\theoremstyle{plain}
\newtheorem{theorem}{Theorem}[section]
\newtheorem*{theorem*}{Theorem}
\newtheorem{thm}{Theorem}[section]
\newtheorem{proposition}[theorem]{Proposition}
\newtheorem{lemma}[theorem]{Lemma}
\newtheorem{cor}[theorem]{Corollary}
\newtheorem{claim}[theorem]{Claim}
\newtheorem{pdef}[theorem]{Proposition-Definition}
\theoremstyle{definition}
\newtheorem{definition}[theorem]{Definition}
\newtheorem{assumption}[theorem]{Assumption}
\newtheorem{convention}[theorem]{Convention}
\theoremstyle{remark}
\newtheorem{remark}[theorem]{Remark}
\newtheorem{example}[theorem]{Example}
\numberwithin{equation}{section}
\DeclareMathOperator{\pr}{pr}
\DeclareMathOperator{\coker}{coker}
\DeclareMathOperator{\Hom}{Hom}
\DeclareMathOperator{\Kan}{Kan}     %<-- Kan condition
\DeclareMathOperator{\Acyc}{Acyc}   %<-- Acyclic condition
\newcommand{\sse}{\subseteq}
\newcommand*{\nb}    {\nobreakdash}     %no word break after the following hyphen
\newcommand*{\defeq} {\mathrel{\vcentcolon=}} %used for definitions
\newcommand*{\Simp}[1]{\Delta^{#1}}
\newcommand*{\Horn}[2]{\Lambda^{#1}_{#2}}
\newcommand*{\id}{\textup{id}}%identity map
\newcommand{\N}{\ensuremath{\mathbb N}}
\newcommand{\Z}{\ensuremath{\mathbb Z}}
\newcommand{\C}{\ensuremath{\mathbb C}}
\newcommand{\R}{\ensuremath{\mathbb R}}
\newcommand{\pts}{\mathscr{P}}
\newcommand{\ppt}{\mathsf{p}}
\newcommand{\emb}{\hookrightarrow}
\newcommand{\Cat}{\mathsf M}% category M--->manifolds
\newcommand{\Mfd}{\mathsf{Mfd}}% category of smooth manifolds
\newcommand{\covers}{\mathcal T}% set of covers in a pretopology
\newcommand{\open}{\textup{open}}% open maps
\newcommand{\subm}{\textup{subm}}% submersions
\newcommand{\yon}{\mathbf{y}}% Yoneda embedding
\newcommand{\op}{\mathrm{op}}% opposite
\newcommand{\cO}{\mathcal{O}}
\def\smallint{{\begingroup\textstyle \int\endgroup}}
\providecommand{\leftsquigarrow}{%
  \mathrel{\mathpalette\reflect@squig\relax}%
}
\newcommand{\reflect@squig}[2]{%
  \reflectbox{$\m@th#1\rightsquigarrow$}%
}
\newcommand{\sint}{\smallint}
\newcommand{\tlt}[1]{\tau_{< #1}}
\newcommand{\tleq}[1]{\tau_{\leq #1}}
\newcommand{\ti}[1]{\tilde{#1}}
\newcommand{\wht}[1]{\widehat{#1}}
\newcommand{\cinf}{C^{\infty}}
\newcommand{\Sh}{\mathsf{Sh}}
\newcommand{\PSh}{\mathsf{PSh}}
\newcommand{\maps}{\colon}
\newcommand{\tensor}{\otimes}
\renewcommand{\deg}[1]{\left \lvert #1 \right \rvert}
\renewcommand{\S}{\bar{S}}
\newcommand{\del}{\partial}
\newcommand{\codiag}{\nabla}                 
\newcommand{\bs}{\mathbf{s}}
\newcommand{\xto}[1]{\xrightarrow{#1}}
\newcommand{\CE}{\mathrm{CE}}
\newcommand{\Linf}{\mathsf{L}_{\infty}\mathsf{Alg}}
\newcommand{\sSh}{s\mathsf{Sh}}
\newcommand{\Set}{\mathsf{Set}}
\newcommand{\sSet}{s\mathsf{Set}}
\newcommand{\cU}{\mathcal{U}}
\newcommand{\cUp}{\tilde{\mathcal{U}}}
\renewcommand{\C}{\mathsf{C}}
\newcommand{\W}{\mathsf{W}}
\newcommand{\Wf}{\mathsf{W_f}}
\newcommand{\pset}[1]{\mathrm{2}^{#1}}
\newcommand{\g}{\mathfrak{g}}
\newcommand{\h}{\mathfrak{h}}
\newcommand{\pt}{\ast}
\newcommand{\fib}{\twoheadrightarrow}
\newcommand{\LnA}[1]{\mathsf{Lie}_{#1}\mathsf{Alg}}
\newcommand{\LnG}[1]{\mathsf{Lie}_{#1}\mathsf{Grp}}
\newcommand{\LnGpd}[1]{\mathsf{Lie}_{#1}\mathsf{Gpd}}
\newcommand{\lt}{<}
\newcommand{\OS}{\Omega(S)}
\newcommand{\Om}{\Omega}
\newcommand{\sOm}{\, \Omega}
\newcommand{\vphi}{\varphi}
\DeclareMathOperator{\spl}{\mathrm{spl}}
\DeclareMathOperator{\Ob}{\mathrm{Ob}}
\DeclareMathOperator{\Mor}{\mathrm{Mor}}
\DeclareMathOperator{\im}{\mathrm{im}}
\DeclareMathOperator{\ft}{\mathrm{fin}}
\DeclareMathOperator{\Ban}{\mathsf{Ban}}
\DeclareMathOperator{\Match}{\mathrm{Match}}
\DeclareMathOperator{\Map}{\mathrm{Map}}
\DeclareMathOperator{\cyl}{\mathrm{cyl}}
\DeclareMathOperator*{\colim}{\mathrm{colim}}
\DeclareMathOperator{\coeq}{\mathrm{coeq}}
\DeclareMathOperator{\cdga}{\mathsf{cdga}}
\DeclareMathOperator{\curv}{\mathrm{curv}}
\DeclareMathOperator{\MC}{\mathrm{MC}}
\DeclareMathOperator{\dR}{\mathrm{dR}}
\DeclareMathOperator{\cl}{\mathrm{cl}}
\DeclareMathOperator{\ddeg}{\mathrm{deg}}
\DeclareMathOperator{\sfib}{\mathrm{fib}}
\newcommand*{\emptycomment}[1]{}
\newcommand{\hypercover}{hypercover}
\newcommand{\gpd}[1]{{\mathsf{Gpd}_\infty[#1]}}
\newcommand{\lnaft}{\LnA{n}^{\ft}}
\newcommand{\el}{\ell}
\newcommand{\sgn}[1]{ (-1)^{\frac{#1(#1-1)}{2}}}
\begin{document}
\title[Homotopy theory for Lie $\infty$-groupoids \& application to
integration] {On the homotopy theory for Lie
  $\infty$-groupoids, with an application to integrating
  $L_\infty$-algebras}

\author{Christopher L.\  Rogers} 
\address{
 Department of Mathematics and Statistics, University of Nevada,
 Reno. \newline 1664 N. Virginia Street Reno, NV 89557-0084 USA} 
\email{chrisrogers@unr.edu, chris.rogers.math@gmail.com}

\author{Chenchang Zhu}
\address{Mathematisches Institut and Courant Research Centre ``Higher Order Structures''\\
  Georg-August-Universit\"at G\"ottingen\\
  Bunsenstra\ss e 3--5\\
  37073 G\"ottingen\\
  Germany}
\email{zhu@uni-math.gwdg.de, chenchang.zhu@gmail.com}
\subjclass[2000]{55U35; 18G30; 22A22; 17B55}
\keywords{simplicial manifold, Lie $\infty$-groupoid,
  $L_\infty$-algebra, category of fibrant objects, hypercover}

\begin{abstract}
Lie $\infty$-groupoids are simplicial Banach manifolds that
satisfy an analog of the Kan condition for
simplicial sets. An explicit construction of Henriques produces
certain Lie $\infty$-groupoids called ``Lie $\infty$-groups'' by
integrating  finite type Lie $n$-algebras. In order to study the compatibility
between this integration procedure and the homotopy theory of
Lie $n$-algebras introduced in the companion paper \cite{Rogers:2018}, 
we present a homotopy theory for Lie
$\infty$-groupoids. Unlike Kan simplicial sets and the higher
geometric groupoids of Behrend and Getzler, Lie $\infty$-groupoids do not
form a category of fibrant objects (CFO), since the category of manifolds
lacks pullbacks. Instead, we show that
Lie $\infty$-groupoids form  an ``incomplete category of
fibrant objects'' in which the weak equivalences correspond to
``stalkwise'' weak equivalences of simplicial sheaves. 
This homotopical structure enjoys many of the same properties as a  
CFO, such as having, in the presence of functorial path objects, a convenient realization of its simplicial localization.
We further prove that the acyclic fibrations are precisely the
hypercovers, which implies that many of Behrend and Getzler's results
also hold in this more general context. As an application, we show
that Henriques' integration functor is an exact functor with respect to a class of distinguished
fibrations which we call ``quasi-split fibrations''. Such fibrations include acyclic fibrations as well as fibrations that arise in string-like extensions. In particular, integration 
sends $L_\infty$ quasi-isomorphisms to weak equivalences, quasi-split fibrations to Kan fibrations, and preserves acyclic fibrations, as well as pullbacks of acyclic/quasi-split fibrations.
\end{abstract}

\maketitle

\setcounter{tocdepth}{2}
\tableofcontents

\section{Introduction}
Lie $\infty$-groupoids, introduced by Henriques \cite{Henriques:2008}, are simplicial
Banach manifolds that satisfy a certain 
diffeo-geometric analog of the ``horn filling'' condition
for Kan simplicial sets. A Lie $\infty$-groupoid for which all horns of
dimension $>n$ are filled uniquely is called a ``Lie
$n$-groupoid''. A Lie 0-groupoid is a just a Banach manifold,
while a Lie 1-groupoid is the nerve of a Lie groupoid. In general,
 Lie $n$-groupoids serve as models for differentiable $n$-stacks.

Important examples of Lie $n$-groupoids are ``Lie $n$-groups''.  These
are Lie $n$-groupoids that have a single 0-simplex (i.e.\ reduced Lie
$n$-groupoids). Lie $n$-groups have been used to construct
diffeo-geometric models for the higher stages of the Whitehead tower
of the orthogonal group. The most famous of these is the ``String Lie
2-group'': its geometric realization is a topological group whose
homotopy type is the 3-connected cover of the orthogonal group.
Initial interest in the String 2--group stemmed from its appearance in
string theory and possible applications to geometric models of
elliptic cohomology. (See Sec.\ 1.2 of \cite{Henriques:2008} for a
summary and also Sec.\ 7 of \cite{Wolfson:2016}.)

With these applications in mind, Henriques developed a smooth
analog of Sullivan's realization functor from rational homotopy theory
which produces Lie $n$-groups by ``integrating'' finite type Lie $n$-algebras.
Lie $n$-algebras are non-negatively graded chain
complexes concentrated in the first $n-1$ degrees, equipped with a
collection of multi-linear brackets which satisfy a coherent homotopy
analog of the Jacobi identity for differential graded Lie
algebras. (These are also known as $n$-term $L_\infty$-algebras, or
$n$-term homotopy Lie algebras \cite{Baez-Crans:2004, Lada-Markl:1995}.) 
A Lie 1-algebra is just a Lie algebra.

Finite type Lie $n$-algebras have a good notion of a homotopy theory
which can be modeled in a variety of ways \cite{DHR:2015,Rogers:2018,Vallette:2014}.
In these contexts, morphisms between Lie $n$-algebras are
significantly ``weaker'' than just linear maps which preserve the
brackets. But every morphism between Lie $n$-algebras induces a chain
map between the underlying complexes. A morphism between Lie
$n$-algebras is a weak equivalence when the induced chain map gives an
isomorphism on the corresponding homology groups. (Such morphisms are
also known as ``$L_\infty$-quasi-isomorphisms''.)

 What is still missing from this story is a full understanding of the
 relationship between the homotopy theory of Lie $n$-algebras and the
 homotopy theory of Lie $n$-groups. The situation is understood in
 some special cases, for example, for strict Lie 2-algebras and Lie
 2-groups \cite{Noohi:2013}.  But in general, one would hope
that   Henriques' integration functor would send a weak equivalence between
Lie $n$-algebras to a weak equivalence between Lie $n$-groups.

It turns out that presenting a user-friendly homotopy theory for Lie
$n$-groupoids is a bit subtle. This is mostly due to the
fact that the category of Banach manifolds lacks several desirable
properties, such as the existence of pullbacks. 
Recently, Behrend and Getzler \cite{Behrend-Getzler:2015} showed that higher
groupoids internal to certain geometric contexts called ``descent categories'' 
form a category of fibrant objects (CFO) for a homotopy theory, in the
sense of Brown \cite{Brown:1973}. Roughly, a descent category is a
category of ``spaces'' which has all finite limits 
equipped with a distinguished class of morphisms called
``covers'' satisfying some axioms. 
Examples include the category of schemes with surjective
\'{e}tale morphisms as covers, as well as the category of Banach
analytic spaces, with surjective submersions as covers.  The
fibrations in the Behrend--Getzler CFO structure for $n$-groupoids are
natural generalizations of Kan fibrations, while the 
acyclic fibrations are precisely the so-called ``hypercovers''. 
As Behrend and Getzler show, this data completely determines
the weak equivalences between geometric $n$-groupoids via a
very nice combinatorial characterization \cite[Thm.\ 5.1]{Behrend-Getzler:2015}. 

Unfortunately, the category of Banach manifolds---regardless of the
choice of covers---does not form a descent category, since it lacks
finite limits.  Hence, Behrend and Getzler's results do not apply
directly in this context. (Nor, unfortunately, does the related work
of Pridham \cite{Pridham:2013}.)  It is worth emphasizing that
\textit{finite-dimensional} Lie $n$-groupoids form a full subcategory
of the category of $n$-groupoids internal to the category of
$\cinf$-schemes (in the sense of Dubuc \cite{Dubuc:1981}). The latter
category, as Behrend and Getzler note, is a descent category. Kan
fibrations between Lie $n$-groupoids first appeared in the work of
Henriques \cite{Henriques:2008}, while hypercovers for Lie
$n$-groupoids are featured prominently in the work of the second
author \cite{Zhu:2009a} and Wolfson \cite{Wolfson:2016}.  In
particular, the second author defines two Lie $n$-groupoids to be
``Morita equivalent'' if they are connected by a span of
hypercovers. Wolfson's work on $n$-bundles is also quite relevant
here: he generalizes aspects of Behrend and Getzler's machinery to the
context of Lie $n$-groupoids, but he did not require an explicit
presentation of their homotopy theory.

\subsection{Summary of main results}
In Theorem \ref{thm:icfo_struct}, we show that Lie $n$-groupoids form
what we call---for lack of better terminology---an ``incomplete
category of fibrant objects'' (iCFO) for a homotopy theory.  
Just like
in a category of fibrant objects, an iCFO is equipped with two important
classes of morphisms: weak
equivalences and fibrations, and as usual, the morphisms which lie in
the intersection of the two are called acyclic fibrations.  The axioms
of an iCFO (Def.\ \ref{def:catfibobj}) are identical to those of a
CFO, except we do not assume the existence of pullbacks---even along 
fibrations. We define the weak equivalences of Lie $n$-groupoids to be
those smooth simplicial morphisms which correspond, via the Yoneda
embedding, to ``stalkwise weak equivalences'' between the associated
simplicial sheaves.  Stalkwise weak equivalences (Def.\
\ref{def:stalk_weq}) are a natural choice for weak equivalences
between simplicial sheaves, appearing in the work of Brown
\cite{Brown:1973}, and specifically in diffeo-geometric contexts in
the work of Dugger \cite{Dugger:1999}, Nikolaus, Schreiber, and
Stevenson \cite{NSS:2015}, and Freed and Hopkins
\cite{Freed-Hopkins:2013}

The fibrations in our iCFO structure for Lie $n$-groupoids are the ``Kan
fibrations'' (Def.\ \ref{def:Kan_arrow}) introduced by Henriques. The
acyclic fibrations are, by definition, those Kan fibrations which are
also stalkwise weak equivalences. However, we show in Prop.\
\ref{prop:hypercover} and Prop.\ \ref{cor:1} that any morphism which
is both a Kan fibration and a stalkwise weak equivalence is a
hypercover (Def.\ \ref{def:equivalence}).  This is because the
category of Banach manifolds can be given the structure of what we
call a ``locally stalkwise pretopology'' (Def.\
\ref{def:LSW_covers}). This is a diffeo-geometric result, and relies on
the fact that the inverse function theorem holds for Banach
manifolds. And so the acyclic fibrations in our iCFO structure are
precisely the hypercovers, just like in the work of
Behrend and Getzler. Hence, using their results, we
demonstrate in Sec.\ \ref{sec:more-on-w-eq}
 that the weak equivalences between Lie $n$-groupoids
can be characterized completely by combinatorial data that only
involves maps between Banach manifolds. No reference to simplicial
sheaf theory is actually needed.

One advantage of defining the weak equivalences 
to be stalkwise weak equivalences is that it
allows us to connect to the homotopy theory of Lie $n$-algebras, via
Henriques' integration functor, in a straightforward way. 
We can give a rather concise proof that the integration of 
a $L_\infty$ quasi-isomorphism is a stalkwise weak equivalence (Thm.\ \ref{thm:int_weq}). 

This is, in fact, only a part of a much stronger result.
The first author has shown in a companion paper \cite{Rogers:2018} that the category of finite type Lie $n$-algebras admits a CFO structure that, roughly speaking, lifts the projective model structure on non-negatively graded chain complexes. In particular, the 
weak equivalences are precisely the $L_\infty$ quasi-isomorphisms, and the fibrations are those $L_\infty$-morphisms whose linear term is surjective in all positive degrees. We summarize this result in Thm.\ \ref{thm:lnaft_cfo} of the present paper. Furthermore, in Remark \ref{rmk:lna_fibs}, we 
use some preliminary results concerning the \textit{differentiation} of Lie $\infty$-groups
to explain why this notion of fibration between Lie $n$-algebras is the ``correct'' one for our applications.

It turns out that not every fibration of Lie $n$-algebras integrates to a Kan fibration (Remark \ref{rmk:fib_not_int}). So the integration functor is not an exact functor, in the 
usual sense of homotopical algebra. However, in Def.\ \ref{def:exactfunctor}, we introduce the notion of a functor being exact with respect to a class of distinguished fibrations. This is a mild and controllable generalization, and it provides us with a useful way to compare iCFOs.

Indeed, Thm.\ \ref{thm:int_exact} demonstrates that integration is compatible with a substantial amount of the iCFO structure. More precisely, we show that Henriques' integration functor is exact with respect to the class of so-called ``quasi-split fibrations''. This class of fibrations includes all acyclic fibrations, as well as the fibrations which naturally arise in the construction of the string Lie 2-algebra. We interpret this result as step one of a larger project in progress
whose goal is to prove an analog of ``Lie's Second Theorem'' for Lie
$n$-groups and Lie $n$-algebras.

\subsection{Outline of paper}
Throughout, we try to write for a somewhat broader audience, which
could include, for example, readers from differential/Poisson geometry with interests in
both Lie groupoid theory and $L_\infty$-algebras. We attempt
a self-contained presentation, within reason, and use 
only a minimal amount of technical machinery. We recognize that
some of the auxiliary results presented here can reside in a more general
framework well known to experts in abstract homotopy theory. 

We begin in Section \ref{sec:iCFO}, where we give the axioms for an incomplete
category of fibrant objects (iCFO) for a homotopy theory. We show that
many of the nice properties which hold for CFOs also hold for
iCFOs. For example, we show that the mapping space between
two objects in the Dwyer-Kan simplicial localization of a small iCFO, equipped with functorial path objects and functorial pullbacks of acyclic fibrations, can be described as the nerve of a category of spans. 
We also introduce in this section the notion of 
an functor being exact with respect to a class of distinguished fibrations. Such a functor has
many of the same properties as an exact functor between CFOs. The main difference is that
the functor is only required to preserve those fibrations contained in a distinguished subclass of fibrations in the source category. 

In Section \ref{sec:higher_gpd}, we recall the definitions of an
$n$-groupoid object and a Kan fibration in a large category (such as
the category of Banach manifolds) equipped with a
``pretopology''. Many of the basic constructions in this section and
throughout the paper require taking limits in this category which a
priori do not exist. Hence, limits must be treated as limits of
sheaves, and then shown to be representable. Furthermore, we later on define weak equivalences between
$n$-groupoids in terms of the simplicial Yoneda embedding.  We
therefore need to deal with sheaves over large categories. To resolve
any set-theoretical problems, we employ the standard workaround by
passing to a larger Grothendieck universe.  In theory, this could
introduce a dependence on this ``enlargement'' (e.g., see \cite{Waterhouse:1975}).
So we show explicitly in Appendix \ref{sec:universe} that, for the
case of Lie $n$-groupoids, all of our results are independent of
choice of Grothendieck universe.

In Section \ref{sec:points}, we recall the notion of ``points'' for
categories of sheaves, which generalize the notion of stalks. 
In particular, we consider a collection of points for sheaves over the
category of Banach manifolds, which generalizes those
found in the literature (e.g.,  \cite{Dugger:1999,NSS:2015} ) for
finite-dimensional manifolds. We also collect in this section some useful results
regarding matching objects and various notions of epimorphisms and surjections for
pretopologies equipped with a collection of points. We use these
notions in Section \ref{sec:stalk_weqs} to define stalkwise weak
equivalences between $n$-groupoids. We also show in this section 
that any morphism between $n$-groups (reduced $n$-groupoids) which
induces an isomorphism of the corresponding simplicial homotopy group sheaves (in
the sense of Joyal \cite{Joyal:1984} and Henriques \cite{Henriques:2008}) 
is a stalkwise weak equivalence. We
use this result in Section \ref{sec:int_exact} where we consider the integration $L_\infty$ quasi-isomorphisms.

The definition of hypercover is recalled in Section
\ref{sec:hypercovers}, and we introduce the notion of a category
equipped with a ``locally stalkwise pretopology''. We show that the
category of Banach manifolds
equipped with the surjective submersion pretopology is an example.
In Section \ref{sec:gpd_icfo}, we prove $\infty$-groupoids in a
category equipped with a locally stalkwise pretopology form an iCFO.
We note that this proof can be easily refined to show that $n$-groupoids
for finite $n$ also have an iCFO structure. We also demonstrate in this
section that the weak equivalences for this iCFO structure can be
described without the need of simplicial sheaves, in analogy with a
result of Behrend and Getzler.

In Section \ref{sec:LnA},  we recall from \cite{Rogers:2018} the CFO structure on the category of finite type Lie $n$-algebras. We also summarize a number of technical results from \cite{Rogers:2018}
concerning: the ``strictification'' of fibrations between Lie $n$-algebras, the properties of Maurer-Cartan sets, and the decomposition of Postnikov towers.
We also introduce the notion of a quasi-split fibration between Lie $n$-algebras, and
we provide some motivating examples.

Finally, in Section \ref{sec:int_exact} we extend 
Henriques' results on the integration of Postnikov towers of Lie $n$-algebras.
We use these to show that the integration of a quasi-split fibration is a Kan fibration, and that the integration of a $L_\infty$ quasi-isomorphism is a stalkwise weak equivalence.
We conclude with Theorem \ref{thm:int_exact}, which is our main result: The integration functor is exact with respect to the class of quasi-split fibrations.

\subsection{Acknowledgments}
The authors gratefully thank Ezra Getzler for many helpful
conversations, and especially for sharing an early version of his
preprint with Kai Behrend. CLR thanks Dave Carchedi for helpful discussions. 
The authors also wish to thank an anonymous referee for their suggestions on 
improving the exposition and strengthening the results of this work.

Parts of this paper were written during CLR's stay
at the Mathematical Research Institute of the University of Melbourne (MATRIX). He thanks the institute and the organizers of the 2016 Higher Structures in Geometry and Physics program for their hospitality and support. CLR acknowledges support by an AMS-Simons Travel Grant, and
both authors acknowledge support by the DFG Individual Grant (ZH 274/1-1) ``Homotopy Lie Theory''.

\section{Incomplete categories of fibrant objects}
\label{sec:iCFO}
In this section, we introduce a slight generalization of Brown's
definition for a category of fibrant objects (CFO) for a homotopy theory
\cite[Sec.\ 1]{Brown:1973}.
In particular, we do not assume the existence of certain limits in the
underlying category, hence the term ``incomplete''. This is reasonable
given our applications to $\infty$-groupoid objects in diffeo-geometric
categories. We note that other variations on weakening Brown's axioms 
have already appeared in the literature, for example Horel's ``partial Brown categories'' \cite{Horel:2016}. (See Ex.\ \ref{ex:exactfunctor} below.)

\begin{definition} \label{def:catfibobj}
Let $\C$ be a category with finite products and terminal object $\ast
\in \C$ equipped with two classes of morphisms called
\textbf{weak equivalences} and \textbf{fibrations}. A morphism which
is both a weak equivalence and a fibration is called an
\textbf{acyclic fibration}. We say $\C$ is an
{\bf incomplete category of fibrant objects (iCFO)} iff:
\begin{enumerate}
\item{Every isomorphism in $\C$ is an acyclic fibration.}

\item{The class of weak equivalences satisfies  ``2 out of 3''. That is, if
    $f$ and $g$ are composable morphisms in $\C$ and any two of $f,g, g
    \circ f$ are weak equivalences, then so is the third.}

\item{The composition of two fibrations is a fibration.}

\item{If the pullback of a fibration exists, then it is a fibration.
That is, if $Y \xto{g} Z \xleftarrow{f} X$ is a diagram in $\C$ with $f$
    a fibration, and if $X \times_{Z} Y$ exists, then
   the induced projection $X \times_{Z} Y \to Y$ is a  fibration.}

\item{The pullback of an acyclic fibration exists, and is an acyclic fibration.
That is, if $Y \xto{g} Z \xleftarrow{f} X$ is a diagram in $\C$ with $f$
    an acyclic fibration, then the pullback $X \times_{Z} Y$ exists, and
   the induced projection $X \times_{Z} Y \to Y$ is an acyclic  fibration.}

\item{For any object $X \in \C$ there exists a (not necessarily
    functorial) \textbf{path object}, that is, an object
    $X^{I}$ equipped with morphisms
\[
X \xto{s} X^{I} \xto{(d_0,d_1)} X \times X,
\]
such that $s$ is a weak equivalence, $(d_0,d_1)$ is a fibration, and their
composite is the diagonal map.}

\item{All objects of $\C$ are \textbf{fibrant}. That is, for any $X \in \C$ the unique map 
$ X \to \ast$ is a fibration.}
\end{enumerate}
\end{definition}

\begin{remark}
The only difference between the above and the original definition of Brown is axiom (4). Brown requires that the pullback of a fibration always exists.
\end{remark}

\subsection{Factorization}
An important feature of a category of fibrant objects is Brown's
factorization lemma in \cite[Sec.\ 1]{Brown:1973}. 
The factorization lemma also holds for any iCFO.

Let $Y^I$ be a path object for an object $Y$ in 
an iCFO $\C$. Since $Y \to \ast$ is a fibration,
Def.\ \ref{def:catfibobj} implies that the projections $\pi_{i} \maps Y \times Y \to Y$
are fibrations. Hence, the morphisms $d_{i} \maps Y^I \to Y$  are also
fibrations. Moreover, since $d_i s = \id_Y$, and $s$ is a weak
equivalence, the maps $d_{i}$ are acyclic fibrations. Thus we have proven
\begin{lemma} \label{lem:di}
The projection $d_i: Y^I\to Y$ is an acyclic fibration for $i=0,1$.
\end{lemma}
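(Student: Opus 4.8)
The plan is to verify the two defining properties of an acyclic fibration separately: that each $d_i$ is a fibration, and that each $d_i$ is a weak equivalence. Both will follow formally from the iCFO axioms of Def.\ \ref{def:catfibobj} together with the definition of the path object, so the whole argument amounts to a short diagram chase.

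First I would show that $d_i$ is a fibration. The idea is to factor it as the composite
\[
Y^I \xto{(d_0,d_1)} Y \times Y \xto{\pi_i} Y,
\]
and to check that each factor is a fibration. The map $(d_0,d_1)$ is a fibration by the very definition of a path object. For the projection $\pi_i$, the key observation is that $Y \times Y$ is the pullback of the cospan $Y \to \ast \leftarrow Y$ over the terminal object, a pullback which exists because $\C$ has finite products. Since all objects are fibrant (axiom (7)), the structure map $Y \to \ast$ is a fibration, so axiom (4)---which asserts that the pullback of a fibration is again a fibration \emph{whenever it exists}---applies and shows $\pi_i$ is a fibration. Composition of fibrations (axiom (3)) then gives that $d_i = \pi_i \circ (d_0,d_1)$ is a fibration.

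Next I would show that $d_i$ is a weak equivalence. Here the crucial identity is $d_i \circ s = \id_Y$, which follows from the path-object axiom: the composite $(d_0,d_1)\circ s$ is the diagonal $\Delta \maps Y \to Y \times Y$, and postcomposing with $\pi_i$ gives $d_i \circ s = \pi_i \circ \Delta = \id_Y$. Since every isomorphism is an acyclic fibration (axiom (1)), $\id_Y$ is in particular a weak equivalence; and $s$ is a weak equivalence by the definition of the path object. Applying the 2-out-of-3 property (axiom (2)) to the composable pair $s$ and $d_i$, with both $d_i \circ s$ and $s$ weak equivalences, forces $d_i$ to be a weak equivalence as well.

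Combining the two halves, each $d_i$ is simultaneously a fibration and a weak equivalence, hence an acyclic fibration, which is exactly the claim. I do not anticipate a genuine obstacle, since the proof is purely formal; the only point deserving a moment's care is the justification that $\pi_i$ is a fibration, as this is the one place where the ``incompleteness'' of the iCFO could in principle interfere. It does not, precisely because the relevant pullback is a finite product and hence exists by hypothesis, so axiom (4) remains available.
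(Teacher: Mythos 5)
Your proposal is correct and follows essentially the same route as the paper: the projections $\pi_i \maps Y \times Y \to Y$ are fibrations because $Y \to \ast$ is (axiom (7)) and $Y\times Y$ is the pullback over $\ast$ (axiom (4)), so $d_i = \pi_i \circ (d_0,d_1)$ is a fibration by axiom (3), and then $d_i \circ s = \id_Y$ together with 2-out-of-3 makes $d_i$ a weak equivalence. Your added remark that the incompleteness of the iCFO causes no trouble here, since the relevant pullback is a finite product and hence exists, is exactly the right point of care.
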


%The above lemma can be used to prove the existence of a factorization.
The proof of the next lemma is identical to the analogous lemma for
categories of fibrant objects. Indeed, the proof does not require the
existence of pullbacks along arbitrary fibrations.

\begin{lemma} \label{lemma:fact} 
If $\C$ is an iCFO and $f\maps X\to Y$ is a morphism in $\C$, then $f$
can be factored as $f=p \circ i$, where $p$ is a fibration, and $i$ is a 
weak equivalence which is a section (right inverse) of an acyclic fibration. 
\end{lemma}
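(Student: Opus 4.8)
The plan is to adapt Brown's factorization argument, taking care that every pullback we invoke is one whose existence the iCFO axioms actually guarantee. Fix a path object $Y \xto{s} Y^{I} \xto{(d_0,d_1)} Y \times Y$ for $Y$; by Lemma \ref{lem:di} both $d_0$ and $d_1$ are acyclic fibrations. First I would form the pullback $Z \defeq X \times_{Y} Y^{I}$ of $f \maps X \to Y$ along $d_0 \maps Y^{I} \to Y$. Because $d_0$ is an acyclic fibration, axiom (5) of Def.\ \ref{def:catfibobj} guarantees that this pullback exists and that the projection $\pi_{X} \maps Z \to X$ is again an acyclic fibration; write $\pi \maps Z \to Y^{I}$ for the other projection.

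Next I would build the section $i$. Since $d_0 s = \id_Y$, the pair $(\id_X, s f)$ satisfies the compatibility $f \circ \id_X = d_0 \circ (s f)$, so the universal property of the pullback produces a unique map $i \maps X \to Z$ with $\pi_X \circ i = \id_X$ and $\pi \circ i = s f$. Thus $i$ is a section (right inverse) of the acyclic fibration $\pi_X$; as $\pi_X$ is a weak equivalence and $\pi_X \circ i = \id_X$, the ``2 out of 3'' axiom forces $i$ to be a weak equivalence. I then set $p \defeq d_1 \circ \pi \maps Z \to Y$ and check $p \circ i = d_1 \pi i = d_1 s f = f$, so that $f = p \circ i$ with $i$ of the desired type.

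The main obstacle is showing that $p$ is a fibration, and this is precisely where axiom (4)---pullbacks of fibrations are fibrations \emph{only when they happen to exist}---must be applied with care, since we cannot freely pull back along arbitrary fibrations. The trick is to consider the map $\theta \defeq (\pi_X, d_1 \pi) \maps Z \to X \times Y$ and to recognize it as a pullback of the path-object fibration $(d_0,d_1) \maps Y^{I} \to Y \times Y$ along $f \times \id_Y \maps X \times Y \to Y \times Y$. Indeed, the square with top edge $\pi \maps Z \to Y^{I}$, left edge $\theta$, bottom edge $f \times \id_Y$, and right edge $(d_0,d_1)$ is cartesian: a point of that pullback is a triple $(x,y,\gamma)$ with $f(x) = d_0(\gamma)$ and $y = d_1(\gamma)$, which is exactly the data of a point $(x,\gamma)$ of $Z$ together with the determined coordinate $y = d_1(\gamma)$. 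Since this pullback is thus (isomorphic to) $Z$, which we already know exists, axiom (4) applies and $\theta$ is a fibration.

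Finally I would observe that $p = \pr_Y \circ \theta$, where $\pr_Y \maps X \times Y \to Y$ is the projection. This projection is the pullback of the fibration $X \to \ast$ (axiom (7)) along $Y \to \ast$; the product $X \times Y$ exists by hypothesis on $\C$, so axiom (4) makes $\pr_Y$ a fibration. Hence $p$ is a composite of the two fibrations $\theta$ and $\pr_Y$, and is a fibration by axiom (3), completing the factorization. I expect the only genuinely delicate point to be the cartesian-square identification of $\theta$, since it is exactly this that lets us conclude the fibrancy of $p$ using only pullbacks whose existence the iCFO axioms already supply, without ever pulling back along a non-acyclic fibration.
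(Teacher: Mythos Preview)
Your proof is correct and follows essentially the same route as the paper's: form $Z = X \times_Y Y^{I}$ via the acyclic fibration $d_0$ (so axiom (5) supplies existence), produce the section $i$ of $\pi_X$, set $p = d_1 \circ \pi$, and then identify $(\pi_X, d_1\pi) \maps Z \to X \times Y$ as the pullback of $(d_0,d_1)$ along $f \times \id_Y$ so that axiom (4) applies to the already-existing pullback. The paper's argument and yours are the same, including the final step of factoring $p$ through the projection $X \times Y \to Y$.
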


\begin{proof}
Let $Y^I$ be a path object for $Y$. 
Lemma \ref{lem:di} implies that the pullback $X \times_Y Y^I$ of
the diagram
\[
X \xto{f} Y \xleftarrow{d_0} Y^I
\] 
exists, and hence the projection $\pr_1 \maps X
\times_{Y}Y^{I} \to X$ is an acyclic fibration. Combining this fact
with the commutative diagram 
\[
\xymatrix{
X  \ar[d]_{\id} \ar[r]^{sf} & Y^I \ar[d]^{d_0}\\
X \ar[r]^{f} & Y \\
}
\]
implies that $\pr_1$ has a right inverse
\[
%\begin{equation} \label{eq:fac1}
i \maps X \to X \times_{Y} Y^I 
\]
%\end{equation}
which is necessarily a weak equivalence.  Moreover, if $p \maps X
\times_{Y} Y^I \to Y$  is the composition
%\begin{equation} \label{eq:fac2}
\[
X \times_{Y} Y^{I} \xto{\pr_2} Y^{I} \xto{d_1} Y.
\]
%\end{equation}
then 
%\begin{equation} \label{eq:fac3}
\[
f = p \circ i.
\]
%\end{equation}
To show that $p$ is a fibration, we observe that $X \times_Y Y^I$ is
also the pullback of the diagram
\[
\begin{tikzpicture}[descr/.style={fill=white,inner sep=2.5pt},baseline=(current  bounding  box.center)]
\matrix (m) [matrix of math nodes, row sep=2em,column sep=3em,
  ampersand replacement=\&]
  {  
X \times_{Y} Y^I \& Y^I \\
X \times Y \& Y \times Y \\
}; 
  \path[->,font=\scriptsize] 
   (m-1-1) edge node[auto] {$\pr_2$} (m-1-2)
   (m-1-1) edge node[auto,swap] {$(\pr_1,p)$} (m-2-1)
   (m-1-2) edge node[auto] {$(d_0,d_1)$} (m-2-2)
   (m-2-1) edge node[auto] {$(f,\id_Y)$} (m-2-2)
  ;

%begin pullback symbol%
  \begin{scope}[shift=($(m-1-1)!.4!(m-2-2)$)]
  \draw +(-0.25,0) -- +(0,0)  -- +(0,0.25);
  \end{scope}
  %end pullback symbol%

\end{tikzpicture}
\]
Since $(d_0,d_1)$ is a fibration, $(pr_1,p)$ is a fibration. Since the
projection $X \times Y \to Y$ is a fibration (indeed $Y$ is fibrant),
it follows that $p$ is also a fibration.

\end{proof}

One simple fact which follows from the factorization lemma is that
every weak equivalence in an iCFO
yields a span of acyclic fibrations. Also, just like in the case with
a CFO \cite[Lemma 1.3]{Behrend-Getzler:2015}, the weak equivalences in
an iCFO are determined by the acyclic fibrations.

\begin{proposition} \label{prop:catfib_morita_eq}
\mbox{}
\begin{enumerate}
\item  If $f \maps X \to Y$ is a weak equivalence in an iCFO
then there exists a span of acyclic fibrations $X \leftarrow Z \to Y$.

\item A morphism $f \maps X \to Y$ in an iCFO is a weak equivalence if
  and only if it factors as $f=p \circ i$, where $p$ is an acyclic
  fibration,  and  $i$ is a section of an acyclic fibration. 
\end{enumerate}
\end{proposition}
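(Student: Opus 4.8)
The plan is to establish part (2) first, since part (1) will then follow immediately. Both parts rest entirely on the factorization Lemma \ref{lemma:fact} together with repeated application of the 2-out-of-3 axiom, so the argument is formal and I would not expect any genuine obstacle beyond bookkeeping.

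For the forward implication of (2), I would start with a weak equivalence $f \maps X \to Y$ and invoke Lemma \ref{lemma:fact} to factor $f = p \circ i$, where $p$ is a fibration and $i$ is a weak equivalence that is a section of an acyclic fibration. Since $f$ and $i$ are both weak equivalences, the 2-out-of-3 axiom (Def.\ \ref{def:catfibobj}(2)) forces $p$ to be a weak equivalence as well; being already a fibration, $p$ is therefore an acyclic fibration. This is precisely the factorization asserted, so no further work is needed in this direction.

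For the converse implication of (2), I would assume $f = p \circ i$ with $p$ an acyclic fibration and $i$ a section of some acyclic fibration $q$, so that $q \circ i = \id$. The one step that requires a moment of care is checking that $i$ itself is a weak equivalence: here I use that $q$ is a weak equivalence (being an acyclic fibration) and that $q \circ i = \id$ is a weak equivalence, the latter because axiom (1) makes every isomorphism an acyclic fibration and hence a weak equivalence. Applying 2-out-of-3 to $q \circ i$ then yields that $i$ is a weak equivalence. Since $p$ is also a weak equivalence, a final application of 2-out-of-3 to $f = p \circ i$ shows that $f$ is a weak equivalence. I expect this verification that a section of an acyclic fibration is a weak equivalence to be the only nonformal point, and it is exactly where the hypothesis that $i$ is a section of an \emph{acyclic} fibration (rather than merely a fibration) is used.

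For part (1), given a weak equivalence $f \maps X \to Y$, I would simply apply part (2) to write $f = p \circ i$ with $p \maps Z \to Y$ an acyclic fibration and $i \maps X \to Z$ a section of an acyclic fibration $q \maps Z \to X$. Both legs of $X \xleftarrow{q} Z \xrightarrow{p} Y$ are then acyclic fibrations, giving the desired span and completing the proof.
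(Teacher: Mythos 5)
Your proof is correct and follows essentially the same route as the paper: factor $f$ via Lemma \ref{lemma:fact} and apply the 2-out-of-3 axiom, with the span in part (1) being exactly the one produced by that factorization. You are in fact slightly more complete than the paper, which leaves the converse direction of (2) (that a section of an acyclic fibration is a weak equivalence, via $q \circ i = \id$ and 2-out-of-3) implicit.
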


\begin{proof}
If $f$ is a weak equivalence, then factor  $f=p\circ
  i$ as in Lemma \ref{lemma:fact}. Since $f$ and $i$ are weak equivalences,
  $p$ is an acyclic fibration. Hence, the legs of the span
\[
X \xleftarrow{\pr_1}  X \times_{Y} Y^I  \xto{p} Y
\]
are acyclic fibrations. 
\end{proof}

\begin{remark}\label{rmk:morita}
Spans of acyclic fibrations are analogous to the notion of Morita equivalence between
Lie groupoids. Indeed, in \cite[Def.\ 2.12]{Zhu:2009a}, two Lie
$n$-groupoids are considered ``Morita equivalent'' iff they are
connected by a span of maps called hypercovers, which we consider in Sec.\ \ref{sec:hypercovers}. 
We will see in Sec.\ \ref{sec:gpd_icfo} that hypercovers are the
acyclic fibrations in the iCFO structure for Lie $n$-groupoids.
\end{remark}

Next, we provide a working definition for the notion of an exact functor between iCFOs, which is well suited for the applications we have in mind.  

\begin{definition} \label{def:distingfib}
Let $\C$ be an iCFO and denote by $\sfib(\C)$ the class of fibrations of $\C$.
A class of fibrations $S \sse \sfib(\C)$  is
\textbf{distinguished} iff $S$ contains all acyclic fibrations and all morphisms into the terminal object of $\C$.
\end{definition}

Every iCFO has two obvious classes of distinguished fibrations: the class of all fibrations
$S_{\mathrm{max}} = \sfib(\C)$, and the class $S_{\mathrm{min}}$ which only contains
the acyclic fibrations and the morphisms into the terminal object.

\begin{definition} \label{def:exactfunctor}
Let $\C$ be an iCFO and $S$ a subclass of distinguished fibrations of $\C$.  A functor $F \maps \C \to \C'$ between iCFOs is a \textbf{(left) exact functor with respect to $S$} iff 
\begin{enumerate}
\item $F$ preserves the terminal object and acyclic fibrations, 
\item $F$ maps every fibration in $S$ to a fibration in $\C'$, and
\item any pullback square in $\C$ of the form
\[
\begin{tikzpicture}[descr/.style={fill=white,inner sep=2.5pt},baseline=(current  bounding  box.center)]
\matrix (m) [matrix of math nodes, row sep=2em,column sep=3em,
  ampersand replacement=\&]
  {  
P \& X \\
Z \& Y \\
}
; 
  \path[->,font=\scriptsize] 
   (m-1-1) edge node[auto] {$$} (m-1-2)
   (m-1-1) edge node[auto,swap] {$$} (m-2-1)
   (m-1-2) edge node[auto] {$f$} (m-2-2)
   (m-2-1) edge node[auto] {$$} (m-2-2)
  ;

%begin pullback symbol%
  \begin{scope}[shift=($(m-1-1)!.4!(m-2-2)$)]
  \draw +(-0.25,0) -- +(0,0)  -- +(0,0.25);
  \end{scope}
  %end pullback symbol%
\end{tikzpicture}
\]
with $f$ a fibration in $S$, is mapped by $F$ to a pullback square in $\C'$.
\end{enumerate}
% it sends weak equivalences to weak equivalences, acyclic fibrations to acyclic fibrations, and pullbacks of acyclic fibrations to pullbacks.
\end{definition}

Before we give a few examples of such functors, we note the following simple lemma:
\begin{lemma} \label{lem:exactfunctor}
Let $F \maps \C \to \C'$ be an exact functor with respect to a class $S$.
Then $F$ preserves finite products and weak equivalences.
\end{lemma} 
\begin{proof}
It follows from Def.\ \ref{def:distingfib} and axiom 3 above that $F$ preserves finite products. If $f \maps X \to Y$ is a weak equivalence in $\C$, then Lemma \ref{lemma:fact} implies that $f=p \circ i$, where $p$ is an acyclic fibration and $i$ is a right inverse of an acyclic fibration. Since $F$ preserves acyclic fibrations, it follows that $F(f)$ is a weak equivalence.  
\end{proof}

\begin{example} \label{ex:exactfunctor}
\mbox{}
\begin{enumerate}
\item Let $F \maps \C \to \C'$ be a functor between categories of fibrant objects (CFOs).
Then $F$ is an exact functor between CFOs in the usual  sense (Def.\ 2.3.3 \cite{BHH:2017}) if and only if $F$ is exact with respect to the class $S_{\mathrm{max}}$. 

\item An iCFO equipped with a functorial path object (see Sec.\ \ref{subsubsec:func_path_obj}) has the structure of a ``partial Brown category'' (PBC), a notion  introduced by Horel in \cite[Rmk.\ 2.4, Prop.\ 2.6]{Horel:2016}. Following Horel's definition of an exact functor between PBCs \cite[Def.\ 2.5]{Horel:2016}, we say a functor between iCFOs is \textbf{partially exact} 
iff it sends weak equivalences to weak equivalences, acyclic fibrations to acyclic fibrations, and pullbacks of acyclic fibrations to pullbacks. In contrast with the first example, a product preserving functor $F \maps \C \to   \C'$  between iCFOs is partially exact if and only if $F$ is exact with respect to the class $S_{\mathrm{min}}$. 

\item In our main application, Thm. \ref{thm:int_exact}, we show that the integration functor from finite type Lie $n$-algebras to Lie $\infty$-groupoids is exact with respect to a class of distinguished fibrations called ``quasi-split fibrations'' (Def.\ \ref{def:split_fib}), which lies properly between $S_{\mathrm{min}}$ and $S_{\mathrm{max}}$. 
\end{enumerate}
\end{example}

\subsection{Simplicial localization} \label{sec:simp_loc}
In this section, we show that in certain cases the simplicial localization (or underlying
$\infty$-category) of a small incomplete category of fibrant objects
has a simple description in terms of the nerve of a category of spans. 
An example is a small category of $\infty$-groupoids equipped with the iCFO structure described in Sec.\ \ref{sec:gpd_icfo}. In particular, the simplicial localization of the category of Lie $n$-groupoids is discussed in Sec.\ \ref{sec:simp_loc_gpd}, and this will be useful for our future work concerning the higher category theory of Lie $\infty$-groupoids.

Recall that for any small category $\C$ and a wide subcategory $\W$ of weak
equivalences, one may associate to it a simplicial category $L_{\W}\C$
(i.e., a category enriched in simplicial sets) via the Dwyer-Kan
simplicial localization, or ``hammock localization'' 
\cite{Dwyer-Kan:1980}. The simplicial category $L_{\W}\C$ has the same
objects as $\C$ and is universal with respect to the property that
weak equivalences in $W$ are homotopy equivalences in $L_{\W}\C$. In
particular, $\pi_0 L_{\W}\C$, the category whose objects are those of
$\C$, and whose hom-sets are
\[
\pi_0\big( \mathrm{Map}_{L_{\W} \C}(X,Y) \big), \quad \forall X,Y \in \C,
\]
is equivalent to the usual localization $\C[\W^{-1}]$, the category
obtained by formally inverting the morphisms in $\W$.

The mapping space $\mathrm{Map}_{L_{\W} \C}(X,Y)$ is  
the direct limit of nerves of categories \cite[Prop.\ 5.5]{Dwyer-Kan:1980}. Dwyer and Kan showed that
when $\C$ is a model category with functorial factorizations,   
$\mathrm{Map}_{L_{\W} \C}(X,Y)$ can be described more simply, up to weak homotopy equivalence, as the nerve of a single category of spans (e.g., Prop.\ 8.2 in \cite{Dwyer-Kan:1980} and subsequent corollaries). Weiss \cite{Weiss:1999} later showed that Dwyer and Kan's proof extends to the case when $\C$ is a Waldhausen category (i.e.\ a category with cofibrations and weak equivalences) equipped with both a cylinder functor, and canonical pushouts of cofibrations. Roughly speaking, the ``opposite'' of Weiss's argument is spelled out in detail for CFOs in the work of Nikolaus, Schreiber, and Stevenson 
\cite[Thm.\ 3.61]{NSS:2015}. As we show below, an analogous result holds for iCFOs.

\subsubsection{Functorial path objects and pullbacks} \label{subsubsec:func_path_obj}
In this section, weak equivalences and fibrations will be denoted by $X \xto{\sim} Y$ and $X \fib Y$, respectively. Furthermore, in this section, $\C$ will denote a small incomplete category of fibrant objects equipped 
with:
\begin{itemize}

\item \textbf{functorial path objects}: An assignment of a path object  $X^I$ to each object $X \in \C$ and to each $f \maps X \to Y$, a morphism $f^I \maps X^I \to Y^I$ such that the following diagram commutes:
\[
\begin{tikzpicture}[descr/.style={fill=white,inner sep=2.5pt},baseline=(current  bounding  box.center)]
\matrix (m) [matrix of math nodes, row sep=2em,column sep=3em,
  ampersand replacement=\&]
  {  
X \& X^{I} \& X \times X \\
Y \& Y^{I} \& Y \times Y \\
}; 
  \path[->,font=\scriptsize] 
   (m-1-1) edge node[auto] {$s$} node[auto,below] {$\sim$} (m-1-2.185)
   (m-2-1) edge node[auto] {$s$} node[auto,below] {$\sim$} (m-2-2.185)
   (m-1-1) edge node[auto] {$f$} (m-2-1)
   (m-1-2) edge node[auto] {$f^I$} (m-2-2)
   (m-1-3) edge node[auto] {$(f,f)$} (m-2-3)
  ;
  \path[->>,font=\scriptsize] 
   (m-1-2) edge node[auto] {$(d_0,d_1)$} (m-1-3)
   (m-2-2) edge node[auto] {$(d_0,d_1)$} (m-2-3)
;
\end{tikzpicture}
\]

\item \textbf{functorial pullbacks of acyclic fibrations}: An
  assignment to each diagram of the form  $X \xto{f} Y \xleftarrow{g} Z$, in which $g$ is an acyclic
  fibration, a universal cone  $X \xleftarrow{ \tilde{g}} X \times_Y Z \xto{\tilde{f}} Z$ (which exists via the iCFO axioms, and in which $\tilde{g}$ is necessarily an acyclic fibration). The universal property implies that to 
each commutative diagram:
\[
\begin{tikzpicture}[descr/.style={fill=white,inner sep=2.5pt},baseline=(current  bounding  box.center)]
\matrix (m) [matrix of math nodes, row sep=2em,column sep=3em,
  ampersand replacement=\&]
  {  
X \& Y  \& Z  \\
X' \& Y' \& Z' \\
}; 
  \path[->,font=\scriptsize] 
   (m-1-1) edge node[auto] {$f$} (m-1-2)
   (m-2-1) edge node[auto] {$f'$} (m-2-2)
   (m-1-1) edge node[auto,swap] {$\alpha$} (m-2-1)
   (m-1-2) edge node[auto] {$\beta$} (m-2-2)
   (m-1-3) edge node[auto] {$\gamma$} (m-2-3)
   % (m-1-2) edge node[auto] {$f^I$} (m-2-2)
   % (m-1-3) edge node[auto] {$(f,f)$} (m-2-3)
  ;
  \path[->>,font=\scriptsize] 
  (m-1-3) edge node[auto,swap] {$g$} node[auto,below] {$\sim$} (m-1-2)
  (m-2-3) edge node[auto,swap] {$g'$} node[auto,below] {$\sim$} (m-2-2)

;

\end{tikzpicture}
\] 
in which the right horizontal morphisms are acyclic fibrations, we obtain a unique commutative  
diagram
\[
\begin{tikzpicture}[descr/.style={fill=white,inner sep=2.5pt},baseline=(current  bounding  box.center)]
\matrix (m) [matrix of math nodes, row sep=2em,column sep=3em,
  ampersand replacement=\&]
  {  
X \& X \times_Y Z  \& Z  \\
X' \& X' \times_{Y'} Z' \& Z' \\
}; 
  \path[->,font=\scriptsize] 
   (m-1-2) edge node[auto] {$\tilde{f}$} (m-1-3)
   (m-2-2) edge node[auto] {$\tilde{f}'$} (m-2-3)
   (m-1-1) edge node[auto,swap] {$\alpha$} (m-2-1)
   (m-1-2) edge node[auto] {$$} (m-2-2)
   (m-1-3) edge node[auto] {$\gamma$} (m-2-3)
   % (m-1-2) edge node[auto] {$f^I$} (m-2-2)
   % (m-1-3) edge node[auto] {$(f,f)$} (m-2-3)
  ;
  \path[->>,font=\scriptsize] 
   (m-1-2) edge node[auto,swap] {$\tilde{g}$} node[auto,below] {$\sim$} (m-1-1)
   (m-2-2) edge node[auto,swap] {$\tilde{g}'$} node[auto,below] {$\sim$} (m-2-1)

;

\end{tikzpicture}
\] 
in which the left horizontal morphisms are acyclic fibrations.
\end{itemize}

Functorial path objects and pullbacks as above provide an iCFO
with functorial factorizations in the sense of \cite[Def.\ 12.1.1]{Riehl:2014}. 
\begin{proposition} \label{prop:func_fact}
Let $\C$ be an iCFO with functorial path objects and functorial pullbacks of acyclic fibrations.
Then each morphism $f \maps X \to Y$ in $\C$ can be canonically factored as
\[
X \xto{i_f} X \times_Y Y^I \xto{p_f} Y
\]
where $p_f$ is a fibration and $i_f$ is a right inverse of an acyclic fibration. Moreover, this factorization is natural: Given a commutative diagram 
\[
\begin{tikzpicture}[descr/.style={fill=white,inner sep=2.5pt},baseline=(current  bounding  box.center)]
\matrix (m) [matrix of math nodes, row sep=2em,column sep=3em,
  ampersand replacement=\&]
  {  
X \& Y  \\
X' \& Y' \\
}; 
  \path[->,font=\scriptsize] 
   (m-1-1) edge node[auto] {$f$} (m-1-2)
   (m-2-1) edge node[auto] {$f'$} (m-2-2)
   (m-1-1) edge node[auto,swap] {$\alpha$} (m-2-1)
   (m-1-2) edge node[auto] {$\beta$} (m-2-2)
  ;
\end{tikzpicture}
\]

there exists a unique morphism $\gamma \maps X \times_Y Y^I \to X' \times_{Y'} {Y'}^I$ such that the diagram
\[
\begin{tikzpicture}[descr/.style={fill=white,inner sep=2.5pt},baseline=(current  bounding  box.center)]
\matrix (m) [matrix of math nodes, row sep=2em,column sep=3em,
  ampersand replacement=\&]
  {  
X \& X \times_Y Y^I \&  Y  \\
X' \& X' \times_{Y'} {Y'}^I \&  Y'  \\
}; 
  \path[->,font=\scriptsize] 
  (m-1-1) edge node[auto] {$i_f$} node[auto,below] {$\sim$} (m-1-2.182)
  (m-2-1) edge node[auto] {$i_{f'}$} node[auto,below] {$\sim$} (m-2-2.182)
  (m-1-1) edge node[auto] {$\alpha$} (m-2-1)
  (m-1-2) edge node[auto] {$\gamma$} (m-2-2)
  (m-1-3) edge node[auto] {$\beta$} (m-2-3)
   ;

 \path[->>,font=\scriptsize] 
  (m-1-2) edge node[auto] {$p_f$} (m-1-3)
  (m-2-2) edge node[auto] {$p_{f'}$} (m-2-3)
;
\end{tikzpicture}
\]
commutes.
\end{proposition}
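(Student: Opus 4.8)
The plan is to read the factorization off from Lemma \ref{lemma:fact}, using the functorial data to pin down every choice, and then to produce the comparison map $\gamma$ from the functorial pullback. First I would construct the canonical factorization. Given $f\maps X\to Y$, take the functorial path object $Y^I$ and the acyclic fibration $d_0\maps Y^I\fib Y$ of Lemma \ref{lem:di}, and let $X\times_Y Y^I$ be the functorial pullback of $X\xto{f}Y\xleftarrow{d_0}Y^I$, with projections $\pr_1,\pr_2$ (so $\pr_1$ is an acyclic fibration). Exactly as in Lemma \ref{lemma:fact}, the square with top edge $sf$, left edge $\id_X$, bottom edge $f$ and right edge $d_0$ commutes since $d_0 s=\id_Y$, and so induces the section $i_f\maps X\to X\times_Y Y^I$ with $\pr_1 i_f=\id_X$ and $\pr_2 i_f=sf$; I then set $p_f\defeq d_1\circ\pr_2$. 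Here $p_f$ is a fibration, $i_f$ is a right inverse of the acyclic fibration $\pr_1$, and $p_f i_f=d_1 s f=f$. Since $Y^I$, the pullback, and $i_f$ are all dictated by the functorial structures, this factorization is canonical.

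Next I would build $\gamma$. Applying the functorial path object to $\beta\maps Y\to Y'$ gives $\beta^{I}\maps Y^I\to{Y'}^I$ satisfying $\beta^{I}s=s\beta$ and $(d_0,d_1)\beta^{I}=(\beta,\beta)(d_0,d_1)$, whence $d_0\beta^{I}=\beta d_0$ and $d_1\beta^{I}=\beta d_1$ (throughout, primes denote the analogous maps for the factorization of $f'$). The identity $d_0\beta^{I}=\beta d_0$ together with the hypothesis $f'\alpha=\beta f$ says exactly that $(\alpha,\beta,\beta^{I})$ is a morphism of cospans
\[
(X\xto{f}Y\xleftarrow{d_0}Y^I)\longrightarrow(X'\xto{f'}Y'\xleftarrow{d_0}{Y'}^I)
\]
whose right legs are acyclic fibrations. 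Feeding this into the functorial pullback yields a unique $\gamma\maps X\times_Y Y^I\to X'\times_{Y'}{Y'}^I$ compatible with the cones, i.e.\ $\pr_1'\gamma=\alpha\,\pr_1$ and $\pr_2'\gamma=\beta^{I}\pr_2$; its uniqueness, and hence functoriality in $(\alpha,\beta)$, is inherited from the universal property of the pullback.

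It then remains to check that the ladder in the statement commutes. For the left square I would compare both legs into the pullback $X'\times_{Y'}{Y'}^I$: on one side $\pr_1'\gamma i_f=\alpha\,\pr_1 i_f=\alpha=\pr_1' i_{f'}\alpha$, and on the other $\pr_2'\gamma i_f=\beta^{I}\pr_2 i_f=\beta^{I}sf=s\beta f=sf'\alpha=\pr_2' i_{f'}\alpha$, so that $\gamma i_f=i_{f'}\alpha$ by the universal property. For the right square I compute directly $p_{f'}\gamma=d_1'\pr_2'\gamma=d_1'\beta^{I}\pr_2=\beta d_1\pr_2=\beta p_f$. I expect the only genuine obstacle to be the first half of the middle paragraph: verifying that $(\alpha,\beta,\beta^{I})$ is an admissible input for the functorial pullback, namely that the square $d_0\beta^{I}=\beta d_0$ commutes and that both its legs are the acyclic fibrations of Lemma \ref{lem:di}; after that, everything reduces to formal manipulation with the path-object identities $\beta^{I}s=s\beta$, $d_1\beta^{I}=\beta d_1$ and the pullback universal property. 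Together these exhibit $(i_{(-)},p_{(-)})$ as a functorial factorization in the sense of \cite[Def.\ 12.1.1]{Riehl:2014}.
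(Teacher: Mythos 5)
Your proposal is correct and is exactly the argument the paper intends: the paper's proof consists of the single sentence that one repeats Lemma \ref{lemma:fact} using the functorial path objects and pullbacks, and you have simply carried that out in full, with the verifications $\gamma i_f=i_{f'}\alpha$ and $p_{f'}\gamma=\beta p_f$ following correctly from the path-object identities and the universal property of the functorial pullback.
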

\begin{proof}
One simply repeats the proof
of the factorization lemma for an iCFO (Lemma \ref{lemma:fact}) using the functorial path objects and pullbacks. 
\end{proof}

\subsubsection{Categories of spans in $\C$ }
Denote by $\Wf \subseteq \W \subseteq \C$ the subcategories of $\C$ consisting of acyclic fibrations and weak equivalences, respectively. For each pair of objects $X,Y \in \C$ we denote by  
$\C\W_f^{-1}(X,Y)$ (respectively, $\C\W^{-1}(X,Y)$) the category whose objects are spans in $\C$ of the form
\[
X \leftarrow C \rightarrow Y
\] 
in which the left arrow is an acyclic fibration (respectively, weak equivalence), and whose morphisms are commutative diagrams of the form
\[
\begin{tikzpicture}[descr/.style={fill=white,inner sep=2.5pt},baseline=(current  bounding  box.center)]
\matrix (m) [matrix of math nodes, row sep=1em,column sep=3em,
  ampersand replacement=\&]
  {  
 \& C \&    \\
X \&  \&  Y  \\
 \& C' \&   \\
}; 
  \path[->,font=\scriptsize] 
  (m-1-2) edge node[auto,swap] {$f$} (m-2-1)
  (m-3-2) edge node[auto] {$f'$} (m-2-1)
  (m-1-2) edge node[auto] {$g$} (m-2-3)
  (m-3-2) edge node[auto,swap] {$g'$} (m-2-3)
  (m-1-2) edge node[above,sloped] {$\sim$} (m-3-2)
   ;
\end{tikzpicture}
\]
in which the vertical arrow is a weak equivalence.

We will also need generalizations of the above categories of spans. In what follows, we use Dwyer and Kan's notation for ``hammock graphs'' \cite[Sec.\ 5.1]{Dwyer-Kan:1980}. Let $\mathbf{w}$ be a word of length $n \geq 1$ consisting of letters $\{\C,\W,\W^{-1},\Wf^{-1}\}$. For each pair of objects $X,Y \in \C$ we denote by $\mathbf{w}(X,Y)$ the category whose objects are diagrams in $\C$ of the form
\[
\begin{tikzpicture}[descr/.style={fill=white,inner sep=2.5pt},baseline=(current  bounding  box.center)]
\matrix (m) [matrix of math nodes, row sep=2em,column sep=3em,
  ampersand replacement=\&]
  {  
X \& C_1 \& C_2 \& \cdots \& C_{n-1} \& Y\\
}; 
  \path[font=\scriptsize] 
(m-1-1) edge node[auto] {$f_0$} (m-1-2)
(m-1-2) edge node[auto] {$f_1$} (m-1-3)
(m-1-3) edge node[auto] {$f_2$} (m-1-4)
(m-1-4) edge node[auto] {$f_{n-2}$} (m-1-5)
(m-1-5) edge node[auto] {$f_{n-1}$} (m-1-6)
;
\end{tikzpicture}
\]
in which the morphism $f_i$ goes to the right and is in $\C$ (resp.\ $\W$)
iff the $(n-i)$th letter in $\mathbf{w}$ is $\C$ (resp.\ $\W$). Otherwise, 
the morphism $f_i$ goes to the left and is in $\W$ (resp.\ $\Wf$)
iff the $(n-i)$th letter in $\mathbf{w}$ is $\W^{-1}$ (resp.\ $\Wf^{-1}$). Morphisms in $\mathbf{w}(X,Y)$ are commuting diagrams in which all vertical arrows are weak equivalences.

\begin{proposition}\label{prop:simploc1}
Let $X,Y$ be objects of $\C$.  The inclusion functors
\[
\C \Wf^{-1}(X,Y) \stackrel{\iota}{\emb} \C \W^{-1} (X,Y) 
\]
\[
\W \Wf^{-1}(X,Y) \stackrel{\iota}{\emb} \W \W^{-1} (X,Y) 
\]
induce simplicial homotopy equivalences between the corresponding nerves
\[
N\C \Wf^{-1}(X,Y) \xto{\simeq} N \C \W^{-1} (X,Y), \quad  
N\W \Wf^{-1}(X,Y) \xto{\simeq} N \W \W^{-1} (X,Y) 
\]
\end{proposition}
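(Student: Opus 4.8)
The plan is to construct, uniformly for both inclusions, a retraction functor together with a natural transformation to the identity, and then invoke the elementary fact that a natural transformation $F\Rightarrow G$ between functors $\mathcal D\to\mathcal E$ induces a simplicial homotopy $NF\simeq NG$ on nerves (via $N(\mathcal D\times[1])\cong N\mathcal D\times\Delta^1$). The whole argument is powered by the functorial factorization of Proposition \ref{prop:func_fact}, and is the iCFO analogue of the proof of \cite[Thm.\ 3.61]{NSS:2015}.

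Given a span $X\xleftarrow{f}C\xrightarrow{g}Y$ whose left leg $f$ is a weak equivalence, I would factor the left leg using Proposition \ref{prop:func_fact} as
\[
C\xto{i_f}C\times_X X^I\xto{p_f}X,
\]
where $p_f$ is a fibration and $i_f$ is a section of the acyclic fibration $q_f\maps C\times_X X^I\to C$ (so $q_f i_f=\id_C$ and $i_f$ is a weak equivalence). Since $f=p_f i_f$ and $i_f$ are weak equivalences, two-out-of-three forces $p_f$ to be an acyclic fibration. I then set
\[
r\bigl(X\xleftarrow{f}C\xrightarrow{g}Y\bigr)\defeq\bigl(X\xleftarrow{p_f}C\times_X X^I\xrightarrow{g q_f}Y\bigr).
\]
Its left leg $p_f$ is an acyclic fibration; for the first inclusion the right leg $gq_f$ lies automatically in $\C$, while for the second inclusion $g$ is a weak equivalence, so $gq_f$ --- a composite of a weak equivalence with the acyclic fibration $q_f$ --- is again a weak equivalence. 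Hence one assignment $r$ defines both functors $\C\W^{-1}(X,Y)\to\C\Wf^{-1}(X,Y)$ and $\W\W^{-1}(X,Y)\to\W\Wf^{-1}(X,Y)$. Functoriality on a morphism of spans $h\maps C\to C'$ comes from the naturality clause of Proposition \ref{prop:func_fact}, applied to the square with vertical maps $h$ and $\id_X$: it produces an induced map $\gamma\maps C\times_X X^I\to C'\times_X X^I$ that commutes with the legs ($p_{f'}\gamma=p_f$ and $q_{f'}\gamma=h q_f$, whence $g'q_{f'}\gamma=gq_f$) and is a weak equivalence by two-out-of-three applied to $q_{f'}\gamma=hq_f$.

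Next I would check that the sections $i_f$ assemble into a natural transformation $\eta\maps\id\Rightarrow\iota\circ r$ on $\C\W^{-1}(X,Y)$ (resp.\ $\W\W^{-1}(X,Y)$). For each span, $i_f$ is itself a morphism of spans from the span to $r$ of it: it is a weak equivalence, the identity $p_f i_f=f$ matches the left legs, and $(gq_f)i_f=g$ matches the right legs because $q_f i_f=\id_C$. Naturality of $\eta$ is exactly the commuting square $\gamma i_f=i_{f'}h$ supplied by Proposition \ref{prop:func_fact}. Restricting $\eta$ to the subcategory of spans whose left leg is already an acyclic fibration yields a natural transformation $\id\Rightarrow r\circ\iota$ there, since for such a span both the source and target of $i_f$ lie in that subcategory. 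Passing to nerves, the two natural transformations give simplicial homotopies $\id\simeq N\iota\circ Nr$ and $\id\simeq Nr\circ N\iota$ (on $N\C\W^{-1}(X,Y)$ and $N\C\Wf^{-1}(X,Y)$ respectively, and likewise in the $\W$-case), so $N\iota$ is a simplicial homotopy equivalence with inverse $Nr$.

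The only genuine work, as opposed to diagram bookkeeping, is verifying that the functorial factorization carries morphisms of spans to morphisms of spans --- that is, that $r$ is a functor and $\eta$ is natural. This is precisely where the uniqueness and naturality built into Proposition \ref{prop:func_fact}, together with repeated use of two-out-of-three, do the work; everything else is formal.
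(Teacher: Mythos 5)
Your proof is correct and uses the same strategy as the paper's: build a retraction functor onto the subcategory of spans whose left leg is an acyclic fibration via the functorial factorization, and let the sections $i$ supply natural transformations $\id \Rightarrow \iota\circ F$ and $\id \Rightarrow F\circ\iota$, which become simplicial homotopies after applying the nerve. The only substantive difference is which map gets factored: the paper applies Proposition \ref{prop:func_fact} to $(f,g)\maps C\to X\times Y$, so the replacement span $X\twoheadleftarrow C'\twoheadrightarrow Y$ consists of two fibrations (the left one acyclic by two-out-of-three), whereas you factor only the left leg $f\maps C\to X$ and obtain $X \xlfib{p_f} C\times_X X^I \xto{gq_f} Y$, whose right leg need not be a fibration. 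Since $\C\Wf^{-1}(X,Y)$ and $\W\Wf^{-1}(X,Y)$ impose no condition on the right leg beyond lying in $\C$ resp.\ $\W$ --- which you verify --- this is immaterial for the proposition; the paper's choice merely has the side benefit of landing in spans of fibrations. One small remark: the identity $q_{f'}\gamma = h q_f$ that you use to match the right legs is supplied by the functorial-pullback axiom underlying Proposition \ref{prop:func_fact} rather than by the diagram displayed in its statement, but that is exactly where it comes from, so nothing is missing.
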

\begin{proof}
We use the fact that a natural transformation between functors induces a homotopy between the corresponding simplicial maps between nerves. Denote by $F \maps \C \W^{-1} (X,Y) \to 
\C \Wf^{-1}(X,Y)$ the functor which assigns to a span of the form
\[
\begin{tikzpicture}[descr/.style={fill=white,inner sep=2.5pt},baseline=(current  bounding  box.center)]
\matrix (m) [matrix of math nodes, row sep=2em,column sep=3em,
  ampersand replacement=\&]
  {  
X \& C \&  Y\\
}; 
  \path[->,font=\scriptsize] 
(m-1-2) edge node[auto,swap]{$f$} node[auto]{$\sim$} (m-1-1)
(m-1-2) edge node[auto] {$g$} (m-1-3)
;
\end{tikzpicture}
\] 
an object in $\C \Wf^{-1}(X,Y)$ via the following. First, apply the functorial factorization
(Prop.\ \ref{prop:func_fact}) to the morphism $(f,g) \maps C \to X \times Y$ to obtain
\[
\begin{tikzpicture}[descr/.style={fill=white,inner sep=2.5pt},baseline=(current  bounding  box.center)]
\matrix (m) [matrix of math nodes, row sep=2em,column sep=3em,
  ampersand replacement=\&]
  {  
C \& C' \&  X \times Y\\
}; 
  \path[->,font=\scriptsize] 
(m-1-1) edge node[auto]{$i$} node[auto,swap]{$\sim$} (m-1-2)
;
 \path[->>,font=\scriptsize] 
(m-1-2) edge node[auto] {$p$} (m-1-3)
;
\end{tikzpicture}
\] 
Then composing the fibration $p$ with the projections gives a span of fibrations: 
\begin{equation} \label{eq:simploc1}
\begin{tikzpicture}[descr/.style={fill=white,inner sep=2.5pt},baseline=(current  bounding  box.center)]
\matrix (m) [matrix of math nodes, row sep=2em,column sep=3em,
  ampersand replacement=\&]
  {  
X \& C' \&  Y\\
}; 
  \path[->>,font=\scriptsize] 
(m-1-2) edge node[auto,swap]{$f'$}(m-1-1)
(m-1-2) edge node[auto] {$g'$} (m-1-3)
;
\end{tikzpicture}
\end{equation}
along with a commutative diagram
\begin{equation}\label{eq:simploc2}
\begin{tikzpicture}[descr/.style={fill=white,inner sep=2.5pt},baseline=(current  bounding  box.center)]
\matrix (m) [matrix of math nodes, row sep=1em,column sep=3em,
  ampersand replacement=\&]
  {  
 \& C \&    \\
X \&  \&  Y  \\
 \& C' \&   \\
}; 
  \path[->,font=\scriptsize] 
  (m-1-2) edge node[auto,swap] {$f$} node[sloped,below] {$\sim$} (m-2-1)
  (m-1-2) edge node[auto] {$g$} (m-2-3)
  (m-1-2) edge node[auto] {$i$} node[below,sloped] {$\sim$} (m-3-2)
   ;
  \path[->>,font=\scriptsize] 
  (m-3-2) edge node[auto] {$f'$} (m-2-1)
  (m-3-2) edge node[auto,swap] {$g'$} (m-2-3)

;
\end{tikzpicture}
\end{equation}
which, combined with the ``2 out of 3'' axiom implies that $f'$ is an acyclic fibration.
Hence, the diagram \eqref{eq:simploc1} is an object of $\C \Wf^{-1}(X,Y)$.
It is easy to see that this assignment is indeed functorial, due to the use of functorial factorizations. Moreover, the weak equivalence $i$ in the diagram \eqref{eq:simploc2}
gives natural transformations
\[
\id_{\C \W^{-1}(X,Y)} \to \iota \circ F, \quad \id_{\C \Wf^{-1}(X,Y)} \to F \circ \iota
\]
Hence, $N\iota \maps N\C \Wf^{-1}(X,Y) \to N \C \W^{-1} (X,Y)$ is a homotopy equivalence.
To show $N \W \Wf^{-1}(X,Y) \xto{\simeq} N \W \W^{-1} (X,Y)$ is a homotopy equivalence, we observe that the restriction of the functor $F$ to the subcategory $\W \W^{-1}(X,Y)$ 
has as its target $\W \Wf^{-1}(X,Y)$, thanks to the commutative diagram \eqref{eq:simploc2} and the ``2 out of 3'' axiom. 
\end{proof}

\subsubsection{$\C$ admits a homotopy calculus of right fractions} 
In the terminology of \cite[Sec.\ 5.1]{Dwyer-Kan:1980}, 
the $k$-simplices in $N\C \W^{-1} (X,Y)$ are hammocks between $X$ and $Y$
of width $k$ and type $\C \W^{-1}$. There is simplicial map (i.e., the reduction map)
%\begin{equation} \label{eq:red_map}
\[
r \maps N\C \W^{-1} (X,Y) \to \Map_{L_W  \C}(X, Y)
\]
%\end{equation}
sending such a hammock to a reduced hammock, in the sense of \cite[Sec.\ 2.1]{Dwyer-Kan:1980}. The main theorem of this section is:

\begin{theorem}\label{thm:simploc}
Let $\C$ be a small incomplete category of fibrant objects with functorial path objects and functorial pullbacks of acyclic fibrations. Then for all objects $X,Y$ of $\C$, the maps
\begin{equation} \label{eq:thm:simploc}
N\C \Wf^{-1}(X,Y) \xto{N\iota } N\C \W^{-1} (X,Y) \xto{r} \Map_{L_W  \C}(X, Y)
\end{equation}
are weak homotopy equivalences of simplicial sets.
\end{theorem}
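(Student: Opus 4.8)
The plan is to treat the two maps in \eqref{eq:thm:simploc} separately. The left map $N\iota \maps N\C \Wf^{-1}(X,Y) \to N\C \W^{-1}(X,Y)$ is already a simplicial homotopy equivalence by Proposition \ref{prop:simploc1}, hence in particular a weak homotopy equivalence. So the entire content is to show that the reduction map $r$ of \eqref{eq:red_map} is a weak homotopy equivalence. For this I would adapt Dwyer and Kan's argument that a category admitting a homotopy calculus of fractions has mapping spaces computable from a single zigzag type \cite[Sec.\ 6]{Dwyer-Kan:1980}, in the dual (\emph{right fractions}) form carried out for categories of fibrant objects in \cite[Thm.\ 3.61]{NSS:2015}. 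Recall that $\Map_{L_W \C}(X,Y)$ is, by \cite[Prop.\ 5.5]{Dwyer-Kan:1980}, the direct limit of the nerves $N\mathbf{w}(X,Y)$ over all admissible zigzag types $\mathbf{w}$, and that $r$ is the restriction of the total reduction map to the type $\C\W^{-1}$. The goal is therefore to prove that the type $\C\W^{-1}$ is homotopy cofinal, i.e.\ that every zigzag between $X$ and $Y$ can be coherently reduced to a single span $X \xleftarrow{\sim} C \to Y$.

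Concretely, I would verify that $(\C,\W)$ admits a homotopy calculus of right fractions by checking Dwyer and Kan's conditions. That $\W$ contains all isomorphisms, is closed under composition, and satisfies ``2 out of 3'' is immediate from Definition \ref{def:catfibobj}. The substantive condition is a homotopy-coherent Ore-type move: every cospan $A \to B \xleftarrow{\sim} C$ whose right leg is a weak equivalence must be completable to a commuting square whose opposite legs are again weak equivalences, with the category of such completions contractible. I would establish this in two elementary reductions, each realized as a homotopy equivalence of nerves by exhibiting a natural transformation (hence a simplicial homotopy, exactly as in the proof of Proposition \ref{prop:simploc1}). First, adjacent same-direction arrows are merged by composition. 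Second, and crucially, a backward--forward pair $\cdot \xleftarrow{\sim} \cdot \to \cdot$ is interchanged: using Proposition \ref{prop:catfib_morita_eq} one first replaces the backward weak equivalence by a span of acyclic fibrations, and then forms the pullback, which exists and is again an acyclic fibration by axiom (5) of Definition \ref{def:catfibobj}. Iterating these moves collapses an arbitrary zigzag to a span of type $\C\W^{-1}$. Throughout, the functorial path objects and functorial pullbacks of acyclic fibrations (Proposition \ref{prop:func_fact}) ensure that all assignments are functorial and that the comparisons are genuine natural transformations, so that the induced maps on nerves are homotopy equivalences and the completion categories are contractible.

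The main obstacle is precisely the feature distinguishing an iCFO from an honest CFO: in the arguments of \cite{Dwyer-Kan:1980} and \cite{NSS:2015} one freely forms pullbacks along \emph{arbitrary} fibrations, whereas here pullbacks are guaranteed only along acyclic fibrations (axiom (5)). The heart of the proof is therefore to arrange that every pullback appearing in the reduction procedure is taken along an acyclic fibration. This is exactly what Proposition \ref{prop:catfib_morita_eq} makes possible, since it lets one trade any weak equivalence for acyclic fibrations \emph{before} any pullback is formed, and it is the reason the backward legs are first pushed into $\Wf$ via Proposition \ref{prop:simploc1}. Once one checks that no step of the Dwyer--Kan/NSS argument requires a pullback along a non-acyclic fibration, the remaining verifications are formally identical to the CFO case, and $r$ is a weak homotopy equivalence, completing the proof.
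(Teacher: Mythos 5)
Your proposal follows essentially the same route as the paper: Proposition \ref{prop:simploc1} disposes of $N\iota$, and the reduction map is handled by verifying a homotopy calculus of right fractions in the style of \cite[Thm.\ 3.61]{NSS:2015}, with the key iCFO-specific point being exactly the one you isolate --- that all weak equivalences must first be traded (functorially, via Prop.\ \ref{prop:func_fact} and the argument of Prop.\ \ref{prop:simploc1}) for acyclic fibrations so that every pullback is taken along an acyclic fibration as guaranteed by axiom (5). The only caveat is cosmetic: Dwyer--Kan's condition is literally that the insertion functors $\jmath \maps \C^{i+j}\W^{-1}(X,Y) \to \C^{i}\W^{-1}\C^{j}\W^{-1}(X,Y)$ (and their $\W$-analogues) induce weak equivalences of nerves, rather than an Ore-type completion condition, but the natural-transformation arguments you describe are precisely what verifies this.
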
 

Prop.\ \ref{prop:simploc1} implies, of course, that the first map in \eqref{eq:thm:simploc} is a weak homotopy equivalence. To prove Thm. \ref{thm:simploc}, we just need to show that the reduction map is a weak equivalence. We do this by showing that $\C$ admits a homotopy calculus of right fractions. Then our result will follow from a result of Dwyer and Kan \cite[Prop.\ 6.2]{Dwyer-Kan:1980}.  

Let $i,j \geq 0$ be integers. Given objects $X,Y \in \C$ there is functor 
\begin{equation}\label{eq:simploc_incl}
\jmath \maps \C^{i+j}\W^{-1}(X,Y) \to \C^{i}\W^{-1}\C^{j}\W^{-1}(X,Y)
\end{equation}
which sends a diagram of the form
\[
\begin{tikzpicture}[descr/.style={fill=white,inner sep=2.5pt},baseline=(current  bounding  box.center)]
\matrix (m) [matrix of math nodes, row sep=2em,column sep=2em,
  ampersand replacement=\&]
  {  
X \& C_1 \& C_2 \& ~ \cdots ~   C_j \& C_{j+1} \& ~\cdots ~ C_{i+j} \& Y\\
}; 
  \path[->,font=\scriptsize] 
(m-1-2) edge node[auto] {$f_1$} (m-1-3)
(m-1-3) edge node[auto] {$f_2$} (m-1-4)
(m-1-4) edge node[auto] {$f_{j}$} (m-1-5)
(m-1-5) edge node[auto] {$f_{j+1}$} (m-1-6)
(m-1-6) edge node[auto] {$f_{i+j}$} (m-1-7)
;
  \path[->,font=\scriptsize] 
(m-1-2) edge node[above] {$f_0$} node[below,sloped] {$\sim$} (m-1-1)
;
\end{tikzpicture}
\]
to the diagram 
\[
\begin{tikzpicture}[descr/.style={fill=white,inner sep=2.5pt},baseline=(current  bounding  box.center)]
\matrix (m) [matrix of math nodes, row sep=2em,column sep=2em,
  ampersand replacement=\&]
  {  
X \& C_1 \& C_2 \& ~ \cdots ~   C_j \& C_{j+1} \& C_{j+1} \& ~\cdots ~ C_{i+j} \& Y\\
}; 
  \path[->,font=\scriptsize] 
(m-1-2) edge node[auto] {$f_1$} (m-1-3)
(m-1-3) edge node[auto] {$f_2$} (m-1-4)
(m-1-4) edge node[auto] {$f_{j}$} (m-1-5)
(m-1-6) edge node[auto] {$f_{j+1}$} (m-1-7)
(m-1-7) edge node[auto] {$f_{i+j}$} (m-1-8)
;
  \path[->,font=\scriptsize] 
(m-1-2) edge node[above,swap] {$f_0$} node[below,sloped] {$\sim$} (m-1-1)
(m-1-6) edge node[above]  {$\id$} node[below,sloped] {$\sim$} (m-1-5)
;
\end{tikzpicture}
\]
We abuse notation and denote by $\jmath \maps \W^{i+j}\W^{-1}(X,Y) \to \W^{i}\W^{-1}\W^{j}\W^{-1}(X,Y)$
the restriction of \eqref{eq:simploc_incl} to the subcategory $\W^{i+j}\W^{-1}(X,Y)$.
We recall \cite[Sec.\ 6.1]{Dwyer-Kan:1980} that the pair $(\C,\W)$ admits a \textbf{homotopy calculus of right fractions} iff the induced maps on nerves
\[
\begin{split}
N \C^{i+j}\W^{-1}(X,Y) \xto{N \jmath} N\C^{i}\W^{-1}\C^{j}\W^{-1}(X,Y), \\
N \W^{i+j}\W^{-1}(X,Y) \xto{N \jmath} N\W^{i}\W^{-1}\C^{j}\W^{-1}(X,Y)
\end{split}
\]
are weak homotopy equivalences for all $i,j \geq 0$ and objects $X,Y \in \C$.

\begin{proof}[Proof of Thm.\ \ref{thm:simploc}]
We show $(\C,\W)$ admits a homotopy calculus of right fractions by adopting the strategy 
used by Nikolaus, Schreiber, and Stevenson to prove their Thm.\ 3.61 in \cite{NSS:2015}. 
First, we observe that the proof of Prop.\ \ref{prop:simploc1} can be easily generalized 
to show that the inclusions of subcategories
\begin{equation} \label{eq:simploc_pf1}
\begin{split}
\C^{i} \Wf^{-1} \stackrel{\iota}{\emb} \C^{i} \W^{-1}, \quad \W^{i} \Wf^{-1} \stackrel{\iota}{\emb} \W^{i} \W^{-1} \\
\end{split}
\end{equation}
and
\begin{equation} \label{eq:simploc_pf2}
\begin{split}
\C^{i} \Wf^{-1} \C^{j} \Wf^{-1} \stackrel{\iota}{\emb} \C^{i} \W^{-1} \C^{j} \W^{-1}, \quad  
\W^{i} \Wf^{-1} \W^{j} \Wf^{-1} \stackrel{\iota}{\emb} \W^{i} \W^{-1} \W^{j} \W^{-1}
\end{split}
\end{equation}
induce homotopy equivalences on the corresponding nerves. Next, we consider
the restriction of the functor \eqref{eq:simploc_incl} $\jmath \maps \C^{i+j}\W^{-1}(X,Y) \to \C^{i}\W^{-1}\C^{j}\W^{-1}(X,Y)$  to the following subcategories:
\begin{equation} \label{eq:simploc_pf3a}
\C^{i+j}\Wf^{-1}(X,Y) \xto{\jmath} \C^{i}\Wf^{-1}\C^{j}\Wf^{-1}(X,Y), 
\end{equation}
and
\begin{equation}\label{eq:simploc_pf3b}
\W^{i+j}\Wf^{-1}(X,Y) \xto{\jmath} \W^{i}\Wf^{-1}\W^{j}\Wf^{-1}(X,Y).
\end{equation}
Let $F \maps \C^{i}\Wf^{-1}\C^{j}\Wf^{-1}(X,Y) \to \C^{i+j}\Wf^{-1}(X,Y)$ be the functor that assigns to the diagram
\[
\begin{tikzpicture}[descr/.style={fill=white,inner sep=2.5pt},baseline=(current  bounding  box.center)]
\matrix (m) [matrix of math nodes, row sep=2em,column sep=2em,
  ampersand replacement=\&]
  {  
X \& C_1 \& C_2 \& ~ \cdots ~   C_j \& C_{j+1} \& C_{j+2} \& ~\cdots ~ C_{i+j+1} \& Y\\
}; 
\path[->,font=\scriptsize] 
(m-1-2) edge node[auto] {$f_1$} (m-1-3)
(m-1-3) edge node[auto] {$f_2$} (m-1-4)
(m-1-4) edge node[auto] {$f_{j}$} (m-1-5)
(m-1-6) edge node[auto] {$f_{j+2}$} (m-1-7)
(m-1-7) edge node[auto] {$f_{i+j+1}$} (m-1-8)
;
  \path[->>,font=\scriptsize] 
(m-1-2) edge node[above] {$f_0$} node[below,sloped] {$\sim$} (m-1-1)
(m-1-6) edge node[above]  {$f_{j+1}$} node[below,sloped] {$\sim$} (m-1-5)
;
\end{tikzpicture}
\]
a diagram
 \[
\begin{tikzpicture}[descr/.style={fill=white,inner sep=2.5pt},
every node/.style={outer sep=2.5pt},
baseline=(current  bounding  box.center)]
\matrix (m) [matrix of math nodes, row sep=2em,column sep=2em,
  ampersand replacement=\&,text height=1.5ex, text depth=0.25ex]
  {  
X \& D_1 \& D_2 \& \cdots  \&  D_{i+j} \& Y\\
}; 
\path[->,font=\scriptsize] 
(m-1-2) edge node[auto] {$g_1$} (m-1-3)
(m-1-3) edge node[auto] {$g_2$} (m-1-4)
(m-1-4) edge node[auto] {$$} (m-1-5)
(m-1-5) edge node[auto] {$g_{i+j}$} (m-1-6)
;

  \path[->>,font=\scriptsize] 
(m-1-2) edge node[above] {$g_0$} node[below,sloped] {$\sim$} (m-1-1)
%(m-1-6) edge node[above]  {$f_{j+1}$} node[below,sloped] {$\sim$} (m-1-5)
;
\end{tikzpicture}
\]
obtained by taking the iterated pullback of the acyclic fibration $f_{j+1}$:

\begin{equation}\label{diag:simploc1}
\begin{tikzpicture}[descr/.style={fill=white,inner sep=2.5pt},
every node/.style={outer sep=2.5pt},
baseline=(current  bounding  box.center)]
\matrix (m) [matrix of math nodes, row sep=2em,column sep=2em,
  ampersand replacement=\&,text height=1.5ex, text depth=0.25ex]
   {  
 \& C'_2 \& C'_3 \&  \cdots \&   C'_{j-1} \& C'_{j} \& C'_{j+1} \& C_{j+2} \& \cdots  \\
X \& C_1 \& C_2 \&  \cdots \&   C_{j-2} \& C_{j-1} \& C_{j}  \& C_{j+1} \&   \\
 }; 

\path[->>,font=\scriptsize] 
(m-1-2) edge node[above,sloped] {$f_0 \circ \ti{f}_1$} (m-2-1)
;

\path[->,font=\scriptsize] 
(m-1-2) edge node[auto] {$f'_{2}$} (m-1-3)
(m-1-3) edge node[auto] {$f'_{3}$} (m-1-4)
(m-1-4) edge node[auto] {$$} (m-1-5)
(m-1-5) edge node[auto] {$f'_{j-1}$} (m-1-6)
(m-1-6) edge node[auto] {$f'_{j}$} (m-1-7)
(m-1-7) edge node[auto] {$f'_{j+1}$} (m-1-8)
(m-1-8) edge node[auto] {$f_{j+2}$} (m-1-9)
% (m-1-9) edge node[auto] {$$} (m-1-10)
% (m-1-10) edge node[auto] {$f_{i+j+1}$} (m-1-11)

(m-2-2) edge node[auto] {$f_1$} (m-2-3)
(m-2-3) edge node[auto] {$f_2$} (m-2-4)
(m-2-4) edge node[auto] {$$} (m-2-5)
(m-2-5) edge node[auto] {$f_{j-2}$} (m-2-6)
(m-2-6) edge node[auto] {$f_{j-1}$} (m-2-7)
(m-2-7) edge node[auto] {$f_j$} (m-2-8)
;

\path[->>,font=\scriptsize] 
(m-2-2) edge node[above] {$f_0$}node[below,sloped] {$\sim$} (m-2-1)
(m-1-8) edge node[auto] {$f_{j+1}$} node[below,sloped] {$\sim$} (m-2-8)
(m-1-7) edge node[auto] {$\ti{f}_{j}$} node[below,sloped] {$\sim$} (m-2-7)
(m-1-6) edge node[auto] {$\ti{f}_{j-1}$} node[below,sloped] {$\sim$} (m-2-6)
(m-1-5) edge node[auto] {$\ti{f}_{j-2}$} node[below,sloped] {$\sim$} (m-2-5)
(m-1-3) edge node[auto] {$\ti{f}_{2}$} node[below,sloped] {$\sim$} (m-2-3)
(m-1-2) edge node[auto] {$\ti{f}_{1}$} node[below,sloped] {$\sim$} (m-2-2)
;
\end{tikzpicture}
\end{equation}

Hence, $D_k := C'_{k+1}$ if $k \leq j$, otherwise $D_k :=C_{k+1}$. And $g_0:= f_0 \circ \ti{f}_1$, 
$g_k:= f'_{k+1}$ if $1 \leq k \leq j$, otherwise $g_k := f_{k+1}$.
Note $F$ is indeed a functor, since all the pullbacks in \eqref{diag:simploc1} are functorial.
Furthermore, if all the $f_k$ are morphisms in $\W$, then so are the $g_k$ by the 2 out of 3 axiom. Hence $F$ restricts to a functor $\W^{i}\Wf^{-1}\W^{j}\Wf^{-1}(X,Y) \to \W^{i+j}\Wf^{-1}(X,Y)$

There is a natural transformation $F \circ \jmath \to \id_{\C^{i+j}\Wf^{-1}(X,Y)}$, where $\jmath$ is the functor \eqref{eq:simploc_pf3a}, whose components are all identity morphisms. Indeed, if $f_{j+1}=\id_{C_{j+1}}$ in the diagram \eqref{diag:simploc1}, then $\ti{f}_{k}=\id_{C_{k-1}}$ for all $k \geq 1$. There is also a natural transformation $\jmath \circ F \to \id_{\C^{i}\Wf^{-1}\C^{j}\Wf^{-1}(X,Y)}$ whose components are the vertical maps in the following diagram:
\begin{equation}\label{diag:simploc2}
\begin{tikzpicture}[descr/.style={fill=white,inner sep=2.5pt},
every node/.style={outer sep=2.5pt},
baseline=(current  bounding  box.center)]
\matrix (m) [matrix of math nodes, row sep=2em,column sep=2em,
  ampersand replacement=\&,text height=1.5ex, text depth=0.25ex]
   {  
 \& C'_2 \& C'_3 \&  \cdots \&  C'_{j+1} \& C_{j+2} \& C_{j+2} \&C_{j+3} \& \cdots \\
X \& C_1 \& C_2 \&  \cdots \&   C_{j}  \& C_{j+1} \& C_{j+2} \& C_{j+3} \& \cdots  \\
 }; 

\path[->,font=\scriptsize] 
(m-1-2) edge node[auto] {$f'_{2}$} (m-1-3)
(m-1-3) edge node[auto] {$f'_{3}$} (m-1-4)
(m-1-4) edge node[auto] {$$} (m-1-5)
(m-1-5) edge node[auto] {$f'_{j+1}$} (m-1-6)
(m-1-7) edge node[auto] {$f_{j+2}$} (m-1-8)
(m-1-8) edge node[auto] {$$} (m-1-9)
%(m-1-9) edge node[auto] {$$} (m-1-10)
%(m-1-10) edge node[auto] {$f_{i+j+1}$} (m-1-11)

(m-2-2) edge node[auto] {$f_{1}$} (m-2-3)
(m-2-3) edge node[auto] {$f_{2}$} (m-2-4)
(m-2-4) edge node[auto] {$$} (m-2-5)
(m-2-5) edge node[auto] {$f_{j}$} (m-2-6)
(m-2-7) edge node[auto] {$f_{j+2}$} (m-2-8)
(m-2-8) edge node[auto] {$$} (m-2-9)
%(m-2-9) edge node[auto] {$$} (m-2-10)
%(m-2-10) edge node[below] {$f_{i+j+1}$} (m-1-11)
;

 \path[->>,font=\scriptsize] 
 (m-1-2) edge node[above,sloped] {$f_0 \circ \ti{f}_1  $} (m-2-1)
 (m-1-7) edge node[above,sloped] {$\id$} node[below] {$\sim$} (m-1-6)
 (m-2-7) edge node[above] {$f_{j+1}$} node[below] {$\sim$} (m-2-6)
 (m-2-2) edge node[above] {$f_0$} node[below] {$\sim$}(m-2-1)
 ;

 \path[->>,font=\scriptsize] 
(m-1-2) edge node[auto] {$\ti{f}_{1}$} node[below,sloped] {$\sim$} (m-2-2) 
(m-1-3) edge node[auto] {$\ti{f}_{2}$} node[below,sloped] {$\sim$} (m-2-3)
(m-1-5) edge node[auto] {$\ti{f}_{j}$} node[below,sloped] {$\sim$} (m-2-5)
(m-1-6) edge node[auto] {$f_{j+1}$} node[below,sloped] {$\sim$} (m-2-6)
;

% (m-2-2) edge node[above] {$f_0$}node[below,sloped] {$\sim$} (m-2-1)
%  (m-1-8) edge node[auto] {$f_{j+1}$} node[below,sloped] {$\sim$} (m-2-8)

%  (m-1-6) edge node[auto] {$\ti{f}_{j-1}$} node[below,sloped] {$\sim$} (m-2-6)
%  (m-1-5) edge node[auto] {$\ti{f}_{j-2}$} node[below,sloped] {$\sim$} (m-2-5)

 \path[->,font=\scriptsize] 
(m-1-7) edge node[auto] {$\id$} (m-2-7) 
(m-1-8) edge node[auto] {$\id$} (m-2-8) 
%(m-1-10) edge node[auto] {$\id$} (m-2-10) 
 ;
\end{tikzpicture}
\end{equation}

The existence of these natural transformations implies that the functors 
\eqref{eq:simploc_pf3a} and \eqref{eq:simploc_pf3b} induce homotopy equivalences on the corresponding nerves. Combining these with functors \eqref{eq:simploc_pf1}, \eqref{eq:simploc_pf2} and then taking the nerve, gives us commutative diagrams of simplicial sets.
\[
\begin{tikzpicture}[descr/.style={fill=white,inner sep=2.5pt},
every node/.style={outer sep=2.5pt},
baseline=(current  bounding  box.center)]
\matrix (m) [matrix of math nodes, row sep=2em,column sep=2em,
  ampersand replacement=\&,text height=1.5ex, text depth=0.25ex]
   {  
N\C^{i+j} \Wf^{-1} \& N\C^{i} \Wf^{-1} \C^{j} \Wf^{-1}\\
N\C^{i+j} \W^{-1} \& N\C^{i} \W^{-1} \C^{j} \W^{-1}\\
 }; 

  \path[->,font=\scriptsize] 
 (m-1-1) edge node[auto] {$N\jmath$} node[below] {$\sim$}(m-1-2) 
 (m-1-1) edge node[auto]  {$N \iota $} node[below,sloped] {$\sim$} (m-2-1) 
 (m-1-2) edge node[auto] {$N \iota $} node[below,sloped] {$\sim$} (m-2-2) 
 (m-2-1) edge node[auto] {$N \jmath$} (m-2-2) 
  ;
\end{tikzpicture}
\qquad
\begin{tikzpicture}[descr/.style={fill=white,inner sep=2.5pt},
every node/.style={outer sep=2.5pt},
baseline=(current  bounding  box.center)]
\matrix (m) [matrix of math nodes, row sep=2em,column sep=2em,
  ampersand replacement=\&,text height=1.5ex, text depth=0.25ex]
   {  
N\W^{i+j} \Wf^{-1} \& N\W^{i} \Wf^{-1} \W^{j} \Wf^{-1}\\
N\W^{i+j} \W^{-1} \& N\W^{i} \W^{-1} \W^{j} \W^{-1}\\
 }; 

  \path[->,font=\scriptsize] 
 (m-1-1) edge node[auto] {$N\jmath$} node[below] {$\sim$}(m-1-2) 
 (m-1-1) edge node[auto]  {$N \iota $} node[below,sloped] {$\sim$} (m-2-1) 
 (m-1-2) edge node[auto] {$N \iota $} node[below,sloped] {$\sim$} (m-2-2) 
 (m-2-1) edge node[auto] {$N \jmath$} (m-2-2) 
  ;
\end{tikzpicture}
\] 
By 2 out of 3, the bottom horizontal morphisms in these diagrams are weak equivalences of simplicial sets. Therefore, $(\C,\W)$ admits a homotopy calculus of right fractions.
\end{proof}

\section{Higher groupoids and Kan fibrations} \label{sec:higher_gpd}
In this section, we recall Henriques' definition \cite{Henriques:2008} of a higher groupoid
object in a (large) category $\Cat$ equipped with a pretopology $\covers$.

\subsection{Preliminaries and notation}
If $\Cat$ is a small category, we denote by $\PSh(\Cat)$ the category
of  presheaves on $\Cat$.
The functor $\yon \maps \Cat \to \PSh(\Cat)$, $X \mapsto \yon X = \hom_{\Cat}(-,X)$
denotes the  Yoneda embedding, which identifies objects in $\Cat$ with the
representable presheaves. 

We denote by $s\Cat$ the category of simplicial objects in $\Cat$,
i.e.\ the category of contravariant functors $\Delta \to \Cat$, where
$\Delta$ is the category of finite ordinals
\[
[0]=\{0\}, \qquad [1]=\{0, 1\},\quad \dotsc,\quad
[n]=\{0, 1,\dotsc, n\},\quad\dotsc,
\]
with order-preserving maps. In particular, $\sSet$ is the category of
simplicial sets. For $m \geq 0$,
the simplicial sets $\Delta^m$ and $\partial \Delta^m$
are the simplicial $m$-simplex and its boundary,
respectively:
\begin{align*}
  (\Simp{m})_n &= \{f\colon (0,1,\dotsc,n) \to (0,1,\dotsc, m)
  \mid f(i)\le f(j) \text{ for all }i \le j\},\\
  (\partial\Simp{m})_n &=
  \bigl\{f\in (\Simp{m})_n \bigm| \{0,\dotsc,m\}
  \nsubseteq \{f(0),\dotsc, f(n)\} \bigr\}.
\end{align*}
For $m>0$ and $0 \leq j \leq m$, 
the horn \(\Horn{m}{j}\) is the simplicial set obtained from the 
\(m\)\nb-simplex~\(\Simp{m}\) by taking away its interior and
its \(j\)th face:
\[
  (\Horn{m}{j})_n =
  \bigl\{f\in (\Simp{m})_n\bigm| \{0,\dotsc,j-1,j+1,\dotsc,m\}
  \nsubseteq \{f(0),\dotsc, f(n)\} \bigr\}.
\]

\subsection{Pretopologies}
An $n$-groupoid in $\Cat$ is special kind of simplicial object in
$\Cat$. The precise definition requires us to equip $\Cat$ with extra
structure, which also allows us
to define sheaves on $\Cat$.
\begin{definition}
  \label{def:pretopology}
  Let~\(\Cat\) be a category with coproducts and a terminal
  object $\ast$.  A
  {\bf pretopology}  on $\Cat$ is a collection $\covers$ of
  arrows, called \textbf{covers}, with the following properties:
  \begin{enumerate}
  \item isomorphisms are covers;
  \item the composite of two covers is a cover;
  \item pullbacks of covers are covers; more precisely, for a
    cover \(U\to X\) and an arrow \(Y\to X\), the pull-back
    \(Y\times_X U\) exists in $\Cat$ and the canonical map
    \(Y\times_X U \to Y\) is a cover;

  \item for any object \(X\in \Cat\), the map \(X\to *\) is a cover.
  \end{enumerate} 
\end{definition}
What we call a pretopology is called
a ``singleton Grothendieck pretopology'' in \cite{Zhu:2009a}, and was
first defined in \cite[Def.\ 2.1]{Henriques:2008}. Every pretopology in our sense gives a
Grothendieck pretopology in the classical sense. 

Let $(\Cat,\covers)$ be a category equipped with a pretopology. A
presheaf $F \in \PSh(\Cat)$ is a sheaf if and only if for every cover
$U \to X$, $F(X)$ is the equalizer of the diagram
\[
F(U) \rightrightarrows F(U \times_X U).
\]
We denote by $\Sh(\Cat) \subseteq \PSh(\Cat)$ the full subcategory of
sheaves on $(\Cat,\covers)$. A pretopology is \textbf{subcanonical}
iff every representable presheaf is a sheaf. In this paper, all
pretopologies are assumed to be subcanonical.

Also, we will never assume
$\Cat$ has limits. Therefore, we take limits of
diagrams in $\Cat$ by first showing that the limit of the
corresponding diagram in $\Sh(\Cat)$ of
representable presheaves is representable, and then using the fact that the
functor $\yon$ preserves limits.

\subsection{Pretopologies for Banach
  manifolds} \label{subsubsec:Banach} We denote by $\Mfd$ the category
whose objects are Banach manifolds, in the sense of \cite[Ch
2.1]{Lang:95}, and whose morphisms are smooth maps. (We could also
consider $C^r$ maps as well.) A morphism $f \maps X\to Y$ between
manifolds is a {\bf submersion} iff for all $x \in X$, there exists an
open neighborhood $U_x$ of $x$, an open neighborhood $V_{f(x)}$ of $f(x)$, and a local
section $\sigma \maps V_{f(x)}\to U_x$ of $f$ at $x$. That is, $\sigma$ is
a morphism in $\Mfd$ such that $f \circ \sigma =\id$ and
$\sigma(f(x))= x$. Note that we may always take $U_x$ to be the
connected component of of $f^{-1} (V_{f(x)})$ containing $x$.

It is a result of Henriques (\cite[Cor.\ 4.4]{Henriques:2008}), 
that the collection $\covers_{\subm}$ of surjective submersions is a
subcanonical pretopology for the category $\Mfd$. 
\begin{remark} \label{rmk:open} Another example of a subcanonical
  pretopology on $\Mfd$ is the pretopology of open covers
  $\covers_{\open}$. Since every cover in the surjective submersion
  pretopology can be refined by a cover in the pretopology of open
  covers (see Example \ref{ex:banach_small}), every sheaf on
  $(\Mfd,\covers_\open)$ is also a sheaf on $(\Mfd,\covers_{ss})$ and
  vice versa. See also \cite [Prop.\ 2.17]{NS:2011}.
\end{remark}

Examples of subcanonical pretopologies for other categories of
geometric interest can be found in Table 1 of \cite{Zhu:2009a}.

\subsubsection{Technicalities involving large categories} \label{subsec:set_theory}
Strictly speaking, the categories $\Sh(\Cat)$ and  $\PSh(\Cat)$ are not well-defined if $\Cat$ 
is not small. In our main example of interest, $\Cat$ will be the large category
of Banach manifolds, so we will need a good theory of sheaves over a large
category.  The set-theoretic technicalities involved with
sheaves over large categories can be subtle.
For example, the sheafification functor may not be well-defined for
presheaves over a large site since a priori it requires taking
colimits over proper classes. The standard workaround is to use
Grothendieck universes, and in particular, to appeal to the Universe
Axiom, that allows one to take colimits in an ambient larger universe in which
classes are sets. However, this larger universe is by no means
canonical, and the resulting colimit may very well depend on the
choice of larger universe. See \cite{Waterhouse:1975} for such an example 
involving sheaves for the fpqc topology in algebraic geometry.

We show in Appendix \ref{sec:universe} that all results in this paper
are independent of choice of universe, provided our pretopology
$\covers$ on the large category $\Cat$ admits what we call a ``small
refinement'' (Def.\ \ref{def:small}).  Indeed, an example of such a pretopology is the
surjective submersion pretopology on the category of Banach
manifolds. (See Example \ref{ex:banach_small}.) 

From here on, we always assume that the pretopology being considered admits a small
refinement. This allows the reader to ignore all set-theoretic
issues, and treat $(\Cat,\covers)$ as if it was a pretopology on a
small category.

\subsection{Kan fibrations and higher groupoids in $(\Cat, \covers)$}
Here we recall Henriques' definition of Kan fibration and
$n$-groupoid, which is
based on the work of Duskin \cite{Duskin:1977} and Glenn \cite{Glenn:1982}.
Let $(\Cat,\covers)$ be a
category equipped with a pretopology.
In what follows, if $K$ is a simplicial set and $X$ is a simplicial
object in $\Cat$, then we denote by  $\Hom(K,X)$ the sheaf
\begin{equation} \label{eq:matchobj_andre0}
 \Hom(K,X)(U): = \hom_{s\Cat}(K \tensor U, X)
 \end{equation}
 where $K \tensor U$ is the simplicial object $(K \tensor U)_n :=
 \coprod_{K_n} U$.  (See Prop.\ \ref{prop:matchobj} for other
 characterizations of this sheaf.)  Note that $\Hom(K,X)$ may not be
 representable. 

More generally, suppose $i\colon A\to B$ is a map
 between simplicial sets and $f\colon X\to Y$ is a morphism of simplicial
 objects in $\Cat$.  Denote by
\begin{equation}
  \label{eq:commuting_square_space}
  \Hom(A \xto{i} B, X\xto{f} Y) \defeq
  \Hom(A, X) \times_{\Hom(A, Y)} \Hom(B, Y);
\end{equation}
the sheaf which assigns to an object $U \in \Cat$ the 
set of commuting squares in $s\Cat$ of the form
 \[
\begin{tikzpicture}[descr/.style={fill=white,inner sep=2.5pt},baseline=(current  bounding  box.center)]
\matrix (m) [matrix of math nodes, row sep=2em,column sep=2em,
  ampersand replacement=\&]
{
     A \tensor U  \& X\\
     B \tensor U \& Y\\
   };
  \path[->,font=\scriptsize] 
   (m-1-1) edge node[auto] {$$} (m-1-2)
   (m-1-1) edge node[auto,swap] {$i$} (m-2-1)
   (m-1-2) edge node[auto] {$f$} (m-2-2)
   (m-2-1) edge node[auto] {$$} (m-2-2)
  ;
 \end{tikzpicture}
 \]
There is a canonical map 
\[
\Hom(B,X) \xto{(i^\ast, f_\ast)} \Hom(A\to B, X\to Y)
\]
induced by pre and post composition with $i \maps A \to B$ and 
$f \maps X \to Y$, respectively.

\begin{definition}[Def.\ 2.3 \cite{Henriques:2008}]
  \label{def:Kan_arrow}
 A morphism $f\colon X\to Y$ of simplicial objects in a category equipped
 with a pretopology $(\Cat, \covers)$ satisfies the
  \textbf{Kan condition} $\Kan(m,j)$ iff the sheaf
  $\Hom(\Horn{m}{j}\to\Simp{m}, X\to Y)$ is representable and the
  canonical map (i.e., the horn projection)
  \begin{equation}
    \label{eq:Kan_arrow}
    X_m = \Hom(\Simp{m},X)\xrightarrow{(\iota^\ast_{m,j},f_\ast)} 
\Hom(\Horn{m}{j}\xto{\iota_{m,j}} \Simp{m}, X\xto{f} Y)
  \end{equation}
  is a cover. The morphism $f \maps X  \to Y$  satisfies the
  \textbf{unique Kan condition} $\Kan!(m, j)$ iff the canonical map
  in \eqref{eq:Kan_arrow} is an isomorphism.
  We say $f \maps X \to Y$ is a \textbf{Kan fibration} iff it satisfies $\Kan(m,j)$
  for all $m\ge 1$, $0\le j\le m$.
\end{definition}

\begin{definition}[Def.\ 2.3 \cite{Henriques:2008}]
  \label{def:ngpd}
A simplicial object $X \in s\Cat$ is
a \textbf{higher groupoid} in $(\Cat,\covers)$, or more precisely,
  an \textbf{$n$-groupoid} object in
  $(\Cat, \covers)$ for $n \in\N \cup \{\infty \}$ iff the unique morphism
\[
X \to \ast
\]
satisfies the Kan condition $\Kan(m,j)$ for $1\leq m \leq n$, $0 \leq
j \leq m$, and the unique Kan condition $\Kan!(m,j)$ for all $m >n$,
$0 \leq j \leq m$. An \textbf{$n$-group object} in $(\Cat, \covers)$
is an $n$-groupoid object $X$, such that $X_0=\ast$, 
where $\ast$ is the terminal object in $\Cat$.
\end{definition}

In other words, $X$ is an
$n$-groupoid if the sheaf $\Hom(\Horn{m}{j}, X)$ is representable and
the restriction map
\[
  \Hom(\Simp{m},X)\to\Hom(\Horn{m}{j},X)
\]
is a cover for all $1 \leq m \leq n$, $0 \leq j
\leq m$ and an isomorphism for all $m >n$, $0 \leq j
\leq m$. 

\subsection{Representability results}
Now we record some useful tools for proving
representability. Similar results can be found in the work of Behrend
and Getzler \cite{Behrend-Getzler:2015}, Wolfson \cite{Wolfson:2016}, and Zhu \cite{Zhu:2009a}.
What follows is reminiscent of the use of anodyne extensions in
simplicial sets.   

\begin{definition}
  \label{def:collapsible}
  The inclusion $\iota \maps  S \hookrightarrow T$ of a simplicial
  subset $S$ into a finitely generated simplicial set $T$ is a
  \textbf{collapsible extension} iff it is the composition of inclusions
  of simplicial subsets
  \[
  S= S_0 \hookrightarrow S_1 \hookrightarrow \dotsb \hookrightarrow S_l=T
  \]
  where each $S_i$ is obtained from $S_{i-1}$ by filling a horn. That
  is, for each $i=1,\ldots,l$, there is a horn $\Horn{m}{j}$ and a map
  $\Horn{m}{j}\to S_{i-1}$ such that $S_i = S_{i-1}
  \sqcup_{\Horn{m}{j}} \Simp{m}$.  If the inclusion of a point into a
  finitely-generated simplicial set $T$ is a collapsible extension,
  then we say $T$ is \textbf{collapsible}.  Similarly, we say $\iota
  \maps S \hookrightarrow T$ is a \textbf{boundary extension} iff it
  is the composition of inclusions of simplicial subsets
  \[
  S= S_0 \hookrightarrow S_1 \hookrightarrow \dotsb \hookrightarrow S_l=T
  \]
  where each $S_i$ is obtained from $S_{i-1}$ by filling a
  boundary. That is, for each $i=1,\ldots,l$, there is a $m \geq 0$
  and a map $\partial \Delta^m \to S_{i-1}$ such that $S_i =
  S_{i-1} \sqcup_{\partial \Delta^m} \Simp{m}$.
  \end{definition}
Collapsible extensions are called ``expansions'' in \cite{Wolfson:2016}.
We follow the terminology that appears in \cite[Sec.\ 2.6]{Li:2015}. 
(A more detailed study of these morphisms can be found there.) 
Other examples of collapsible extensions are the 
``$m$-expansions'' in \cite[Def.\ 3.7]{Behrend-Getzler:2015}.

Clearly a collapsible extension is a boundary extension since the natural
inclusion $\Horn{n}{j} \to \Simp{n}$ can be decomposed into two
boundary extensions $\Horn{n}{j} \to \partial \Simp{n} \to
\Simp{n}$. Also,  if $S\hookrightarrow T$ and $T\hookrightarrow U$ are
both collapsible extensions (or boundary extensions), then so is their composition
$S\hookrightarrow U$. 

We have the following fact:
\begin{lemma}[Lemma 2.44, \cite{Li:2015}]
  \label{lem:example_collapsible}
  The inclusion of any face \(\Simp{k} \to \Simp{n}\) is a collapsible
  extension for \(0\le k<n\).
\end{lemma}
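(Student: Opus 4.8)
The plan is to build $\Simp{n}$ up from its face $\Simp{k}$ by a sequence of horn fillings organized by an acyclic matching, in the spirit of discrete Morse theory. First I would reduce to the codimension-one case. Writing the given face as $\Simp{k}=\Delta^A$ for a $(k+1)$-element subset $A$ of the vertex set $\{0,\dots,n\}$, I choose an enumeration $v_1,\dots,v_{n-k}$ of the complementary vertices and set $B_0=A$ and $B_i=B_{i-1}\cup\{v_i\}$. Each inclusion $\Delta^{B_{i-1}}\hookrightarrow\Delta^{B_i}$ realizes $\Delta^{B_{i-1}}$ as the face of the simplex $\Delta^{B_i}$ opposite the single vertex $v_i$. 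Since collapsible extensions are closed under composition (as noted after Def.\ \ref{def:collapsible}), it suffices to treat the case in which $T=\Simp{m}$ has exactly one more vertex than $S$; that is, when $S$ is the face of $\Simp{m}$ opposite a single vertex $w$.

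For this codimension-one case, let $w$ be the omitted vertex and $V$ the vertex set of $\Simp{m}$. The nondegenerate simplices that must be adjoined are precisely the subsets $\tau\subseteq V$ with $w\in\tau$. I fix a pivot vertex $c\in V$ with $c\neq w$ (possible since $m\ge 1$), and pair such a $\tau$ with $\tau\cup\{c\}$ when $c\notin\tau$, and with $\tau\setminus\{c\}$ when $c\in\tau$. Because $w\neq c$, both partners still contain $w$, so this is a perfect matching of the simplices to be added. Each matched pair $(\tau,\ \tau\cup\{c\})$ with $c\notin\tau$ corresponds to filling the horn $\Horn{d}{r}\hookrightarrow\Simp{d}$, where $\Simp{d}=\Delta^{\tau\cup\{c\}}$ and $r$ is the position of $c$ among the ordered vertices of $\tau\cup\{c\}$; filling this horn adjoins exactly the top simplex $\tau\cup\{c\}$ together with its $r$-th face $d_r(\tau\cup\{c\})=\tau$.

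It then remains to order these fillings so that at each stage the attaching horn is already present, and I would process the pairs in order of increasing dimension of the larger simplex $\tau\cup\{c\}$. The faces of $\tau\cup\{c\}$ other than $d_r=\tau$ are obtained by deleting a vertex different from $c$: deleting $w$ yields a face lying in the starting subcomplex $S$ (it omits $w$), while deleting any other vertex yields a strictly lower-dimensional simplex still containing both $w$ and $c$, hence one adjoined at an earlier stage. Thus every horn $\Horn{d}{r}$ genuinely maps into the subcomplex constructed so far, and since all simplices involved are nondegenerate faces of $\Simp{n}$ the pushout simply adjoins the pair set-theoretically. The main obstacle, and the only real content, is this bookkeeping: verifying that the matching is acyclic, i.e.\ that the increasing-dimension order turns every attaching map into a bona fide horn inclusion. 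The codimension-one reduction is exactly what makes the matching perfect, since then the unique face of $\tau\cup\{c\}$ that omits $w$ is the only one that can fall into $S$.
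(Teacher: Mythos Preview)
Your argument is correct. The reduction to codimension one is clean, and the acyclic matching in that case is verified properly: for each pair $(\tau,\tau\cup\{c\})$ processed by increasing dimension of $\tau\cup\{c\}$, every face of $\tau\cup\{c\}$ other than $\tau$ either omits $w$ (hence lies in $S$) or contains both $w$ and $c$ (hence is a top simplex of a strictly lower-dimensional pair, already adjoined). The base step with $\tau=\{w\}$ works since $m\ge 1$ guarantees the pivot $c$ exists.

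As for comparison: the paper does not give its own proof of this lemma but simply cites it from Li's thesis \cite{Li:2015}. Your argument is therefore strictly more than what the paper provides. The discrete-Morse style matching you use is the standard modern way to exhibit such collapses; an alternative, more bare-hands route (and likely closer to what one finds in \cite{Li:2015}) is to do the codimension-one step by listing the horn fillings explicitly in the order $\Lambda^1,\Lambda^2,\dots,\Lambda^m$, attaching at each stage the unique simplex spanned by $w$ together with a growing initial segment of the remaining vertices---but this is really the same matching written out linearly. Your formulation has the advantage of making the acyclicity check transparent and independent of any particular vertex ordering beyond the choice of $c$.
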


We will use the following two lemmas to solve most of the
representability issues in this paper.  They are similar to Lemma 2.4
in \cite{Henriques:2008}.

\begin{lemma} 
  \label{lem:covers_in_groupoid}
  Let~\(S\hookrightarrow T\) be a collapsible extension and let~\(X\)
  be a higher groupoid in
  \((\Cat,\covers)\).  If \(\Hom(S,X)\) is representable,  then
  \(\Hom(T,X)\) is representable as well and the restriction map
  \(\Hom(T,X) \to \Hom(S,X)\) is a cover.
\end{lemma}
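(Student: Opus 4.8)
The plan is to prove this by induction on the length $l$ of the collapsible extension $S = S_0 \hookrightarrow S_1 \hookrightarrow \dotsb \hookrightarrow S_l = T$. The base case $l=0$ is trivial since then $S = T$. For the inductive step, it suffices to handle a single horn-filling: assuming $\Hom(S_{i-1},X)$ is representable and the restriction $\Hom(S_{i-1},X)\to \Hom(S,X)$ is a cover, I will show the same for $S_i = S_{i-1}\sqcup_{\Horn{m}{j}} \Simp{m}$, and then compose covers (which are closed under composition by the pretopology axioms) to conclude. So the entire argument reduces to analyzing one pushout $S_i = S_{i-1}\sqcup_{\Horn{m}{j}}\Simp{m}$.

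For that single step, the key observation is that the tensoring $K \tensor U$ with a fixed $U$ sends the pushout square of simplicial sets $\Horn{m}{j}\to\Simp{m}$, $\Horn{m}{j}\to S_{i-1}$, $S_i = S_{i-1}\sqcup_{\Horn{m}{j}}\Simp{m}$ to a pushout in $s\Cat$, and since $\Hom(-,X)(U) = \hom_{s\Cat}(-\tensor U, X)$ turns colimits in the first variable into limits, I would argue that $\Hom(S_i, X)$ is the pullback (as sheaves) of the cospan
\[
\Hom(\Simp{m}, X) \xto{} \Hom(\Horn{m}{j}, X) \xleftarrow{} \Hom(S_{i-1},X).
\]
Now $\Hom(\Simp{m},X) = X_m$ is representable, $\Hom(\Horn{m}{j},X)$ is representable because $X$ is a higher groupoid, and $\Hom(S_{i-1},X)$ is representable by the inductive hypothesis. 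Moreover the horn projection $X_m = \Hom(\Simp{m},X)\to \Hom(\Horn{m}{j},X)$ is a cover, again by the higher groupoid condition $\Kan(m,j)$. Since covers are stable under pullback in a pretopology (axiom (3) of Definition \ref{def:pretopology}), the pullback $\Hom(S_i,X) = \Hom(S_{i-1},X)\times_{\Hom(\Horn{m}{j},X)}\Hom(\Simp{m},X)$ exists in $\Cat$, i.e.\ is representable, and its projection to $\Hom(S_{i-1},X)$ is a cover.

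To finish the single step I would identify this projection with the restriction map $\Hom(S_i,X)\to\Hom(S_{i-1},X)$ induced by the inclusion $S_{i-1}\hookrightarrow S_i$, which follows directly from the universal property of the pullback and the compatibility of the tensoring functor with the inclusions of simplicial sets. Composing the cover $\Hom(S_i,X)\to\Hom(S_{i-1},X)$ with the cover $\Hom(S_{i-1},X)\to\Hom(S,X)$ from the inductive hypothesis gives a cover $\Hom(T,X)\to\Hom(S,X)$ after the final step, using pretopology axiom (2).

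The main obstacle is the representability bookkeeping: one must be careful that all the limits and colimits in question are genuinely computed at the level of sheaves $\Sh(\Cat)$, and that the pullback which a priori only exists as a sheaf is actually representable. This is exactly where the pretopology axiom on stability of covers under pullback does the work, guaranteeing both existence of the pullback in $\Cat$ (not merely in $\Sh(\Cat)$) and that the resulting map is a cover. A secondary technical point is verifying that $\Hom(-,X)$ sends the pushout $S_i = S_{i-1}\sqcup_{\Horn{m}{j}}\Simp{m}$ to the asserted pullback; this is formal, following from the fact that $K \mapsto K\tensor U$ preserves colimits and that $\hom_{s\Cat}(-,X)$ is contravariant, but it should be stated explicitly since the whole induction hinges on it.
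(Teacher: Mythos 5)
Your proof is correct and follows essentially the same route as the paper's: reduce to a single horn-filling step via composition of covers, identify $\Hom(S_i,X)$ as the pullback of $\Hom(S_{i-1},X) \to \Hom(\Horn{m}{j},X) \leftarrow \Hom(\Simp{m},X)$ using that $\Hom(-,X)$ turns colimits of simplicial sets into limits of sheaves, and conclude via the higher-groupoid Kan condition together with the pretopology axiom that pullbacks of covers exist and are covers. The representability and pullback-identification points you flag as needing care are exactly the ones the paper's proof relies on, so nothing is missing.
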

\begin{proof}
  Let \(S=S_0\subset S_1\subset \dotsb \subset S_l=T\) be a
  filtration as in Definition~\ref{def:collapsible}.  Since
  composites of covers are again covers, we may assume without loss
  of generality that \(l=1\), i.e., \(T=S\cup_{\Horn{n}{j}} \Simp{n}\) for
  some \(n,j\).  Note that the functor $\Hom(-,X)$ \eqref{eq:matchobj_andre0}
  sends colimits of simplicial sets to limits of sheaves. Hence, we
  have
\[
  \Hom(T,X)
  = \Hom(S,X) \times_{\Hom(\Horn{n}{j},X)} \Hom(\Simp{n},X).
\]
Since~\(X\) is a higher groupoid, $\Hom(\Simp{n},X) \to
\Hom(\Horn{n}{j},X)$ is a cover between representable sheaves, and
hence, the axioms of a pretopology imply that
 $\Hom(T,X)$ is representable and that
$\Hom(T,X)\to\Hom(S,X)$ is a cover.
\end{proof}
\begin{remark} \label{rem:n-gpd}
In particular, if $X$ is a $k$-groupoid for $k < \infty$, and
$T=S\cup_{\Horn{n}{j}} \Simp{n}$ with $n >k$, and $\Hom(S,X)$ is
representable, then the proof of Lemma \ref{lem:covers_in_groupoid}
implies that $\Hom(T,X) \to \Hom(S,X)$ is not just a cover, but an isomorphism.
\end{remark}

The next lemma concerns the representability of the sheaf \eqref{eq:commuting_square_space}.
\begin{lemma}
  \label{lem:representable}
  Let $S$ be a collapsible simplicial subset of $\Simp{k}$, $X$ a
  simplicial object in $\Cat$, and $Y$ a higher groupoid in $\Cat$.
  If $f\colon X\to Y$ is a morphism 
 which satisfies $\Kan(m,j)$ for all $m<k$ and $0\le j\le m$, 
then the sheaf \(\Hom(S \hookrightarrow \Simp{k}, X \xto{f} Y)\) is representable.
\end{lemma}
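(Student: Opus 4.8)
The plan is to induct along the collapsible structure of $S$. Unwinding the definition \eqref{eq:commuting_square_space}, the sheaf in question is $\Hom(S,X) \times_{\Hom(S,Y)} Y_k$, where $Y_k = \Hom(\Simp{k}, Y)$ is representable. If $S = \Simp{k}$, this collapses to $\Hom(\Simp{k}, X) = X_k$, which is representable, so I may assume $S \subsetneq \Simp{k}$; in particular $k \ge 1$. Abbreviating $P(T) \defeq \Hom(T \emb \Simp{k}, X \xto{f} Y)$ for a simplicial subset $T \subseteq \Simp{k}$, I would fix a collapsible filtration $\{\mathrm{pt}\} = S_0 \subset S_1 \subset \cdots \subset S_l = S$ as in Definition \ref{def:collapsible} and prove that each $P(S_i)$ is representable by induction on $i$. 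Since $S$ omits the unique nondegenerate $k$-simplex of $\Simp{k}$, every nontrivial horn $\Horn{m}{j}$ filled in this filtration has dimension $m < k$, so the hypothesis $\Kan(m,j)$ is available at each step.

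For the base case, $S_0$ is a vertex and $P(S_0) = X_0 \times_{Y_0} Y_k$. The face inclusion $\Simp{0} \emb \Simp{k}$ is a collapsible extension (Lemma \ref{lem:example_collapsible}), so because $Y$ is a higher groupoid, Lemma \ref{lem:covers_in_groupoid} makes the restriction $Y_k \to Y_0$ a cover between representable sheaves. The pullback of a cover exists in $\Cat$ by Definition \ref{def:pretopology}(3), so $X_0 \times_{Y_0} Y_k$ is representable.

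For the inductive step, write $S_i = S_{i-1} \cup_{\Horn{m}{j}} \Simp{m}$ with $m < k$ and $P(S_{i-1})$ representable. Because $\Hom(-,X)$ and $\Hom(-,Y)$ turn the defining pushout of $S_i$ into pullbacks of sheaves, a direct unwinding of the defining data identifies
\[
P(S_i) = P(S_{i-1}) \times_{\Hom(\Horn{m}{j} \emb \Simp{m},\, X \xto{f} Y)} X_m,
\]
in which the right leg is the horn projection $X_m = \Hom(\Simp{m}, X) \to \Hom(\Horn{m}{j} \emb \Simp{m}, X \xto{f} Y)$ and the left leg sends a compatible pair $(\phi, \psi')$ to the restrictions $(\psi'|_{\Horn{m}{j}}, \phi|_{\Simp{m}})$. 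By $\Kan(m,j)$ (Definition \ref{def:Kan_arrow}) the base $\Hom(\Horn{m}{j} \emb \Simp{m}, X \xto{f} Y)$ is representable and the horn projection is a cover; hence the pullback exists and is representable by Definition \ref{def:pretopology}(3), closing the induction.

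The main obstacle is verifying the pullback identification in the inductive step: one must check that the compatibility encoded in a $U$-point of $P(S_i)$ splits precisely as a $U$-point of $P(S_{i-1})$ together with an $m$-simplex of $X$ agreeing with it over $\Horn{m}{j}$ and lying above $\phi|_{\Simp{m}} \in Y_m$. Once this bookkeeping is arranged, the essential structural input is only that the relevant leg is a cover---coming from the Kan condition on $f$ in the inductive step, and from $Y$ being a groupoid in the base case---so that the pretopology supplies exactly those pullbacks that need not otherwise exist; no pullback outside this class is ever invoked.
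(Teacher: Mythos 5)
Your proof is correct and takes essentially the same route as the paper's: induction along the collapsible filtration of $S$, with the base case $\Hom(\ast \hookrightarrow \Simp{k}, X \xto{f} Y) \cong X_0 \times_{Y_0} Y_k$ representable because $Y_k \to Y_0$ is a cover (Lemmas \ref{lem:example_collapsible} and \ref{lem:covers_in_groupoid}). The only difference is that you write out the inductive step --- the identification of $P(S_i)$ as a pullback of $P(S_{i-1})$ along the horn projection, which is a cover by $\Kan(m,j)$ --- whereas the paper delegates exactly this step to the proof of Lemma 2.4 in \cite{Henriques:2008}.
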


\begin{proof}
  First note that the statement is identical to that of \cite[Lemma
  2.4]{Henriques:2008} except that we do not require
  $X_0=Y_0=*$. So, we can proceed exactly as in the proof of
  \cite[Lemma 2.4]{Henriques:2008}, but with a different
  verification of the base case for the induction.  For this, we
  consider $S_0=\ast \hookrightarrow \Delta^k$ and observe that the sheaf
  $\Hom(*\hookrightarrow \Simp{k}, X \xto{f} Y )$ is represented by the pullback
  $X_0\times_{Y_0}Y_k$ in $\Cat$, which exists in $\Cat$ since Lemmas
  \ref{lem:example_collapsible} and \ref{lem:covers_in_groupoid}.
  imply that $Y_k\to Y_0$ is a cover.
  \end{proof}

  \begin{remark} \label{rem:rep-kan-map} 
The horn $\Horn{n}{j} \subset\Delta^{n}$ is collapsible. Hence, if $Y$ is a
higher groupoid and $f \maps X \to Y$ is a
    simplicial morphism satisfying the Kan condition
    $\Kan(m,j)$ for $1 \leq m< n$, then Lemma \ref{lem:representable}
    implies that $\Hom(\Horn{n}{j}\to\Simp{n} , X\to Y)$ is
    automatically representable. 
    Similarly, if $X$ is a simplicial object and if $X \to \ast$ satisfies the Kan condition
    $\Kan(m,j)$ for $1 \leq m< n$, then Lemma \ref{lem:representable}
    implies that $\Hom(\Horn{n}{j}, X)$ is  automatically representable. 
\end{remark}

\section{Points for categories with pretopologies} \label{sec:points}
We begin this section by considering certain functors called ``points'' for
a category equipped with a pretopology. This will allow us to make comparisons
between the homotopy theory of higher groupoid objects and that of
simplicial sets. A point can be thought of
as a generalization of the functor which sends a sheaf on a space to its stalk
at a particular point. The notion originates in 
topos theory. See, for example, \cite[C.2.2, p.\ 555]{Johnstone:2002} and \cite[VII.5]{MM:1994}.

We will not need all of the topos theory formalism here, so our presentation is
quite abbreviated and self-contained. Our
motivation stems from the use of points in the homotopy theory of
simplicial sheaves over the category of finite-dimensional manifolds
(e.g.\, \cite{Dugger:1999,NSS:2015}).

\begin{definition} \label{def:points}
Let $(\Cat,  \covers)$ be a category equipped with a pretopology.
\begin{enumerate}
\item{ A {\bf point} of $(\Cat,\covers)$ is a functor
\[
\ppt \maps \Sh(\Cat) \to \Set
\]
which preserves finite limits and small colimits.
}

\item{
A collection of points $\pts$ of $(\Cat,\covers)$ is \textbf{jointly
  conservative} iff a morphism $\phi \maps F
\to G$ in $\Sh(\Cat)$ is an isomorphism if and only if for all $\ppt \in \pts$
\[
\ppt(\phi) \maps \ppt(F) \to \ppt(G) 
\]
is an isomorphism of sets.
}
\end{enumerate}
\end{definition}
If $X$ is a simplicial object in $(\Cat,\covers)$ and $\ppt \maps
\Sh(\Cat) \to \Set$ is a point, we denote by $\ppt X$ the simplicial set
%\begin{equation} \label{eq:point_sset}
\[
\ppt X_n:=\ppt (\yon X_n)
\]
%\end{equation}

\subsection{Points for Banach manifolds}
Our main example, the category of Banach manifolds
equipped with the pretopology of surjective submersions, admits
a jointly conservative collection of points. If we were only considering
finite-dimensional manifolds, then these points would be the same as those
used in \cite[Def.\ 3.4.6]{Dugger:1999}.

Let $V \in \Ban$ be a Banach space, and denote by $B_V(r)$ the open ball of
radius $r$ about the origin in $V$. Let $(\Mfd,\covers_{\subm})$
denote the category of Banach manifolds equipped with the surjective submersion pretopology,
and denote by  $\ppt_V \maps \Sh(\Mfd) \to \Set$
the functor
\begin{equation} \label{eq:ban_point}
\ppt_V(F) = \colim_{r \to 0} F( B_{V}(r) ).
\end{equation}

\begin{proposition} \label{prop:points}
\mbox{}
\begin{enumerate}
\item For every Banach space $V$, the functor 
$\ppt_V \maps \Sh(\Mfd) \to \Set$ preserves finite limits and small colimits.

\item The collection of points $\pts_{\Ban} :=\{\ppt_V ~ \vert ~  V \in \Ban \}$ is jointly conservative.
\end{enumerate}
\end{proposition}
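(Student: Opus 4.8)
The plan is to factor each $\ppt_V$ through presheaves and reduce both parts to a single geometric input: that surjective submersions onto balls split over smaller balls. Concretely, define $\tilde\ppt_V \maps \PSh(\Mfd) \to \Set$ by $\tilde\ppt_V(P) = \colim_{r \to 0} P(B_V(r))$, so that $\ppt_V = \tilde\ppt_V \circ \iota$, where $\iota \maps \Sh(\Mfd) \emb \PSh(\Mfd)$ is the inclusion. The engine of the whole argument is the following splitting lemma, whose proof is immediate from the definition of submersion in Section \ref{subsubsec:Banach} (hence from the inverse function theorem for Banach manifolds): if $p \maps U \to B_V(r)$ is a surjective submersion, then there is some $0 < r' < r$ and a section $\sigma \maps B_V(r') \to U$ of $p$. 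Indeed, one picks $u_0 \in p^{-1}(0)$, applies the submersion property at $u_0$ to obtain a local section near $0$, and shrinks its domain to a ball.

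For part (1), I would first observe that $\tilde\ppt_V$ preserves finite limits and all small colimits. This is formal: limits and colimits in $\PSh(\Mfd)$ are computed objectwise, so each evaluation $P \mapsto P(B_V(r))$ preserves everything, and $\tilde\ppt_V$ is a filtered colimit of such evaluations; filtered colimits commute with finite limits in $\Set$, and colimits commute with colimits. To transfer this to sheaves I would use the splitting lemma to prove the locality isomorphism $\tilde\ppt_V(P) \xrightarrow{\ \cong\ } \tilde\ppt_V(aP)$, natural in $P$, where $a$ denotes sheafification. Unwinding the plus construction, an element of $(aP)(B_V(r))$ is a matching family over some cover $U \to B_V(r)$; restricting along a section $\sigma \maps B_V(r') \to U$ produces an honest element of $P(B_V(r'))$, and the same restriction shows that two germs agreeing in $aP$ already agree in $P$, so the map is bijective in the colimit. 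Granting this, part (1) follows: finite limits of sheaves coincide with those computed in $\PSh(\Mfd)$, so $\ppt_V$ inherits preservation of finite limits from $\tilde\ppt_V$; and since colimits of sheaves are sheafifications of presheaf colimits, the locality isomorphism gives $\ppt_V(\colim_i F_i) = \tilde\ppt_V(a\,\colim_i \iota F_i) \cong \tilde\ppt_V(\colim_i \iota F_i) = \colim_i \ppt_V(F_i)$.

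For part (2), the ``only if'' direction is automatic, so I would suppose $\ppt_V(\phi)$ is bijective for every $V$. Since $\iota$ is fully faithful, it suffices to show $\phi_X \maps F(X) \to G(X)$ is bijective for every Banach manifold $X$. The key identification is that for any $x \in X$ with model space $V$, a chart centred at $x$ makes the balls $B_V(r)$ cofinal among neighbourhoods of $x$, so the stalk $\colim_{U \ni x} F(U)$ is isomorphic to $\ppt_V(F)$, and $\phi$ on this stalk is exactly $\ppt_V(\phi)$; hence $\phi$ is a stalk isomorphism at every point of every manifold. Injectivity and surjectivity of $\phi_X$ then follow from the usual separatedness-and-gluing argument: two sections of $F$ with equal image agree on a ball around each point, hence agree by separatedness; and a section of $G$ lifts locally over a ball around each point, and these local lifts glue by the injectivity just established. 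Here I would invoke Remark \ref{rmk:open} so that the gluing can be run with ordinary open covers.

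The main obstacle is the locality isomorphism in part (1): this is the only place where the geometry of $\Mfd$ (as opposed to formal sheaf theory) enters, and getting it right amounts to checking that the splitting lemma makes the identity cover cofinal among covers of a ball as $r \to 0$, so that the \v{C}ech/plus construction collapses in the stalk. A secondary point requiring care in part (2) is that joint conservativity cannot be deduced from the origin-stalk of a single ball alone, since a filtered colimit can be bijective with no individual term bijective; it is therefore essential to use the stalks $\ppt_{V_x}$ at \emph{all} points $x$ simultaneously, which is precisely what the chart identification supplies.
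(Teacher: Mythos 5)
Your proposal is correct, and for part (2) it is essentially the paper's own argument: identify the stalk at each point of a manifold with some $\ppt_{V}$ via a chart, then run the standard separatedness-and-gluing argument (the paper glues over the surjective submersion $\coprod_m B_{V_m}(r_m)\to M$ rather than invoking Remark \ref{rmk:open}, but this is cosmetic). Where you genuinely diverge is part (1). The paper disposes of it in one line --- ``since $\ppt_V$ is a filtered colimit, it commutes with finite limits and small colimits'' --- which fully justifies finite-limit preservation (limits of sheaves are computed objectwise, and filtered colimits commute with finite limits in $\Set$) but is silent on the point your proposal isolates: colimits in $\Sh(\Mfd)$ involve sheafification and are \emph{not} computed objectwise, so evaluation at $B_V(r)$ does not preserve them term by term. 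Your locality isomorphism $\tilde\ppt_V(P)\cong\tilde\ppt_V(aP)$, proved by splitting any surjective submersion onto a ball over a smaller ball and collapsing the plus construction in the colimit, is exactly the missing geometric input; it is the same mechanism that elsewhere in the paper underlies the ``locally stalkwise'' property of $\covers_{\subm}$ (cf.\ Lemma \ref{lemma:stalkwise-surj}). So your route is slightly longer but self-contained and actually more complete than the printed proof on the cocontinuity claim; the paper's route is shorter at the cost of leaving that verification implicit.
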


\begin{proof} 
Since $\ppt_V$ is a filtered colimit, it commutes with finite
limits and small colimits. 
Now let $\phi \maps F \to G$ be a morphism of sheaves such that for all $V
\in \Ban$
\[
\ppt_V(\phi) \maps \ppt_V F \to  \ppt_V G
\]
is bijection. In particular, injectivity implies that  if $x \in F(B_V(r_x))$ and $y \in
F(B_V(r_y))$ such that $\ppt_V(\phi)(\bar{x}) = \ppt_V(\phi)(\bar{y})$,
then there exists $r \leq r_x$ and $r \leq r_y$ such that $ i_x^*x = i_y^*
y$, where $i_x \maps B_V(r) \to B_V(r_x)$ and $i_y \maps B_V(r) \to
B_V(r_y)$ are the inclusions.

We  show $\phi \maps F \to G$ is injective.
Let $M \in \Mfd$ and $x,y \in F(M)$ such that
$\phi_M(x)=\phi_M(y)$. By the usual arguments, for each $m \in M$
there exists an open neighborhood $U_m$ of $m$, a Banach space $V_m$, a
radius $r_m >0$ and a diffeomorphism
$\psi_m \maps B_{V_m}(r_m) \xto{\cong} U_m$ such that $\psi_m(0)=m$.
Hence, we have a cover
\begin{equation} \label{eq:cover}
\coprod_{m \in M} B_{V_m}(r_m) \xto{(i_m)} M,
\end{equation}
where $i_m \maps B_{V_m}(r_m) \to M$ is the composition of $\psi_m$ with the inclusion.
So for each $m\in M$
\[
\phi_{B_{V_m}(r_m) }(i^*_{m} x )  = \phi_{B_{V_m}(r_m) }(i^*_{m} y )
\]
which implies
\[
p^{\ast}_{V_m}(\phi_{B_{V_m}(r_m) })(\overline{i^*_{m} x} )  = p^{\ast}_{V_m}(\phi_{B_{V_m}(r_m) })(\overline{i^*_{m} y} ).
\]
Therefore, there exists a smaller ball $B_{V_m}(r'_m)
\subseteq B_{V_m}(r_m) $ such that the restrictions of $x$ and $y$  to
$B_{V_m}(r'_m)$ are equal. Since 
\[
\coprod_{m \in M} B_{V_m}(r_m) \xto{(i_m)} M
\]
is a cover, and $F$ is a sheaf, we conclude $x=y$.

Now we show $\phi \maps F \to G$ is surjective. Let $M \in \Mfd$ and $y \in
G(M)$. We use the cover \eqref{eq:cover} as before, so that $i^{\ast}_m y \in
G(B_{V_{m}}(r_m))$. Since $\ppt_{V_{m}}\phi \maps \ppt_{V_{m}}F \to
\ppt_{V_{m}}G$ is onto, there exists $r'_m \leq r_m$ and $x_m \in F(B_{V_{m}}(r_m))$
such that $\phi(x_m)= j^{*}_m y$ where $j_m \maps B_{V_{m}}(r'_m)  \to M$ is
the composition of $i_m$ with the inclusion $B_{V_{m}}(r'_m) \hookrightarrow B_{V_{m}}(r_m)$.

Consider the pullback $W \times_M W
\overset{p_1}{\underset{p_2}{\rightrightarrows}} W$, where $W$ is the cover $\coprod_{m \in
  M} B_{V_{m}}(r'_m)$. We claim $p^{*}_1 \bigl( \{ x_m \} \bigr) =
p^{*}_2 \bigl( \{ x_m \} \bigr)$. Indeed, since $y$ is a global
section over $M$, we have the equalities
\[
\phi(p^{*}_1 \bigl( \{x_m \} \bigr)) = 
p^{*}_1 \phi(\bigl( \{ x_m \} \bigr)) = p^{*}_1( \{j^{*}_my
\})=p^{*}_2( \{ j^{*}_my \})= \phi(p^{*}_2 \bigl( \{ x_m \} \bigr)).
\]
Therefore, since $\phi$ is one to one, we conclude 
$p^{*}_1 \bigl( \{ x_m \} \bigr) = p^{*}_2 \bigl( \{ x_m
\} \bigr)$. And since $F$ is a sheaf, there exists a section $x \in
F(M)$ such that $j^{*}_{m} \phi (x) = j^{*}_m y$, and hence $\phi(x)=y$
.
\end{proof}

\begin{remark}\label{rmk:ss_vs_open}
It follows from Remark \ref{rmk:open} that
Prop.\ \ref{prop:points} also implies that the collection of functors
\eqref{eq:ban_point} is jointly conservative for $(\Mfd,\covers_{\open})$.

\end{remark}

\subsection{Matching objects and stalkwise Kan fibrations} \label{subsec:matching}
We next recall how points of $(\Cat,\covers)$ take ``matching objects'' for
simplicial sheaves to matching objects for simplicial sets. We also
describe the relationship between the sheaf $\Hom(K,X)$, defined in
\eqref{eq:matchobj_andre0}, for a simplicial object $X \in s\Cat$ and 
the corresponding matching object for the representable simplicial
sheaf $\yon X$.

\begin{definition} \label{def:matching}
Given a simplicial set $K \in \sSet$ and a simplicial sheaf $F \in
\sSh(\Cat)$, their \textbf{matching object} $M_K(F)$ is the sheaf
\[
M_{K}(F)(U):= \hom_{\sSet}(K ,F(U) ).
\] 
\end{definition}

If $F \in \sSh(\Cat)$ is a simplicial sheaf and $K \in \sSet$ is a
simplicial set, then we
denote by $F_m^{K_{n}}$ the sheaf 
\[
U \mapsto F_{m}^{K_n}(U):= \prod_{K_n} F_m(U).
\]
Hence, for each $m$, $n$, and element $x \in K_n$ we have the
canonical projection $\pi^{n}_m(x) \maps F_m^{K_n} \to F_m$.
For any such $F$ and $K$, there are two maps of sheaves:
\[
\alpha_F, \alpha_K \maps \prod_{m \geq 0} F_{m}^{K_m} \to \prod_{\theta_{mn}} F_m^{K_n},
\]
where the latter product is taken over all morphisms $\theta_{mn} \maps [m] \to [n]$
in the category $\Delta$.
The maps $\alpha_F$ and $\alpha_K$ are defined in the following way:
If $U \in \Cat$, then
$F_m^{K_n}(U)=\hom_{\Set}(K_n,F_m(U))$. So let $\bar{f}=(f_m) \in
\prod_m F_m^{K_m}(U)$. Then, the projections of $\alpha_F(\bar{f})$
and $\alpha_{K}(\bar{f})$ to the factor $F_m^{K_n}(U)$ labeled by a morphism 
$\theta_{mn} \maps [m] \to [n]$ are $F(\theta_{mn})(f_n)$ and 
$ f_m \circ K(\theta_{mn})$, respectively.

We now record some basic facts about the matching object $M_{K}(F)$.
\begin{proposition} \label{prop:matchobj}
\mbox{}
\begin{enumerate}

\item  For any simplicial sheaf $F$ and 
  morphism of simplicial sets  $\gamma \maps K \to L$, there is a natural morphism of
  sheaves $M_\gamma \maps M_L(F) \to M_{K}(F)$.\\

\item  For any simplicial sheaf $F$ and
  simplicial set $K$, $M_{K}(F)$ is the equalizer of the diagram
 \begin{equation} \label{eq:matchobj_eql1}
 \xymatrix{
 \displaystyle{\prod_{m \geq 0}} F_{m}^{K_m}  \ar^*++{\alpha_F}@<-.7ex>[r]
 \ar@<+.7ex>[r]_*++{\alpha_K} & \displaystyle{\prod_{\theta \maps [m] \to [n]}} F_m^{K_n}.
 }
\end{equation}

Moreover, if $K$ is a finitely generated
  simplicial set of dimension $M$, then $M_K(F)$ is the equalizer of
  the diagram
 \begin{equation} \label{eq:matchobj_eql2}
 \xymatrix{
 \displaystyle{\prod_{0 \leq m \leq M}} F_{m}^{K_m}  \ar^*++{\alpha_F}@<-.7ex>[r]
 \ar@<+.7ex>[r]_*++{\alpha_K} & \displaystyle{\prod_{\substack{\theta
       \maps [m] \to [n] \\ 0 \leq m,n \leq M}  }} F_m^{K_n}.
 }
\end{equation}

\item  If $X$ is a simplicial object in $\Cat$, and $yX$ is the representable
  simplicial sheaf $yX(U)_n = \hom_{\Cat}(U,X_n)$, then there
 is a unique natural isomorphism of sheaves $M_K(yX) \cong \Hom(K,X)$, where
  $\Hom(K,X)$ is the sheaf
 \begin{equation} \label{eq:matchobj_andre}
 \Hom(K,X)(U): = \hom_{s\Cat}(K \tensor U, X)
 \end{equation}
previously introduced in \eqref{eq:matchobj_andre0},
 and $K \tensor U$ is the simplicial object $(K \tensor U)_n := \coprod_{K_n} U$.
\end{enumerate}
\end{proposition}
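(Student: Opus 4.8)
The plan is to handle the three parts in order: part (1) is formal, part (2) unwinds the definition of a map of simplicial sets into an equalizer, and part (3) matches universal properties for the copower $K \tensor U$.

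For part (1), I would note that $M_K(F)(U) = \hom_{\sSet}(K, F(U))$ is contravariantly functorial in $K$ by precomposition. A morphism $\gamma \maps K \to L$ induces $\gamma^* \maps \hom_{\sSet}(L, F(U)) \to \hom_{\sSet}(K, F(U))$, and this is natural in $U$ because the restriction maps of $F$ act on the target $F(U)$ and hence commute with precomposition by $\gamma$. So $M_\gamma := \gamma^*$ is the required morphism of sheaves.

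For part (2), an element of $M_K(F)(U) = \hom_{\sSet}(K, F(U))$ is a natural transformation of functors $\Delta^{\op} \to \Set$, i.e.\ a family $(f_m)_{m \geq 0}$ with $f_m \in \hom_\Set(K_m, F_m(U)) = F_m^{K_m}(U)$ such that for every $\theta \maps [m] \to [n]$ the square relating $K(\theta)$ and $F(\theta)$ commutes. Inspecting the definitions of $\alpha_F$ and $\alpha_K$, this commutativity is exactly the equation asserting that the $\theta$-components of $\alpha_F((f_m))$ and $\alpha_K((f_m))$ agree. Hence $M_K(F)(U)$ is the equalizer of $\alpha_F, \alpha_K$ at $U$; as equalizers and products of sheaves are computed sectionwise, this both identifies $M_K(F)$ with the equalizer \eqref{eq:matchobj_eql1} and confirms it is a sheaf. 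For the truncated refinement I would use that a simplicial set $K$ of dimension $M$ is $M$\nb-skeletal, i.e.\ $K \cong \sk_M \mathrm{tr}_M K$; the adjunction between the $M$\nb-skeleton and the $M$\nb-truncation then gives $\hom_{\sSet}(K, F(U)) \cong \hom(\mathrm{tr}_M K, \mathrm{tr}_M F(U))$, and a map of $M$\nb-truncated functors is precisely an element of the equalizer \eqref{eq:matchobj_eql2}, which involves only the factors with $0 \leq m, n \leq M$.

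For part (3), with $F = yX$ and $yX(U)_n = \hom_\Cat(U, X_n)$, I would produce the natural bijection $\hom_{s\Cat}(K \tensor U, X) \cong \hom_{\sSet}(K, yX(U))$ directly from the universal property of the coproduct. A morphism $K \tensor U \to X$ in $s\Cat$ is a family $g_n \maps \coprod_{K_n} U \to X_n$ compatible with the simplicial structure; by the universal property of the coproduct, giving $g_n$ is the same as giving a function $K_n \to \hom_\Cat(U, X_n) = yX(U)_n$, and the compatibility of the $g_n$ — where $\theta \maps [m] \to [n]$ acts on $K \tensor U$ by reindexing the coproduct summands along $K(\theta)$ while fixing $U$ — translates exactly into the naturality that makes $(K_n \to yX(U)_n)_n$ a simplicial map $K \to yX(U)$. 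The resulting bijection is natural in $U$ since every step uses only the universal property of the coproduct and the functoriality of $\hom_\Cat(-, X_n)$, and it is unique by Yoneda. The step requiring the most care is this compatibility bookkeeping: one must verify that the action of a simplicial operator $\theta$ on the copower $K \tensor U$ (collapsing and permuting coproduct summands via $K(\theta) \maps K_n \to K_m$, identically on $U$) corresponds, under the coproduct adjunction, precisely to the naturality square appearing in part (2) for $F = yX$. Once this is checked, part (3) can equivalently be read as the assertion that $\Hom(K,X)$ is the equalizer of $\alpha_{yX}, \alpha_K$, so that $M_K(yX) \cong \Hom(K,X)$ follows formally from part (2). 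No sheaf-theoretic subtleties arise, as every identification is performed sectionwise over $U$ and is manifestly natural.
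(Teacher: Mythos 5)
Your proof is correct and follows essentially the same route as the paper, which likewise treats (1) and the first half of (2) as immediate unwindings of the definition and obtains (3) by identifying $\Hom(K,X)$ with the equalizer of \eqref{eq:matchobj_eql1} for $F=\yon X$. The only (cosmetic) divergence is in the truncated equalizer \eqref{eq:matchobj_eql2}: the paper invokes the Eilenberg--Zilber lemma directly, whereas you invoke the skeleton--truncation adjunction, but these package the same underlying fact that a simplicial set of dimension $M$ is $M$-skeletal.
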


\begin{proof}
Statement (1) is  obvious, as is the proof that
$M_{K}(F)$ is the equalizer of \eqref{eq:matchobj_eql1}. If $K$ is
finitely generated, then one shows $M_K(F)$ is the equalizer of \eqref{eq:matchobj_eql2}
by using the fact that  every simplex in $K$ can be uniquely
written as a non-degenerate simplex composed with a degeneracy map
(the ``Eilenberg--Zilber Lemma''). Finally,  (3) follows by showing
that $\Hom(K,X)$ is the equalizer of \eqref{eq:matchobj_eql1} when
$F=\yon X$.
\end{proof}

\begin{cor} \label{cor:matching}
  %Let $(\Cat,\covers,\pts)$ be a category equipped
  %with a pretopology and a collection of jointly conservative points.
  Let $X$ be a higher groupoid object in $(\Cat, \covers)$ and $K$ a finitely generated
  simplicial set.  Then for each point $\ppt: \Sh(\Cat) \to \Set$ %$\ppt \in \pts$, 
there is an unique  natural isomorphism of sets
\[
\ppt \Hom(K,X) \cong \hom_{\sSet}(K,\ppt X)
\]
\end{cor}

\begin{proof}
For any simplicial set $K$, Prop.\ \ref{prop:matchobj} implies that
\[
\ppt \Hom(K,X) \cong \ppt M_{K}(\yon X).
\]
If $K$ is finitely generated, then for any $n,m \geq 0$, $\yon X^{K_n}_{m}$ is a
finite product of sheaves, and so Prop.\ \ref{prop:matchobj} implies
that $M_{K}(\yon X)$ is an equalizer of finite limits of sheaves. By
definition, the functor $\ppt$ preserves finite limits. Hence,
\begin{equation} \label{eq:corr_match_2}
 \ppt M_{K}(\yon X) \cong  \mathrm{eq} \Bigl (
\xymatrix{
 \displaystyle{\prod_{0 \leq m \leq M}} (\ppt X_{m})^{K_m}  \ar^*++{\ppt \alpha_{yX}}@<-.7ex>[r]
 \ar@<+.7ex>[r]_*++{\ppt \alpha_K} & \displaystyle{\prod_{\substack{\theta
       \maps [m] \to [n] \\ 0 \leq m,n \leq M}  }} (\ppt X_m)^{K_n}
 }
\Bigr).
\end{equation}
A direct computation shows that the equalizer on the right-hand side of 
\eqref{eq:corr_match_2} is simply $\hom_{\sSet}(K,\ppt X)$.
\end{proof}

A jointly conservative collection of points allows us to compare
Kan fibrations of higher groupoids in $(\Cat,\covers)$ to the usual Kan
fibrations of simplicial sets. We first recall a few different notions
of ``surjection'' for sheaves.

\mbox{}
\begin{definition} \label{def:surj}
Let $(\Cat,\covers)$ be a category equipped with a pretopology. 
\begin{enumerate}

\item A morphism of sheaves $\phi \maps F \to G$ is a \textbf{local surjection} iff
for every object $C \in \Cat$ and every element $y \in G(C)$, there exists a
cover $U \xto{f} C$ such that the element $f^{\ast}y \in G(U)$ is in the image
of $\phi_U \maps F(U) \to G(U)$. 

\item If $\pts$ is a collection of jointly conservative points for
  $(\Cat,\covers)$, then a morphism of sheaves $\phi \maps F \to
G$ is a \textbf{stalkwise surjection} with respect to $\pts$ iff for
all $\ppt \in \pts$ the function
\[
\ppt(\phi) \maps \ppt F \to  \ppt G 
\]
is a surjection.
\end{enumerate}
\end{definition}

\begin{lemma}\label{prop:locsurj_epi} \label{prop:cover-sw-surj}
Let $(\Cat,\covers)$ be a category equipped with a pretopology.

\begin{enumerate}
\item If $\phi \maps F \to G$ is a local surjection of sheaves, then $\phi$ is an
epimorphism of sheaves.

\item If $\phi \maps F \to G$ is an epimorphism of sheaves, then $\phi$ is a
stalkwise surjection with respect to any collection of jointly
conservative points $\pts$ of $(\Cat,\covers)$.

\item Let $f \maps U \to C$ be a cover in $(\Cat,\covers)$. The induced morphism of sheaves
$f_\ast \maps \yon (U) \to \yon(C)$ is a stalkwise surjection with
respect to any collection of jointly conservative points.

\end{enumerate}

\end{lemma}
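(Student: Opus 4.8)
The three parts can be handled in sequence, with the third reducing cleanly to the first two. For (1), the plan is to verify the defining property of an epimorphism directly, exploiting the sheaf condition on the target of a test pair. Suppose $\psi_1,\psi_2 \maps G \to H$ are morphisms of sheaves with $\psi_1 \circ \phi = \psi_2 \circ \phi$. Given $C \in \Cat$ and $y \in G(C)$, local surjectivity supplies a cover $f \maps U \to C$ and an element $x \in F(U)$ with $\phi_U(x) = f^\ast y$. Applying $\psi_{1,U}$ and $\psi_{2,U}$ to this equality and using naturality of the $\psi_i$, one obtains $f^\ast \psi_{1,C}(y) = \psi_{1,U}(f^\ast y) = \psi_{2,U}(f^\ast y) = f^\ast \psi_{2,C}(y)$ in $H(U)$. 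Since $H$ is a sheaf and $f$ is a cover, the sheaf condition exhibits $H(C)$ as the equalizer of $H(U) \rightrightarrows H(U \times_C U)$, so the restriction map $H(C) \to H(U)$ is injective; hence $\psi_{1,C}(y) = \psi_{2,C}(y)$. As $C$ and $y$ are arbitrary, $\psi_1 = \psi_2$, and $\phi$ is an epimorphism.

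For (2), I would use the cokernel-pair characterization of epimorphisms together with the colimit-preservation property of points. Since $\Sh(\Cat)$ is cocomplete, the pushout (cokernel pair) $G \sqcup_F G$ exists, with coprojections $i_1,i_2 \maps G \to G \sqcup_F G$. In any category admitting such a pushout, a morphism is an epimorphism exactly when $i_1 = i_2$, so the hypothesis yields $i_1 = i_2$. Now fix $\ppt \in \pts$. Because $\ppt$ preserves small colimits, it carries this pushout to the pushout $\ppt G \sqcup_{\ppt F} \ppt G$ in $\Set$, with coprojections $\ppt(i_1), \ppt(i_2)$, and the equality $i_1 = i_2$ forces $\ppt(i_1) = \ppt(i_2)$. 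In $\Set$ the two coprojections of a cokernel pair coincide precisely when the original map is surjective, so $\ppt(\phi)$ is surjective. As $\ppt$ was arbitrary, $\phi$ is a stalkwise surjection.

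For (3), the plan is to show $f_\ast \maps \yon U \to \yon C$ is a local surjection and then invoke (1) and (2). Given $D \in \Cat$ and $y \in \yon(C)(D) = \hom_\Cat(D,C)$, form the pullback $V := D \times_C U$, which exists and whose projection $g \maps V \to D$ is a cover by the pretopology axioms. The second projection $\pi \maps V \to U$ satisfies $f \circ \pi = y \circ g = g^\ast y$, exhibiting $g^\ast y$ in the image of $(f_\ast)_V$. Thus $f_\ast$ is a local surjection, hence an epimorphism by (1), hence a stalkwise surjection by (2).

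The routine part is (1), a standard sheaf argument. The conceptual crux, and the step I expect to require the most care, is (2): the key is the interplay between the purely categorical description of epimorphisms via cokernel pairs and the fact that points preserve small colimits, together with the observation that in $\Set$ (unlike in a general topos) surjectivity is exactly detected by the equality of the two coprojections. The set-theoretic concerns about the large category $\Cat$ are already absorbed by the standing small-refinement assumption, so forming the cokernel pair in $\Sh(\Cat)$ and applying $\ppt$ to it is legitimate.
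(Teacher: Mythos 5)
Your proof is correct and follows essentially the same route as the paper's: part (1) is the same sheaf-condition argument, part (3) is the same pullback computation reducing to (1) and (2), and part (2) uses the same cokernel-pair characterization of epimorphisms together with colimit preservation by points (the paper phrases it as ``the square of $\phi$ against itself with identity legs is a pushout,'' which is equivalent to your ``the two coprojections coincide''). No gaps.
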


\begin{proof}
(1) Suppose $\alpha,\beta \maps G \to H$ are morphisms of sheaves such that
$\alpha \circ \phi = \beta \circ \phi$. We wish to show $\alpha=\beta$.
Let $C \in \Cat$ and $ y \in G(C)$. Let $U \xto{f} C$ be a cover such that there
exists $x \in F(U)$ such that $\phi_U(x) = f^{\ast}(y) \in G(U)$. Since
$\alpha_U \circ \phi_U = \beta_U \circ \phi_U$, we have 
\[
\alpha_U(f^{\ast}y) = \beta_U(f^{\ast}y) \in H(U).
\]
Hence, $f^{\ast}\alpha_C(y)=f^{\ast}\beta_C(y)$. Since $H$ is a sheaf, and $U$
is a cover, we conclude $\alpha_C(y)=\beta_C(y)$.

(2) A morphism $\phi \maps F \to G$ is an epimorphism if and only if the diagram
\[
\xymatrix{
F \ar[d]_{\phi} \ar[r]^{\phi} & G \ar[d]^{\id} \\
G \ar[r]^{\id} & G \\
}
\]
is a pushout. By definition, a point $\ppt \maps \Sh(\Cat) \to \Set$  preserves
small colimits. Hence, the proof follows.

(3) We will show $f_\ast$ is a local surjection of sheaves. Then (1)
and (2) will imply the result. Let $A \in \Cat$ and $g \in \yon(
C)(A)=\hom(A,C)$. By axioms of a pretopology, the pullback
\[
\xymatrix{
A \times_C U \ar[d]_{\pr_1} \ar[r]^{\pr_2} & U \ar[d]^{f} \\
A \ar[r]^{g} & C
}
\]
is a cover of $A$. Since $\pr_2 \in \yon (U)(A \times_C U)$, we can apply
$f_\ast$:
\[
f_\ast(\pr_2)=f \circ \pr_2 = g \circ \pr_1 = \pr_1^{\ast}(g) \in \yon(C)(A
\times_C U).
\]
Hence, $f_\ast$ is a local surjection.
\end{proof}

Now we will start connecting the homotopy theory of higher groupoids
in $(\Cat,\covers)$ with that of Kan simplicial sets.
The next proposition says that a Kan fibration of higher groupoids is
a ``stalkwise Kan fibration''  with respect to any collection of jointly conservative points.

\begin{proposition}\label{prop:kan}
If $f \maps X \to Y$ is a Kan fibration of higher groupoids in $(\Cat,
\covers)$ and $\pts$ is a collection of jointly
conservative points of $(\Cat,\covers)$, then for all $\ppt \in \pts$ the
map $\ppt  f \maps \ppt X \to \ppt Y$ is a Kan fibration of simplicial sets.
 \end{proposition}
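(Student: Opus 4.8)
The plan is to verify, for each $m \geq 1$ and $0 \leq j \leq m$, that the horn projection of the simplicial map $\ppt f \maps \ppt X \to \ppt Y$, namely
\[
(\ppt X)_m \to \Hom(\Horn{m}{j} \to \Simp{m}, \ppt X \to \ppt Y),
\]
is surjective as a function of sets. The strategy is to recognize this map as the image under $\ppt$ of the sheaf-level horn projection that appears in the Kan condition $\Kan(m,j)$, and then to invoke the fact that a point carries a cover to a surjection (Lemma \ref{prop:cover-sw-surj}(3)).

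First I would identify the target. By definition \eqref{eq:commuting_square_space}, the sheaf $\Hom(\Horn{m}{j} \to \Simp{m}, X \to Y)$ is the pullback $\Hom(\Horn{m}{j}, X) \times_{\Hom(\Horn{m}{j}, Y)} \Hom(\Simp{m}, Y)$, a finite limit of sheaves, so the finite-limit preservation of $\ppt$ sends it to the corresponding pullback of sets. Since $X$ and $Y$ are higher groupoids and $\Horn{m}{j}$, $\Simp{m}$ are finitely generated, Cor.\ \ref{cor:matching} supplies natural isomorphisms $\ppt \Hom(K,X) \cong \hom_{\sSet}(K, \ppt X)$, and likewise for $Y$. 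Assembling these across the pullback yields
\[
\ppt \Hom(\Horn{m}{j} \to \Simp{m}, X \to Y) \cong \Hom(\Horn{m}{j} \to \Simp{m}, \ppt X \to \ppt Y).
\]
Because the isomorphisms of Cor.\ \ref{cor:matching} are natural in the simplicial-set variable (applied here to the inclusion $\Horn{m}{j} \hookrightarrow \Simp{m}$ and to $f$), this identification intertwines the sheaf-level horn projection with the horn projection of $\ppt f$.

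It then remains to check surjectivity. Since $f$ is a Kan fibration, $\Kan(m,j)$ holds, so the horn projection $X_m \to \Hom(\Horn{m}{j} \to \Simp{m}, X \to Y)$ is a cover; both its source and (by the representability clause of $\Kan(m,j)$) its target are representable, and as the pretopology is subcanonical this map is $\yon$ of a cover in $\Cat$. Lemma \ref{prop:cover-sw-surj}(3) then shows that applying any $\ppt \in \pts$ produces a surjection of sets. Under the identifications of the previous paragraph, this surjection is exactly the horn projection of $\ppt f$, so $\ppt f$ satisfies the horn-filling condition for $(m,j)$; as $m,j$ were arbitrary, $\ppt f$ is a Kan fibration of simplicial sets.

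The only genuine subtlety I anticipate lies in the second step: confirming that the natural isomorphism built from Cor.\ \ref{cor:matching} together with finite-limit preservation truly matches the two horn projections as maps, rather than merely identifying their sources and targets in isolation. This reduces to the naturality of Cor.\ \ref{cor:matching} in its simplicial argument, which is routine but should be recorded carefully so that the cover of the sheaf-level projection is transported to the surjectivity of the simplicial-level projection. Note that the Kan property of $\ppt X$ and $\ppt Y$ themselves is not needed here, as the fibration condition for $\ppt f$ is relative.
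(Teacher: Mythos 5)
Your proposal is correct and follows essentially the same route as the paper's proof: identify the target of the horn projection via finite-limit preservation of $\ppt$ together with Cor.\ \ref{cor:matching}, and then convert the cover condition from $\Kan(m,j)$ into a surjection of sets using Lemma \ref{prop:cover-sw-surj}. Your explicit attention to the naturality of the identifications and to the representability of the target is a careful recording of steps the paper leaves implicit, but the argument is the same.
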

\begin{proof}
Since $f \maps X \to Y$ is a Kan fibration, the horn projections
 \[ 
\Hom(\Simp{n}, X) \xrightarrow{(\iota^\ast_{n,j},f_\ast)} \Hom(\Horn{n}{j} \to
 \Simp{n}, X\to Y) 
\] 
are covers for all $n\ge 1$ and $0\le j \le
 n$. Hence, Prop.\ \ref{prop:locsurj_epi} implies that
for each point $\ppt \in \pts$
\begin{equation} \label{eq:prop_kan1}
\ppt \Hom(\Simp{n}, X) \xrightarrow{\ppt (\iota^\ast_{n,j},f_\ast)}
\ppt \Hom(\Horn{n}{j} \to \Simp{n}, X\to Y)
 \end{equation}
 is a surjection.  Corollary
 \ref{cor:matching} implies that $\ppt (\Hom(\Simp{n}, X)) \cong
 \hom_{\sSet}(\Simp{n}, \ppt X)$, and since each $\ppt$ preserves finite
 limits, we have
\[
\ppt ( \Hom(\Horn{n}{j} \to \Simp{n},
 X\to Y)) \cong 
  \hom_{\sSet}(\Horn{n}{j}, \ppt X) \times_{\hom_{\sSet}(\Horn{n}{j}, \ppt Y)} \hom_{\sSet}(\Simp{n}, \ppt Y).
\]
Combining these natural isomorphisms with \eqref{eq:prop_kan1} implies
that $\ppt X \xrightarrow{\ppt f} \ppt Y$ is a Kan fibration of
simplicial sets.
\end{proof}

\begin{cor}\label{cor:kan}
If $X$ is a higher groupoid in $(\Cat,
\covers)$ and $\pts$ is a collection of jointly
conservative points for $(\Cat,\covers)$, then the simplicial set $\ppt X$
is a Kan complex for each $\ppt \in \pts$.
\end{cor}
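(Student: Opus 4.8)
The plan is to obtain this as an immediate consequence of Prop.\ \ref{prop:kan}, applied to the canonical map $X \to \ast$. The first step is to check that this map is genuinely a Kan fibration between higher groupoids. On the one hand, the terminal object $\ast$, viewed as a constant simplicial object, is trivially a higher groupoid: since $\Hom(K,\ast)$ is the terminal sheaf for every simplicial set $K$, each horn projection for $\ast$ is an isomorphism and hence satisfies every (unique) Kan condition. On the other hand, the defining conditions of a higher groupoid (Def.\ \ref{def:ngpd}) show that $X \to \ast$ satisfies $\Kan(m,j)$ for all $m \ge 1$ and $0 \le j \le m$: for $1 \le m \le n$ this holds by hypothesis, while for $m > n$ the stronger condition $\Kan!(m,j)$ holds, and an isomorphism is a cover by the first axiom of a pretopology (Def.\ \ref{def:pretopology}). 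Thus $X \to \ast$ is a Kan fibration of higher groupoids.

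Next I would apply Prop.\ \ref{prop:kan} to the Kan fibration $f \maps X \to \ast$, which yields that $\ppt f \maps \ppt X \to \ppt \ast$ is a Kan fibration of simplicial sets for each $\ppt \in \pts$. It then suffices to identify the target $\ppt \ast$ with the terminal simplicial set $\Simp{0}$. For this I would use that $\ppt \ast_n = \ppt(\yon \ast)$, where $\ast_n = \ast$ is terminal in $\Cat$ for every $n$; since $\yon$ preserves limits and $\ppt$ preserves finite limits, and the terminal object is the empty limit, the set $\ppt(\yon \ast)$ is a single point in every simplicial degree, whence $\ppt \ast \cong \Simp{0}$.

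Finally, I would invoke the standard characterization that a simplicial set $K$ is a Kan complex exactly when the unique map $K \to \Simp{0}$ is a Kan fibration. Applying this to $\ppt X \to \ppt \ast \cong \Simp{0}$ gives that $\ppt X$ is a Kan complex, as desired. The argument is entirely formal once Prop.\ \ref{prop:kan} is in hand; the only subtlety worth flagging is that the unique Kan conditions defining a higher groupoid in high degrees imply the ordinary Kan conditions, which is precisely what licenses regarding $X \to \ast$ as a bona fide Kan fibration rather than merely the structure map of a higher groupoid.
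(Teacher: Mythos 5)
Your proposal is correct and is exactly the argument the paper intends: the corollary is stated as an immediate consequence of Prop.\ \ref{prop:kan} applied to the Kan fibration $X \to \ast$, using that $\ppt$ preserves the terminal object. The details you supply (that $\Kan!(m,j)$ implies $\Kan(m,j)$ because isomorphisms are covers, and that $\ppt \ast \cong \Simp{0}$) are the right ones and match the paper's definitions.
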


\section{Stalkwise weak equivalences} \label{sec:stalk_weqs} In this
section, we introduce the morphisms between higher groupoids which will
eventually become the weak equivalences in an incomplete category of
fibrant objects. These stalkwise weak equivalences are a natural choice for weak equivalences
between simplicial sheaves in diffeo-geometric contexts, e.g.\
\cite[Def.\ 3.4.6]{Dugger:1999}.
 
\begin{definition} \label{def:stalk_weq}
Let $(\Cat,\covers)$ be a category equipped with a pretopology and a
collection of jointly conservative points $\pts$. A morphism $f \maps X
\to Y$ of higher groupoids in $(\Cat,\covers)$ is a \textbf{stalkwise
  weak equivalence} iff $\ppt f \maps \ppt X \to \ppt Y$ is a weak
homotopy equivalence of simplicial sets for all $\ppt \in \pts$.
\end{definition}

\subsection{Simplicial homotopy groups for higher groups} 
\label{sec:simplicial-homotopy-groups}
Following Joyal \cite{Joyal:1984}, Henriques gave a definition of simplicial homotopy group
sheaves for $n$-groups  in a category
equipped with a pretopology. (See also related work of Jardine \cite{Jardine:1983}.) 
We show here that a morphism $f \maps X
\to Y$ of $n$-groups which induces an isomorphism between the
simplicial homotopy groups is a stalkwise weak equivalence. We will
need this result for integrating $L_\infty$ quasi-isomorphisms in Section \ref{sec:int_exact}.

Let $S^n:=\Delta[n] / \del \Delta[n]$ be the simplicial
$n$-sphere, and let $\cyl(S^n)$ denote the simplicial set
\begin{equation} \label{eq:cyl}
\cyl (S^n) := \Delta[n+1] \bigg/ \Bigl( \bigcup_{i=2}^{n+1} F^i \cup
( F^0 \cap F^1) \Bigr),
\end{equation}
where $F^i$ is the simplicial set generated by the $i$th face of
$\Delta[n+1]$. 
There are two inclusions $i_0,i_1 \maps S^n \to \cyl(S^n)$ induced by the maps
$\Delta[n] \to F^0$ and $\Delta[n] \to F^1$, which are homotopy
equivalences as well as homotopic to one another. 
Let $X$ be a reduced Kan simplicial set, i.e.\ $X_0=\ast$. Since we
have a unique basepoint, for $n \geq 1$ 
we can define the \textbf{$n$th-homotopy group} of
$X$ as the coequalizer
\begin{equation}\label{eq:pi_n-sSet}
\pi_n(X) = \coeq \Bigl ( \hom_{\sSet}(\cyl(S^n), X) \rightrightarrows
\hom_{\sSet}(S^n,X) \Bigr).
\end{equation}
In \cite{Henriques:2008}, Eq.\ \eqref{eq:pi_n-sSet} is used to provide
an analogous definition of simplicial homotopy groups for 
higher groups in $(\Cat,\covers)$.  The suitable analog of 
$\hom_{\sSet}(K, -)$ in \eqref{eq:pi_n-sSet} is the matching object
described in Sec. \ref{subsec:matching}.

\begin{definition}[Def.\ 3.1 \cite{Henriques:2008}] \label{def:andre_pi_n}
Let $X$ be an $k$-group in $(\Cat,\covers)$. Let $n \geq1$ and let $S^n$ be
the simplicial $n$-sphere, and $\cyl(S^n)$ the simplicial set
\eqref{eq:cyl}.
The \textbf{simplicial homotopy groups} $\pi^{\spl}_n(X)$ are the
sheaves 
%\begin{equation} \label{eq:andre_pi_n}
\[
\pi^{\spl}_n(X) := \coeq \Bigl ( \Hom(\cyl(S^n), X) \rightrightarrows
\Hom(S^n,X) \Bigr),
\]
%\end{equation}
where $\Hom(-,X)$ is the sheaf \eqref{eq:matchobj_andre}. 
\end{definition}

\begin{proposition}\label{thm:homotopy_grps}
 % Let $(\Cat,\covers,\pts)$ be a category equipped
 % with a pretopology and a collection of jointly conservative points. 
  Let $X$ be an $n$-group in $(\Cat,\covers)$. Then for all points
  $\ppt: \Sh(\Cat) \to \Set$, there is a unique natural isomorphism of groups
%\begin{equation} \label{eq:homotopy_grps_iso}
\[
\phi_X \maps \ppt \pi^{\spl}_{\ast}(X) \xto{\cong} \pi_{\ast}(\ppt X)
\]
%\end{equation}
\end{proposition}

\begin{proof}
Since $\ppt$ preserves colimits, we have a natural isomorphism
%\begin{equation} \label{eq:homotopy_grps1}
\[
\ppt \pi^{\spl}_{k}(X) \cong \coeq \Bigl ( \ppt \Hom(\cyl(S^k), X) \rightrightarrows
\ppt \Hom(S^k,X) \Bigr).
\]
%\end{equation}
Since $S^n$ and $\cyl(S^n)$ are finitely generated, Cor.\
\ref{cor:matching} implies that there are natural isomorphisms
\begin{align*}
\ppt \Hom(S^n,X) & \cong \hom_{\sSet}(S^n,\ppt X), \\
 \ppt \Hom(\cyl(S^n),X) & \cong  \hom_{\sSet}(\cyl(S^n),\ppt X).
\end{align*}
Combining these natural isomorphisms and using the fact that
$\Hom(-,X)$ is functorial (Prop.\ \ref{prop:matchobj}), we obtain
a natural isomorphism
\[
\phi_X \maps \ppt \pi^{\spl}_{k}(X) \xto{\cong} \coeq \Bigl (
\hom_{\sSet}(\cyl(S^k), \ppt X) \rightrightarrows \hom_{\sSet}(S^k, \ppt X) \Bigr)
=\pi_k(\ppt X).
\]
\end{proof}

\subsection{Stalkwise acyclic fibrations}
Recall that an acyclic fibration of simplicial sets is a morphism of
simplicial sets which is both a weak equivalence and a
fibration. Equivalently, $X \to Y$ is an acyclic fibration of
simplicial sets iff the boundary projections
\begin{equation} \label{eq:acyc_sset}
\hom_{\sSet}(\Simp{n}, X) \to \hom_{\sSet}(\partial \Simp{n}, X)
\times_{\hom_{\sSet}(\partial \Simp{n}, Y)} \hom_{\sSet}(\Simp{n}, Y)
\end{equation}
are surjective for all $n \geq 0$. In Prop.\
\ref{prop:kan} we showed that a Kan fibration $X \to Y$ between higher groupoids
in $(\Cat,\covers)$ equipped with a jointly conservative collection of
points $\pts$ is a stalkwise Kan fibration, i.e.\ induces a Kan
fibration $\ppt X \to \ppt Y$ for all $\ppt \in \pts$. So clearly a Kan fibration which is also a
stalkwise weak equivalence is obviously a ``stalkwise acyclic
fibration''. In fact, more is true, as the following proposition
shows:

\begin{proposition}\label{prop:explicit-acyclic}
 Let $(\Cat, \covers, \pts)$ be a category equipped with a
  pretopology and a collection of jointly conservative points. A morphism $f\maps X\to Y$ of
  higher groupoids in $(\Cat, \covers)$ is both a stalkwise Kan
  fibration and a stalkwise weak equivalence (i.e., a stalkwise acyclic fibration)
if and only if the boundary projections
\[
\Hom(\Simp{n}, X) \xrightarrow{(\jmath^\ast_n,f_\ast)} 
\Hom(\partial\Simp{n}\xto{\jmath_n} \Simp{n},X\xto{f} Y)
\]
are stalkwise surjections for $n \geq 0$.
\end{proposition}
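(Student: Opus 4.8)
The plan is to apply the point functors $\ppt \in \pts$ in order to reduce the entire statement to the standard characterization of acyclic fibrations of simplicial sets recorded in \eqref{eq:acyc_sset}. The essential computation I would establish first is that, for every $\ppt \in \pts$ and every $n \geq 0$, applying $\ppt$ to the boundary projection
\[
\Hom(\Simp{n}, X) \xrightarrow{(\jmath^\ast_n,f_\ast)} \Hom(\partial\Simp{n}\to \Simp{n}, X\to Y)
\]
yields, up to natural isomorphism, precisely the simplicial boundary projection \eqref{eq:acyc_sset} for the induced map $\ppt f \maps \ppt X \to \ppt Y$. Once this identification is in place, both directions of the equivalence follow formally.

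To carry out the identification, recall that by \eqref{eq:commuting_square_space} the target sheaf is the pullback $\Hom(\partial\Simp{n}, X) \times_{\Hom(\partial\Simp{n}, Y)} \Hom(\Simp{n}, Y)$. Since every point preserves finite limits (Def.\ \ref{def:points}), applying $\ppt$ carries this to the pullback of the images of the three terms. The simplicial sets $\partial\Simp{n}$ and $\Simp{n}$ are finitely generated and $X, Y$ are higher groupoids, so Cor.\ \ref{cor:matching} applies to each term and identifies $\ppt$ of the target with $\hom_{\sSet}(\partial\Simp{n}, \ppt X) \times_{\hom_{\sSet}(\partial\Simp{n}, \ppt Y)} \hom_{\sSet}(\Simp{n}, \ppt Y)$, while $\ppt\Hom(\Simp{n}, X) \cong \hom_{\sSet}(\Simp{n}, \ppt X)$. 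Because the isomorphisms of Cor.\ \ref{cor:matching} are natural in the simplicial variable and in the simplicial object, $\ppt(\jmath^\ast_n)$ becomes simplicial restriction along $\jmath_n$ and $\ppt(f_\ast)$ becomes postcomposition with $\ppt f$; hence $\ppt(\jmath^\ast_n, f_\ast)$ is identified with the boundary projection \eqref{eq:acyc_sset} for $\ppt f$.

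With this in hand, I would conclude as follows. By the definition of stalkwise surjection (Def.\ \ref{def:surj}), the displayed boundary projections are stalkwise surjections for all $n \geq 0$ if and only if, for every $\ppt$ and every $n$, the simplicial boundary projection \eqref{eq:acyc_sset} for $\ppt f$ is surjective. By the characterization recalled at the start of this subsection, the latter holds for all $n$ exactly when $\ppt f$ is an acyclic fibration of simplicial sets, i.e.\ simultaneously a Kan fibration and a weak homotopy equivalence. Quantifying over all $\ppt \in \pts$, this says precisely that $f$ is both a stalkwise Kan fibration and a stalkwise weak equivalence (Def.\ \ref{def:stalk_weq}), which is the definition of a stalkwise acyclic fibration.

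The whole argument is bookkeeping once the apparatus of Sec.\ \ref{sec:points} is available, so I do not expect a genuine obstacle. The only step requiring real care is the final sentence of the key computation: one must check that $\ppt$ applied to the structure map $(\jmath^\ast_n, f_\ast)$ \emph{coincides} with the simplicial boundary projection under the isomorphisms, not merely that source and target agree as sets. This compatibility is exactly what the naturality clause of Cor.\ \ref{cor:matching} provides, so I would simply invoke it rather than recompute.
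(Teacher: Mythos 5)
Your proposal is correct and follows essentially the same route as the paper's own proof: both identify $\ppt$ of the boundary projection with the simplicial boundary projection \eqref{eq:acyc_sset} for $\ppt f$ via Cor.\ \ref{cor:matching} and the finite-limit preservation of points, then conclude by the standard characterization of acyclic fibrations of simplicial sets. Your extra remark about checking naturality of the identification is a fair point of care, but it matches what the paper implicitly relies on.
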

\begin{proof}
Recall that the sheaf $\Hom(\partial\Simp{n}\to \Simp{n},X\to Y)$ 
\eqref{eq:commuting_square_space} is the pullback
\[
\Hom(\partial \Simp{n}, X)
\times_{\Hom(\partial \Simp{n}, Y)} \Hom(\Simp{n}, Y).
\]
Corollary \ref{cor:matching} implies that $\ppt (\Hom(\Simp{n}, X)) \cong
 \hom_{\sSet}(\Simp{n}, \ppt X)$ for all $\ppt \in \pts$, and since each $\ppt$ preserves finite
 limits, we have
\[
\begin{split}
\ppt \bigl( \Hom(\partial \Simp{n}, X) & \times_{\Hom(\partial \Simp{n},
  Y)}  \Hom(\Simp{n}, Y) \bigr)  \\
& \cong \hom_{\sSet}(\partial \Simp{n}, \ppt X) \times_{\hom_{\sSet}(\partial \Simp{n},
  \ppt Y)} \hom_{\sSet}(\Simp{n}, \ppt Y).
\end{split}
\]
Hence, by comparison with \eqref{eq:acyc_sset}, we see that $\ppt f
\maps \ppt X \to \ppt Y$ is an acyclic fibration of simplicial sets if and
only if $\ppt  (\jmath^\ast_n,f_\ast)$ is surjective, i.e.\ $(\jmath^\ast_n,f_\ast)$ is a stalkwise surjection.
\end{proof}

\section{Hypercovers} \label{sec:hypercovers}
Hypercovers were introduced in \cite{AGV:1972} and subsequently have been used
throughout the homotopy theory of simplicial sheaves, e.g.\ \cite{Brown:1973}. 
Hypercovers for Lie $n$-groupoids play an important role in the work
of Zhu \cite{Zhu:2009a} and Wolfson \cite{Wolfson:2016}.
Also, the acyclic fibrations in the Behrend--Getzler
CFO structure for $n$-groupoids objects in a descent category
are hypercovers.

\begin{definition}
  \label{def:equivalence}
Given a category and pretopology \((\Cat, \covers)\), a morphism
  \(f\colon X\to Y\) of simplicial objects in \(\Cat\) is a  \textbf{\hypercover} iff it satisfies
  the condition \(\Acyc(m)\) for all \(0\le m\),  which means the sheaf 
  \(\Hom(\partial\Simp{m}\to \Simp{m},X\to Y)\) is representable and
  the canonical boundary projection: 
  \begin{equation}
    \label{eq:acyclic-fibration}
    \Hom(\Simp{m}, X) \xrightarrow{q_m:=(\jmath^\ast_m,f_\ast)}
    \Hom(\partial\Simp{m}\xto{\jmath_m} \Simp{m},X \xto{f} Y),
  \end{equation}
  is a cover  in \(\covers\).  For \(m=0\),
  \(\Hom(\partial\Simp{0}\to \Simp{0},X\to Y)\defeq Y_0\) by definition.
\end{definition}

\begin{remark}
\mbox{}
\begin{enumerate}\label{rmk:hypercover}
\item   As shown in \cite[Lemma 2.4]{Zhu:2009a}, if $Y$ is a
higher groupoid in \((\Cat, \covers)\),  and $f \colon X \to Y$
satisfies \eqref{eq:acyclic-fibration} for $l<m$, then
$\Hom(\partial\Simp{m}\xto{\jmath_m} \Simp{m},X \xto{f} Y) $ is
    automatically representable.                           %initial condition of an induction to
                                %be held, the collapsible set in the target
                                %need to be representable.
\item A hypercover is a Kan fibration
  automatically, since maps in \(\{ \Horn{m}{j} \to
  \Simp{m}\mid m > 0 , 0\le j\le m \}\) can be reconstructed
  as pushouts of ones in \(\{ \partial \Simp{m} \to
  \Simp{m}\mid m \geq 0 \}\).
\end{enumerate}
\end{remark}

The $\Acyc(m)$ condition is obviously analogous to the previously discussed
characterization \eqref{eq:acyc_sset} of acyclic fibrations of
simplicial sets. Indeed, every hypercover of higher groupoids is a
stalkwise acyclic fibration in the sense of Prop.\
\ref{prop:explicit-acyclic}:

\begin{cor} \label{cor:hyper_sw_acyc}
If a morphism of higher groupoids $f \maps X \to Y$  in
$(\Cat,\covers,\pts)$ is a hypercover, then it is Kan fibration and
a stalkwise weak equivalence.
\end{cor}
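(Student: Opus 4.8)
The plan is to obtain the corollary by assembling previously established facts; no genuinely new argument is required. The statement makes two assertions — that $f$ is a Kan fibration and that $f$ is a stalkwise weak equivalence — and I would treat them separately. The Kan fibration claim requires nothing beyond invoking Remark \ref{rmk:hypercover}(2): each horn inclusion $\Horn{m}{j} \to \Simp{m}$ is a pushout of boundary inclusions $\partial\Simp{k} \to \Simp{k}$, so the conditions $\Acyc(m)$ defining a hypercover force the Kan conditions $\Kan(m,j)$, and hence $f$ is automatically a Kan fibration.

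For the stalkwise weak equivalence claim, I would proceed as follows. By the definition of a hypercover (Def.\ \ref{def:equivalence}), for every $m \ge 0$ the sheaf $\Hom(\partial\Simp{m} \to \Simp{m}, X \to Y)$ is representable and the boundary projection $q_m$ of \eqref{eq:acyclic-fibration} is a cover in $\covers$. Since both its source $\Hom(\Simp{m}, X) = X_m$ and its target are representable, $q_m$ is a cover between representable sheaves, so Lemma \ref{prop:cover-sw-surj}(3) shows that $q_m$ is a stalkwise surjection with respect to $\pts$. Having verified that the boundary projections are stalkwise surjections for all $m \ge 0$, I would then apply Prop.\ \ref{prop:explicit-acyclic} to conclude that $f$ is a stalkwise acyclic fibration, and in particular a stalkwise weak equivalence.

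The only point requiring any care — and it is hardly a real obstacle — is the translation of ``cover in $\covers$'' into ``stalkwise surjection of sheaves.'' This is precisely the chain of implications packaged in Lemma \ref{prop:cover-sw-surj}: a cover induces a local surjection, a local surjection is an epimorphism of sheaves, and an epimorphism is carried to a surjection by each point, since points preserve colimits. The representability clauses built into the hypercover definition are exactly what guarantee that part (3) of that lemma applies verbatim to each $q_m$. Thus the corollary follows purely by combining Remark \ref{rmk:hypercover}(2), Lemma \ref{prop:cover-sw-surj}, and Prop.\ \ref{prop:explicit-acyclic}.
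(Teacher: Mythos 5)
Your proposal is correct and follows the paper's own proof essentially verbatim: Remark \ref{rmk:hypercover}(2) for the Kan fibration claim, Lemma \ref{prop:locsurj_epi} to convert the covering boundary projections into stalkwise surjections, and Prop.\ \ref{prop:explicit-acyclic} to conclude. The extra unpacking of the chain cover $\Rightarrow$ local surjection $\Rightarrow$ epimorphism $\Rightarrow$ stalkwise surjection is accurate but already packaged in that lemma.
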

\begin{proof}
Remark \ref{rmk:hypercover} implies that $f \maps X \to Y$ is a Kan fibration.
Since the boundary projections \eqref {eq:acyclic-fibration} are covers,
Prop.\ \ref{prop:locsurj_epi} implies that they are stalkwise surjections with respect
to the points $\pts$. Hence, Prop.\ \ref{prop:explicit-acyclic} implies that $f
\maps X \to Y$ is a stalkwise acyclic fibration, so in particular, it
is a stalkwise weak equivalence.
\end{proof}

The natural question to ask is whether the converse of Cor.\
\ref{cor:hyper_sw_acyc} is true. That is, is every morphism of higher groupoids
which is both a Kan fibration and a stalkwise weak equivalence
necessarily a hypercover? The answer will be yes, if our pretopology
satisfies some additional conditions. Our main example,
the category $\Mfd$ of Banach manifolds 
equipped with the surjective submersion pretopology $\covers_{\subm}$
and the collection of points $\pts_{\Ban}$, satisfies these additional conditions.

\subsection{Locally stalkwise pretopologies}

\begin{definition}\label{defi:cover-from-sheaf}
Let $(\Cat, \covers, \pts)$ be a category equipped with a pretopology
and a jointly conservative collection of points. Let $X \in \Cat$.
A morphism $F \xto{g} \yon X$ of sheaves is a \textbf{local
stalkwise cover} iff there exists an object $Y \in \Cat$ and a stalkwise
surjection (Def.\ \ref{def:surj}) $\yon Y \xto{f} F$ such that the
composition $g \circ f \maps \yon Y \to \yon X$ is a cover in $(\Cat,\covers)$.
\end{definition}

\begin{definition} \label{assump:cover} \label{def:LSW_covers}
A pretopology $\covers$ on a category $\Cat$ is a \textbf{locally
  stalkwise pretopology} iff there exists a jointly conservative
collection of points $\pts$ of $(\Cat,\covers)$ such that:
\begin{enumerate}
\item (2-out-of-3) If $U\xrightarrow{f} V \xrightarrow{g} W$ are
  morphisms in $\Cat$, and $g\circ f$ is a cover and $\yon(f)$ is a stalkwise surjection in
  $\Sh(\Cat)$ with respect to $\pts$, then $g$ is also a cover,
\\  
\item (locality of covers) If
  $W\xrightarrow{q} V$ and $U\xrightarrow{p} V$ are morphisms in
  $\Cat$, $\yon(q)$ is a stalkwise surjection with respect to
  $\pts$, and the base change $U\times_{V} W
  \xrightarrow{\tilde{p}} W$ is a local stalkwise cover,
  then $p$ is a cover, and therefore $U\times_{V} W$ is representable.
\end{enumerate}
\end{definition}

We have several remarks about Def.\ \ref{def:LSW_covers}:

\begin{remark} \label{rk:2-out-of-3}
\mbox{}
\begin{enumerate}

\item It is not really precise to call the first condition in Def.\ \ref{def:LSW_covers}:
``2-out-of-3'', because
$g\circ f$ being a cover and $g$ being a cover will
not imply anything (just like surjective maps for sets). 
We note that our ``2-out-of-3'' property holds
automatically for stalkwise surjections in $\Sh(\Cat)$. 

\item If $(M,\covers,\pts)$ is a category equipped with a locally
  stalkwise pretopology then a local stalkwise cover $F \xto{g} \yon
  X$ of $X$ is a cover in $\covers$ whenever $F$ is representable.
\end{enumerate}
\end{remark}

We now can give a converse to Cor.\ \ref{cor:hyper_sw_acyc}. In fact,
we have:

\begin{proposition} \label{prop:hypercover}
Let $(\Cat,\covers)$ be a category equipped with a locally stalkwise
pretopology with respect to a jointly conservative collection of points
$\pts$, and let $f \maps X \to Y$ be a morphism of higher
groupoids in $(\Cat,\covers)$. The following are equivalent:
\begin{enumerate}
\item $f \maps X \to Y$ is a Kan fibration and a stalkwise weak equivalence with
  respect to the collection of points $\pts$.
\item $f \maps X \to Y$ is a Kan fibration and a stalkwise weak equivalence with
  respect to any jointly conservative collection of points of $(\Cat,\covers)$.
\item $f \maps X \to Y$ is a hypercover.
\end{enumerate}
\end{proposition}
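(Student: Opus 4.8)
The plan is to prove the cycle of implications $(3)\Rightarrow(2)\Rightarrow(1)\Rightarrow(3)$. The implication $(2)\Rightarrow(1)$ is immediate, since $\pts$ is itself one of the jointly conservative collections quantified over in $(2)$. For $(3)\Rightarrow(2)$ I would argue that a hypercover is a Kan fibration (Remark~\ref{rmk:hypercover}), and that its boundary projections $q_m$ are covers, hence stalkwise surjections with respect to \emph{any} jointly conservative collection of points by Prop.~\ref{prop:locsurj_epi}; Prop.~\ref{prop:explicit-acyclic} then makes $f$ a stalkwise acyclic fibration, in particular a stalkwise weak equivalence, with respect to every such collection. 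This is essentially Cor.~\ref{cor:hyper_sw_acyc}, observing that its proof is insensitive to the choice of points.

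The entire content therefore lies in $(1)\Rightarrow(3)$. Assuming $(1)$, I would first combine Prop.~\ref{prop:kan} (a Kan fibration is a stalkwise Kan fibration) with the stalkwise weak equivalence hypothesis to see that $\ppt f$ is an acyclic Kan fibration of simplicial sets for every $\ppt\in\pts$. Prop.~\ref{prop:explicit-acyclic} then shows that each boundary projection
\[
q_m \maps \Hom(\Simp{m},X) \longrightarrow \Hom(\partial\Simp{m}\to\Simp{m},X\xto{f}Y)
\]
is a stalkwise surjection. Since $X$ is a higher groupoid, $\Hom(\Simp{m},X)=X_m$ is representable, and since $Y$ is a higher groupoid, Remark~\ref{rmk:hypercover} guarantees the target (abbreviate it $V_m$) is representable as well. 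Thus each $q_m$ is a stalkwise surjection between representable sheaves, and it remains only to upgrade every $q_m$ to a cover: once this is done, $f$ satisfies $\Acyc(m)$ for all $m$ and is a hypercover by Def.~\ref{def:equivalence}.

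To perform the upgrade I would apply the locality axiom Def.~\ref{def:LSW_covers}(2) with $U=W=X_m$, $V=V_m$, and both $p$ and the stalkwise surjection $q$ taken to be $q_m$ itself. The task then reduces to showing that the base change $X_m\times_{V_m}X_m \to X_m$ is a \emph{local stalkwise cover} in the sense of Def.~\ref{defi:cover-from-sheaf}; crucially, this is a statement about horn/boundary filling and does not presuppose that $q_m$ is a cover, so the application is not circular. Here the higher-groupoid structure of $X$ enters: the sheaf $X_m\times_{V_m}X_m$ classifies pairs of $m$-simplices of $X$ that share a common boundary and a common image in $Y_m$, which is exactly the data prescribed on the two ends and the sides of the prism $\Simp{m}\times\Simp{1}$. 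Because the relative filling of this prism is a collapsible extension, Lemmas~\ref{lem:covers_in_groupoid} and~\ref{lem:representable} produce a representable object $Z$ (the relative matching object of $f$ over the prism) together with a cover $\yon Z\to X_m$ given by restriction to one end, and a canonical map $\yon Z\to X_m\times_{V_m}X_m$; the latter is a stalkwise surjection precisely because $\ppt f$ is an acyclic fibration and the prism can be filled stalkwise. This exhibits the desired local stalkwise cover, so Def.~\ref{def:LSW_covers}(2) forces $q_m$ to be a cover (and $X_m\times_{V_m}X_m$ to be representable).

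I expect the main obstacle to be exactly this last step: identifying the fibre product $X_m\times_{V_m}X_m$ with a relative matching object over a collapsible extension, verifying that the prism $\Simp{m}\times\Simp{1}$ filled rel its two ends and sides is collapsible, and checking that restriction to a single end remains a cover. The case $m=0$ is the degenerate instance in which the prism collapses to one edge, so that any two vertices lying in a common fibre of $f$ are joined by a $1$-simplex of $X$; the general prism construction is the technical heart of the argument and is where the hypothesis that $\covers$ is a locally stalkwise pretopology is genuinely used.
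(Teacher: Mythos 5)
Your treatment of $(3)\Rightarrow(2)\Rightarrow(1)$ is correct and matches the paper (Cor.~\ref{cor:hyper_sw_acyc} together with Lemma~\ref{prop:locsurj_epi}), and you have correctly located all of the difficulty of $(1)\Rightarrow(3)$ in upgrading the stalkwise surjections $q_m$ to covers via the locality axiom Def.~\ref{def:LSW_covers}(2). But your particular application of that axiom has a genuine gap. Taking $p=q=q_m$ forces you to produce a representable $Z$ with a stalkwise surjection $\yon Z \to X_m\times_{V_m}X_m$ whose composite to $X_m$ is a cover, where $V_m:=\Hom(\partial\Simp{m}\to\Simp{m},X\to Y)$, so that $X_m\times_{V_m}X_m$ classifies pairs of $m$-simplices with \emph{equal} boundaries and \emph{equal} images in $Y_m$. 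Your proposed $Z$, a ``relative matching object of $f$ over the prism,'' cannot be obtained from Lemmas~\ref{lem:covers_in_groupoid} and~\ref{lem:representable}: those lemmas apply to inclusions of simplicial \emph{subsets}, whereas encoding the constraints ``the two ends have the same boundary'' and ``the image in $Y$ is constant along $\Simp{1}$'' requires mapping out of \emph{quotient} complexes (prisms degenerate on $\partial\Simp{m}\times\Simp{1}$ and in the $Y$-direction), for which the paper establishes no representability or covering results. The identification of $X_m\times_{V_m}X_m$ with ``data on the two ends and sides of the prism'' is not correct as stated: the sides of an honest prism carry a homotopy of boundaries, not an equality of boundaries. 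Already for $m=0$ the natural witness would be something like $X_1\times_{Y_1}Y_0$ (edges of $X$ with degenerate image in $Y$), whose representability is not available. You flag this step as the main obstacle, but it is not a routine verification; it is where the argument breaks.

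The paper's proof sidesteps this by choosing a different pullback square: for $k\ge 1$, the sheaf $\Hom(\partial\Simp{k}\to\Simp{k},X\to Y)$ is exhibited as the base change of $q_{k-1}$ along the map $\Hom(\Horn{k}{0}\to\Simp{k},X\to Y)\to\Hom(\partial\Simp{k-1}\to\Simp{k-1},X\to Y)$ induced by $d_0$. With this choice the witness for the local-stalkwise-cover condition is simply $q_k$ itself: it is a stalkwise surjection by Prop.~\ref{prop:explicit-acyclic}, and its composite with the projection to the horn space is the horn projection $\Kan(k,0)$, a cover because $f$ is a Kan fibration. One then checks that the relevant vertical map is a stalkwise surjection using that face maps of higher groupoids are covers, and runs an induction on $k$ which simultaneously produces the cover property of $q_{k-1}$ and the representability of $\Hom(\partial\Simp{k}\to\Simp{k},X\to Y)$. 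If you wish to keep your ``diagonal'' square, you would first need to develop representability of matching objects for the relevant quotient complexes (in the spirit of Def.~\ref{def:andre_pi_n}), which is considerably more machinery than the dimension-shifting argument requires.
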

\begin{proof}
(3)$\Rightarrow$(2) is the content of Cor.\ \ref{cor:hyper_sw_acyc}, and
(2) $\Rightarrow$(1) is obvious. So we focus on  (1)$\Rightarrow$(3).

First, let $k \geq 1$ and consider the following pullback diagram in $\Sh(\Cat)$
%$\tilde{\Cat}$
\begin{equation}\label{eq:bd-horn}
\xymatrix{
\Hom(\Simp{k}, X) \ar[r]^-{q_k} &\Hom(\partial \Simp{k} \to \Simp{k}, X\to Y) \ar[d]^{pr_0} \ar[r]^{\tilde{q}_{k-1}} &
\Hom(\Horn{k}{0} \to \Simp{k}, X\to Y)  \ar[d]^{d_0^Y \times
  (d_0^X)^{\times k}} \\
&\Hom(\Simp{k-1}, X) \ar[r]^-{q_{k-1}} & \Hom(\partial \Simp{k-1} \to \Simp{k-1},
X\to Y). }
\end{equation} 
The projection $\Hom(\partial \Simp{k} \to \Simp{k}, X\to Y) \to
\Hom(\Simp{k-1}, X)=X_{k-1}$ is induced by $d^X_0$. 
The composition $\tilde{q}_{k-1} \circ q_k: \Hom(\Simp{k}, X) \to \Hom(\Horn{k}{0}
\to \Simp{k}, X\to Y)$ is the horn projection, and therefore a cover,
since $f \maps X \to Y$ is a Kan fibration. Since $f$ is also a
stalkwise weak equivalence, Prop.\ \ref{prop:explicit-acyclic} implies
that $q_k$ is a stalkwise surjection. Hence, $\tilde{q}_{k-1}$ is a
local stalkwise cover (Def.\ \ref{defi:cover-from-sheaf}).
 
Next, we observe that the composition 
$pr_0\circ q_k: \Hom(\Simp{k}, X) \to \Hom(\Simp{k-1}, X)$ is simply
the face map $d_0 \maps X_k \to X_{k-1}$, which is a cover (Lemmas 
\ref{lem:example_collapsible} and   \ref{lem:covers_in_groupoid}),
hence a stalkwise surjection (Lemma \ref{prop:locsurj_epi}). 
Combining this with the fact that $q_k$ is a stalkwise surjection
implies that $pr_0$ is a local stalkwise cover, and hence also a stalkwise surjection.
Thus $d_0^Y\times(d_0^X)^k \circ \tilde{q}_{k-1} =
q_{k-1} \circ pr_0$ is stalkwise surjective, hence $d_0^Y\times(d_0^X)^k$
is stalkwise surjective.

Now, we show via induction that the boundary maps $\Hom(\Simp{k},X)
\xto{q_k} \Hom(\partial \Simp{k} \to \Simp{k}, X\to Y)$ are covers for
all $k \geq 0$.  For the base case, let $k=1$.  Then Def.\
\ref{def:equivalence} implies that $\Hom(\partial \Simp{k-1} \to
\Simp{k-1}, X\to Y)=Y_0$ is representable, and since $f$ is a Kan
fibration, $\Hom(\Horn{1}{0} \to \Simp{1}, X\to Y)$ is also representable.
The results of the previous paragraphs imply that $\tilde{q}_0$ is a local stalkwise cover and
$d_0^Y\times(d_0^X)^k$ is a stalkwise surjection.  Since $\covers$ is
a locally stalkwise pretopology, Def.\ \ref{assump:cover} (2) implies
that $q_{0}$ is a cover and $\Hom(\partial \Simp{1} \to \Simp{1}, X\to
Y)$ is representable.

Finally, suppose $k \geq 2$, $\Hom(\Simp{k-2}, X) \xto{q_{k-2}}
\Hom(\partial \Simp{k-2} \to \Simp{k-2}, X\to Y)$ is a cover, and 
$\Hom(\partial \Simp{k-1} \to \Simp{k-1}, X\to Y)$ is representable.
As previously shown, the pullback of $q_{k-1}$ in diagram
\eqref{eq:bd-horn} along the stalkwise surjection $d_0^Y\times(d_0^X)^k$
is a local stalkwise cover. Hence, Def.\ \ref{assump:cover}(2) implies
that  $q_{k-1} \maps \Hom(\Simp{k-2}, X) \to
\Hom(\partial \Simp{k-1} \to \Simp{k-1}, X\to Y)$ is a cover, and 
that $\Hom(\partial \Simp{k} \to \Simp{k}, X\to Y)$ is
representable. This completes the proof.
\end{proof}

\subsection{Locally stalkwise pretopologies for Banach manifolds}
\label{sec:local_sw_BanMfd}
Here we show that the pretopology $\covers_{\subm}$ of surjective submersions equipped
with the collection of jointly conservative points $\pts_{\Ban}$
\eqref{eq:ban_point} is a locally stalkwise pretopology for the
category of Banach manifolds. We first give an explicit description of
stalkwise surjective maps in $(\Mfd,\covers_{\subm},\pts_{\Ban})$.

\begin{lemma} \label{lem:surj_sub_2of3}
Let $X\xrightarrow{f} Y
\xrightarrow{g} Z$ be composable morphisms in  $\Mfd$. If $f$ is surjective and $g\circ f$ is a
surjective submersion, then $g$ is also a surjective submersion. 
\end{lemma}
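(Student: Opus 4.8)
The plan is to verify the two defining properties of a surjective submersion for $g$ directly, using the local-section characterization of submersions recorded in Section~\ref{subsubsec:Banach}. First I would dispatch surjectivity: since $g\circ f$ is surjective, every $z\in Z$ equals $(g\circ f)(x)=g(f(x))$ for some $x\in X$, and as $f(x)\in Y$ this already exhibits $z$ as a value of $g$. Hence $g$ is surjective. (Note this step does not even use surjectivity of $f$.)

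The substance is the submersion property, which requires producing a smooth local section of $g$ through every point $y\in Y$. This is where surjectivity of $f$ enters: I would use it to choose a point $x\in X$ with $f(x)=y$. Since $g\circ f$ is a submersion, its defining property at $x$ furnishes an open neighborhood $V$ of $(g\circ f)(x)=g(y)$ in $Z$ together with a smooth local section $\sigma\maps V\to X$ satisfying $(g\circ f)\circ\sigma=\id_V$ and $\sigma(g(y))=x$.

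The key observation is then that $f\circ\sigma\maps V\to Y$ is a local section of $g$ through $y$. Indeed, $g\circ(f\circ\sigma)=(g\circ f)\circ\sigma=\id_V$, and $(f\circ\sigma)(g(y))=f(\sigma(g(y)))=f(x)=y$. Since $f\circ\sigma$ is smooth and sends $g(y)$ to $y$, after shrinking $V$ I may assume its image lies in any prescribed open neighborhood of $y$, so the pair $(V, f\circ\sigma)$ meets the definition of a submersion for $g$ at $y$. As $y\in Y$ was arbitrary, $g$ is a submersion, completing the argument.

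The main — and essentially only — point requiring care is that every $y\in Y$ admits a preimage under $f$, so that the submersion hypothesis on $g\circ f$ can be invoked at a point lying over $y$; this is precisely the role played by the surjectivity of $f$. Beyond this there is no genuine obstacle: the inverse function theorem is not needed here, only the formal fact that composing a local section of $g\circ f$ with $f$ yields a local section of $g$.
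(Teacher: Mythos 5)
Your proof is correct and follows essentially the same route as the paper's: pick $x\in f^{-1}(y)$ using surjectivity of $f$, take the local section $\sigma$ of $g\circ f$ at $x$ guaranteed by the submersion hypothesis, and observe that $f\circ\sigma$ is a local section of $g$ through $y$. The only cosmetic difference is how the target neighborhood of $y$ is arranged (the paper simply takes $W=g^{-1}(V)$ rather than shrinking $V$), which is immaterial.
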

\begin{proof}
Clearly, $g$ is surjective. Let $y \in Y$, $z=g(y)$, and $x \in
f^{-1}(y)$. Since $g \circ f$ is a surjective submersion, there exists
open neighborhoods $U$ and $V$ of $z$ and $x$, respectively, and a
section $\sigma_{XZ} \maps U \to V$ such that $\sigma_{XZ}(z)=x$, and
$g \circ f \circ \sigma_{XZ} = \id_{U}$. Let $W=g^{-1}(U)$. Then
$\sigma := f\circ \sigma_{XZ} \maps U \to W$ is a section of $g$ such
that $\sigma(z)=y$. Hence $g$ is a surjective submersion.
\end{proof}

\begin{lemma}\label{lemma:stalkwise-surj}
Let $\phi \maps X \to Y$ be a morphism in $(\Mfd,\covers_{\subm},\pts_{\Ban})$. 
Then the following are equivalent:
\begin{enumerate}
\item \label{item-surj-sub} 
The morphism of sheaves $\yon(\phi) \maps \yon X \to \yon Y$ 
is stalkwise surjective.

\item \label{item-glue}  
For any morphism $f \maps  U \to Y$ in $\Mfd$, there exists an open cover
$\{U_i \}_{i \in \mathcal{I}}$ of $U$ and morphisms $f_i \maps U_i \to X$ such that
$\varphi \circ f_i = f \vert_{U_i}$ for each $i \in \mathcal{I}$.

\item  \label{item-section} 
For each $y \in Y$, there exists: a preimage $x \in \phi^{-1}(y)$, an open
neighborhood of $V$ of $y$, an open
neighborhood of $W$ of $x$, and a morphism $\sigma \maps V \to W$ such
that $\sigma(y)=x$ and $\phi \circ \sigma = \id_{V}$.
\end{enumerate}
\end{lemma}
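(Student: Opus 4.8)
The plan is to establish the cyclic chain of implications (1) $\Rightarrow$ (3) $\Rightarrow$ (2) $\Rightarrow$ (1). Throughout I use the germ description of the points: for a Banach space $E$, an element of $\ppt_E(\yon X) = \colim_{r \to 0} \hom_{\Mfd}(B_E(r), X)$ is the germ at $0$ of a smooth map $h \maps B_E(r) \to X$, two such germs agreeing iff the maps coincide on a smaller ball, and $\ppt_E(\yon \phi)$ sends $[h]$ to $[\phi \circ h]$. Thus condition (1) says precisely: for every Banach space $E$ and every germ at $0$ of a smooth map $g \maps B_E(r) \to Y$, there is a germ $[h]$ of a smooth map into $X$ with $\phi \circ h = g$ on some ball around $0$.

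For (1) $\Rightarrow$ (3), fix $y \in Y$ and choose a chart $\psi \maps B_E(\rho) \xrightarrow{\cong} O \subseteq Y$ with $\psi(0) = y$, where $E$ is the Banach space modelling $Y$ near $y$. The germ $[\psi] \in \ppt_E(\yon Y)$ lifts, by (1), to a germ $[h] \in \ppt_E(\yon X)$ with $\phi \circ h = \psi$ on some ball $B_E(r')$, $r' \le \rho$. Setting $x := h(0)$ gives $\phi(x) = \psi(0) = y$, so $x \in \phi^{-1}(y)$. The restriction $\psi|_{B_E(r')}$ is a diffeomorphism onto the open set $V := \psi(B_E(r')) \ni y$, and $\sigma := h \circ (\psi|_{B_E(r')})^{-1} \maps V \to X$ satisfies $\sigma(y) = x$ and $\phi \circ \sigma = \psi \circ (\psi|_{B_E(r')})^{-1} = \id_V$; shrinking $V$ so that $\sigma(V)$ lands in a prescribed open neighborhood $W$ of $x$ yields the section required by (3). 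The essential feature exploited here is that $\pts_{\Ban}$ contains a point $\ppt_E$ for \emph{every} Banach space $E$, in particular for the (possibly infinite-dimensional) model space of $Y$ at $y$; this is exactly what lets the identity germ $[\psi]$ be tested and lifted.

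The remaining two implications are routine gluing and localization arguments. For (3) $\Rightarrow$ (2), given $f \maps U \to Y$, apply (3) at each $y_u := f(u)$ to obtain $V_{y_u} \ni y_u$ and $\sigma_{y_u} \maps V_{y_u} \to X$ with $\phi \circ \sigma_{y_u} = \id$; set $U_u := f^{-1}(V_{y_u})$ and $f_u := \sigma_{y_u} \circ f|_{U_u}$, so that $\{U_u\}_{u \in U}$ is an open cover of $U$ and $\phi \circ f_u = f|_{U_u}$. For (2) $\Rightarrow$ (1), fix a Banach space $E$ and a germ $[g]$ represented by $g \maps B_E(r) \to Y$; applying (2) with $U = B_E(r)$ produces an open cover $\{U_i\}$ and lifts $g_i \maps U_i \to X$. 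Choosing $i_0$ with $0 \in U_{i_0}$ and a ball $B_E(r') \subseteq U_{i_0}$, the restriction $h := g_{i_0}|_{B_E(r')}$ satisfies $\phi \circ h = g|_{B_E(r')}$, so $[h]$ lifts $[g]$ and $\ppt_E(\yon \phi)$ is surjective.

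The whole statement is essentially a dictionary between the stalkwise language and ordinary local sections, so no deep analysis is needed; the only step requiring genuine care is (1) $\Rightarrow$ (3), where one must invoke the richness of $\pts_{\Ban}$ (a point for each Banach model space) to guarantee that a local section through an arbitrary chart actually exists. Everything else is definition-chasing with germs and open covers.
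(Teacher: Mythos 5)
Your proof is correct and uses the same essential ingredients as the paper's argument (germs of maps on small balls, charts around a point, and gluing over open covers); the only organizational difference is that you close the cycle as (1)$\Rightarrow$(3)$\Rightarrow$(2)$\Rightarrow$(1), proving (1)$\Rightarrow$(3) directly by lifting the germ of a chart $\psi$ at $y$ through the point $\ppt_E$ for the model space $E$, whereas the paper first establishes (1)$\Leftrightarrow$(2) and then deduces (3) by applying (2) to the identity map $\id_Y \maps Y \to Y$. Both routes are equally valid and of comparable length, and your (2)$\Rightarrow$(1) and (3)$\Rightarrow$(2) steps coincide with the paper's.
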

\begin{proof}

\mbox{}

(\ref{item-surj-sub} $\Rightarrow$ \ref{item-glue}): Let $f \maps U
\to Y$ be a morphism of Banach manifolds.  Since $\phi$ is stalkwise
surjective, for each $z \in U$, we can find a Banach space $V_z$, an
$r_z >0$ and a local diffeomorphism $i_z \maps B_{V_z}(r_z) \to U$
with $i_z(0)=z$ such that there exists a map $f_z \maps B_{V_z}(r_z)
\to X$ with the property that  $\phi \circ f_z=f\circ i_z$. We then use
collection of open balls $\{B_{V_z}(r_z) \}_{z \in U}$ to obtain the
desired open cover of $U$, and the collection
$\{ f_z \}_{z \in U}$ as our desired collection of maps.

(\ref{item-glue}  $\Rightarrow$\ref{item-surj-sub}):
Let $V$ be a Banach space and $\bar{g} \in p_V\yon Y$. This class is
represented by a map $g \maps B_V(r) \to Y$.  By hypothesis, there
exists an open cover $\{U_i\}$ of $B_V(r)$ and maps $h_i \maps U_i \to
X$ such that $\varphi \circ h_i = g \vert_{U_{i}}$. Let $U_{i_{0}}$ be
an open subset containing $0$, and $r'>0$ such that  $B_V(r') \subseteq U_{i_0}$.
Then $g \vert_{B_V(r')}$ is the composition of $\varphi$ with $f=h_{i_0}
\vert_{B_V(r')}$, and hence $p_V\varphi(\bar{f})=\bar{g}$.

(\ref{item-glue} $\Rightarrow$ \ref{item-section}): 
Let $y \in Y$ and consider the identity map $\id_Y \maps Y \to Y$.
There there exists an open cover $\{U_i\}$ of $Y$ and maps $f_i \maps
U_i \to Y$ such that $\varphi \circ f_i = \id_{U_i}$. Let $U_{i'}$
be an element of the cover containing $y$, and let  $x=f_{i'}(y)$. Then 
$\sigma:=f_{i'} \maps U_{i'} \to X$ is a desired local section of
$\varphi$ which maps $y$ to $x$.

(\ref{item-section} $\Rightarrow$ \ref{item-glue}): 
Let $f \maps U \to Y$ be a map. For each $y \in Y$, there exists an $x
\in \phi^{-1}(y)$, an open subset $V_y$ containing $y$, and open
subset $W_y$ containing $x$ and a local section $\sigma_y \maps V_y
\to W_y$ mapping $y$ to $x$. The collection $\{U_y:=f^{-1}(V_y)\}_{y \in
  Y}$ is an open cover of $U$. For each $y \in Y$, let $f_y \maps U_y
\to X$ be the composition $f_y=\sigma_y \circ f \vert_{U_y}$. Then, by
construction, $\varphi \circ f_{y} = f \vert_{U_y}$.
\end{proof}

\begin{remark}
Note that Lemma \ref{lemma:stalkwise-surj} implies that 
stalkwise surjective maps are weaker than surjective submersions. For
example, given a point $x\in X$, it is easy to see, by item (2) above, the natural map $X\sqcup\{x\} \to X$ is a
stalkwise surjective map but not a surjective submersion. 
Also note that Remark \ref{rmk:ss_vs_open} implies that
Lemma \ref{lemma:stalkwise-surj} also holds if we replace 
$\covers_{ss}$ with $\covers_{\open}$.

\end{remark}

\begin{lemma} \label{lem:stalk_surj_top}
Let $W\xrightarrow{q}V \xleftarrow{p} U$ be morphisms in $\Mfd$ such
that $\yon(q)$ is stalkwise surjective, and
consider the pullback diagram in the category of topological spaces
\[
\begin{tikzpicture}[descr/.style={fill=white,inner sep=2.5pt},baseline=(current  bounding  box.center)]
\matrix (m) [matrix of math nodes, row sep=2em,column sep=3em,
  ampersand replacement=\&]
  {  
U \times_V W \& W \\
U  \& V \\
}; 
  \path[->,font=\scriptsize] 
   (m-1-1) edge node[auto] {$\tilde{p}$} (m-1-2)
   (m-1-1) edge node[auto,swap] {$\tilde{q}$} (m-2-1)
   (m-1-2) edge node[auto,swap] {$q$} (m-2-2)
   (m-2-1) edge node[auto] {$p$} (m-2-2)
  ;
%begin pullback symbol%
  \begin{scope}[shift=($(m-1-1)!.4!(m-2-2)$)]
  \draw +(-0.25,0) -- +(0,0)  -- +(0,0.25);
  \end{scope}
  %end pullback symbol%

\end{tikzpicture}
\]
Suppose that there exists a manifold $X \in \Mfd$ and a continuous
surjective map $X\xrightarrow{\pi} U\times_{V}W$ such that $
\tilde{q}\circ \pi \maps X \to U$ is smooth and the composition 
$\tilde{p}\circ \pi \maps X \to W$ is a smooth surjective submersion.
Then $p \maps U \to V $ is a surjective submersion and therefore
$U\times_{V} W$ is representable in $\Mfd$.
\end{lemma}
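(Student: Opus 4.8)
The plan is to prove directly that $p \maps U \to V$ is a surjective submersion; everything else then follows formally. Indeed, once $p$ is known to be a surjective submersion it is a cover in $\covers_{\subm}$, so by the pullback axiom of a pretopology (Def.\ \ref{def:pretopology}) the fiber product $U \times_V W$ exists in $\Mfd$, and since the underlying topological space of a pullback along a surjective submersion is the topological fiber product, this manifold \emph{is} the space $U\times_V W$, which is therefore representable. Thus the whole content is the verification of the two defining properties of a surjective submersion for $p$. Surjectivity is quick: since $\yon(q)$ is stalkwise surjective, the local-section characterization of Lemma \ref{lemma:stalkwise-surj} shows in particular that $q$ is surjective onto $V$; given $v \in V$ pick $w \in q^{-1}(v)$, and since $\tilde p \circ \pi \maps X \to W$ is surjective there is $x \in X$ with $(\tilde p \circ \pi)(x) = w$. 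Writing $\pi(x) = (u,w)$, the pullback relation gives $p(u) = q(w) = v$, so $v \in \im(p)$.

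The main step is to produce, for each $u \in U$, a smooth local section of $p$ through $u$. Set $v = p(u)$. Because $\yon(q)$ is stalkwise surjective, Lemma \ref{lemma:stalkwise-surj} supplies a point $w_0 \in q^{-1}(v)$ together with a smooth local section $s \maps V_0 \to W_0$ of $q$ on a neighborhood $V_0$ of $v$ with $s(v) = w_0$. Then $(u, w_0) \in U \times_V W$, and since $\pi$ is surjective I may choose $x_0 \in \pi^{-1}(u, w_0)$, so that $(\tilde p \circ \pi)(x_0) = w_0$ and $(\tilde q \circ \pi)(x_0) = u$. As $\tilde p \circ \pi$ is a submersion, there is a smooth local section $\tau \maps W_1 \to X$ of $\tilde p \circ \pi$ with $\tau(w_0) = x_0$ on a neighborhood $W_1$ of $w_0$. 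Shrinking to $V_1 := V_0 \cap s^{-1}(W_1)$, which is an open neighborhood of $v$, I would define
\[
\sigma := (\tilde q \circ \pi) \circ \tau \circ s \maps V_1 \to U,
\]
which is smooth precisely because $\tilde q \circ \pi$ is assumed smooth (this is the sole use of that hypothesis). One checks $\sigma(v) = u$ directly, and for $v' \in V_1$, writing $\pi(\tau(s(v'))) = (u', w')$, one has $w' = s(v')$, hence $q(w') = v'$ and therefore $p(u') = v'$; thus $p \circ \sigma = \id_{V_1}$. This exhibits the required local section, so $p$ is a submersion.

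The crux, and the place where the somewhat unusual hypotheses pay off, is exactly this construction of $\sigma$: the topological fiber product $U \times_V W$ need not be a manifold, so one cannot section $p$ by working inside it; instead the manifold $X$ and its surjection $\pi$ serve as a smooth resolution through which the genuine submersion $\tilde p \circ \pi$ can be sectioned, the section $\tau$ then being pushed down to $U$ by the smooth map $\tilde q \circ \pi$, with $s$ providing the needed lift from $V$ to $W$. Combining the surjectivity of $p$ with the submersion property just established shows $p$ is a surjective submersion, and the representability of $U \times_V W$ in $\Mfd$ follows from the pretopology axioms as in the reduction above.
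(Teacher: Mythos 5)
Your proof is correct, and its core construction is the same as the paper's: extract a smooth local section $\sigma_{WV}$ of $q$ from stalkwise surjectivity via Lemma \ref{lemma:stalkwise-surj}, lift through $X$ by choosing a point in $\pi^{-1}(u,w_0)$ and taking a smooth local section of the surjective submersion $\tilde p\circ\pi$, and push down to $U$ with the smooth map $\tilde q\circ\pi$; the paper's final section is literally $\tilde q\circ\pi\circ\sigma_{XW}\circ\sigma_{WV}$, which matches your $(\tilde q\circ\pi)\circ\tau\circ s$.

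The one genuine difference is that the paper inserts an intermediate step you omit: it invokes the inverse function theorem for Banach manifolds to write $q$ locally as a projection, so that the local fiber product $O_u\times_{O_v}O_w$ is representable in $\Mfd$, and then argues that $\pi\circ\sigma_{XW}$ is a smooth section of $\tilde p$ into that manifold before composing further. Your observation is that this detour is unnecessary for the stated conclusion, because the final section is a composite of smooth maps $V_1\to W_1\to X\to U$ that never needs to factor through the fiber product as a manifold; smoothness is immediate, and the identity $p\circ\sigma=\id$ follows from the set-level pullback relation exactly as you check. What the paper's extra step buys is the auxiliary fact that $\tilde p$ admits smooth local sections into a locally representable $U\times_V W$ (and it is the reason the introduction advertises the inverse function theorem as an ingredient), but for proving that $p$ is a surjective submersion your leaner route suffices and is, if anything, cleaner. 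Your surjectivity argument and the concluding reduction of representability of $U\times_V W$ to the pretopology axioms also agree with the paper.
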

\begin{proof}
  The surjectivity of $p$ follows from an easy set-theoretical
  argument, so we will show it is a surjective submersion.  Let $u \in
  U$ and $v=p(u)$.  Since $\yon(q)$ is stalkwise surjective, Lemma
  \ref{lemma:stalkwise-surj} implies there exists 
 an open neighborhood $O_v$ of $v$, an element $w\in q^{-1}(v) \sse W$, and
 a smooth section
\[
\sigma_{WV}: O_v \to O_w:= q^{-1}(O_v),
\]
such  that $\sigma_{WV}(v)=w, \quad q \circ \sigma_{WV}=\id_{O_v}$.
Hence the restriction $q \vert_{O_w}$ is a submersion at $w$ (i.e.,
$T_w q$ is surjective and its kernel splits). The inverse function
theorem \cite[Cor.\ I.5.2s]{Lang:95} then implies that, by taking $O_v$
to be a small enough neighborhood, we can express $q \vert_{O_w}$ as a
projection. Therefore, the following pullback diagram exists in the
category $\Mfd$:
\[
\begin{tikzpicture}[descr/.style={fill=white,inner sep=2.5pt},baseline=(current  bounding  box.center)]
\matrix (m) [matrix of math nodes, row sep=2em,column sep=3em,
  ampersand replacement=\&]
  {  
O_u \times_{O_v} O_w \& O_w \\
O_u  \& O_v \\
}; 
  \path[->,font=\scriptsize] 
   (m-1-1) edge node[auto] {$\tilde{p}$} (m-1-2)
   (m-1-1) edge node[auto,swap] {$\tilde{q}$} (m-2-1)
   (m-1-2) edge node[auto,swap] {$q$} (m-2-2)
   (m-2-1) edge node[auto] {$p$} (m-2-2)
  ;
%begin pullback symbol%
  \begin{scope}[shift=($(m-1-1)!.4!(m-2-2)$)]
  \draw +(-0.25,0) -- +(0,0)  -- +(0,0.25);
  \end{scope}
  %end pullback symbol%

\end{tikzpicture}
\]
where $O_u:=p^{-1}(O_v)$, and we suppress restrictions of the morphisms.

Now, by hypothesis, we have a surjective continuous map $\pi \maps X
\to U \times_V W$ such that $\pi \circ \tilde{q}$ is smooth and 
 $\pi \circ \tilde{p}$ is a surjective
submersion. Let $x \in X$ such that $\pi(x)=(u,w)$.  
We may shrink $O_v$ further if necessary, so that we have a smooth
section 
\[
\sigma_{XW} \maps O_w \to O_x:=(\tilde{p}\circ \pi)^{-1}(O_w)
\]
such that  $\bigl (\tilde{p} \circ \pi \bigr) \circ \sigma_{XW} =
\id_{O_w}$ and $\sigma_{XW}(w)=x$. It is not difficult to see that we
have the following equalities of open sets:
\[
\tilde{p}^{-1}(O_w)= O_u \times_{O_v} O_w, \quad \pi^{-1}( O_u \times_{O_v}
O_w)= O_x.
\]
Since $\tilde{p} \circ \pi \circ \sigma_{XW}=\id_{O_w}$, we see that 
\[
\pi\circ \sigma_{XW} \maps O_w\to O_u \times_{O_v} O_w
\]
is a continuous section of $\tilde{p} \maps O_u \times_{O_v}
O_w \to O_w$. In fact, $\pi\circ \sigma_{XW}$ is smooth since the factors
$\tilde{q} \circ \pi\circ \sigma_{XW}$ and $\tilde{p} \circ \pi\circ
\sigma_{XW}$ are compositions of smooth maps. 

The commutativity of the pushout diagram gives us
$p \circ \tilde{q} \circ \pi \circ \sigma_{XW} = q$.
Hence,  
\[
\tilde{q} \circ \pi \circ \sigma_{XW} \circ \sigma_{WV} \maps O_v \to O_u
\]
is our desired smooth section of $p$. Therefore $p$ is a  submersion. 
\end{proof}

%\ccomment{made changes below}
\begin{proposition}\label{cor:1}
The category of Banach manifolds $\Mfd$ equipped with the surjective
submersion pretopology $\covers_{\subm}$, and the collection of jointly conservative points 
$\pts_{\Ban}$ is a locally stalkwise pretopology. 
\end{proposition}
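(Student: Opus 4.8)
The plan is to verify the two axioms of Definition~\ref{def:LSW_covers} for the triple $(\Mfd, \covers_{\subm}, \pts_{\Ban})$, feeding off the three lemmas just established. First I would dispose of the ``2-out-of-3'' axiom. Suppose $U \xrightarrow{f} V \xrightarrow{g} W$ with $g \circ f$ a surjective submersion and $\yon(f)$ a stalkwise surjection. The equivalence of conditions \eqref{item-surj-sub} and \eqref{item-section} in Lemma~\ref{lemma:stalkwise-surj} shows that a stalkwise surjection admits local sections through every point of its target, so in particular $f$ is surjective on points. With $f$ surjective and $g \circ f$ a surjective submersion, Lemma~\ref{lem:surj_sub_2of3} immediately yields that $g$ is a surjective submersion, i.e.\ a cover, which is the first axiom.

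The substantive part is the ``locality of covers'' axiom, and the idea is to reduce it to Lemma~\ref{lem:stalk_surj_top}. Given $W \xrightarrow{q} V$ and $U \xrightarrow{p} V$ with $\yon(q)$ stalkwise surjective and the base change $\tilde{p} \maps \yon U \times_{\yon V} \yon W \to \yon W$ a local stalkwise cover, unwinding Definition~\ref{defi:cover-from-sheaf} produces a Banach manifold $X$ together with a stalkwise surjection $\yon X \to \yon U \times_{\yon V} \yon W$ whose composite with $\tilde{p}$ is a cover $\yon X \to \yon W$. By the Yoneda lemma this stalkwise surjection amounts to a pair of smooth maps $X \to U$ and $X \to W$ agreeing over $V$, hence a continuous map $\pi \maps X \to U \times_V W$ into the set-theoretic (topological) fibre product, for which $\tilde{q} \circ \pi \maps X \to U$ is smooth and $\tilde{p} \circ \pi \maps X \to W$ is a smooth surjective submersion. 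These are exactly the ingredients Lemma~\ref{lem:stalk_surj_top} asks for, save one.

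The missing ingredient is surjectivity of $\pi$, and this is the single place where the collection of points is used essentially. Here I would evaluate the stalkwise surjection $\yon X \to \yon U \times_{\yon V} \yon W$ at the point $\ppt_{0}$ attached to the zero Banach space. Since $B_{0}(r)$ is a single point for every $r$, the functor $\ppt_{0}\yon(-)$ is just the underlying-set functor, and because $\ppt_{0}$ preserves finite limits we get $\ppt_{0}(\yon U \times_{\yon V} \yon W) = U \times_V W$ as a set. Stalkwise surjectivity at $\ppt_{0}$ therefore says precisely that the underlying map $X \to U \times_V W$ of $\pi$ is surjective, so $\pi$ is surjective. With all hypotheses in place, Lemma~\ref{lem:stalk_surj_top} concludes that $p \maps U \to V$ is a surjective submersion and that $U \times_V W$ is representable in $\Mfd$, giving the second axiom. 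The hard part is thus entirely localized in the second axiom; all of its analytic content (the inverse-function-theorem argument) has been pre-packaged into Lemma~\ref{lem:stalk_surj_top}, so the only genuine step left is manufacturing its hypotheses, the one delicate observation being that the surjectivity of $\pi$ is read off from the zero-dimensional stalk.
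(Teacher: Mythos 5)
Your proof is correct and follows essentially the same route as the paper's: the ``2-out-of-3'' axiom is reduced to Lemma~\ref{lem:surj_sub_2of3} via the surjectivity of stalkwise surjections, and the locality axiom is reduced to Lemma~\ref{lem:stalk_surj_top} by unwinding the definition of a local stalkwise cover through the Yoneda lemma. The one point where you are more explicit than the paper is the surjectivity of $\pi \maps X \to U\times_V W$, which the paper asserts rather tersely from the stalkwise surjectivity of $f$; your observation that evaluating at the point attached to the zero Banach space recovers the underlying-set functor (and hence the set-theoretic fibre product, since points preserve finite limits) is a clean and valid way to justify exactly that step.
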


\begin{proof}
  First, we show the ``2-out-of-3'' property of Def.\
  \ref{def:LSW_covers} is satisfied. Indeed, this follows immediately
  from Lemma \ref{lem:surj_sub_2of3}, since if $\yon(f) \maps \yon(X)\to
  \yon(Y)$ is stalkwise surjective with respect to $\pts_{\Ban}$ then $f
  \maps X \to Y$ is surjective.

Next, we show that the locality property of Def.\ \ref{def:LSW_covers} is
satisfied. Let $W\xrightarrow{q}V \xleftarrow{p} U$ be morphisms
in $\Mfd$ and suppose $\yon q \maps \yon W \to \yon V$ is a stalkwise surjection. 
Furthermore, suppose we have a diagram of sheaves
\[
\begin{tikzpicture}[descr/.style={fill=white,inner sep=2.5pt},baseline=(current  bounding  box.center)]
\matrix (m) [matrix of math nodes, row sep=2em,column sep=3em,
  ampersand replacement=\&]
  {  
\yon X \& \yon U \times_{\yon V} \yon W \&  \yon W \\
\& \yon U  \& \yon V \\
}; 
  \path[->,font=\scriptsize] 
  (m-1-1) edge node[auto] {$f$} (m-1-2)
   (m-1-2) edge node[auto] {$\widetilde{\yon p}$} (m-1-3)
   (m-1-2) edge node[auto,swap] {$\widetilde{\yon q}$} (m-2-2)
   (m-1-3) edge node[auto,swap] {$\yon q$} (m-2-3)
   (m-2-2) edge node[auto] {$\yon p$} (m-2-3)
  ;
%begin pullback symbol%
  \begin{scope}[shift=($(m-1-2)!.4!(m-2-3)$)]
  \draw +(-0.25,0) -- +(0,0)  -- +(0,0.25);
  \end{scope}
  %end pullback symbol%

\end{tikzpicture}
\]
which exhibits the pullback $\widetilde{\yon p}$ as a local stalkwise cover, i.e.\ 
$f$ is a stalkwise surjection, and $\widetilde{\yon p} \circ f$ is represented by a surjective submersion $g \maps X \to W$.
Note that the Yoneda lemma implies that there exists a smooth map $h \maps X \to U$ representing
the composition $\widetilde{\yon q} \circ f$.
Moreover, since $f$ is stalkwise surjective, it is represented in the category of topological spaces by a surjective map $\pi=(h,g) \maps X \to U \times_{V} W$.
This observation, combined with the fact  that $g$ is a surjective submersion, implies that the hypotheses of
Lemma \ref{lem:stalk_surj_top} are satisfied, and therefore we conclude that $p
\maps U \to V$ is a cover.

% in which $\yon q$ and $f$ are stalkwise surjections, and
% $\yon g:=\widetilde{\yon p} \circ f$ is represented by a surjective
% submersion. More precisely, 
% the Yoneda lemma implies that there exists smooth maps
% $g \maps X \to W$ and $h \maps X \to U$ such that $\yon g = \widetilde{\yon p}
% \circ f$ and $\yon h = \widetilde{\yon q} \circ f$. Since $f$ is
% stalkwise surjective, $f$ is surjective as a map between underlining topological
%  spaces and $g$ is a surjective submersion. We now have satisfied all the hypoth
% eses of
% Lemma \ref{lem:stalk_surj_top}, and therefore we can conclude that $p
% \maps U \to V$ is a cover. 

\end{proof}

\section{Higher groupoids as an incomplete category
  of fibrant objects} \label{sec:gpd_icfo}
We now fix a category $(\Cat,\covers,\pts)$ equipped
with a locally stalkwise pretopology and prove that the corresponding category
of $\infty$-groupoids admits an iCFO structure.
The results in Sec.\ \ref{sec:local_sw_BanMfd} then imply that we obtain an
iCFO structure for the category of Lie $\infty$-groupoids
as a special case.

After discussing some aspects of the simplicial localization of 
category of Lie $\infty$-groupoids in Sec.\ \ref{sec:simp_loc_gpd}, 
we analyze in Sec.\ \ref{sec:more-on-w-eq} the weak equivalences for
this iCFO structure in more detail. In particular, we recall that the
weak equivalences are completely characterized by the acyclic
fibrations, which in this case are, respectively, stalkwise weak equivalences and hypercovers.
Thanks to a result of Behrend and Getzler \cite{Behrend-Getzler:2015}, we obtain a sheaf-theoretic independent characterization of weak equivalences, without any mention of the
collection of points $\pts$.

\subsection{Path object}
We first construct a candidate for a path object. This will require
several steps. Later in Sec.\ \ref{sec:iCFO_struct}, we verify that
this gives a path object as part of the iCFO structure for higher
groupoids.  Our construction, in particular the proof of Prop.\
\ref{prop:pobj_rep}, is essentially identical to that of Behrend and
Getzler \cite[Thm.\ 3.21]{Behrend-Getzler:2015}.  However, as
previously mentioned, we do not assume the existence of finite
limits in our category $\Cat$. Moreover, our definition of weak
equivalences, necessary for our application in Sec.\
\ref{sec:int_exact} is different than the one given in
\cite{Behrend-Getzler:2015}. Hence, it is necessary to present a
verification that the Behrend--Getzler construction works in our context.

Given a simplicial sheaf $F \in \sSh(\Cat)$ and 
a simplicial set $K \in \sSet$, denote by $F^K$ the simplicial sheaf
\begin{equation}
\label{eq:pobj_sheaf}
 F^{K}(U)_n:=\hom_{\sSet}\bigl(\Delta^n \times K,F(U)
 \bigr)=M_{\Delta^n \times K}(F),
\end{equation}
where $M_{\Delta^n \times K}(F)$ is the aforementioned
matching object (Def.\ \ref{def:matching}).
A natural path object
for $F$ is the simplicial sheaf $F^{\Delta^1}$.
The inclusion of simplicial
sets
\[
\partial \Delta^1 \cong \Delta^0 \cup \Delta^0 \xto{d^0 \cup d^1} \Delta^1
\]
induces a morphism of simplicial sheaves
\begin{equation} \label{eq:pobj_d0d1}
F^{\Delta^1} \xto{(d^{0 \ast},d^{1 \ast})} F \times F \cong F^{\Delta^0 \cup \Delta^0}.
\end{equation}
Also, the constant map $s^0 \maps \Delta^1 \to \Delta^0$ gives us a
map of simplicial sheaves 
\begin{equation} \label{eq:pobj_s0}
F \cong F^{\Delta^0} \xto{s^{0 \ast}} F^{\Delta^1}.
\end{equation}

We now make the following observation which justifies our
consideration of $F^{\Delta^1}$:
\begin{proposition} \label{prop:stalkwise_pobject}
Let $X$ be a higher groupoid in $(\Cat,\covers,\pts)$. In the diagram
\[
\yon X \xto{s^{0 \ast}} (\yon X)^{\Delta^1} \xto{(d^{0 \ast},d^{1
    \ast})} \yon X \times \yon X. 
\]
 the map $s^{0 \ast}$ is a stalkwise weak equivalence and the map $(d^{0 \ast},d^{1\ast})$ is a stalkwise Kan fibration.
 \end{proposition}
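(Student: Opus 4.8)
The plan is to verify both assertions stalkwise, i.e.\ to fix an arbitrary point $\ppt \in \pts$ and show that $\ppt(s^{0\ast})$ is a weak homotopy equivalence and $\ppt((d^{0\ast},d^{1\ast}))$ is a Kan fibration of simplicial sets; this is exactly what is required by Def.~\ref{def:stalk_weq} and by the notion of stalkwise Kan fibration from Prop.~\ref{prop:kan}. The central reduction is to identify the simplicial set $\ppt\bigl((\yon X)^{\Delta^1}\bigr)$ with the ordinary simplicial path object $(\ppt X)^{\Delta^1}$, after which the statement becomes a standard fact about Kan complexes.

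First I would compute $\ppt\bigl((\yon X)^{\Delta^1}\bigr)$ degreewise. By \eqref{eq:pobj_sheaf} the sheaf of $n$\nobreakdash-simplices is $(\yon X)^{\Delta^1}_n = M_{\Delta^n \times \Delta^1}(\yon X)$, and since $\Delta^n \times \Delta^1$ is a finitely generated simplicial set, Prop.~\ref{prop:matchobj}(3) together with Cor.~\ref{cor:matching} yields a natural isomorphism
\[
\ppt\bigl((\yon X)^{\Delta^1}_n\bigr) \cong \ppt\,\Hom(\Delta^n \times \Delta^1, X) \cong \hom_{\sSet}(\Delta^n \times \Delta^1, \ppt X) = \bigl((\ppt X)^{\Delta^1}\bigr)_n .
\]
Because $\ppt$ preserves finite products, I would also identify $\ppt(\yon X \times \yon X)$ with $\ppt X \times \ppt X$, while $\ppt(\yon X) = \ppt X$ holds by definition \eqref{eq:point_sset}. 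Under these identifications the maps $\ppt(s^{0\ast})$ and $\ppt((d^{0\ast},d^{1\ast}))$ become the canonical maps
\[
\ppt X \xto{s^{0\ast}} (\ppt X)^{\Delta^1} \xto{(d^{0\ast},d^{1\ast})} \ppt X \times \ppt X
\]
of the standard path object, since by \eqref{eq:pobj_d0d1} and \eqref{eq:pobj_s0} both maps are induced by the simplicial maps $\partial\Delta^1 \hookrightarrow \Delta^1$ and $s^0 \maps \Delta^1 \to \Delta^0$, which are transported unchanged through the isomorphisms above.

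Finally I would invoke Cor.~\ref{cor:kan}, which guarantees that $\ppt X$ is a Kan complex, and appeal to the standard simplicial homotopy theory of its path object: for a Kan complex $Y$, the map $s^{0\ast} \maps Y \to Y^{\Delta^1}$ is a weak equivalence, being a section of the trivial fibration $d^{0\ast}$ (here $s^0$ is a homotopy equivalence, so one argues exactly as in Lemma~\ref{lem:di} and then applies ``2 out of 3''); and $(d^{0\ast},d^{1\ast}) \maps Y^{\Delta^1} \to Y \times Y$ is a Kan fibration, being the corner map for the cofibration $\partial\Delta^1 \hookrightarrow \Delta^1$ against the fibration $Y \to \ast$. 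Applying this with $Y = \ppt X$ for every $\ppt \in \pts$ gives the proposition.

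The one step that is not purely formal is establishing that the degreewise isomorphisms of the first paragraph assemble into an isomorphism of \emph{simplicial sets}, i.e.\ are natural in $[n]$. I expect this to be the main obstacle, though a mild one: it follows from the naturality of Cor.~\ref{cor:matching} in the simplicial-set variable, since an order-preserving map $[m] \to [n]$ induces $\Delta^m \times \Delta^1 \to \Delta^n \times \Delta^1$ and hence a compatible morphism of matching objects, to which the functor $\ppt$ applies coherently. Once this naturality is in place, everything else reduces to the preservation properties of points and the recalled facts about Kan complexes.
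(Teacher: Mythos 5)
Your proposal is correct and follows essentially the same route as the paper: both reduce to showing that applying a point $\ppt$ to the diagram yields the standard path-object diagram $\ppt X \to (\ppt X)^{\Delta^1} \to \ppt X \times \ppt X$ for the Kan complex $\ppt X$ (via Eq.~\eqref{eq:pobj_sheaf}, Prop.~\ref{prop:matchobj}, and Cor.~\ref{cor:kan}) and then invoke standard simplicial homotopy theory. Your extra care about the degreewise identification assembling into an isomorphism of simplicial sets is a point the paper leaves implicit, but it is handled exactly as you indicate, by naturality of the matching-object construction in the simplicial-set variable.
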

 \begin{proof}
Recall that Corollary \ref{cor:kan} implies that $\ppt  X:=\ppt  \yon X$ is
a Kan complex for any point $\ppt \in \pts$.
Equation \ref{eq:pobj_sheaf} and Prop.\ \ref{prop:matchobj} imply that the
image of the above diagram under $\ppt $ is naturally
isomorphic to the diagram of simplicial sets
\[
\ppt X \xto{s^{0 \ast}} (\ppt  X)^{\Delta^1} \xto{(d^{0 \ast},d^{1
    \ast})} \ppt  X \times \ppt  X. 
\]
This is the usual diagram which exhibits the Kan complex
$(\ppt  X)^{\Delta{^1}}=\Map_{\sSet}(\Delta^1,\ppt X)$ as the path object of
$\ppt X$. Hence, $s^{0 \ast}$ is a weak equivalence and 
$(\ppt  d^{0 \ast},\ppt  d^{1 \ast})$ is a fibration of simplicial sets
for any point $\ppt $.
\end{proof}

The above proposition suggests that, to construct a path
object for an $n$-groupoid $X$ in $\Cat$, we should show that the
simplicial sheaf $(\yon X)^{\Delta{^1}}$ is representable by a higher
groupoid $X^{\Delta^1}$, and that the map of higher groupoids
$ X^{\Delta^1} \to X \times X$ is not just a stalkwise Kan fibration,
but a Kan fibration in the sense of Def.\ \ref{def:Kan_arrow}.

As a first step, we prove the following Proposition:

\begin{proposition} \label{prop:pobj_rep}
If $X$ is an $\infty$-groupoid in $(\Cat, \covers)$, then there is a canonical
$\infty$-groupoid  $X^{\Delta^1}$ in $(\Cat, \covers)$  representing the simplicial sheaf
$(\yon X)^{\Delta^1}$.
\end{proposition}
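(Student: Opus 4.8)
The plan is to identify the simplices of the candidate path object as matching objects on prisms, prove their representability by a collapsibility argument, and then verify the groupoid (Kan) condition via the exponential adjunction. First I would use Prop.\ \ref{prop:matchobj}(3) together with the cartesian closedness of $\sSet$, in the form $\hom_{\sSet}(K, F^{\Delta^1}(U)) \cong \hom_{\sSet}(K \times \Delta^1, F(U))$, to identify for $F = \yon X$
\[
\bigl((\yon X)^{\Delta^1}\bigr)_n \cong \Hom(\Simp{n} \times \Delta^1, X), \qquad \Hom(K, (\yon X)^{\Delta^1}) \cong \Hom(K \times \Delta^1, X),
\]
naturally in $K \in \sSet$. (Prop.\ \ref{prop:adj_pobject} repackages the same object as $\codiag \circ \Dec$, and one could read everything off the codiagonal instead; I find the prism description more direct for the representability question.)

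The second step is the representability of each level. The prism $\Simp{n} \times \Delta^1$ is finitely generated, and I would argue that the inclusion of the vertex $(0,0)$ is a \emph{collapsible} extension in the sense of Def.\ \ref{def:collapsible}: the inclusion $\ast \hookrightarrow \Simp{n} \times \{0\}$ is collapsible by Lemma \ref{lem:example_collapsible}, while $\Simp{n} \times \{0\} \hookrightarrow \Simp{n} \times \Delta^1$ is collapsible via the classical shuffle triangulation of the prism, in which the $n+1$ top-dimensional simplices are attached one at a time by filling horns. Since $X$ is an $\infty$-groupoid and $\Hom(\ast, X) = X_0$ is representable, Lemma \ref{lem:covers_in_groupoid} then shows $\Hom(\Simp{n} \times \Delta^1, X)$ is representable for every $n$. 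Because $\yon$ is fully faithful, the simplicial structure maps of $(\yon X)^{\Delta^1}$ between these representable sheaves arise from unique morphisms of $\Cat$, so $(\yon X)^{\Delta^1}$ is represented by a genuine simplicial object $X^{\Delta^1}$ of $\Cat$, canonical by the Yoneda lemma.

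Third I would check that $X^{\Delta^1}$ is an $\infty$-groupoid, i.e.\ that $X^{\Delta^1} \to \ast$ satisfies $\Kan(m,j)$ for all $m \geq 1$, $0 \leq j \leq m$. By the adjunction above, $\Hom(\Horn{m}{j}, X^{\Delta^1}) \cong \Hom(\Horn{m}{j} \times \Delta^1, X)$, and the horn projection of $X^{\Delta^1}$ is identified with the restriction map
\[
\Hom(\Simp{m} \times \Delta^1, X) \longrightarrow \Hom(\Horn{m}{j} \times \Delta^1, X).
\]
It therefore suffices to know that $\Horn{m}{j} \times \Delta^1 \hookrightarrow \Simp{m} \times \Delta^1$ is a collapsible extension; this is the prism–horn version of the combinatorial fact used above (the standard statement that $\Horn{m}{j} \times \Delta^1 \hookrightarrow \Simp{m} \times \Delta^1$ is anodyne, refined to an explicit filtration by horn-fillings). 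Granting it, Lemma \ref{lem:covers_in_groupoid} yields both representability of $\Hom(\Horn{m}{j} \times \Delta^1, X)$ and that the horn projection is a cover, which is precisely $\Kan(m,j)$; as we work with an $\infty$-groupoid only covers (not isomorphisms) are required.

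The main obstacle is exactly the combinatorial input in steps two and three: exhibiting $\Simp{n} \times \{0\} \hookrightarrow \Simp{n} \times \Delta^1$ and $\Horn{m}{j} \times \Delta^1 \hookrightarrow \Simp{m} \times \Delta^1$ as \emph{collapsible} extensions rather than merely anodyne ones, i.e.\ producing the literal sequence of horn-fillings demanded by Def.\ \ref{def:collapsible}. This is the shuffle decomposition of the prism, and it is where the argument of Behrend and Getzler \cite[Thm.\ 3.21]{Behrend-Getzler:2015} does its real work; once it is in hand, everything else is formal bookkeeping with the Yoneda embedding and Lemma \ref{lem:covers_in_groupoid}.
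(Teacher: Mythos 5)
Your overall strategy is sound and, for the Kan condition, coincides with the paper's: both arguments ultimately rest on the combinatorial fact (Lemma \ref{lem:path-kan}, proved via Cor.\ \ref{cor:c-to-c}) that $\Horn{m}{j}\times\Simp{1}\hookrightarrow\Simp{m}\times\Simp{1}$ is a collapsible extension, followed by Lemma \ref{lem:covers_in_groupoid}. Where you genuinely diverge is the level-wise representability. The paper does not argue that $\ast\hookrightarrow\Simp{n}\times\Simp{1}$ is collapsible; instead it uses Prop.\ \ref{prop:adj_pobject} to identify $(\yon X)^{\Delta^1}$ with $\codiag\circ\Dec(\yon X)$ and then reads off from the Artin--Mazur formula \eqref{eq:artin-mazur} an \emph{explicit} model
$X^{\Delta^1}_n = X_{n+1}\times_{X_n}\cdots\times_{X_n}X_{n+1}$,
an iterated fiber product along face maps, which exist in $\Cat$ because face maps of a higher groupoid are covers. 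Your collapsibility argument is more uniform and avoids the d\'ecalage bookkeeping, but the paper's construction buys an explicit description of $X^{\Delta^1}_n$ and of its face and degeneracy maps \eqref{eq:di_si}, which is used later (e.g.\ in Sec.\ \ref{sec:simp_loc_gpd} for functoriality of the path object). Both are legitimate.

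There is, however, one concrete gap in your third step. You write that Lemma \ref{lem:covers_in_groupoid}, applied to the collapsible extension $\Horn{m}{j}\times\Delta^1\hookrightarrow\Simp{m}\times\Delta^1$, ``yields both representability of $\Hom(\Horn{m}{j}\times\Delta^1,X)$ and that the horn projection is a cover.'' That reverses the lemma: it takes representability of $\Hom(S,X)$ (the \emph{source}) as a hypothesis and concludes representability of $\Hom(T,X)$ together with the cover property. So representability of $\Hom(\Horn{m}{j}\times\Delta^1,X)$ is an input you have not supplied. The paper supplies it by induction on $m$: once $\Kan(k,j)$ is known for $X^{\Delta^1}\to\ast$ for all $k<m$, Lemma \ref{lem:representable} (with $S=\Horn{m}{j}$ collapsible in $\Simp{m}$) gives representability of $\Hom(\Horn{m}{j},X^{\Delta^1})\cong\Hom(\Horn{m}{j}\times\Delta^1,X)$. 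Alternatively, and more in the spirit of your argument, you could note that $\ast\hookrightarrow\Horn{m}{j}\times\Delta^1$ is itself a collapsible extension (apply Cor.\ \ref{cor:c-to-c} to the collapsible extension $\ast\hookrightarrow\Horn{m}{j}$, then collapse the resulting wedge $\Simp{1}\sqcup_{\ast}\Horn{m}{j}$ to a point), so that Lemma \ref{lem:covers_in_groupoid} applied to this inclusion gives the needed representability before you apply it a second time to the prism--horn inclusion. Either repair closes the gap; as written the step is unjustified.
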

\emptycomment{
Before we prove this, let us first recall that if $F \in ss
\Sh(\Cat)$ is a bisimplicial sheaf, then $\codiag F$ can be expressed
explicitly in terms of iterated pullbacks. Denote by $d^h_i \maps
F_{p,q} \to F_{p-1,q}$ and $d^v_{i} \maps F_{p,q} \to F_{p,q-1}$ the
horizontal and vertical face maps of $F$. Then we have
\begin{equation}  \label{eq:artin-mazur}
(\codiag F)_n = F_{0,n} ~ \prescript{}{d^v_0}\times_{d^h_1} ~ F_{1,n-1}
~ \prescript{}{d^v_0}\times_{d^h_2} \cdots
\prescript{}{d^v_0}\times_{d^h_{i}} ~ F_{i,n-i} ~ \prescript{}{d^v_0}
\times_{d^h_{i+1}} \cdots \prescript{}{d^v_0} \times_{d^h_n} ~ F_{n,0}
\end{equation}
The face and degeneracy maps 
$d_i \maps (\codiag F)(U)_n \to (\codiag F)(U)_{n-1}$, $s_i \maps (\codiag
F)(U)_n \to (\codiag F)(U)_{n+1}$ can be written as \cite{Stevenson:2012}
\begin{equation}\label{eq:di_si} 
\begin{split}
 d_i&= (d^v_i p_0, d^v_{i-1} y_1, \dots, d^v_1
p_{i-1}, d_i^h p_{i+1}, \dots, d^h_ip_n),  \\
s_i &= (s^v_i p_0, s^v_{i-1} p_1, \dots, s^v_0p_i,
s^h_i p_i, s^h_i p_{i+1}, \dots, s^h_i p_n) 
\end{split}
\end{equation}
where $p_i \maps \codiag(F)_n \to F_{i,n-i}$ is the canonical projection.}

We need the
following Lemma whose proof we will postpone for the moment.
\begin{lemma} \label{lem:path-kan}
The  inclusion 
\[ \Horn{n}{ j} \times \Simp{1} \to \Simp{n} \times \Simp{1}, \]
is a collapsible extension (Def.\ \ref{def:collapsible}).
\end{lemma}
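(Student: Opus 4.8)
The plan is to exhibit the inclusion as an explicit finite composite of horn fillings (Def.~\ref{def:collapsible}), organized according to the prism decomposition of $\Simp{n}\times\Simp{1}$. Recall that the nondegenerate simplices of $\Simp{n}\times\Simp{1}$ of top dimension $n+1$ are the $n+1$ ``shuffles'' $\sigma_0,\dots,\sigma_n$, where $\sigma_i$ is the $(n+1)$-simplex on the vertices $(0,0),\dots,(i,0),(i,1),\dots,(n,1)$, i.e.\ the monotone path in $[n]\times[1]$ that jumps from level $0$ to level $1$ at the $i$-th column. A nondegenerate simplex $(\sigma,\tau)$ of $\Simp{n}\times\Simp{1}$ lies \emph{outside} $\Horn{n}{j}\times\Simp{1}$ precisely when the image of its projection $\sigma$ to $\Simp{n}$ contains $\{0,\dots,j-1,j+1,\dots,n\}$; the first step is to enumerate these ``new'' simplices, which are the $\sigma_i$, the injective $n$-simplices $(\id_{[n]},\tau)$ (one for each of the $n+2$ monotone maps $\tau\maps[n]\to[1]$), together with the lower-dimensional simplices whose projection has image exactly $\{0,\dots,j-1,j+1,\dots,n\}$.

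First I would dispose of the two ``caps''. Since $\Horn{n}{j}\times\{0\}$ and $\Horn{n}{j}\times\{1\}$ already lie in $\Horn{n}{j}\times\Simp{1}$, filling the horn $\Horn{n}{j}\hookrightarrow\Simp{n}$ inside each of the slices $\Simp{n}\times\{0\}$ and $\Simp{n}\times\{1\}$ is a single horn filling, so the two together exhibit
\[
\Horn{n}{j}\times\Simp{1}\hookrightarrow \Horn{n}{j}\times\Simp{1}\cup \bigl(\Simp{n}\times\partial\Simp{1}\bigr)
\]
as a composite of two horn fillings (each one also attaching the corresponding $j$-th face in its cap). It then remains to fill the interior of the prism, which I would do by attaching the shuffles $\sigma_0,\sigma_1,\dots,\sigma_n$ in this order. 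The point is that each $\sigma_i$ becomes, after finitely many auxiliary fillings, a genuine horn filling: its two ``injective'' codimension-one faces are the diagonal $n$-simplices of threshold $i$ and $i+1$, of which the threshold-$0$ and threshold-$(n+1)$ ones are the caps just added, while the intermediate diagonals are produced as the new faces of successive fillings; its only other face outside the horn is the single face whose projection has image $\{0,\dots,j-1,j+1,\dots,n\}$, which occurs exactly when $i\neq j$. When that face is still missing I first insert it via a horn filling of one lower dimension (whose own faces are by then available from the caps and from earlier steps, and otherwise inserted by further lower-dimensional fillings), and then fill $\sigma_i$ along the remaining diagonal. Closure of collapsible extensions under composition, established just before Lemma~\ref{lem:example_collapsible}, then assembles these finitely many fillings into the desired collapsible extension.

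The main obstacle is the bookkeeping: verifying that, at each stage of this filtration, the simplex being attached has all but exactly one of its codimension-one faces already present, and identifying the resulting horn index as a function of $i$ and $j$. This forces a case split into $i<j$, $i=j$, and $i>j$, since the extra face over the $j$-th column exists only for $i\neq j$, and the two diagonal faces of $\sigma_i$ are shared with the neighbours $\sigma_{i-1}$ and $\sigma_{i+1}$, so whether they are already in place depends on the order of attachment. Equivalently, the content of the argument is to produce an acyclic matching pairing each new nondegenerate $m$-simplex with a codimension-one face in such a way that every pairing is realized by filling a single horn $\Horn{m}{k}$ -- that is, to exhibit a collapse of the pair $(\Simp{n}\times\Simp{1},\,\Horn{n}{j}\times\Simp{1})$. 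This is routine but notationally heavy, and is cleanest to verify directly from the shuffle description above.
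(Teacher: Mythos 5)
Your outline is correct, but it takes a genuinely different route from the one the paper uses for this lemma. The paper adds a single cap $\Simp{n}\times\Horn{1}{i}$ (one horn filling) and then disposes of the remaining inclusion formally, via the pushout--product statement of Lemma~\ref{lem:path-kan-aux} and Cor.~\ref{cor:c-to-c}, which reduce everything to Hovey's Lemma~3.3.3 together with an induction over the boundary extensions making up $\Horn{n}{j}\hookrightarrow\Simp{n}$; no direct analysis of the shuffles of $\Simp{n}\times\Simp{1}$ is needed. You instead add \emph{both} caps (two horn fillings, landing on $\Horn{n}{j}\times\Simp{1}\cup\Simp{n}\times\partial\Simp{1}$) and then collapse the prism shuffle by shuffle. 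What you describe from that point on is, almost verbatim, the paper's own Appendix~\ref{sec:lem_proof} proof of the \emph{stronger} Lemma~\ref{lem:pobj_fib}: your intermediate object is exactly $T_{j,-1}$ there, your order of attachment $\sigma_0,\dots,\sigma_n$ is the filtration $T_{j,l}=T_{j,l-1}\cup x_l$, your ``extra face over the $j$-th column'' is $y^j_l$ (present exactly when $l\neq j$), and your auxiliary lower-dimensional filling is the insertion of $y^j_l$ along $\Horn{n}{k}$ with missing face $z^j_l$. The trade-off is clear: the paper's argument for this lemma is short, formal, and reusable (it applies to any collapsible extension $S\to T$ in place of the horn), while yours is self-contained and elementary but carries all the combinatorial weight explicitly. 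The one caveat is that you defer precisely that weight --- the verification that at each stage exactly one codimension-one face is missing, and the identification of the horn index --- as ``routine but notationally heavy''; that bookkeeping is the entire mathematical content of the direct approach, and Claims~\ref{claim1}--\ref{claim3} of the appendix show it does close up (with one small simplification available to you: the faces of $y^j_l$ other than $z^j_l$ are already all present in $T_{j,l-1}$, so no ``further lower-dimensional fillings'' beyond the single insertion of $y^j_l$ are ever required).
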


\begin{proof}[Proof of Prop.\ \ref{prop:pobj_rep}]
 Let $X$ be an $\infty$-groupoid in $\Cat$. Proposition \ref{prop:matchobj}
and \eqref{eq:pobj_sheaf} imply that
% , the simplicial sheaf
 % $(\yon X)^{\Delta^1}$ satisfies 
\[
(\yon
 X)^{\Delta^1}_n \cong \Hom(\Delta^n\times \Delta^1, X),
\] 
on each level $n$. %We will show $\Hom(\Delta^n\times \Delta^1, X)$ is representable.
It follows from the presentation of $\Delta^n\times \Delta^1$ (e.g., see Appendix \ref{sec:lem_proof})
that $\Hom(\Delta^n\times \Delta^1, X)$ is represented by
\[
X_{n+1} ~ \prescript{}{d_1}\times_{d_1} ~ X_{n+1}
~ \prescript{}{d_2}\times_{d_2} \cdots
\prescript{}{d_i}\times_{d_{i}} ~ X_{n+1} ~ \prescript{}{d_{i+1}}
\times_{d_{i+1}} \cdots \prescript{}{d_n} \times_{d_n} ~ X_{n+1}.
\] 
Lemmas \ref{lem:example_collapsible} and \ref{lem:covers_in_groupoid}
imply that the face maps of $X$ are covers. Hence, the above pullbacks exist in $\Cat$. 
Therefore, $(\yon X)^{\Delta^1}$ is represented by a
simplicial object $X^{\Delta ^1}$.

Next we prove that $X^{\Delta^{1}}$ is an $\infty$-groupoid in
$\Cat$. By definition, we need to show the condition $\Kan(n,j)$
holds, i.e.,
$\Hom(\Horn{n}{j},X^{\Delta^1})$ is representable and the map induced by the inclusion
 \[
 \Hom(\Delta^{n},X^{\Delta^1}) \to \Hom(\Horn{n}{j},X^{\Delta^1})
 \]
is a cover, for all $n \geq 1$ and $0 \leq j \leq n$.
Note that Prop.\ \ref{prop:matchobj} implies that  we have a natural isomorphism, 
\[
\Hom(\Horn{n}{j},X^{\Delta^1}) \cong \Hom(\Horn{n}{j}  \times \Delta^1,X).
\]

We proceed by induction. For the $n=1$ case, we have
\[
\Hom(\Delta^1 \times \Delta^1,X) \to \Hom(\Horn{1}{j}  \times
\Delta^1,X) \cong \Hom(\Delta^1,X) \cong X_{1}.
\]
Since $X$ is a higher groupoid, Lemma \ref{lem:path-kan} combined with
Lemma \ref{lem:covers_in_groupoid} imply that  $\Kan(1,j)$ is satisfied. 

Now assume $n>1$ and that $\Kan(m,j)$ holds for all $m < n$ and $0 \leq
j \leq m$. This plus the fact that $\Horn{n}{j}$ is a collapsible subset of $\Delta^n$,
allows us to apply Lemma \ref{lem:representable} to conclude that 
$\Hom(\Horn{n}{j},X^{\Delta^1})$ is representable. Hence,
Lemma \ref{lem:path-kan} again combined with
Lemma \ref{lem:covers_in_groupoid} imply that  $\Kan(n,j)$ is satisfied.
This completes the proof.
\end{proof}

It remains to prove Lemma \ref{lem:path-kan}. We start with the
following auxiliary Lemma:

\begin{lemma} \label{lem:path-kan-aux}
Let $f\maps \Horn{1}{i} \to \Simp{1}$, for either $i= 0$ or $i= 1$ be
the usual inclusion. For every boundary extension $\iota: S\to T$
the induced map
\[ 
(S \times \Simp{1}) \sqcup_{S \times \Horn{1}{i} } (T \times \Horn{1}{i}) \to T \times \Simp{1},
\] 
is a collapsible extension.
\end{lemma}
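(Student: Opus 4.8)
The plan is to recognize the displayed map as the \emph{pushout-product} (Leibniz product) $\iota \square f$ of the boundary extension $\iota\maps S\to T$ with the horn inclusion $f\maps \Horn{1}{i}\to\Simp{1}$, where for maps $\iota\maps S\to T$ and $f\maps X\to Y$ of simplicial sets one sets
\[
\iota\square f\maps (S\times Y)\cup_{S\times X}(T\times X)\longrightarrow T\times Y.
\]
Taking $X=\Horn{1}{i}$ and $Y=\Simp{1}$ reproduces exactly the map in the statement. The strategy is then to reduce, via the standard cellular properties of $\square$, to the single generating case $\iota=(\partial\Simp{m}\hookrightarrow\Simp{m})$, and to settle that case by an explicit triangulation of the prism $\Simp{m}\times\Simp{1}$.

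For the reduction I would first record two closure properties of collapsible extensions. Closure under composition is already noted after Definition \ref{def:collapsible}. Closure under cobase change follows from the pasting law for pushouts: if $S_{k-1}\to S_k$ fills a horn $\Horn{n}{j}$, then its pushout along any $S_{k-1}\to Z$ is $Z\to Z\cup_{\Horn{n}{j}}\Simp{n}$, again a horn filling (and still a monomorphism, since monomorphisms are stable under pushout in $\sSet$). Next, since $f$ is fixed, the functor $(-)\square f$ preserves colimits in its first variable; hence the given presentation of the boundary extension $\iota$ as a finite composite of pushouts of boundary inclusions $\partial\Simp{m}\hookrightarrow\Simp{m}$ induces a presentation of $\iota\square f$ as a finite composite of pushouts of the maps $(\partial\Simp{m}\hookrightarrow\Simp{m})\square f$. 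Concretely, writing $S=S_0\subset\dots\subset S_l=T$, the subcomplexes $D_k:=(S_k\times\Simp{1})\cup_{S_k\times\Horn{1}{i}}(T\times\Horn{1}{i})$ interpolate between $D_0=\mathrm{dom}(\iota\square f)$ and $D_l=T\times\Simp{1}$, and each $D_{k-1}\hookrightarrow D_k$ is a cobase change of $\iota_k\square f$. By the two closure properties, it therefore suffices to treat the generating case.

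The core of the argument is thus the claim that
\[
(\partial\Simp{m}\hookrightarrow\Simp{m})\,\square\,f\;\maps\; A_i\hookrightarrow \Simp{m}\times\Simp{1},\qquad A_i:=(\partial\Simp{m}\times\Simp{1})\cup(\Simp{m}\times\{i\}),
\]
is a collapsible extension, where I have used that $\Horn{1}{i}$ is the single vertex $\{i\}$ of $\Simp{1}$. Here I would invoke the staircase triangulation: the nondegenerate top-dimensional simplices of $\Simp{m}\times\Simp{1}$ are the $m+1$ simplices $\sigma_k=[(0,0),\dots,(k,0),(k,1),\dots,(m,1)]$ for $0\le k\le m$, with $\sigma_k$ and $\sigma_{k+1}$ meeting along the interior face $d_{k+1}\sigma_k=d_{k+1}\sigma_{k+1}$, while all remaining faces of each $\sigma_k$ lie in $\partial\Simp{m}\times\Simp{1}$ except the two caps $d_0\sigma_0=\Simp{m}\times\{1\}$ and $d_{m+1}\sigma_m=\Simp{m}\times\{0\}$. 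For $i=0$ one attaches $\sigma_m,\sigma_{m-1},\dots,\sigma_0$ in this order: starting from $A_0$, attaching $\sigma_k$ fills the horn $\Horn{m+1}{k}$, since at that stage its unique missing face is $d_k\sigma_k$ (shared with the not-yet-attached $\sigma_{k-1}$), and the last step $\sigma_0$ fills the outer horn $\Horn{m+1}{0}$ whose missing face is the cap $\Simp{m}\times\{1\}$. For $i=1$ one attaches $\sigma_0,\dots,\sigma_m$ in ascending order, filling $\Horn{m+1}{1},\dots,\Horn{m+1}{m+1}$ respectively. Either way $A_i\hookrightarrow\Simp{m}\times\Simp{1}$ is a composite of horn fillings, hence collapsible.

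The main obstacle is the bookkeeping in this last paragraph: one must verify carefully that at each stage of the staircase attachment exactly one face of the incoming $\sigma_k$ is not yet present, so that the attachment genuinely fills a single horn rather than a larger anodyne piece. This is where the precise order of attachment (descending for $i=0$, ascending for $i=1$) and the appearance of the two outer horns $\Horn{m+1}{0}$ and $\Horn{m+1}{m+1}$ matter; note that these outer horns are harmless here, since Definition \ref{def:collapsible} permits filling horns $\Horn{m}{j}$ for all $0\le j\le m$, not merely inner ones.
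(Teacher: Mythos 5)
Your proof is correct and follows essentially the same route as the paper: both recognize the map as the pushout-product of $\iota$ with $f$, use co-continuity of the product in the first variable to reduce a boundary extension to the single generating case $\partial\Simp{m}\hookrightarrow\Simp{m}$, and then handle that case via the staircase triangulation of $\Simp{m}\times\Simp{1}$. The only difference is that the paper cites \cite[Lemma 3.3.3]{Hovey:1999} for the generating case, whereas you write out that argument explicitly (and correctly, including the order of attachment and the appearance of the outer horns $\Horn{m+1}{0}$ and $\Horn{m+1}{m+1}$).
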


\begin{proof}
The case when $\iota: S \to T$ is the standard inclusion $\partial \Simp{n}
\to \Simp{n}$ is proven in \cite[Lemma 3.3.3]{Hovey:1999}. 
We next observe the following 
following fact: 
If $F \maps \sSet \times \sSet \to
\sSet$ is a co-continuous functor and
\[
\begin{tikzpicture}[descr/.style={fill=white,inner sep=2.5pt},baseline=(current  bounding  box.center)]
\matrix (m) [matrix of math nodes, row sep=2em,column sep=3em,
  ampersand replacement=\&]
  {  
A  \&  B  \\
S \&  T  \\
}; 
\path[->,font=\scriptsize] 
 (m-1-1) edge node[auto] {$$} (m-1-2)
 (m-1-1) edge node[auto,swap] {$$} (m-2-1)
 (m-1-2) edge node[auto,swap] {$$} (m-2-2)
 (m-2-1) edge node[auto] {$$} (m-2-2)
;

%begin pushout symbol%
 \begin{scope}[shift=($(m-1-1)!.65!(m-2-2)$)]
 \draw +(.25,0) -- +(0,0)  -- +(0,.-.25);
 \end{scope}
 %end pushout symbol%
\end{tikzpicture}
\]
is a pushout square of simplicial sets, then the diagram
\[
\begin{tikzpicture}[descr/.style={fill=white,inner sep=2.5pt},baseline=(current  bounding  box.center)]
\matrix (m) [matrix of math nodes, row sep=2em,column sep=3em,
  ampersand replacement=\&]
  {  
F(A, \Simp{1}) \sqcup_{F(A, \Horn{1}{i})} F(B,\Horn{1}{i}) \& F(B, \Simp{1}) \\ 
F(S, \Simp{1}) \sqcup_{F(S, \Horn{1}{i})} F(T, \Horn{1}{i}) \& F(T, \Simp{1})\\
}; 
\path[->,font=\scriptsize] 
 (m-1-1) edge node[auto] {$$} (m-1-2)
 (m-1-1) edge node[auto,swap] {$$} (m-2-1)
 (m-1-2) edge node[auto,swap] {$$} (m-2-2)
 (m-2-1) edge node[auto] {$$} (m-2-2)
;

%begin pushout symbol%
 \begin{scope}[shift=($(m-1-1)!.65!(m-2-2)$)]
 \draw +(.25,0) -- +(0,0)  -- +(0,.-.25);
 \end{scope}
 %end pushout symbol%
\end{tikzpicture}
\]
 is again a pushout square.
(See \cite[Lemma 2.42]{Li:2015}, where $F$ is taken to
be the join functor, and observe that only the co-continuity of the
join is used in the proof.)

Hence, to prove the statement, we take $F$ above to be the product
functor and proceed by induction.
\end{proof}

Since any collapsible extension is a boundary extension, we have the
following corollary:
\begin{cor}\label{cor:c-to-c}
Let $f\maps \Horn{1}{i} \to \Simp{1}$, for either $i= 0$ or $i= 1$ be
the usual inclusion. For every collapsible extension $\iota: S\to T$
the induced map
\[ 
(S \times \Simp{1}) \sqcup_{S \times \Horn{1}{i} } (T \times \Horn{1}{i}) \to T \times \Simp{1},
\] 
is a collapsible extension.
\end{cor}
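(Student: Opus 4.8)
The plan is to deduce Corollary \ref{cor:c-to-c} directly from Lemma \ref{lem:path-kan-aux}, of which it is simply the restriction to a smaller class of maps. The key observation, recorded in the paragraph following Definition \ref{def:collapsible}, is that every collapsible extension is automatically a boundary extension: the inclusion $\Horn{n}{j} \to \Simp{n}$ used in a single horn-filling factors as $\Horn{n}{j} \to \partial\Simp{n} \to \Simp{n}$, so each elementary step of a collapsible filtration is itself a composite of boundary-fillings, and hence the whole filtration exhibits $S \hookrightarrow T$ as a boundary extension.

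Granting this, I would argue as follows. Given a collapsible extension $\iota\maps S \to T$, I would regard it as a boundary extension and apply Lemma \ref{lem:path-kan-aux} to this $\iota$. The Lemma's conclusion is precisely that the induced map $(S\times\Simp{1})\sqcup_{S\times\Horn{1}{i}}(T\times\Horn{1}{i}) \to T\times\Simp{1}$ is a collapsible extension, which is exactly the assertion of the Corollary. There is no genuine obstacle here: the only point meriting care is the direction of the class inclusion — Lemma \ref{lem:path-kan-aux} is stated for the \emph{larger} class of boundary extensions, so specializing its hypothesis to the \emph{smaller} class of collapsible extensions only strengthens what we assume and therefore requires no additional argument. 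Accordingly, I would simply cite the inclusion ``collapsible $\Rightarrow$ boundary'' together with Lemma \ref{lem:path-kan-aux}.
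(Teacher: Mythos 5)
Your proposal is correct and is exactly the paper's argument: the corollary is deduced from Lemma \ref{lem:path-kan-aux} by observing that every collapsible extension is a boundary extension (via the factorization $\Horn{n}{j} \to \partial\Simp{n} \to \Simp{n}$), so the lemma's hypothesis is satisfied. Nothing further is needed.
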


\begin{proof} [Proof of Lemma \ref{lem:path-kan}]
We observe that  the morphism $\Horn{n}{ j} \times \Simp{1} \to \Simp{n}
\times \Simp{1}$ is a composition of the following maps \[\Horn{n}{j} \times \Simp{1} \to  (\Horn{n}{j} \times \Simp{1}) \sqcup_{\Horn{n}{j} \times \Horn{1}{
    i} } (\Simp{n} \times \Horn{1}{i}) \to \Simp{n}
\times \Simp{1}. \] 
The first map is clearly a collapsible extension (note
that $\Horn{1}{i}=\Delta^0$). The second map is also a collapsible extension by
Cor. \ref{cor:c-to-c}.
\end{proof}

\begin{remark}\label{rem:pobj_rep}
If $X$ in Prop.\ \ref{prop:pobj_rep} is actually a $k$-groupoid for
$k< \infty$, then the above proof can be refined to show that
 $X^{\Delta^1}$ is also an $k$-groupoid. We just need to verify that
the cover
\[
\Hom(\Delta^n \times \Delta^1, X) \to  \Hom(\Horn{n}{j} \times \Delta^1, X)
\]
is an isomorphism for $n>k$. By Remark \ref{rem:n-gpd}, this will be
true provided we can show that the collapsible extension
\begin{equation} \label{eq:rem:pobj_rep}
\Horn{n}{j} \times \Delta^1 \to \Delta^n \times \Delta^1
\end{equation}
is the pushout of maps of the form $\Horn{m}{l} \to \Delta^m$ with $m >k$.
Indeed, this is the case. The collapsible extension $\Horn{n}{j} \to
\Delta^n$ is the composition of two boundary extensions
$\Horn{n}{j} \to \partial \Delta^n \to \Delta^n$. We obtain $\partial
\Delta^n$ from $\Horn{n}{j}$ by attaching $\Delta^{n-1}$ along
$\partial \Delta^{n-1} \to \Horn{n}{j}$. Therefore, the collapsible
extension obtained from  $S=\Horn{n}{j} \to \Delta^n=T$ from the auxiliary
Lemma \ref{lem:path-kan-aux} above involves 
pushouts of maps of the form $\Horn{n}{l} \to \Delta^n$ and
$\Horn{n+1}{l} \to \Delta^{n+1}$. (See the proof of Lemma 3.3.3 in \cite{Hovey:1999}.)
From the proof of Lemma \ref{lem:path-kan}, we can then conclude that
the collapsible extension \eqref{eq:rem:pobj_rep} only involves pushouts
along inclusions of horns which have dimension $>k$.
\end{remark}

\subsection{The iCFO structure} \label{sec:iCFO_struct}
We arrive at our first main result:

\begin{thm} \label{thm:icfo_struct}
Let $(\Cat,\covers,\pts)$ be a category equipped with a
locally stalkwise pretopology with respect to a jointly conservative
collection of points.  The category $\gpd{\Cat, \covers}$, whose objects are 
$\infty$-groupoids in $(\Cat,\covers)$, and whose morphisms are simplicial maps
is an incomplete category of fibrant objects in which:
\begin{itemize}
\item the weak equivalences are the stalkwise weak equivalences (Def.\
  \ref{def:stalk_weq}),

\item the fibrations are the Kan fibrations (Def.\ \ref{def:Kan_arrow}),

\item the acyclic fibrations are hypercovers (Def.\ \ref{def:equivalence}).
\end{itemize}
In particular, the category of Lie $\infty$-groupoids
\[
\LnGpd{\infty}:=\gpd{\Mfd, \covers_{\subm}, \pts_{\Ban}}
\]
is an incomplete category of fibrant objects in this way.
\end{thm}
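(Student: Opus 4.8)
The plan is to deduce the statement about $\LnGpd{\infty}$ from the general one: Prop.~\ref{cor:1} shows that $(\Mfd,\covers_{\subm},\pts_{\Ban})$ is a locally stalkwise pretopology, so it suffices to verify the seven axioms of Def.~\ref{def:catfibobj} for $\gpd{\Cat, \covers}$ with the indicated weak equivalences and fibrations. Three axioms are immediate. Axiom~(7) is built into the definition of an $\infty$-groupoid, since $X\to\ast$ satisfies $\Kan(m,j)$ for all $m\geq 1$, $0\le j\le m$. For axiom~(1), an isomorphism has isomorphisms---hence covers---as horn projections and is sent by every point $\ppt$ to an isomorphism of simplicial sets, so it is simultaneously a Kan fibration and a stalkwise weak equivalence. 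Axiom~(2) follows from the functoriality $\ppt(g\circ f)=\ppt(g)\circ\ppt(f)$ together with $2$-out-of-$3$ for weak homotopy equivalences of simplicial sets, applied at each $\ppt\in\pts$. Finally, the identification of the acyclic fibrations with the hypercovers is exactly Prop.~\ref{prop:hypercover}, which is what will let us unwind axiom~(5).

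For the path object (axiom~(6)) I take $X^I:=X^{\Delta^1}$. Prop.~\ref{prop:pobj_rep} shows this is an $\infty$-groupoid representing $(\yon X)^{\Delta^1}$, and via \eqref{eq:pobj_s0}--\eqref{eq:pobj_d0d1} it comes with maps $s^{0\ast}$ and $(d^{0\ast},d^{1\ast})$ whose composite is the diagonal. By Prop.~\ref{prop:stalkwise_pobject}, $s^{0\ast}$ is a stalkwise weak equivalence, hence a weak equivalence by definition; the remaining point is that $(d^{0\ast},d^{1\ast})\maps X^{\Delta^1}\to X\times X$ is a genuine Kan fibration, not merely a stalkwise one. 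I would check $\Kan(m,j)$ directly: using the natural isomorphisms $\Hom(K,X^{\Delta^1})\cong\Hom(K\times\Delta^1,X)$ from Prop.~\ref{prop:matchobj} and $X\times X\cong X^{\partial\Delta^1}$, the horn projection for $(d^{0\ast},d^{1\ast})$ is identified with the restriction map
\[
\Hom(\Simp{m}\times\Delta^1,X)\longrightarrow \Hom\bigl(P\hookrightarrow\Simp{m}\times\Delta^1,\,X\bigr),
\]
where $P:=(\Simp{m}\times\partial\Delta^1)\sqcup_{\Horn{m}{j}\times\partial\Delta^1}(\Horn{m}{j}\times\Delta^1)$ is the pushout-product of $\Horn{m}{j}\hookrightarrow\Simp{m}$ with $\partial\Delta^1\hookrightarrow\Delta^1$. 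A pushout-product argument in the spirit of Lemma~\ref{lem:path-kan-aux} and Cor.~\ref{cor:c-to-c} (the anodyne-times-cofibration case of \cite[Lemma~3.3.3]{Hovey:1999}) shows $P\hookrightarrow\Simp{m}\times\Delta^1$ is a collapsible extension, so Lemma~\ref{lem:covers_in_groupoid} makes the restriction map a cover between representable sheaves, as required.

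The two stability properties of Kan fibrations, axioms~(3) and~(4), reduce to the fact that horn projections compose and base-change to covers. For axiom~(3), given Kan fibrations $f\maps X\to Y$ and $g\maps Y\to Z$, the horn projection of $g\circ f$ factors as the horn projection of $f$ relative to $Y$ followed by a base change of the horn projection of $g$ relative to $Z$; representability of the intermediate sheaves comes from Lemma~\ref{lem:representable} and Remark~\ref{rem:rep-kan-map}, and since covers compose and pull back to covers (Def.~\ref{def:pretopology}) the composite is a cover. For axiom~(4), whenever $X\times_Z Y$ exists for a Kan fibration $f\maps X\to Z$, the horn projections of the projection $X\times_Z Y\to Y$ are base changes of those of $f$ along representable maps, hence again covers.

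The main obstacle is axiom~(5). By Prop.~\ref{prop:hypercover} the acyclic fibrations are precisely the hypercovers, so I must show that the pullback of a hypercover $f\maps X\to Z$ along any $g\maps Y\to Z$ exists and is again a hypercover---and this is exactly where the absence of finite limits in $\Cat$ bites. The plan is to build $W:=X\times_Z Y$ by induction on simplicial degree. At degree $0$, $\Acyc(0)$ gives that $X_0\to Z_0$ is a cover, so $W_0=X_0\times_{Z_0}Y_0$ exists and $W_0\to Y_0$ is a cover. For the inductive step, assume $W$ has been constructed in degrees $<n$. The relative boundary sheaves $\Hom(\partial\Simp{n}\to\Simp{n},X\to Z)$ and $\Hom(\partial\Simp{n}\to\Simp{n},W\to Y)$ are representable by the automatic-representability result of Remark~\ref{rmk:hypercover}(1) (since $Z$ and $Y$ are higher groupoids and the lower data of $W$ is already built), and the boundary projection $W_n\to\Hom(\partial\Simp{n}\to\Simp{n},W\to Y)$ is the base change, along the induced map of these representable sheaves, of the cover $X_n\to\Hom(\partial\Simp{n}\to\Simp{n},X\to Z)$ supplied by $\Acyc(n)$. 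Since pullbacks of covers exist and are covers (Def.~\ref{def:pretopology}), this simultaneously produces $W_n$ and shows that $W\to Y$ satisfies $\Acyc(n)$. Running the induction through all degrees yields $W=X\times_Z Y$ as a hypercover over $Y$, entirely within the representability framework of Section~\ref{sec:higher_gpd} rather than by assuming ambient limits; this level-by-level existence-and-representability bookkeeping is the one genuinely delicate part of the proof.
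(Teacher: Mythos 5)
Your proposal is correct and, for most of the axioms, follows the same architecture as the paper: the easy axioms (1), (2), (7) are handled identically; axioms (3) and (4) are the paper's Props.\ \ref{prop:axiom3} and \ref{prop:axiom4} (the paper is more explicit about the inductive representability of the intermediate sheaves $\Horn{n}{j}(X)\times_{\Horn{n}{j}(Y)}Z_n$, which is the only content hiding behind your phrase ``along representable maps''); and the identification of acyclic fibrations with hypercovers via Prop.\ \ref{prop:hypercover} is exactly how the paper closes the loop. The one place you take a genuinely different route is axiom (5). The paper gets existence of the pullback cheaply --- $\Acyc(0)$ gives that $f_0$ is a cover, so $Z_0\times_{Y_0}X_0$ exists and Prop.\ \ref{prop:axiom4} produces the whole pullback as a Kan fibration --- and then verifies the weak-equivalence half by applying points: since each $\ppt$ preserves finite limits and acyclic Kan fibrations of simplicial sets are stable under pullback, $\ppt q_f$ is a weak equivalence. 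Your degree-by-degree induction establishing $\Acyc(n)$ for the pullback directly is also valid (it is essentially Lemma 2.8 of \cite{Zhu:2009a}, which the paper notes is equivalent to its Prop.\ \ref{prop:axiom5}); it buys a self-contained combinatorial proof that never mentions $\pts$, at the cost of more representability bookkeeping, whereas the paper's route reuses axiom (4) and the formal properties of points.

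One caution on axiom (6): the statement you need --- that $(\Simp{m}\times\partial\Simp{1})\sqcup_{\Horn{m}{j}\times\partial\Simp{1}}(\Horn{m}{j}\times\Simp{1})\hookrightarrow\Simp{m}\times\Simp{1}$ is a \emph{collapsible extension} --- does not follow from citing Cor.\ \ref{cor:c-to-c} or the standard anodyne pushout-product theorem. Those results concern the pushout-product with $\Horn{1}{i}\to\Simp{1}$, not with $\partial\Simp{1}\to\Simp{1}$, and more importantly collapsible extensions are finite composites of cobase changes of horn inclusions \emph{without} closure under retracts, so the saturation argument behind ``anodyne $\times$ cofibration is anodyne'' does not literally produce collapsibility. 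An explicit filtration by the simplices of the prism triangulation is required; this is precisely the paper's Lemma \ref{lem:pobj_fib}, proved by a bespoke combinatorial argument in Appendix \ref{sec:lem_proof}. Your reduction to this combinatorial statement is the right move, but the statement itself is a nontrivial lemma, not a corollary of the results you cite.
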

Let us make a number of important remarks before we proceed to the proof.

\begin{remark} \label{rmk:iCFO_struct1} 
\mbox{}
\begin{enumerate}
\item The acyclic fibrations are
  obviously determined by the weak equivalences and fibrations, i.e.,
  those Kan fibrations that are also stalkwise weak
  equivalences. Since we are working with a locally stalkwise
  pretopology, it follows from Prop.\ \ref{prop:hypercover} that
  acyclic fibrations are precisely the hypercovers. 

\item In particular, if $\Cat$ is small and has all finite limits, then
  $(\Cat,\covers)$ is a ``descent category'' in the sense of
  Behrend--Getzler. (See proof of Cor.\ \ref{cor:w-eq-combinatoric} for more details.)
In this case, the iCFO structure on $\gpd{\Cat,
    \covers}$ agrees with the CFO structure of Thm.\ 3.6 in \cite{Behrend-Getzler:2015}.

\item In the proof of Thm\ \ref{thm:icfo_struct}, we use the
  assumption that $\covers$ is a locally stalkwise pretopology only in
  the proof of Prop.\ \ref{prop:axiom5}, where we show that 
  pullbacks of acyclic fibrations always exist.

\item The fact that the category of Lie $\infty$-groupoids is
an example of such an iCFO follows immediately from Prop.\ \ref{cor:1}.

\item The proof below of Thm.\ \ref{thm:icfo_struct} can be enhanced to show that
the category $\mathsf{Gpd}_{n}[\Cat,\covers]$ of $n$-groupoids in
$(\Cat,\covers)$ for $n < \infty$ also forms an iCFO. It
follows from Remark \ref{rem:pobj_rep} that the path object
$X^{\Delta^1}$ associated to an $n$-groupoid $X$ will also be an
$n$-groupoid. 
The only other modifications needed are in the proof of Prop.\ \ref{prop:axiom4} below.
(See Remark \ref{rmk:axiom4}.)
\end{enumerate}
\end{remark}

The following collection of propositions proves Thm.\ \ref{thm:icfo_struct} by
directly verifying the axioms of Def.\ \ref{def:catfibobj} 
We begin with the easiest axioms to verify:
\begin{proposition}[Axioms 1, 2, 7] \label{prop:ax127}
\mbox{}
\begin{enumerate}
\item Every isomorphism in $\gpd{\Cat, \covers}$ is a stalkwise weak
  equivalence and a Kan fibration.
\item If $f$ and $g$ are composable morphisms in $\gpd{\Cat,
    \covers}$, and any two of $f$, $g$, or $g\circ f$ are stalkwise weak
  equivalences, then so is the third.
\item If $X$ is an $\infty$-groupoid in $(\Cat, \covers)$, then $X \to
  \ast$ is a Kan fibration.
\end{enumerate}
\end{proposition}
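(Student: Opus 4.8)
The plan is to reduce each of the three assertions to an elementary fact, using the functoriality of the points $\ppt\in\pts$ together with the definitions of Kan fibration and $\infty$-groupoid. Since these are the ``easy'' axioms, I do not expect a genuine obstacle; the only place requiring a little care is a representability bookkeeping inside part (1).

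For (1), let $f\maps X\to Y$ be an isomorphism in the category of $\infty$-groupoids. First I would observe that $\yon f$ is then an isomorphism of simplicial sheaves, so each $\ppt f\maps \ppt X\to \ppt Y$ is an isomorphism of simplicial sets and hence a weak homotopy equivalence; by Def.\ \ref{def:stalk_weq} this makes $f$ a stalkwise weak equivalence. To see that $f$ is a Kan fibration, I would use that an isomorphism $f$ induces natural isomorphisms $\Hom(K,X)\cong\Hom(K,Y)$ for every simplicial set $K$. Feeding these into the pullback \eqref{eq:commuting_square_space} identifies the sheaf $\Hom(\Horn{m}{j}\to\Simp{m},X\to Y)$ with $\Hom(\Simp{m},Y)=Y_m$ (via the projection onto the representable factor, the other leg being an isomorphism), so it is representable; moreover the horn projection \eqref{eq:Kan_arrow} is then identified with $f_m$. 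Since $f$ is an isomorphism, $f_m$ is an isomorphism, and isomorphisms are covers, so every $\Kan(m,j)$ holds and $f$ is a Kan fibration.

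For (2), the key point is that each $\ppt$ is a functor, so $\ppt(g\circ f)=\ppt(g)\circ\ppt(f)$. By Def.\ \ref{def:stalk_weq}, the three statements ``$f$, $g$, $g\circ f$ is a stalkwise weak equivalence'' assert exactly that $\ppt f$, $\ppt g$, and $\ppt(g)\circ\ppt(f)$ are weak homotopy equivalences of simplicial sets, for every $\ppt\in\pts$. The assertion therefore follows by applying, for each $\ppt$ separately, the standard 2-out-of-3 property for weak homotopy equivalences of simplicial sets.

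For (3), this is a direct unwinding of the definitions. By Def.\ \ref{def:ngpd} with $n=\infty$, an $\infty$-groupoid $X$ is precisely a simplicial object whose unique map $X\to\ast$ satisfies $\Kan(m,j)$ for all $m\ge 1$ and $0\le j\le m$ (the unique Kan conditions $\Kan!(m,j)$ for $m>\infty$ being vacuous), which is exactly the defining condition for $X\to\ast$ to be a Kan fibration in the sense of Def.\ \ref{def:Kan_arrow}. Thus the only step demanding any attention at all is the representability claim inside part (1), and that becomes immediate once the natural isomorphisms $\Hom(K,X)\cong\Hom(K,Y)$ coming from the isomorphism $f$ are in hand.
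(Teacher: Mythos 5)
Your proposal is correct and follows essentially the same route as the paper, which simply records (1) as obvious, (2) as the 2-out-of-3 property for weak homotopy equivalences of simplicial sets applied pointwise, and (3) as immediate from Def.\ \ref{def:ngpd}. The only difference is that you spell out the representability and cover verification in (1), which the paper leaves implicit; your identification of the horn projection with $f_m$ via the pullback is the right way to do it.
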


\begin{proof}
(1) Obvious. (2) Follows from the fact that weak equivalences of
simplicial sets satisfy the analogous 2 out of 3 axiom. (3) 
Follows immediately from Def.\ \ref{def:ngpd}.
\end{proof}

This next proposition implies that the composition of two Kan
fibrations is again a Kan fibration. A similar result appears
in \cite[Lemma 2.7]{Henriques:2008} and also in \cite[Theorem 2.17 (3)]{Wolfson:2016}, where Kan fibrations are called ``$n$-stacks''.

\begin{proposition}[Axiom 3] \label{prop:axiom3}
  \label{pro:comp}
  Let $f \maps X\to Y$ and $g\colon Y\to Z$ be morphisms of
  simplicial objects in $\Cat$.  If $f$ and $g$ satisfy
  $\Kan(n,j)$ and the sheaf  
\[
\Hom(\Horn{n}{j}\xto{\iota} \Simp{n}, X \xto{g\circ f} Z)
\]
 is representable, then $g\circ f$ also satisfies $\Kan(n,j)$.
 Similarly, if  $f$ and $g$ satisfy $\Kan!(n,j)$, then $g\circ f$ also
  satisfies $\Kan!(n,j)$.
\end{proposition}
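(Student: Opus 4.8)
The plan is to factor the horn projection for $g \circ f$ through the corresponding sheaf for $f$, and to recognize the remaining factor as a base change of the horn projection for $g$. Abbreviate $\Lambda \defeq \Horn{n}{j}$ and, for a morphism $h \maps A \to B$, write $P_h \defeq \Hom(\Lambda \to \Simp{n}, A \xto{h} B)$, so that the Kan condition concerns the canonical map $A_n \to P_h$. Unwinding \eqref{eq:commuting_square_space}, the horn projection for $g \circ f$ factors at the level of sheaves as
\[
X_n \xto{(\iota^\ast, f_\ast)} P_f \to P_{g \circ f},
\]
where the second map sends a pair $(a,b) \in \Hom(\Lambda, X) \times_{\Hom(\Lambda, Y)} \Hom(\Simp{n}, Y)$ to $(a, g_\ast b)$. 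One checks directly that this lands in $P_{g \circ f}$, since $g_\ast f_\ast a = g_\ast(b|_\Lambda) = (g_\ast b)|_\Lambda$, and that the composite is $x \mapsto (x|_\Lambda, (g\circ f)_\ast x)$, which is exactly the horn projection for $g \circ f$.

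Next I would exhibit the second map $P_f \to P_{g \circ f}$ as a pullback of the horn projection $Y_n \to P_g$ for $g$. Concretely, I claim that the square
\[
\begin{array}{ccc}
P_f & \to & Y_n \\
\downarrow & & \downarrow \\
P_{g\circ f} & \to & P_g
\end{array}
\]
is cartesian in $\Sh(\Cat)$, where the top map is $(a,b) \mapsto b$, the right map is the horn projection for $g$, the left map is $(a,b) \mapsto (a, g_\ast b)$ as above, and the bottom map is $(a,c) \mapsto (f_\ast a, c)$. Commutativity and the universal property are verified by a direct check on $U$-points: an element of $P_{g \circ f} \times_{P_g} Y_n$ is a pair $\bigl((a,c), b\bigr)$ with $f_\ast a = b|_\Lambda$ and $c = g_\ast b$, which is precisely the datum of an element $(a,b) \in P_f$. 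This is the only genuinely computational step, and the main obstacle is simply keeping straight the bookkeeping of the various restriction maps $|_\Lambda$ and pushforwards $f_\ast$, $g_\ast$; once the square is set up, everything else is formal.

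With the square in hand the two cases follow at once. In the $\Kan(n,j)$ case, $P_g$ is representable (because $g$ satisfies $\Kan(n,j)$) and $P_{g \circ f}$ is representable by hypothesis, so the pretopology axiom on pullbacks of covers applies: since $Y_n \to P_g$ is a cover, its base change $P_f \to P_{g\circ f}$ is a cover (and the pullback $P_f$ is representable, consistent with $\Kan(n,j)$ for $f$). As $X_n \to P_f$ is a cover by $\Kan(n,j)$ for $f$, the composite $X_n \to P_{g\circ f}$ is a cover, so $g \circ f$ satisfies $\Kan(n,j)$. In the $\Kan!(n,j)$ case the same square is used purely at the sheaf level: the horn projection $Y_n \to P_g$ is now an isomorphism, and base change of an isomorphism along any morphism of sheaves is again an isomorphism, so $P_f \to P_{g\circ f}$ is an isomorphism; composing with the isomorphism $X_n \to P_f$ shows $X_n \to P_{g\circ f}$ is an isomorphism. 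Since $X_n$ is representable, this automatically makes $P_{g \circ f}$ representable, so no separate representability hypothesis is needed for $\Kan!$, and $g \circ f$ satisfies $\Kan!(n,j)$.
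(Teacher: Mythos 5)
Your proof is correct and takes essentially the same approach as the paper: the cartesian square you construct, exhibiting $P_f \to P_{g\circ f}$ as the base change of the horn projection $Y_n \to P_g$ along $P_{g\circ f}\to P_g$, is precisely the key pullback square in the paper's diagram, and both arguments conclude by composing covers (resp.\ isomorphisms) and invoking the pretopology axiom that pullbacks of covers are covers.
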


\begin{proof}
The proposition, in fact, follows 
from Wolfson's Theorem 2.17 (3) in \cite{Wolfson:2016}. Indeed, axiom 1 in our Def.\ \ref{def:LSW_covers} implies axiom 3 in \cite[Sec.\ 2]{Wolfson:2016} for a category equipped with a subcategory of covers. For the reader's convenience we recall the proof 
from \cite{Wolfson:2016} using our notation.

We have the following diagram containing three pullback squares:
\[
\begin{tikzpicture}[descr/.style={fill=white,inner sep=2.5pt},baseline=(current  bounding  box.center)]
\matrix (m) [matrix of math nodes, row sep=2em,column sep=1em,
  ampersand replacement=\&]
  {  
\Hom(\Delta^n,X) \& \Hom(\Horn{n}{j}\xto{\iota} \Simp{n}, X\xto{f} Y) \&
\Hom(\Horn{n}{j}\xto{\iota} \Simp{n}, X\xto{g\circ f} Z) \&
\Hom(\Horn{n}{j},X) \\
\& \Hom(\Delta^n,Y) \& \Hom(\Horn{n}{j}\xto{\iota} \Simp{n}, Y\xto{g} Z) \&
\Hom(\Horn{n}{j},Y) \\
\& \& \Hom(\Delta^n,Z) \& \Hom(\Horn{n}{j},Z)\\
}; 
 \path[->,font=\scriptsize] 
  (m-1-1) edge node[auto] {$(\iota^\ast,f_\ast)$} (m-1-2)
  (m-1-2) edge node[auto] {$g_\ast$} (m-1-3)
  (m-1-3) edge node[auto] {$\pr$} (m-1-4)
  (m-2-2) edge node[auto] {$(\iota^\ast,g_\ast)$} (m-2-3)
  (m-2-3) edge node[auto] {$\pr$} (m-2-4)
  (m-1-2) edge node[auto] {$\pr$} (m-2-2)
  (m-1-3) edge node[auto] {$f_\ast$} (m-2-3)
  (m-1-4) edge node[auto] {$f_\ast$} (m-2-4)
  (m-2-3) edge node[auto] {$\pr$} (m-3-3)
  (m-2-4) edge node[auto] {$g_\ast$} (m-3-4)
  (m-3-3) edge node[auto] {$\iota^\ast$} (m-3-4)
 ;

%begin pullback symbol%
 \begin{scope}[shift=($(m-1-2)!.4!(m-2-3)$)]
 \draw +(-0.25,0) -- +(0,0)  -- +(0,0.25);
 \end{scope}
 %end pullback symbol%

%begin pullback symbol%
 \begin{scope}[shift=($(m-1-3)!.4!(m-2-4)$)]
 \draw +(-0.25,0) -- +(0,0)  -- +(0,0.25);
 \end{scope}
 %end pullback symbol%

%begin pullback symbol%
 \begin{scope}[shift=($(m-2-3)!.4!(m-3-4)$)]
 \draw +(-0.25,0) -- +(0,0)  -- +(0,0.25);
 \end{scope}
 %end pullback symbol%
\end{tikzpicture}
\]
If $g$ satisfies $\Kan(n,j)$, then
$\Hom(\Delta^n,Y) \xto{(\iota^*,g_*)} \Hom(\Horn{n}{j}\xto{\iota} \Simp{n}, Y\xto{g} Z)$ 
is a cover. Hence, $\Hom(\Horn{n}{j}\xto{\iota} \Simp{n}, X\xto{f} Y) \xto{g_\ast}
\Hom(\Horn{n}{j}\xto{\iota} \Simp{n}, X\xto{g\circ f} Z)$ is a cover.
If $f$ satisfies $\Kan(n,j)$, then 
$\Hom(\Delta^n,X) \xto{(\iota^*,f_*)} \Hom(\Horn{n}{j}\xto{\iota} \Simp{n}, X\xto{f} Y)$
is a cover. Hence, the composition 
\[
g_\ast \circ (\iota^\ast,f_\ast)=(\iota^\ast, (g \circ f)_\ast) \maps
\Hom(\Delta^n,X) \to \Hom(\Horn{n}{j}\xto{\iota} \Simp{n},
X\xto{g\circ f} Z)
\]
 is a cover, and so
$g \circ f$ satisfies $\Kan(n,j)$. The same argument \textit{mutatis mutandis}
shows that if $f$ and $g$ satisfy $\Kan!(n,j)$, then so does $g
\circ f$.
\end{proof}

We next show that the pullback of a Kan fibration is a Kan fibration,
provided the pullback exists in $\gpd{\Cat, \covers}$. As previously
mentioned, this is the only difference between the axioms for an iCFO
and Brown's axioms for a CFO: we do not require the pullback  along
a fibration to exist in general. It turns out, for $\infty$-groupoids, this generalization is in fact
quite mild. The pullback of along a Kan fibration will always
exist provided the corresponding pullback of the 0-simplices exists
in $\Cat$. (The same result appears as Thm.\ 2.17 (4) in
\cite{Wolfson:2016}.)
% where Kan fibrations are called ``$n$-stacks''.)

\begin{proposition}[Axiom 4] \label{prop:axiom4}
Let  $f \maps X \to Y$ be a Kan fibration in $\gpd{\Cat, \covers}$
and $g \maps Z\to Y$ a morphism in $\gpd{\Cat, \covers}$.
If the pullback $Z_0\times_{Y_0} X_0$ exists in $\Cat$, then:
\begin{enumerate}
\item the pullback $Z_n \times_{Y_n} X_n$ exists in $\Cat$ for all $n
  \geq 0$,
\item the morphism $Z\times_{Y} X \xto{p_f} Z$ induced by pulling back $f$ along $g$ 
is a Kan fibration between simplicial objects in $\Cat$,

\item the pullback $Z\times_{Y} X $ is an object of $\gpd{\Cat,\covers}$.
\end{enumerate}
\end{proposition}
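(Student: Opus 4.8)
The plan is to build the pullback one simplicial degree at a time as a simplicial sheaf and then prove representability by induction on degree, exploiting the fact that the horn projections of the pulled-back map are base changes of the horn projections of $f$. Throughout I work with the simplicial sheaf $\hat P \defeq \yon Z \times_{\yon Y} \yon X$, whose $n$-simplices are $\hat P_n \cong \yon Z_n \times_{\yon Y_n} \yon X_n$. Statement (1) amounts to showing each $\hat P_n$ is representable by an object $P_n \in \Cat$; then $P \defeq Z\times_Y X$ is the simplicial object in $\Cat$ representing $\hat P$, and $p_f$ is the induced map to $Z$.

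The key observation is that, for each $n\geq 1$ and $0\le j\le n$, the horn projection of $p_f$ is the base change along $g_\ast\maps \yon Z_n\to\yon Y_n$ of the horn projection of $f$. Since the matching-object functor $M_{\Horn{n}{j}}(-)$ preserves limits (being computed objectwise via $\hom_{\sSet}(\Horn{n}{j},-)$), applying it to $\hat P=\yon Z\times_{\yon Y}\yon X$ and simplifying gives a natural identification of the relative matching object of $p_f$,
\[
M_{\Horn{n}{j}}(\hat P)\times_{M_{\Horn{n}{j}}(\yon Z)}\yon Z_n \;\cong\; \Hom(\Horn{n}{j}\to\Simp{n},X\to Y)\times_{\yon Y_n}\yon Z_n .
\]
Comparing universal properties then shows that the square whose two parallel edges are the horn projections of $f$ and of $p_f$, and whose remaining two edges are induced by $g_\ast$, is cartesian. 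As $f$ is a Kan fibration, the horn projection of $f$ is a cover between representable sheaves.

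With this in hand I would induct on $n$, proving simultaneously that $\hat P_n$ is representable and that $p_f$ satisfies $\Kan(n,j)$ for all $0\le j\le n$. The base case $n=0$ is exactly the hypothesis that $Z_0\times_{Y_0}X_0$ exists. For the step, assume the claim for all $m<n$. Since $M_{\Horn{n}{j}}(\hat P)$ involves only the degrees $<n$ of $\hat P$ together with the representable $\yon Z_n$, Remark \ref{rem:rep-kan-map} (via Lemma \ref{lem:representable}) shows the relative matching object above is representable; as the target of the horn projection of $f$ is representable too, the comparison map between them is a morphism in $\Cat$. The pretopology axiom---pullbacks of covers exist in $\Cat$ and are again covers---then applies to the cartesian square, showing that the pullback $\hat P_n\cong X_n\times_{Y_n}Z_n$ is representable and that the horn projection of $p_f$ is a cover. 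This closes the induction and proves (1) and (2).

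Finally, for (3), now that $P$ is a genuine simplicial object in $\Cat$ and $p_f\maps P\to Z$ is a Kan fibration, I factor the terminal map as $P\xto{p_f}Z\to\ast$ and invoke Prop.\ \ref{prop:axiom3}: both factors satisfy $\Kan(n,j)$, and the required representability of $\Hom(\Horn{n}{j},P)=\Hom(\Horn{n}{j}\to\Simp{n}, P\to\ast)$ follows inductively from Remark \ref{rem:rep-kan-map} once $P\to\ast$ is known to satisfy $\Kan(m,j)$ for $m<n$; hence $P$ is an $\infty$-groupoid. (The refinement asserting that $P$ is a finite $n$-groupoid when $X,Y,Z$ are, which additionally requires the unique horn-filling conditions, is the content of Remark \ref{rmk:axiom4}.) The step I expect to demand the most care is the representability bookkeeping in the induction: one must verify that the degree-$n$ relative matching object depends only on the already-constructed lower degrees of $\hat P$, so that Lemma \ref{lem:representable} may be applied before $\hat P$ is known to be representable in degree $n$, and that every sheaf entering the pullback-of-covers axiom is representable at the moment that axiom is invoked.
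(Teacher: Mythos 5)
Your proposal is correct and follows essentially the same route as the paper: identify the horn projection of $p_f$ as the base change along $g_\ast$ of the horn projection of $f$ (the paper's pasted pullback squares give exactly your identification $\Hom(\Horn{n}{j}\to\Simp{n}, Z\times_Y X\to Z)\cong \Hom(\Horn{n}{j},X)\times_{\Hom(\Horn{n}{j},Y)}Z_n$), induct on the simplicial degree using Lemma \ref{lem:representable} to get representability of the relative matching object from the already-constructed lower degrees, apply the pullback-of-covers axiom, and deduce (3) from Prop.\ \ref{prop:axiom3}. The only cosmetic difference is that the paper runs the base case at $n=1$ explicitly (exhibiting $X_0\times_{Y_0}Z_1$ as the pullback of the given $Z_0\times_{Y_0}X_0$ along the cover $d_j\maps Z_1\to Z_0$), whereas you start at $n=0$; the bookkeeping concern you flag is real but resolves exactly as you anticipate.
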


\begin{proof}
For convenience, we use the following notation below: If $K$ is a
simplicial set and $W$ is a simplicial object in $\Cat$, then $K(W)$
is the sheaf $\Hom(K,W)$. Also, we denote by $\Hom(\iota,p_f)$
the sheaf $\Hom(\Horn{n}{j} \xto{\iota} \Delta^n, 
Z\times_{Y} X \xto{p_f} Z)$. Finally, we do not distinguish between a
simplicial object $W$ in $\Cat$, and the representable sheaf $\yon W$.

We shall prove statements (1) and (2) simultaneously:
For all $n \geq 0$, and $0 \leq j \leq n$, we wish to show that
morphism of sheaves
\begin{equation} \label{eq:axiom4.1}
Z_n \times_{Y_n} X_n \xto{(\iota^*,p_{f \ast})} \Hom(\iota,p_f)
\end{equation}
is represented by a cover in $\Cat$. It follows from the definition
Def.\ \ref{eq:matchobj_andre0}, that for a fixed simplicial set $K$, 
the functor $\Hom(K,-) \maps s\Cat \to \Sh(\Cat)$ preserves limits.
Therefore, we have an isomorphism of sheaves
\[
\begin{split}
\Hom(\iota,p_f) &:= \Horn{n}{j} \bigl( Z \times_Y X \bigr)
\times_{\Horn{n}{j}(Z)} Z_n\\
&\cong \bigl( \Horn{n}{j}(Z) \times_{\Horn{n}{j}(Y)} \Horn{n}{j}(X)
\bigr)  \times_{\Horn{n}{j}(Z)} Z_n,
\end{split}
\]
and a commuting diagram of pullback squares
\begin{equation} \label{diag:pbdiag1}
\begin{tikzpicture}[descr/.style={fill=white,inner sep=2.5pt},baseline=(current  bounding  box.center)]
\matrix (m) [matrix of math nodes, row sep=1.5em,column sep=3em,
  ampersand replacement=\&]
  {  
\Hom(\iota,p_f) \& Z_n \\
\Horn{n}{j}(Z) \times_{\Horn{n}{j}(Y)} \Horn{n}{j}(X) \& \Horn{n}{j}(Z)\\
\Horn{n}{j}(X) \& \Horn{n}{j}(Y)\\
}; 
  \path[->,font=\scriptsize] 
   (m-1-1) edge node[auto] {$\pr_1$} (m-1-2)
   (m-1-1) edge node[auto,swap] {$\pr_2$} (m-2-1)
   (m-1-2) edge node[auto] {$\iota^*$} (m-2-2)
   (m-2-1) edge node[auto] {$p_{f\ast}$} (m-2-2)
   (m-2-1) edge node[auto,swap] {$\pr_3$} (m-3-1)
   (m-2-2) edge node[auto] {$g_{\ast}$} (m-3-2)
   (m-3-1) edge node[auto] {$f_{\ast}$} (m-3-2)
  ;

%begin pullback symbol%
  \begin{scope}[shift=($(m-1-1)!.4!(m-2-2)$)]
  \draw +(-0.25,0) -- +(0,0)  -- +(0,0.25);
  \end{scope}
  %end pullback symbol%

 %begin pullback symbol%
  \begin{scope}[shift=($(m-2-1)!.4!(m-3-2)$)]
  \draw +(-0.25,0) -- +(0,0)  -- +(0,0.25);
  \end{scope}
  %end pullback symbol%

\end{tikzpicture}
\end{equation}
Hence, the pasting law for pullbacks gives an isomorphism of sheaves
\begin{equation} \label{eq:diag1}
\begin{split}
\Hom(\iota,p_f) \cong \Horn{n}{j}(X) \times_{\Horn{n}{j}(Y)} Z_n.
\end{split}
\end{equation}
The above isomorphism gives another commuting diagram of pullback squares,
which via the universal property, contains the morphism \eqref{eq:axiom4.1}:
\begin{equation}\label{diag:pbdiag2}
\begin{tikzpicture}[descr/.style={fill=white,inner sep=2.5pt},baseline=(current  bounding  box.center)]
\matrix (m) [matrix of math nodes, row sep=1.5em,column sep=3em,
  ampersand replacement=\&]
  {  
Z_n \times_{Y_n} X_n \& X_n \& \\
 \Hom(\iota,p_f) \&  Y_n \times_{\Horn{n}{j}(Y)} \Horn{n}{j}(X) \& \Horn{n}{j}(X) \\
 Z_n \& Y_n \& \Horn{n}{j}(Y)\\
}; 
  \path[->,font=\scriptsize] 
   (m-1-1) edge node[auto] {$$} (m-1-2)
   (m-1-1) edge node[auto,swap] {$(\iota^\ast,p_{f \ast})$} (m-2-1)
   (m-1-2) edge node[auto] {$(\iota^*,f_*)$} (m-2-2)
   (m-2-1) edge node[auto] {$$} (m-2-2)
   (m-2-1) edge node[auto,swap] {$$} (m-3-1)
   (m-2-2) edge node[auto] {$$} (m-2-3)
   (m-2-2) edge node[auto] {$$} (m-3-2)
   (m-3-1) edge node[auto] {$g_{\ast}$} (m-3-2)
   (m-3-2) edge node[auto] {$\iota^{\ast}$} (m-3-3)
   (m-2-3) edge node[auto] {$f_\ast$} (m-3-3)
  ;
% %begin pullback symbol%
   \begin{scope}[shift=($(m-1-1)!.4!(m-2-2)$)]
   \draw +(-0.25,0) -- +(0,0)  -- +(0,0.25);
   \end{scope}
%   %end pullback symbol%

%  %begin pullback symbol%
   \begin{scope}[shift=($(m-2-1)!.4!(m-3-2)$)]
   \draw +(-0.25,0) -- +(0,0)  -- +(0,0.25);
   \end{scope}
%   %end pullback symbol%

%  %begin pullback symbol%
   \begin{scope}[shift=($(m-2-2)!.4!(m-3-3)$)]
   \draw +(-0.25,0) -- +(0,0)  -- +(0,0.25);
   \end{scope}
%   %end pullback symbol%

\end{tikzpicture}
\end{equation}
Note that since $f \maps X \to Y$ is a Kan fibration, the morphism
$X_n \xto{(\iota^\ast,f_\ast)}  Y_n \times_{\Horn{n}{j}(Y)} \Horn{n}{j}(X)$
is represented by a cover. Hence, to show that the morphism
\eqref{eq:axiom4.1} is represented by a cover, the above diagram
implies that it is sufficient to show that the sheaf
\begin{equation} \label{eq:axiom4.2}
\begin{split}
\Horn{n}{j}(X) \times_{\Horn{n}{j}(Y)} Z_n 
\end{split}
\end{equation} 
is representable for all $n \geq 1$ and $0 \leq j \leq n$.

First consider the $n=1$ case. Diagram \eqref{diag:pbdiag1} 
and the isomorphism \eqref{eq:diag1} imply
that we have the pullback diagram
\begin{equation} \label{diag:pbdiag3}
\begin{tikzpicture}[descr/.style={fill=white,inner sep=2.5pt},baseline=(current  bounding  box.center)]
\matrix (m) [matrix of math nodes, row sep=1em,column sep=1em,
  ampersand replacement=\&]
  {  
X_0 \times_{Y_0} Z_1  \& Z_1 \\
Z_0 \times_{Y_0} X_0 \& Z_0\\
}; 
  \path[->,font=\scriptsize] 
   (m-1-1) edge node[auto] {$$} (m-1-2)
   (m-1-1) edge node[auto,swap] {$$} (m-2-1)
   (m-1-2) edge node[auto] {$\iota^*=d_j$} (m-2-2)
   (m-2-1) edge node[auto] {$$} (m-2-2)
  ;

%begin pullback symbol%
  \begin{scope}[shift=($(m-1-1)!.4!(m-2-2)$)]
  \draw +(-0.25,0) -- +(0,0)  -- +(0,0.25);
  \end{scope}
  %end pullback symbol%

\end{tikzpicture}
\end{equation}
The sheaf $Z_0 \times_{Y_0} X_0$ is representable by hypothesis, and
since $Z$ is an $\infty$-groupoid, $Z_1 \xto{d_j} Z_0$ is a
cover. Therefore,  for $n=1$ and $j=0,1$, the sheaf \eqref{eq:axiom4.2} is representable,
$Z_1 \times_{Y_1}X_1$ is representable, and $p_f \maps Z \times_Y X
\to Z$ satisfies $\Kan(1,j)$.

Now suppose $p_f$ satisfies $\Kan(m,j)$ for all $1 \leq m < n$ and $0
\leq j \leq m$. This plus the fact that $\Horn{n}{j}$ is a collapsible
subset of $\Delta^n$, allows us to apply Lemma \ref{lem:representable}
to conclude that $\Hom(\iota_{n,j},p_f) \cong \Horn{n}{j}(X) \times_{\Horn{n}{j}(Y)} Z_n$. 
is representable. Hence, $p_f \maps Z \times_Y X \to Z$ satisfies
$\Kan(n,j)$, and so it is a Kan fibration.

Statement (3) immediately follows. Indeed, since $Z$ is an $\infty$-groupoid,
$Z \to \ast$ is a Kan fibration. Therefore, Prop.\ \ref{prop:axiom3} 
implies that $Z \times_Y X \to \ast$ is also a Kan fibration.
\end{proof}

\begin{remark} \label{rmk:axiom4} If $X$, $Y$, and $Z$ in the
  statement of Prop.\ \ref{prop:axiom4} are $k$-groupoids, $k <
  \infty$, then one can show that the pullback $Z \times_Y X$, if it
  exists, is also a $k$-groupoid.  As previously mentioned in Remark
  \ref{rmk:iCFO_struct1}, this fact helps show that the
  category $\mathsf{Gpd}_{k}[\Cat,\covers]$ forms an iCFO.  We just
  need to verify that the morphism
\begin{equation} \label{eq:rmk1}
Z_n \times_{Y_n} X_{n} \to \Horn{n}{j}(Z) \times_{\Horn{n}{j}(Y)} \Horn{n}{j}(X)
\end{equation}
is an isomorphism for $n >k$. Since $Z$ is a $k$-groupoid, the
pullback diagram \eqref{diag:pbdiag1} implies that 
\begin{equation}\label{eq:rmk2}
\Hom(\iota_{n,j},p_f) \xto{\pr_2} \Horn{n}{j}(Z) \times_{\Horn{n}{j}(Y)} \Horn{n}{j}(X)
\end{equation}
is an isomorphism. Since $f \maps X \to Y$ is a Kan fibration between
$k$-groupoids, it is not hard to show that the morphism
\[
X_n \xto{(\iota^\ast,f_\ast)} Y_n \times_{\Horn{n}{j}(Y)} \Horn{n}{j}(X)
\]
is an isomorphism for all $n >k$. Hence, the pullback diagram
\eqref{diag:pbdiag2} implies that 
\[
Z_n \times_{Y_n} X_{n}  \xto{(\iota^\ast,p_{f \ast})} \Hom(\iota_{n,j},p_f)
\]
is an isomorphism. Composing this with the isomorphism
\eqref{eq:rmk2}, we conclude that \eqref{eq:rmk1} is an isomorphism.

\end{remark}

Proposition \ref{prop:axiom4} also makes it easy to show that the pullbacks of
acyclic fibrations always exist in $\gpd{\Cat,\covers}$, and are
always acyclic fibrations. (Since acyclic fibrations turn out to be equivalent to
hypercovers, this result is equivalent to \cite[Lemma 2.8]{Zhu:2009a}.)

\begin{proposition}[Axiom 5] \label{prop:axiom5}
Let  $f \maps X \to Y$ be an acyclic fibration in $\gpd{\Cat, \covers}$
and $g \maps Z\to Y$ a morphism in $\gpd{\Cat, \covers}$.
Then the morphism $Z\times_{Y} X \xto{q_f} Z$ induced by pulling back $f$ along $g$ 
is an acyclic fibration.
\end{proposition}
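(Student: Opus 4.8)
The plan is to split the statement into two assertions---existence of the pullback together with the Kan fibration property, and the stalkwise weak equivalence property---and to derive the first almost entirely from Prop.\ \ref{prop:axiom4}.

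First I would observe that since $f$ is an acyclic fibration, Prop.\ \ref{prop:hypercover} identifies it with a hypercover, so the $\Acyc(0)$ condition (Def.\ \ref{def:equivalence}) says precisely that $f_0 \maps X_0 \to Y_0$ is a cover in $\covers$. The third pretopology axiom (Def.\ \ref{def:pretopology}) then yields the existence of $Z_0 \times_{Y_0} X_0$ in $\Cat$. This is exactly the hypothesis needed to apply Prop.\ \ref{prop:axiom4}, which simultaneously gives that every $Z_n \times_{Y_n} X_n$ exists, that $Z \times_Y X$ is an $\infty$-groupoid, and that $q_f \maps Z \times_Y X \to Z$ is a Kan fibration.

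The remaining task, and the heart of the argument, is to show that $q_f$ is a stalkwise weak equivalence. Fixing a point $\ppt \in \pts$, the key step is a limit-interchange: because $(Z\times_Y X)_n = Z_n \times_{Y_n} X_n$ and both $\yon$ and $\ppt$ preserve finite limits (the latter by Def.\ \ref{def:points}), one obtains a natural isomorphism of simplicial sets
\[
\ppt(Z \times_Y X) \cong \ppt Z \times_{\ppt Y} \ppt X
\]
identifying $\ppt q_f$ with the levelwise pullback of $\ppt f$ along $\ppt g$ in $\sSet$. Since $f$ is an acyclic fibration, Prop.\ \ref{prop:kan} makes $\ppt f$ a Kan fibration and Def.\ \ref{def:stalk_weq} makes it a weak homotopy equivalence, so $\ppt f$ is a trivial Kan fibration.

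To finish I would invoke the standard fact that trivial fibrations of simplicial sets are exactly the maps with the right lifting property against the boundary inclusions $\partial\Simp{n} \to \Simp{n}$, and that such lifting classes are stable under pullback; hence $\ppt q_f$ is again a trivial fibration, in particular a weak homotopy equivalence. As this holds for every $\ppt \in \pts$, the map $q_f$ is a stalkwise weak equivalence, and combined with the Kan fibration property it is an acyclic fibration. The one non-formal point is the limit-interchange isomorphism; the rest is a chain of appeals to earlier results and to elementary simplicial homotopy theory.
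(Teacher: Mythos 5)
Your proof is correct and follows essentially the same route as the paper: both reduce the Kan fibration part to Prop.\ \ref{prop:axiom4} via the fact that a hypercover has $f_0$ a cover (so $Z_0\times_{Y_0}X_0$ exists), and both obtain the stalkwise weak equivalence by using that points preserve finite limits so that $\ppt q_f$ is the pullback of the trivial Kan fibration $\ppt f$ in $\sSet$. The only difference is that you spell out the stability of trivial fibrations under pullback via the lifting property, which the paper leaves implicit.
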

\begin{proof}
Since $f \maps X \to Y$ is an acyclic fibration, Prop.\ \ref{prop:hypercover}
implies that $f$  is a hypercover. Then, by definition,
$f_0 \maps X_0 \to Y_0$ is a cover, and hence the pullback $Z_0
\times_{Y_0} X_0$ exists in $\Cat$. Proposition \ref{prop:axiom4}
therefore implies that the morphism $Z\times_{Y} X \xto{q_f} Z$ is a
Kan fibration in $\gpd{\Cat,\covers}$.

Let $\ppt$ be a point. Then $\ppt f \maps \ppt X \to \ppt Y$ is an acyclic
fibration of simplicial sets. By definition, points preserve finite
limits. Hence, $\ppt \bigl (Z\times_{Y} X \bigr) \xto{\ppt q_f}  \ppt Z$
is the pullback of $\ppt f$, and is therefore a weak equivalence of
simplicial sets. So we conclude $q_f$ is a stalkwise weak equivalence,
hence an acyclic fibration.
\end{proof}

Finally, we show that if $X \in \gpd{\Cat,\covers}$, then
the $\infty$-groupoid $X^{\Delta^1}$ constructed
in Prop.\ \ref{prop:pobj_rep} is a path object for $X$. Let 
\[
X^{\Delta^1} \xto{(d^{0 \ast},d^{1 \ast})} X \times X \cong X^{\Delta^0 \cup \Delta^0},
\]
and
\[
s^{0 \ast} \maps X \to X^{\Delta^1}
\]
denote the morphisms \eqref{eq:pobj_d0d1} and \eqref{eq:pobj_s0},
respectively, induced by the 
inclusions $\partial \Delta^1 \cong \Delta^0 \cup \Delta^0 \xto{d^0 \cup d^1}
\Delta^1$ and the constant map $s^0 \maps \Delta^1 \to \Delta^0$, respectively.

\begin{proposition}[Axiom 6] \label{prop:axiom6}
Let $X \in \gpd{\Cat,\covers}$. Then
\begin{equation} \label{eq:axiom6.1}
X \xto{s^{0 \ast}} X^{\Delta^1} \xto{(d^{0 \ast},d^{1
    \ast})} X \times X. 
\end{equation}
is a factorization of the diagonal map $X \times X$ into a
stalkwise weak equivalence $s_0^\ast \maps X \to X^{\Delta^1}$
followed by a Kan fibration $X^{\Delta^1} \xto{(d^{0 \ast},d^{1
    \ast})} X \times X$.
\end{proposition}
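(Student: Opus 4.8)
The plan is to dispose of the easy parts first and then concentrate all the work on upgrading the \emph{stalkwise} fibration statement of Prop.~\ref{prop:stalkwise_pobject} to the honest Kan condition of Def.~\ref{def:Kan_arrow}. At the level of representing simplicial sets, the composite in \eqref{eq:axiom6.1} is induced by $\partial\Simp{1}\xto{d^0\cup d^1}\Simp{1}\xto{s^0}\Simp{0}$, which is the unique (terminal) map $\partial\Simp{1}\to\Simp{0}$; applying $(\yon X)^{(-)}$ contravariantly sends this to the diagonal $X\cong X^{\Simp{0}}\to X^{\partial\Simp{1}}\cong X\times X$, so the composite is the diagonal as required. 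That $s^{0\ast}$ is a stalkwise weak equivalence is exactly the content of Prop.~\ref{prop:stalkwise_pobject}. Thus the only remaining point is that $p\defeq(d^{0\ast},d^{1\ast})\maps X^{\Delta^1}\to X\times X$ satisfies $\Kan(n,j)$ for all $n\geq1$ and $0\le j\le n$.

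The next step is to translate this Kan condition into a restriction map between matching sheaves. Noting that the product $X\times X$ of $\infty$-groupoids is again an $\infty$-groupoid, representability of the horn-projection codomain for $p$ is automatic by Lemma~\ref{lem:representable} and induction on $n$, exactly as in Remark~\ref{rem:rep-kan-map} (taking $Y=X\times X$ and $f=p$). For the projection itself, Prop.~\ref{prop:adj_pobject}, Prop.~\ref{prop:matchobj}, and the identifications \eqref{eq:pobj_eq1} give $\Hom(\Simp{n},X^{\Delta^1})\cong\Hom(\Simp{n}\times\Simp{1},X)$, together with $X\times X\cong X^{\partial\Simp{1}}$ so that $\Hom(K,X\times X)\cong\Hom(K\times\partial\Simp{1},X)$. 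Since $\Hom(-,X)$ sends colimits of simplicial sets to limits of sheaves, the pullback defining $\Hom(\Horn{n}{j}\to\Simp{n},X^{\Delta^1}\to X\times X)$ becomes $\Hom(A_{n,j},X)$, where
\[
A_{n,j}\defeq\bigl(\Horn{n}{j}\times\Simp{1}\bigr)\sqcup_{\Horn{n}{j}\times\partial\Simp{1}}\bigl(\Simp{n}\times\partial\Simp{1}\bigr),
\]
and under these isomorphisms the horn projection is the restriction map $\Hom(\Simp{n}\times\Simp{1},X)\to\Hom(A_{n,j},X)$ induced by the inclusion $A_{n,j}\hookrightarrow\Simp{n}\times\Simp{1}$.

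The final, and main, step is the combinatorial lemma that $A_{n,j}\hookrightarrow\Simp{n}\times\Simp{1}$ is a collapsible extension (Def.~\ref{def:collapsible}): this is precisely the pushout--product of the anodyne inclusion $\Horn{n}{j}\hookrightarrow\Simp{n}$ with the monomorphism $\partial\Simp{1}\hookrightarrow\Simp{1}$, and such pushout--products of anodyne maps with cofibrations are again anodyne. I would prove it within the paper's framework by the same co-continuity-of-products argument used for Lemma~\ref{lem:path-kan-aux} and Cor.~\ref{cor:c-to-c}, now feeding in the decomposition of $\Horn{n}{j}\hookrightarrow\Simp{n}$ into boundary fillings (as in the proof of Lemma~\ref{lem:path-kan}) and invoking \cite[Lemma 3.3.3]{Hovey:1999} for the base pushout--product. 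Once collapsibility is established, Lemma~\ref{lem:covers_in_groupoid} applied to the $\infty$-groupoid $X$ shows $\Hom(\Simp{n}\times\Simp{1},X)\to\Hom(A_{n,j},X)$ is a cover, so $p$ satisfies $\Kan(n,j)$ and is a Kan fibration. The hard part is exactly this combinatorial lemma: keeping track of which horns are filled in the pushout--product and confirming that the paper's notion of collapsible extension captures the required anodyne-ness for these finitely generated products; everything else (representability, the diagonal identity, and the weak-equivalence claim) reduces to results already in place.
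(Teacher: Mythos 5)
Your reduction of the statement to a single combinatorial claim follows the paper's route exactly: the diagonal identity, the stalkwise weak equivalence via Prop.~\ref{prop:stalkwise_pobject}, the identification of the horn projection for $(d^{0\ast},d^{1\ast})$ with the restriction map $\Hom(\Simp{n}\times\Simp{1},X)\to\Hom(A_{n,j},X)$, and the inductive representability argument all match the paper's proof. The gap is in how you propose to prove that $A_{n,j}\hookrightarrow\Simp{n}\times\Simp{1}$ is a collapsible extension. The machinery you want to reuse --- Lemma~\ref{lem:path-kan-aux}, Cor.~\ref{cor:c-to-c}, and \cite[Lemma 3.3.3]{Hovey:1999} --- concerns the pushout--product of an extension in the \emph{first} factor with the horn inclusion $\Horn{1}{i}\hookrightarrow\Simp{1}$ in the second factor. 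What you need here is the pushout--product of the horn inclusion $\Horn{n}{j}\hookrightarrow\Simp{n}$ with the boundary inclusion $\partial\Simp{1}\hookrightarrow\Simp{1}$, and the reduction is not symmetric: if you feed the decomposition of $\Horn{n}{j}\hookrightarrow\Simp{n}$ into boundary fillings through the co-continuity argument against $\partial\Simp{1}\hookrightarrow\Simp{1}$, each step becomes a pushout of $(\partial\Simp{m}\hookrightarrow\Simp{m})\mathbin{\square}(\partial\Simp{1}\hookrightarrow\Simp{1})$, which is a boundary-type attachment rather than a horn filling, so you only get a boundary extension out of this.

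The general fact that the pushout--product of an anodyne map with a cofibration is anodyne does cover this inclusion, but its standard proof (Gabriel--Zisman; Goerss--Jardine I.4.2) exhibits $\Horn{n}{j}\hookrightarrow\Simp{n}$ as a \emph{retract} of a cylinder-type inclusion, and retracts are not permitted in the paper's notion of collapsible extension (Def.~\ref{def:collapsible}). Lemma~\ref{lem:covers_in_groupoid}, which is what converts collapsibility into the statement that the restriction map is a cover, is proved only for genuine finite compositions of horn-filling pushouts, since the covers of an abstract pretopology need not be closed under retracts. This is precisely why the paper isolates the needed statement as Lemma~\ref{lem:pobj_fib} and proves it by a separate, explicit argument in Appendix~\ref{sec:lem_proof}: one triangulates $\Simp{n}\times\Simp{1}$ into the $(n+1)$-simplices $x_0,\dots,x_n$ and attaches them one at a time to $A_{n,j}$, checking that each attachment requires filling at most two horns (one of dimension $n$ for the possibly missing face $y^j_l$, then one of dimension $n+1$ for $x_l$ itself). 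Your outline would be complete if the pushout--product reduction were replaced by such a direct horn-filling filtration, or if Lemma~\ref{lem:covers_in_groupoid} were first extended to retracts of collapsible extensions (which would require an additional hypothesis on the pretopology).
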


To prove the above, we'll need the following lemma, whose proof we
postpone to Appendix  \ref{sec:lem_proof}.

\begin{lemma} \label{lem:pobj_fib}
 The  inclusion of simplicial sets
\[
\Horn{n}{j}\times\Simp{1} \sqcup_{\Horn{n}{j} \times \partial \Simp{1}} 
  \Simp{n}\times{\partial \Simp{1}} \to \Simp{n} \times \Simp{1}
\]
is a collapsible extension.
\end{lemma}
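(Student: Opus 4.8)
The map in question is the pushout--product (Leibniz product) of the horn inclusion $\Horn{n}{j}\hookrightarrow\Simp{n}$ with the boundary inclusion $\partial\Simp{1}\hookrightarrow\Simp{1}$; write $P$ for its domain $\Horn{n}{j}\times\Simp{1}\sqcup_{\Horn{n}{j}\times\partial\Simp{1}}\Simp{n}\times\partial\Simp{1}$. The plan is to exhibit an explicit finite filtration of $\Simp{n}\times\Simp{1}$ over $P$ by horn fillings, following the classical shuffle (Eilenberg--Zilber) decomposition of a prism, taking care---and this is the whole point---that the first factor is a horn rather than the full boundary $\partial\Simp{n}$. First I would identify the non-degenerate simplices of $\Simp{n}\times\Simp{1}$ missing from $P$. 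Describing them as strictly increasing chains in the poset $[n]\times[1]$, such a chain already lies in $P$ unless it uses both levels of $\Simp{1}$ and its projection to $[n]$ is either all of $\{0,\dots,n\}$ or exactly the omitted face $\{0,\dots,n\}\setminus\{j\}$. These split into a ``full'' family, consisting of the $n+1$ top shuffles $\sigma_0,\dots,\sigma_n$ together with their shared interface faces $\tau_0,\dots,\tau_{n-1}$, and a ``$j$-face'' family consisting of the shuffles and interfaces of the subprism $d_j\Simp{n}\times\Simp{1}$. I would then glue these in order of increasing dimension, and within each dimension in the order dictated by the shuffle index.

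The crux is the bookkeeping showing that every attachment is a horn filling, i.e.\ that when a simplex is glued in, exactly one of its faces is new. The key structural observation is that, because $\Horn{n}{j}$ omits the $j$-th face, each full shuffle $\sigma_k$ with $k\neq j$ has a distinguished missing ``boundary'' face, namely the one whose projection is $d_j\Simp{n}$; once its two interface faces $\tau_{k-1},\tau_k$ have already been filled, gluing $\sigma_k$ fills precisely that one $j$-face prism cell, hence is a horn. The single special shuffle $\sigma_j$ (whose value $j$ is repeated, so it has no missing boundary face) and the interface cells are ordered so that one adjacent cell is already present, again leaving exactly one missing face. This is exactly where the hypothesis is used: if one replaces $\Horn{n}{j}$ by the full boundary $\partial\Simp{n}$, then the last top shuffle has all of its faces already present and could only be glued along $\partial\Simp{n+1}$---a boundary, not a horn---and indeed the analogue of the lemma with $\partial\Simp{n}$ is false (already the case $n=1$ ends in a boundary filling). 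Thus the omission of the $j$-th face is essential, not cosmetic.

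I expect the main obstacle to be precisely this combinatorial verification of the ordering, which is the standard anodyne/prism computation. I would organise it exactly as in the proof of Lemma \ref{lem:path-kan-aux}, using that the product functor is co-continuous to propagate the relevant pushout squares, and invoking the classical shuffle computation (as in \cite[Lemma 3.3.3]{Hovey:1999}, already cited there) to confirm that each gluing is a horn. One could package the result as the $\partial\Simp{1}$-analogue of Corollary \ref{cor:c-to-c}---namely that $(S\times\Simp{1})\sqcup_{S\times\partial\Simp{1}}(T\times\partial\Simp{1})\to T\times\Simp{1}$ is a collapsible extension whenever $\iota\colon S\to T$ is---proved by induction on the length of $\iota$; but its base case, a single horn filling $\iota=(\Horn{m}{l}\to\Simp{m})$, is precisely the present lemma, so this packaging does not circumvent the shuffle computation. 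Finally I would remark, as a sanity check and to isolate the real content, that $P\to\Simp{n}\times\Simp{1}$ is automatically an acyclic cofibration: it is a monomorphism, and since $\Horn{n}{j}\times\Simp{1}\to P$ (two horn fillings) and $\Horn{n}{j}\times\Simp{1}\to\Simp{n}\times\Simp{1}$ (Lemma \ref{lem:path-kan}) are both weak equivalences, two-out-of-three applies. This explains abstractly why the map is anodyne; the filtration above is only needed to upgrade ``anodyne'' to the explicit notion of a collapsible extension.
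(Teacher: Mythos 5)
Your strategy is the same as the one used in Appendix~\ref{sec:lem_proof}: triangulate the prism into the shuffles $x_0,\dots,x_n$, observe that the cells of $\Simp{n}\times\Simp{1}$ absent from the domain are exactly the shuffles, their interfaces, and the shuffles and interfaces of the subprism $d_j\Simp{n}\times\Simp{1}$, and attach them by horn fillings, with the cells over the omitted face $d_j\Simp{n}$ supplying the needed free faces. Your inventory of the missing cells is correct, and so is your diagnosis of why the horn (rather than $\partial\Simp{n}$) in the first factor is essential. The problem is that the step you set aside as ``the main obstacle'' \emph{is} the lemma, and the ordering you sketch cannot be carried out as stated. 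A horn filling $S\sqcup_{\Horn{m}{i}}\Simp{m}$ adjoins exactly two nondegenerate cells: an $m$-cell together with exactly one of its $(m-1)$-faces. Every proper face of a subprism interface, and every face of an interface $\tau_l$ except possibly the one lying over $d_j\Simp{n}$, already lies in the domain; so these middle-dimensional cells cannot be attached ``first, in order of increasing dimension''---they can only enter as the distinguished missing face of a higher-dimensional filling. Consequently the clause ``once its two interface faces $\tau_{k-1},\tau_k$ have already been filled, gluing $\sigma_k$ fills precisely that one $j$-face prism cell'' is circular: if every $\sigma_k$ with $k\neq j$ uses its $j$-face as the missing face, the $n$ interfaces (each a face of nothing but two adjacent shuffles) are never introduced. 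A count makes this precise: there are $4n$ missing cells, hence exactly $2n$ horn fillings, and the $n+1$ top-dimensional fillings must between them introduce all $n$ interfaces plus at most one $j$-face cell; the remaining $j$-face cells must be attached beforehand by $n$-dimensional fillings paired with the subprism interfaces. Pinning down this pairing---which cell is the free face of which filling, in which order, and checking that \emph{all other} faces are already present at that moment---is exactly the content of Claims \ref{claim1}--\ref{claim3}, and it is delicate (the cases $l=j$, where $y^j_l$ coincides with the interface $d_{l+1}x_l$, and $l=n$, where the candidate free face already lies in $\Simp{n}\times\partial\Simp{1}$, both require special handling).

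Two further points. Hovey's Lemma 3.3.3 treats the other pushout--product, $(\partial\Simp{n}\to\Simp{n})$ against $(\Horn{1}{i}\to\Simp{1})$, which is what Lemma \ref{lem:path-kan-aux} uses; it does not cover the present map, and the standard derivation of one class from the other passes through the saturated anodyne class (retracts, arbitrary pushouts), which does not by itself yield a collapsible extension in the strict finite sense of Definition \ref{def:collapsible}. So ``invoking the classical shuffle computation'' there is not available: the computation for this pushout--product has to be done from scratch. Finally, your closing two-out-of-three argument is correct but, as you note, only shows the map is an anodyne-up-to-homotopy monomorphism, which is strictly weaker than what the lemma asserts and is not used anywhere in the paper.
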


\begin{proof}[Proof of Prop.\ \ref{prop:axiom5}]
First, observe that $d^{i \ast} \circ s^{0 \ast}=\id_X$. This follows
from fact that the assignment $K \mapsto (\yon X)^K$, where
$(\yon X)^K$ is the simplicial sheaf \eqref{eq:pobj_sheaf}, is a
contravariant functor from simplicial sets to simplicial sheaves.
Hence, \eqref{eq:axiom6.1} is a factorization of the diagonal map.

Next, since we have an isomorphism of sheaves $\yon(X^{\Delta^1}) \cong
(\yon X)^{\Delta^1}$, Prop.\ \ref{prop:stalkwise_pobject} implies that 
$s_0^\ast \maps X \to X^{\Delta^1}$ is a stalkwise weak equivalence.

Now we show $f:=(d^{0 \ast},d^{1 \ast}) \maps X^{\Delta^1} \to X
\times X$ satisfies the condition $\Kan(n,j)$ for all $n \geq 1$ and
$0 \leq j \leq n$, i.e. the morphism of sheaves
%\begin{equation} \label{eq:axiom5.1}
\[
X^{\Delta^1}_n \xto{ (\iota_{n,j}^\ast, f_\ast)} \Hom(\iota_{n,j},f),
\]
%\end{equation}
where $\Hom(\iota_{n,j},f):=
\Hom(\Lambda^n_j \xto{\iota_{n,j}} \Delta^n, X^{\Delta^1}  \xto{f} X
\times X)$ is represented by a cover. It follows from Prop.\ \ref{prop:matchobj}
that for any finitely generated simplicial sets $K$ and $L$,  we have an isomorphism
of sheaves
\[
\Hom(L,X^{K}) \cong \Hom(L \times K, X).
\] 
Therefore, we have the following
isomorphisms of sheaves
\[
\begin{split}
\Hom(\iota_{n,j},f) & \cong \Hom(\Horn{n}{j} \times \partial \Delta^1,
X)\times_{\Hom(\Horn{n}{j} \times \partial \Delta^1,X)}
\Hom(\Horn{n}{j} \times \Delta^1,X)\\
& \cong \Hom \bigl (\Horn{n}{j}\times\Simp{1} \sqcup_{\Horn{n}{j} \times \partial \Simp{1}} 
  \Simp{n}\times{\partial \Simp{1}}, X \bigr)
\end{split}
\]
Hence, showing that $f$ satisfies $\Kan(n,j)$ is equivalent to showing
that the morphism of sheaves
\begin{equation} \label{eq:axiom5.3}
\Hom(\Delta^n \times \Delta^1, X) \xto{ (\iota_{n,j}^\ast, f_\ast)} 
\Hom \bigl (\Horn{n}{j}\times\Simp{1} \sqcup_{\Horn{n}{j} \times \partial \Simp{1}} 
  \Simp{n}\times{\partial \Simp{1}}, X \bigr)
\end{equation}
is a cover. Lemma \ref{lem:pobj_fib} implies that the inclusion
$\Horn{n}{j}\times\Simp{1} \sqcup_{\Horn{n}{j} \times \partial \Simp{1}} 
  \Simp{n}\times{\partial \Simp{1}} \to \Simp{n} \times \Simp{1}$ is a
  collapsible extension. Hence, Lemma \ref{lem:covers_in_groupoid}
  implies that in order to show \eqref{eq:axiom5.3} is a cover, it
  suffices to show that 
\[
\Hom(\iota_{n,j},f) \cong \Hom \bigl (\Horn{n}{j}\times\Simp{1} \sqcup_{\Horn{n}{j} \times \partial \Simp{1}}   \Simp{n}\times{\partial \Simp{1}}, X \bigr)
\]
is representable for all $n$ and $j$.

Consider the $n=1$ case. Then we have the pullback square
\[
\begin{tikzpicture}[descr/.style={fill=white,inner sep=2.5pt},baseline=(current  bounding  box.center)]
\matrix (m) [matrix of math nodes, row sep=2em,column sep=3em,
  ampersand replacement=\&]
  {  
\Hom(\iota_{1,j},f) \& X_1 \times X_1 \\
X_1  \& X_0 \times X_0\\
}; 
  \path[->,font=\scriptsize] 
   (m-1-1) edge node[auto] {$$} (m-1-2)
   (m-1-1) edge node[auto,swap] {$$} (m-2-1)
   (m-1-2) edge node[auto] {$(d_j,d_j)$} (m-2-2)
   (m-2-1) edge node[auto] {$(d_0,d_1)$} (m-2-2)
  ;

%begin pullback symbol%
  \begin{scope}[shift=($(m-1-1)!.4!(m-2-2)$)]
  \draw +(-0.25,0) -- +(0,0)  -- +(0,0.25);
  \end{scope}
  %end pullback symbol%

\end{tikzpicture}
\]
Since $X \times X$ is an $\infty$-groupoid, the map $X_1 \times X_1
\xto{d_j,d_j} X_0 \times X_0$ is a cover. Hence, the pullback
$\Hom(\iota_{1,j},f)$ exists and so $f$ satisfies $\Kan(1,j)$.
Now if $n>1$ and $f$ satisfies $\Kan(m,j)$ for all $m <n$ and $1 \leq
j \leq m$, then Lemma \ref{lem:representable} implies that
$\Hom(\iota_{n,j},f)$ is representable.
\end{proof}

\subsubsection{Simplicial localization for $\gpd{\Cat, \covers}$} \label{sec:simp_loc_gpd}
We note that the path object $X^{\Delta^1}$ used in the proof of Thm.\ \ref{thm:icfo_struct}  
is functorial, in the sense of Sec.\ \ref{sec:simp_loc}.
This can be easily deduced from the fact that $X^{\Delta^1}$ represents the sheaf $(\yon X)^{\Delta{^1}}$
(Prop.\ \ref{prop:pobj_rep}). The iCFO structure on $\LnGpd{\infty}$ in particular is equipped with both functorial path objects, as well as functorial pullbacks of acyclic fibrations. Indeed, the pullbacks in this case are characterized in each simplicial dimension by the unique Banach manifold structure on the set-theoretic fiber product. (See, for example, Prop.\ 2.5 and Prop.\ 2.6 of \cite{Lang:95}.)   
Hence, for a small full subcategory of Lie $n$-groupoids closed under the iCFO structure, Thm.\ \ref{thm:simploc} would provide a convenient description of its simplicial localization. A potentially useful example of this sort, which will be studied in future work, is the category of $n$-groupoids internal to the category of separable Banach manifolds\footnote{We thank E.\ Getzler for this observation.}.

\subsection{Alternative characterization of weak
  equivalences}\label{sec:more-on-w-eq}
The incorporation of stalkwise weak equivalences into our iCFO structure for
$\gpd{\Cat,\covers}$ turns out to be quite convenient for some
applications. % For example, as we will see in Section
% \ref{sec:integration}, the integration of an $L_\infty$-quasi-isomorphism
% is a stalkwise weak equivalence of Lie $\infty$-groups.
However, in general, verifying directly that a morphism 
is a stalkwise weak equivalence could be cumbersome. 
Furthermore, we have the
aesthetically inelegant fact that the stalkwise weak equivalences are
the only piece of the iCFO structure on $\gpd{\Cat,\covers}$ which
requires us to leave the realm of simplicial objects in $\Cat$ for the
larger world of simplicial sheaves on $\Cat$.

Fortunately, as was mentioned in Sec.\ \ref{sec:iCFO}, the weak
equivalences in an iCFO are completely determined by the acyclic
fibrations.  This very useful fact is emphasized in the work of
Behrend and Getzler \cite{Behrend-Getzler:2015}
 on CFOs for higher geometric groupoids in descent categories.
What this implies in particular for the iCFO structure on $\gpd{\Cat,\covers}$,
is the following: If $f \maps X \to Y$ is a morphism in
$\gpd{\Cat,\covers}$, we consider the pullback diagram
\[
\begin{tikzpicture}[descr/.style={fill=white,inner sep=2.5pt},baseline=(current  bounding  box.center)]
\matrix (m) [matrix of math nodes, row sep=1em,column sep=3em,
  ampersand replacement=\&]
  {  
X \times_{Y} Y^{\Delta^1} \& Y^{\Delta^1} \\
X  \& Y  \\
}; 
  \path[->,font=\scriptsize] 
   (m-1-1) edge node[auto] {$\pr_2$} (m-1-2)
   (m-1-1) edge node[auto,swap] {$\pr_1$} (m-2-1)
   (m-1-2) edge node[auto] {$d_0$} (m-2-2)
   (m-2-1) edge node[auto] {$f$} (m-2-2)
  ;

%begin pullback symbol%
  \begin{scope}[shift=($(m-1-1)!.4!(m-2-2)$)]
  \draw +(-0.25,0) -- +(0,0)  -- +(0,0.25);
  \end{scope}
  %end pullback symbol%

\end{tikzpicture}
\]
Then it follows from Lemma \ref{lemma:fact} and Prop.\
\ref{prop:catfib_morita_eq} (and Thm.\ \ref{thm:icfo_struct}) that $f
\maps X \to Y$ is a stalkwise weak equivalence if and only if the composition
\[
p_f \maps X \times_{Y} Y^{\Delta^1} \xto{\pr_2} Y^{\Delta^1} \xto{d_1} Y
\]
is a hypercover.
Moreover, the path object construction can be avoided altogether, and
weak equivalences can be characterized directly in terms of $f$ and
covers between representable sheaves.  

To give just a simple example, denote by $\iota_{a} \maps \Delta^n \to \Delta^1 \times
\Delta^n$ for $a=0,1$ the inclusions $m \mapsto (a,m)$. Similarly, 
there are the inclusions $\partial \iota_a \maps 
\partial \Delta^n \to \Delta^1 \times \partial \Delta^n$. There is the pushout diagram
\[
\begin{tikzpicture}[descr/.style={fill=white,inner sep=2.5pt},baseline=(current  bounding  box.center)]
\matrix (m) [matrix of math nodes, row sep=2em,column sep=3em,
  ampersand replacement=\&]
  {  
\Horn{1}{1} \times \partial \Delta^n  \&  \Delta^1 \times \partial \Delta^n  \\
\Horn{1}{1} \times \Delta^n \&    \Delta^1 \times \partial \Delta^n
\cup \Horn{1}{1} \times \Delta^n \\
}; 
\path[->,font=\scriptsize] 
 (m-1-1) edge node[auto] {$$} (m-1-2)
 (m-1-1) edge node[auto,swap] {$$} (m-2-1)
 (m-1-2) edge node[auto] {$\jmath_1$} (m-2-2)
 (m-2-1) edge node[auto] {$\jmath_2$} (m-2-2)
;

%begin pushout symbol%
 \begin{scope}[shift=($(m-1-1)!.60!(m-2-2)$)]
 \draw +(.25,0) -- +(0,0)  -- +(0,.-.25);
 \end{scope}
 %end pushout symbol%
\end{tikzpicture}
\]
%The following is just a first step in the more detailed proof of
The following is parallel to the first step in the proof of 
Thm. 5.1 in  \cite{Behrend-Getzler:2015}:
\begin{proposition}\label{prop:w-eq}
A morphism $f \maps X \to Y$ in $\gpd{\Cat,\covers}$ is a stalkwise
weak equivalence if and only if, for all $n \geq 0$ the morphism in $\Cat$
\begin{equation} \label{eq:w-eq-first-step}
\Hom \bigl( \Delta^n \xto{i_1} \Delta^1 \times \Delta^n, X \xto{f} Y
\bigr) \to \Hom \bigl ( \partial \Delta^n  \xto{ \jmath_1
  \circ \partial \iota_1} \Delta^1 \times \partial \Delta^n
\cup \Horn{1}{1} \times \Delta^n, X \xto{f} Y \bigr)
\end{equation}
is a cover.
\end{proposition}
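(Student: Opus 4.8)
The plan is to deduce the statement from the hypercover reduction recorded just above the proposition. By Lemma~\ref{lemma:fact}, Prop.~\ref{prop:catfib_morita_eq}, and Thm.~\ref{thm:icfo_struct}, the morphism $f$ is a stalkwise weak equivalence if and only if the projection $p_f \maps X\times_Y Y^{\Delta^1}\xto{\pr_2}Y^{\Delta^1}\xto{d_1}Y$ is a hypercover. Writing $P_f \defeq X\times_Y Y^{\Delta^1}$ (which exists as a higher groupoid by Prop.~\ref{prop:axiom5}, since $d_0$ is an acyclic fibration by Lemma~\ref{lem:di}), the hypercover condition $\Acyc(n)$ of Def.~\ref{def:equivalence} asserts exactly that the boundary projection
\[
q_n \maps \Hom(\Simp{n}, P_f)\to \Hom(\partial\Simp{n}\to\Simp{n}, P_f\xto{p_f}Y)
\]
is a cover for every $n\ge 0$. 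It therefore suffices to produce, for each $n$, a natural identification of $q_n$ with the morphism \eqref{eq:w-eq-first-step}; the proposition then follows by quantifying over $n$.

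First I would identify the domain. Recall the exponential law $\Hom(L, F^K)\cong \Hom(L\times K, F)$ for finitely generated $K,L$ (a consequence of Prop.~\ref{prop:matchobj}), together with the fact that for fixed $K$ the functor $\Hom(K,-)$ preserves limits (as used in Prop.~\ref{prop:axiom4}), hence preserves the defining pullback of $P_f$. Combining these gives
\[
\Hom(\Simp{n}, P_f)\cong \Hom(\Simp{n}, X)\times_{\Hom(\Simp{n},Y)}\Hom(\Delta^1\times\Simp{n}, Y)\cong \Hom\bigl(\Simp{n}\xto{i_1}\Delta^1\times\Simp{n}, X\xto{f}Y\bigr),
\]
where the middle fiber product is formed along $f_\ast$ and the restriction $(d_0)_\ast$, which corresponds under the exponential law to the endpoint inclusion $i_1$, and the last isomorphism is just the definition \eqref{eq:commuting_square_space} of the commuting-square sheaf. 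This is precisely the domain of \eqref{eq:w-eq-first-step}.

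The main work is the analogous identification of the codomain. I would expand the target of $q_n$ as the fiber product $\Hom(\partial\Simp{n}, P_f)\times_{\Hom(\partial\Simp{n},Y)}\Hom(\Simp{n},Y)$, apply the same exponential law and limit-preservation to $\Hom(\partial\Simp{n}, P_f)$, and regroup the resulting iterated fiber product. Because $\Hom(-,X\to Y)$ sends colimits of simplicial sets to limits of sheaves (cf.\ the proof of Lemma~\ref{lem:covers_in_groupoid}), this iterated fiber product is the value of $\Hom(-,X\xto{f}Y)$ on a single pushout of simplicial sets: the $Y$-valued data assemble into a map out of $\Delta^1\times\partial\Simp{n}$ together with a filler $\Simp{n}\to Y$ glued along the appropriate end, which is exactly the pushout $\Delta^1\times\partial\Simp{n}\cup \Horn{1}{1}\times\Simp{n}$, while the $X$-valued datum enters through the inclusion $\jmath_1\circ\partial\iota_1$ of the boundary. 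Once this is in place, comparing the two identifications shows that $q_n$ coincides with \eqref{eq:w-eq-first-step}, and the equivalence of conditions is immediate.

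The hard part is bookkeeping the two endpoint evaluations $d_0,d_1$ of the path object: the pullback defining $P_f$ is formed along one of them while $p_f$ projects along the other, and one must verify that the end carrying the $f$-constrained boundary and the end carrying the free $\Simp{n}$-filler match those prescribed by $i_1$, $\partial\iota_1$, and $\Horn{1}{1}$ in \eqref{eq:w-eq-first-step}. Concretely, this reduces to checking that forming $\Hom(\partial\Simp{n}\to\Simp{n}, P_f\xto{p_f}Y)$ converts the relevant pushout square of simplicial sets into the correct fiber product of sheaves; this is the one place where the precise pushout combinatorics (in the spirit of Lemma~\ref{lem:pobj_fib} and the first step of \cite[Thm.~5.1]{Behrend-Getzler:2015}) must be handled with care.
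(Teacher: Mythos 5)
Your proposal is correct and follows essentially the same route as the paper: reduce to the hypercover condition for $p_f \maps X\times_Y Y^{\Delta^1}\to Y$, identify its domain at level $n$ with $\Hom(\Delta^n\xto{i_1}\Delta^1\times\Delta^n, X\xto{f}Y)$ via the exponential law, and identify its codomain with the commuting-square sheaf over the pushout $\Delta^1\times\partial\Delta^n\cup\Horn{1}{1}\times\Delta^n$ by regrouping the iterated fiber product (the paper does this last step by explicitly pasting pullback squares). The only cosmetic difference is that the paper cites \cite[Lemma 2.4]{Zhu:2009a} for representability of the boundary-projection target rather than deriving it.
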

\begin{proof}
First, it follows from Lemma 2.4 in \cite{Zhu:2009a} that the sheaf
\[
\Hom(j,p_f):=\Hom \bigl (\partial \Delta^n \xto{j} \Delta^n, X \times_Y Y^{\Delta^1}
\xto{p_f} Y \bigr)
\]
is representable. From the discussion preceding the proposition, we
know $f \maps X \to Y$ is a weak equivalence if and only if for all $n
\geq 0$, the morphism in $\Cat$
\[
X_n \times_{Y_n} Y^{\Delta^1}_n \to \Hom(j,p_f)
\]
is a cover. Since $Y^{\Delta^1}_n \cong \Hom(\Delta^1 \times \Delta^n,
Y)$, there is the pullback square
\[
\begin{tikzpicture}[descr/.style={fill=white,inner sep=2.5pt},baseline=(current  bounding  box.center)]
\matrix (m) [matrix of math nodes, row sep=2em,column sep=3em,
  ampersand replacement=\&]
  {  
X_n \times_{Y_n} Y^{\Delta^1}_n   \&   \Hom(\Delta^1 \times \Delta^n, Y) \\
\Hom(\Delta^n, X) \& \Hom(\Delta^n, Y)\\
}; 
  \path[->,font=\scriptsize] 
   (m-1-1) edge node[auto] {$$} (m-1-2)
   (m-1-1) edge node[auto,swap] {$$} (m-2-1)
   (m-1-2) edge node[auto] {$\iota_{1}^\ast$} (m-2-2)
   (m-2-1) edge node[auto] {$f_\ast$} (m-2-2)
  ;

%begin pullback symbol%
  \begin{scope}[shift=($(m-1-1)!.4!(m-2-2)$)]
  \draw +(-0.25,0) -- +(0,0)  -- +(0,0.25);
  \end{scope}
  %end pullback symbol%

\end{tikzpicture}
\]
Hence,
\[
X_n \times_{Y_n} Y^{\Delta^1}_n \cong
\Hom \bigl( \Delta^n \xto{i_1} \Delta^1 \times \Delta^n, X \xto{f} Y
\bigr) 
\]

To complete the proof, we just need to show 
\[
\Hom(j,p_f) \cong \Hom \bigl ( \partial \Delta^n  \xto{ \jmath_1
  \circ \partial \iota_1} \Delta^1 \times \partial \Delta^n
\cup \Horn{1}{1} \times \Delta^n, X \xto{f} Y \bigr).
\]
This follows from pasting together the following pullback squares:
\[
\begin{tikzpicture}[descr/.style={fill=white,inner sep=2.5pt},baseline=(current  bounding  box.center)]
\matrix (m) [matrix of math nodes, row sep=1.5em,column sep=2em,
  ampersand replacement=\&]
  {  
\Hom(j,p_f) \& \Hom \bigl ( \Delta^1 \times \partial \Delta^n
\cup \Horn{1}{1} \times \Delta^n, Y \bigr) \&
\Hom(\Horn{1}{1} \times \Delta^n,Y) \\
\Hom(\partial \Delta^n, X \times_Y Y^{\Delta^1} ) \& \Hom(\Delta^1
\times \partial \Delta^n,Y) \& \Hom ( \Horn{1}{1} \times \partial
\Delta^n,Y) \\
\Hom(\partial \Delta^n, X) \& \Hom(\partial \Delta^n,Y)\\
}; 
  \path[->,font=\scriptsize] 
   (m-1-1) edge node[auto] {$$} (m-1-2)
   (m-1-2) edge node[auto] {$\jmath^{\ast}_2$} (m-1-3)
   (m-1-1) edge node[auto,swap] {$$} (m-2-1)
   (m-1-2) edge node[auto,swap] {$\jmath^{\ast}_1$} (m-2-2)
   (m-1-3) edge node[auto] {$(\id \times j)^\ast$} (m-2-3)
   (m-2-1) edge node[auto] {$\pr_{2 \ast}$} (m-2-2)
   (m-2-2) edge node[auto] {$$} (m-2-3)
   (m-2-1) edge node[auto,swap] {$\pr_{1 \ast}$} (m-3-1)
   (m-2-2) edge node[auto] {$\partial \iota^{\ast}_1$} (m-3-2)
   (m-3-1) edge node[auto] {$f_{\ast}$} (m-3-2)
  ;
% %begin pullback symbol%
   \begin{scope}[shift=($(m-1-1)!.4!(m-2-2)$)]
   \draw +(-0.25,0) -- +(0,0)  -- +(0,0.25);
   \end{scope}
%   %end pullback symbol%

%  %begin pullback symbol%
   \begin{scope}[shift=($(m-2-1)!.4!(m-3-2)$)]
   \draw +(-0.25,0) -- +(0,0)  -- +(0,0.25);
   \end{scope}
%   %end pullback symbol%

%  %begin pullback symbol%
   \begin{scope}[shift=($(m-1-2)!.5!(m-2-3)$)]
   \draw +(-0.25,0) -- +(0,0)  -- +(0,0.25);
   \end{scope}
%   %end pullback symbol%

\end{tikzpicture}
\]
\end{proof}

Theorem 5.1 in \cite{Behrend-Getzler:2015} further shows that if the category
of $n$-groupoids in $(\Cat,\covers)$ form a category of fibrant objects, then $f \maps X
\to Y$  is a weak equivalence if and only if the morphism
\begin{equation}\label{eq:w-eq}
\Hom(\Simp{n} \to \Simp{n+1}, X\to Y) \to \Hom(\partial \Simp{n}
\to  \Horn{n+1}{n+1}, X\to Y)
\end{equation}
is a cover for $n\ge 0$.
This turns out to be true in our iCFO case as well. 

\begin{cor}\label{cor:w-eq-combinatoric}
A morphism $f: X\to Y$ in $\LnGpd{\infty}$ is a stalkwise weak
equivalence if and only if the natural morphism \eqref{eq:w-eq}
is a cover for $n\ge 0$.
\end{cor}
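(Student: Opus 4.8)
The plan is to retain the sheaf-theoretic description of weak equivalences only long enough to invoke Prop.\ \ref{prop:w-eq}, and then to run the remaining, purely combinatorial, portion of the proof of \cite[Thm.\ 5.1]{Behrend-Getzler:2015} inside our iCFO. First I would recall that, by the discussion preceding Prop.\ \ref{prop:w-eq} (which rests on Lemma \ref{lemma:fact}, Prop.\ \ref{prop:catfib_morita_eq}, and Thm.\ \ref{thm:icfo_struct}), the morphism $f$ is a stalkwise weak equivalence precisely when the canonical map $p_f \maps X\times_Y Y^{\Delta^1}\to Y$ is a hypercover, i.e.\ satisfies $\Acyc(m)$ for all $m\ge 0$. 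Prop.\ \ref{prop:w-eq} already repackages this as the statement that \eqref{eq:w-eq-first-step} is a cover for every $n$, which is exactly the first step of Behrend and Getzler's argument. It therefore remains to prove that \eqref{eq:w-eq-first-step} being a cover for all $n$ is equivalent to \eqref{eq:w-eq} being a cover for all $n$.

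For this equivalence I would follow Behrend and Getzler's simplicial bookkeeping essentially verbatim: both conditions arise by computing the matching objects of $p_f$ against two different collapsible presentations of the inclusion at issue, one built from $\Simp{1}\times\Simp{n}$ and one from the pair $\Horn{n+1}{n+1}\hookrightarrow\Simp{n+1}$. The comparison of the two presentations is a manipulation of pushouts and collapsible extensions in $\sSet$ and makes no reference to the ambient geometry. Each fiber product of sheaves that \cite{Behrend-Getzler:2015} form as a limit in $\Cat$ I would instead form in $\Sh(\Cat)$ and then show representable using Lemma \ref{lem:representable}, Lemma \ref{lem:covers_in_groupoid}, and Lemma 2.4 of \cite{Zhu:2009a} (all available since $X$ and $Y$ are higher groupoids); the cover assertions then follow from Lemma \ref{lem:example_collapsible} and Lemma \ref{lem:covers_in_groupoid}, together with stability of covers under composition and pullback.

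The single place where Behrend and Getzler's proof genuinely exploits the descent-category hypothesis is in cancelling covers, i.e.\ in passing from ``$g\circ h$ is a cover'' to ``$g$ is a cover''; in $\Mfd$ this fails for arbitrary maps. This is exactly where the locally stalkwise structure must be invoked: the cover that has to be cancelled always carries a factor which is a stalkwise surjection, typically a face map $X_k\to X_{k-1}$ or $Y_k\to Y_{k-1}$ of a higher groupoid, which is a cover by Lemma \ref{lem:covers_in_groupoid} and hence stalkwise surjective by Lemma \ref{prop:locsurj_epi}. Consequently the $2$-out-of-$3$ clause of Def.\ \ref{def:LSW_covers}, used in precisely the manner of the proof of Prop.\ \ref{prop:hypercover}, licenses the cancellation. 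I expect this cover-cancellation to be the main obstacle, since it is the only point at which the argument cannot be read off word-for-word from \cite{Behrend-Getzler:2015} and must instead be routed through the hypotheses on the pretopology. Once it is handled, chaining the resulting equivalences shows that $f$ is a stalkwise weak equivalence if and only if \eqref{eq:w-eq} is a cover for all $n\ge 0$, as claimed.
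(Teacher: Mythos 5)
Your proposal is correct and follows essentially the same route as the paper: reduce via Prop.\ \ref{prop:w-eq} to the equivalence of \eqref{eq:w-eq-first-step} and \eqref{eq:w-eq} being covers, then import the proof of Thm.\ 5.1 of \cite{Behrend-Getzler:2015}, checking that the locally stalkwise pretopology supplies what the descent-category axioms would. The only difference is one of emphasis: the paper notes that the cover-cancellation axiom D3 holds outright by Def.\ \ref{assump:cover} (covers are stalkwise surjections), so the genuinely missing hypothesis is only D1 (finite limits), which is handled by verifying representability of the limits appearing in the argument --- precisely the two points you address.
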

\begin{proof}
By Proposition \ref{prop:w-eq}, we see that $f: X\to Y$ in $\LnGpd{\infty}$ is a stalkwise weak
equivalence if and only if the morphism
\eqref{eq:w-eq-first-step} is a cover for all $n\ge 0$. The 
morphisms \eqref{eq:w-eq-first-step} and \eqref{eq:w-eq} are 
exactly the morphisms (5.2) and (5.1), respectively, in \cite{Behrend-Getzler:2015}. 
The sources and targets for the morphisms in \cite{Behrend-Getzler:2015} are $n$-groupoids in
a descent category of spaces: a small category with finite limits, equipped with a subcategory of covers closed under pullback, which satisfy  a ``2 of 3'' property. (Axioms D1, D2, and D3, respectively in \cite{Behrend-Getzler:2015}.)
A category equipped with a locally stalkwise pretopology satisfies all of these axioms, except 
D1. Indeed, D2 is included in the definition of a pretopology, and D3 follows from 
Def.\ \ref{assump:cover}.
Even though D1 is not satisfied in this context, the proof of Thm. 5.1 in \cite{Behrend-Getzler:2015} 
still applies. A direct verification shows that all limits appearing in
the proof exist in $(\Cat,\covers)$. And clearly, the proof works for $n=\infty$. 
\end{proof}
\begin{remark}\label{rmk:BGThm}
We also mention that a characterization of weak equivalences similar to \eqref{eq:w-eq}
between Lie 2-groupoids can be deduced using properties of the join
construction of simplicial sets and the theory of Morita bibundles
developed in Li's Ph.D.\ thesis \cite{Li:2015}. This fact is generalized to
all Lie $n$-groupoids in \cite{blohmann-zhu}, which provides another interpretation of 
the combinatorial formula \eqref{eq:w-eq}.  
\end{remark}

\section{Lie $n$-algebras} \label{sec:LnA} 
In this section, we summarize the main results we will need from the companion paper \cite{Rogers:2018}  concerning the homotopy theory of finite type Lie $n$-algebras, and we refer the reader there for complete proofs and details. Throughout this section and the remainder of this paper, we adopt the notation and conventions from \cite[Sec.\ 2; Sec.\ 3]{Rogers:2018}.

\subsection{$L_\infty$-algebras and their morphisms}
We begin with a quick review of basic facts and definitions concerning $L_\infty$-algebras and their morphisms.  We follow the presentation in \cite[Sec.\ 3]{Rogers:2018} which is based on the
standard reference \cite{Lada-Markl:1995}.

Recall that an \textbf{$L_\infty$-algebra} $(L, \el)$
is a $\Z$-graded $\R$-vector space $L$ equipped with 
a collection $\el = \{\el_1, \el_2, \el_3, \ldots \}$
of graded skew-symmetric linear maps (or brackets)
%\begin{equation} \label{eq:brackets}
\[
\el_k \maps \Lambda^k L \to L, \quad 1 \leq k < \infty 
\]
%\end{equation}
with  $\deg{\el_k} = k-2$, satisfying an infinite sequence of Jacobi-like identities of the form:
\begin{align} \label{eq:Jacobi}
   \sum_{\substack{i+j = m+1, \\ \sigma \in \Sh(i,m-i)}}
  (-1)^{\sigma}\epsilon(\sigma)(-1)^{i(j-1)} l_{j}
   (l_{i}(x_{\sigma(1)}, \dots, x_{\sigma(i)}), x_{\sigma(i+1)},
   \ldots, x_{\sigma(m)})=0
\end{align}
for all $m \geq 1$. Above, the permutation $\sigma$ ranges over all $(i,m-i)$ unshuffles, 
and $\epsilon(\sigma)$ denotes the Koszul sign. In particular, Eq.\ \ref{eq:Jacobi} implies that $(L, \el_1)$ is a (homological) chain complex.

Equivalently, a $L_\infty$-structure on a graded vector space $L$ is a degree $-1$ codifferential $\delta$ on the reduced cocommutative coalgebra $\bar{S}(\bs L) = \bigoplus_{i \geq 1} S^{i}(\bs L)$. See \cite[Sec.\ 2.4; Sec.\ 3.1]{Rogers:2018} for further details. Here $\bs L$ denotes the suspension of the graded vector space $L$, i.e., $\bs L_{i}:=L[-1]_{i}=L_{i-1}$. % The correspondence between the ``structure maps'' of the codifferential $\delta$:
% \[
% \delta^1_m:= \pr_{\bs L} \circ \delta \vert_{\S^{m}(\bs L)} \maps \S^{m}(\bs L) \to \bs L
% \]
% and the brackets $\el_m$ is given by the formula
% %\begin{equation}\label{eq:struc_skew}
% \[
% \delta^1_{m} = (-1)^{\frac{m(m-1)}{2}} \bs \circ
% \el_{m} \circ {(\ds)}^{\tensor m}.
% \]
% %\end{equation}

A (weak) {\bf $L_\infty$-morphism} $f \maps (L,\el) \to (L',\el')$
is a collection $f=\{f_1,f_2,\ldots\}$
of graded skew-symmetric linear maps
$f_k \maps \Lambda^k L \to L' \quad 1 \leq k < \infty$
%\begin{equation} \label{eq:morphism1}
%\end{equation}
with $\ddeg{f_k} = k-1$, satisfying an infinite sequence of equations of the form:
\[
%\begin{equation}\label{eq:mor_formula}
\begin{split}
&\sum_{j+k=m+1} \sum_{\sigma} \pm f_j \bigl( l_k(x_\sigma(1), \ldots, 
x_{\sigma(k)}), x_{\sigma(k+1)},\ldots x_{\sigma(m)} \bigr)  
  \\
 & + \sum_{\substack{ 1 \leq t \leq m \\ i_{1} + \cdots i_{t} = m}}
\sum_{\tau} \pm l'_{t} \bigl( f_{i_1}(x_{\tau(1)},\ldots,x_{\tau(i_1)}),
f_{i_2}(x_{\tau(i_1 + 1 )},\ldots,x_{\tau(i_1 + i_2)}), \\
& \quad \dots, f_{i_t}(x_{\tau(i_1 + \cdots + i_{t-1} +1 )},\ldots,x_{\tau(m)})\bigr) =0.
\end{split}
%\end{equation}
\]
Above  $\sigma$ ranges over all $(k,m-k)$ unshuffles, and $\tau$ ranges through
certain $(i_1,\ldots,i_t)$ unshuffles. (See, for example, \cite[Def.\ 2.3]{Allocca:2014}.)
% A {\bf morphism} of $L_\infty$-algebras 
% \begin{equation} \label{eq:morphism0}
% f=\{f_{k\geq 1} \} \maps
% (L,l_k) \to
% (L',l'_k)
% \end{equation}
% is a collection of graded skew-symmetric $k$-linear maps of degree $k
% -1$
% \begin{equation} \label{eq:morphism1}
% f_k \maps \Lambda^k L \to L' \quad 1 \leq k < \infty
% \end{equation}
%  (See \cite[Def.\ 2.3]{Allocca:2014} for 
% further details and the precise signs.) 
More conveniently, a morphism between $L_\infty$-algebras $L$ and $L'$ is equivalently a degree 0 morphism of dg coalgebras $F \maps \bigl(\S(\bs L), \delta \bigr) \to \bigl(\S(\bs L'), \delta' \bigr)$. 
% The correspondence between the ``structure maps'' of the coalgebra morphism:
% \[
% F^1_m:= \pr_{\bs L'} \circ F \vert_{\S^{m}(\bs L)} \maps \S^{m}(\bs L) \to \bs L'
% \]
% and the skew-symmetric maps $f_m$ is given by the formula
% \[
%  F^{1}_{k} = (-1)^{\frac{k(k-1)}{2}} \bs \circ
%  f_{k} \circ {(\ds)}^{\tensor k}.
% \]
% The morphism $F$ is compatible with the codifferentials if and only if
% for all $m\geq 1$ and for all $x_1,\ldots,x_m \in L$:
% %\begin{equation}\label{eq:dgmap}
% \[
% \begin{split}
% \sum^m_{k=1} \delta'^1_k F^k_m(\bs x_1,\ldots, \bs x_m) =    
% \sum^m_{k=1} F^1_k\delta^k_m (\bs x_1,\ldots, \bs x_m),
% \end{split}
% \]
% %\end{equation} 
% where $F^k_{m} := \pr_{\S^{k}(\bs L')} \circ F \vert_{\S^{m}(\bs L)}$ and
% $\delta^k_{m} := \pr_{\S^{k}(\bs L)} \circ \delta \vert_{\S^{m}(\bs L)}$. We recall that the linear maps 
% $\delta^k_{m}$,  $\delta'^k_{m}$, and $F^{k}_{m}$ are uniquely determined by the structure maps 
% $\delta^1_{n}$,  $\delta'^1_{n}$, and $F^{1}_{n}$, respectively. 
See again \cite[Sec.\ 2.4; Sec.\ 3.1]{Rogers:2018} for further details. In particular, treating $L_\infty$ morphisms as dg coalgebra morphisms gives us a clear way to compose them
\cite[Eq.\ 2.6]{Rogers:2018}. It is typical to consider the category $\Linf$ of $L_\infty$-algebras and $L_\infty$-morphisms as a full subcategory of the category of dg cocommutative coalgebras.

\begin{remark} \label{rmk:h0}
\mbox{}
\begin{itemize}
\item[-] As in \cite{Rogers:2018}, we will write morphisms in $\Linf$ using a single lower-case letter, e.g. 
\[
f \maps (L,\el) \to (L',\el'),
\]
and the $k$-ary map in the collection $f$ will always be denoted by $f_k$. % The dg coalgebra morphism encoded by the collection $f$ will always be written using the corresponding upper-case letter, e.g.  
% $F \maps (\S(\bs L),\delta) \to (\S(\bs L'),\delta')$.

\item[-] Recall that if $f \maps (L,\el) \to (L',\el')$ is a $L_\infty$-morphism, % then it follows from the definitions that 
% the degree 0 map $f_1 \maps (L,\el_1) \to (L',\el_1')$ is a chain map.
% Furthermore, for all $x,y \in L$, we have
% \begin{equation} \label{eq:morphism2.5}
% f_1 \bigl ( \el_2(x,y) \bigr ) -  \el'_{2} \bigl( f_1(x),f_1(y) \bigr)
%  = \el'_1 f_2(x,y).
% \end{equation}
then 
\[
H(f_{1}) \maps \bigl( H_0(L),[\cdot,\cdot]) \to (H_0(L'),[\cdot,\cdot]' \bigr)
\]
 is a morphism of Lie algebras,  where the above Lie brackets are
induced by the bilinear brackets $\el_2$ and $\el'_2$, respectively. % on any $L_\infty$-algebra $(L,\el)$
% induces a Lie algebra structure  on $H_0(L)$,  it follows from 
% Eq.\ \ref{eq:morphism2.5} that 
 \end{itemize}
\end{remark}

Next we recall several important classes of $L_\infty$-morphisms:

\begin{definition} \label{def:quasi-iso}
Let $f \maps (L,\el) \to (L',\el')$ be a morphism of $L_\infty$-algebras. 
\begin{enumerate}
\item We say $f$ is a \textbf{$L_\infty$-isomorphism} iff the linear map $f_1 \maps L \to L'$ is an isomorphism of graded vector spaces.

\item We say $f$ is a \textbf{$L_\infty$-quasi-isomorphism} iff the chain map
$f_1$  is a quasi-isomorphism, i.e.\ the induced map on homology
$H(f_1) \maps H(L) \to H(L')$ is an isomorphism of graded vector spaces.

\item We say $f$ is a \textbf{strict $L_\infty$-morphism}  
iff $f_k =0$ for all $k \geq 2$. In this case we write $f=f_1 \maps (L,\el) \to (L',\el)$ and it follows that every $k$-ary bracket $\el_k$ is preserved by the chain map $f_1$:
\[
\el'_k \circ f_1^{\tensor k} = f_1 \circ \el_k \quad \text{for all $k \geq 1$}.
\]

\end{enumerate}
\end{definition}

\subsection{Finite type Lie $n$-algebras} \label{sec:lnaft}
Let $n \in \N \cup \{\infty\}$. We recall that a 
$L_\infty$-algebra $(L,\el_k)$ is a \textbf{Lie $n$-algebra}
iff the graded vector space $L$ is concentrated in the first $n-1$ non-negative degrees, i.e.\
$L= \bigoplus_{i \geq 0}^{n-1} L_i$. 
The standard reference for Lie $n$-algebras is \cite[Def.\ 4.3.2]{Baez-Crans:2004}.
See \cite[Sec.\ 3.2]{Rogers:2018} for a list of relevant examples.
For a fixed $n \in \N \cup \{\infty\}$, we denote by $\LnA{n}$ the full subcategory of $\Linf$ 
whose objects are Lie $n$-algebras.

If $(L,\el)$ is a Lie $n$-algebra and each $L_i$ is a finite-dimensional vector space, then we say $(L,\el)$ is a \textbf{finite type Lie $n$-algebra}, and  we denote by $\lnaft$ the
category whose objects are finite type Lie $n$-algebras, and whose morphisms are (weak) $L_\infty$-morphisms.

\subsubsection{The Chevalley--Eilenberg algebra} \label{sec:CE_alg}
Throughout the companion paper \cite{Rogers:2018}, the category $\lnaft$ is taken to be a full subcategory of the category of conilpotent dg cocommutative coalgebras. % This  
% perspective is convenient since the approach there involves adopting certain basic ideas from deformation theory.
For the purposes of the present paper, in order to make contact with Henriques' work \cite{Henriques:2008}, we now recall the dual picture.  Let $(L, \el) \in \lnaft$ be a finite type Lie $n$-algebra and $(\S(\bs L),\delta)$ the associated dg coalgebra. We adjoin the ground field to obtain the corresponding coaugmented counital dg cocommutative coalgebra $(S(\bs L),\delta)$, where $S(\bs L)=\R \oplus \S(\bs L)$ is the usual symmetric algebra and $\delta(1)=0$. (See \cite[Sec.\ 2.4]{Rogers:2018}.) 

We denote by 
\[
\CE(L):= \bigl(S(\bs L^\vee), \delta_{\CE} \bigr)
\]
the \textbf{Chevalley--Eilenberg algebra} of $(L,\el)$ i.e.\ the $\R$-linear dual of $(S(\bs L),\delta)$. It is naturally a unital commutative dg-algebra (cdga), whose differential $\delta_{\CE}:=\delta^{\vee}$ has degree $1$. Since $(L,\el)$ is finite type, $\CE(L)$ is semi-free. That is, its
underlying commutative graded algebra is freely generated by the non-negatively graded vector space $\bs L^\vee$, where $L^\vee_i:=\hom_{\R}(L_i,\R)$.

\subsubsection{$\lnaft$ as a category of fibrant objects} \label{sec:lnaft_cfo}
As previously mentioned in Sec.\ \ref{sec:iCFO} the only difference between our definition of an iCFO \eqref{def:catfibobj} and
Brown's original definition of a category of fibrant objects is axiom 4. Brown requires the strong axiom that the pullback of a fibration always exists.

We summarize the  main result of \cite{Rogers:2018} in the following theorem:

\begin{theorem}[Thm.\ 5.1 \cite{Rogers:2018}] \label{thm:lnaft_cfo}
Let $n \in \N \cup \{\infty\}$.  The category $\lnaft$ of finite type Lie $n$-algebras 
and weak $L_\infty$-morphisms has the structure of a category of fibrant objects, in which a morphism
$f \maps (L,\el) \to (L',\el)$ is:
\begin{itemize}
\item a weak equivalence iff it is a $L_\infty$-quasi-isomorphism (Def.\ \ref{def:quasi-iso}),

\item a fibration iff the chain map $f_1 \maps (L,\el_1) \to (L',\el'_1)$ is surjective in all \underline{positive} degrees,

\item an acyclic fibration iff $f$ is a $L_\infty$-quasi-isomorphism and
the chain map $f_1 \maps (L,\el_1) \to (L',\el'_1)$ is surjective in all degrees.
\end{itemize}
\end{theorem}

% The above characterization  acyclic fibrations is given merely for convenience.
% It follows immediately from the fact that for any Lie $n$-algebra $(L,\el)$, we have $H_{0}(L)=L_0/ \im \el_1$.
We note that an explicit construction for path objects in $\lnaft$ is provided in \cite[Sec 3.3]{Rogers:2018}.

\begin{remark} \label{rmk:lna_fibs}
Fibrations between Lie $n$-algebras, as defined in Thm.\ \ref{thm:lnaft_cfo}, coincide with fibrations in the projective model structure on non-negatively graded chain complexes. In contrast, the chain map $f_1$ associated to a fibration between unbounded $\Z$-graded $L_\infty$-algebras is required to be surjective in all degrees. (See for example \cite{Ezra-infty}, \cite{Rogers:2017}, and \cite{Vallette:2014}.)

In \cite{Severa:2007}, \v{S}evera constructed a functor which provides a differentiation procedure $\mathrm{Diff} \maps \LnG{n}^{\ft} \to \lnaft$ from finite-dimensional Lie $n$-groups to Lie $n$-algebras. In certain cases, it is clear how this functor interacts with the iCFO structure (Thm.\ \ref{thm:icfo_struct}) on the category  $\LnGpd{n}$,  
and this provides us with evidence that the notion of fibration given in Thm.\ \ref{thm:lnaft_cfo}
is the ``correct'' one for our applications. For example, suppose $\g_\bullet$ is a simplicial Lie algebra, and let $G_\bullet$  
denote the (level-wise) 1-connected simplicial Lie group integrating $\g_\bullet$.
Then its classifying space $\overline{W}G_\bullet$ is a Lie $\infty$-group.
Jur\v{c}o showed in \cite{Jurco:2012} that $\mathrm{Diff}(\bar{W}G_\bullet)$ is isomorphic to $N\g_\bullet$, the dg Lie algebra obtained from $\g_\bullet$ via Quillen's normalized chains functor \cite{Quillen:1969}.  Now suppose $f \maps \overline{W}G_\bullet \to \overline{W}G'_\bullet$ is a Kan fibration between classifying spaces of 1-connected simplicial Lie groups in $\LnG{\infty}$. Then, by using Jur\v{c}o's result, it is not difficult to see that $\mathrm{Diff}(f) \maps N\g_\bullet \to N\g'_\bullet$  is a dgla morphism that is surjective in all positive degrees, but not surjective, in general, in degree 0. Therefore, if we hope to prove that $\mathrm{Diff}$ preserves fibrations, then 
the definition of fibration in the CFO structure on $\lnaft$ must be the one given in Thm.\ \ref{thm:lnaft_cfo}.
A more detailed analysis of \v{S}evera's functor is the subject of our future work.
\end{remark}

\subsection{Fibrations of Lie $n$-algebras}

The following lemma from \cite{Rogers:2018} is based on a result of Vallette \cite{Vallette:2014} concerning the factorization of epimorphisms between $\Z$-graded homotopy algebras.
The lemma allows us to dramatically simplify many constructions involving fibrations between Lie $n$-algebras. Roughly, it says that every fibration in $\lnaft$ is a strict fibration up to isomorphism. 

\begin{lemma}[Lemma 3.11 \cite{Rogers:2018}] \label{lem:strict_fib}
Let $f \maps (L,\el) \to (L',\el')$ be a fibration between Lie $n$-algebras. 
Then there exists a Lie $n$-algebra $(L,\tilde{\el})$ and an 
isomorphism $\phi \maps (L,\tilde{\el}) \xto{\cong} (L,\el)$ such that
\[
f\phi \maps (L,\tilde{\el}) \to (L',\el) 
\]
is a strict fibration with  $f\phi=(f\phi)_1 = f_1$.
\end{lemma}

It turns out that not every fibration in $\lnaft$ integrates to a fibration in $\LnG{\infty}$ (see
Remark \ref{rmk:fib_not_int}). However, we will show in Thm.\ \ref{thm:int_split} 
that there is a distinguished class of fibrations in $\lnaft$ that do. We call these ``quasi-split fibrations''.
\begin{definition}\label{def:split_fib}
A fibration of Lie $n$-algebras $f \maps (L,\el) \to (L',\el')$ is a \textbf{quasi-split fibration}
iff 
\begin{enumerate}

\item the induced map in homology $H(f_1) \maps H(L) \to H(L')$ is surjective in all degrees and,

\item $H_0(L) \cong \ker H_0(f_1) \oplus H_0(L')$ in the category of Lie algebras.
% in degree zero, the map $H(f_1) \maps (H_0(L), [\cdot,\cdot])  \to (H_0(L'), [\cdot,\cdot]^\prime)$
% is a split epimorphism of Lie algebras.
\end{enumerate}
\end{definition}

% As previously mentioned, the homology $H(L)$ of a Lie $n$-algebra is naturally a graded module over the Lie algebra $H_0(L)$. 

\begin{remark}
Note that every acyclic fibration in $\lnaft$ is a quasi-split fibration. More generally, $f \maps (L,\el) \to (L',\el')$ is a quasi-split fibration if $\ker H(f_1)$ is central and $H(f_1) \maps H(L) \to H(L')$ is a split epimorphism in the category of $H_0(L)$-modules.
\end{remark}

Besides acyclic fibrations, there are other examples of quasi-split fibrations which naturally arise in interesting applications. In particular, the string Lie 2-algebra, whose integration was the original motivation for \cite{Henriques:2008}, is  a special case of the following construction.  

\begin{example}[Central $n$-extensions] \label{ex:string}
Let $(\g,[\cdot,\cdot])$ be a Lie algebra and $c \maps \Lambda^{n+1}
\g \to \R$ a degree $n+1$ cocycle in the Chevalley-Eilenberg complex
associated to $\g$. From this data we obtain a Lie $n$-algebra $\widehat{\g}_{c}$
whose underlying vector space is concentrated in degrees $0$ and $n-1$:
\[
\widehat{\g}_{c}= \g \oplus \R[1-n]
\]
and whose only non-trivial brackets are
\[
\begin{split}
\el_{2}(x_{1},x_{2}) &= [x_{1},x_{2}],  \quad \text{if $x_{1},x_{2} \in \g$}\\
\el_{n+1}(x_{1},\hdots,x_{n+1}) &= c(x_{1},\hdots,x_{n+1}),  \quad \text{if $x_{1},\hdots,x_{n+1} \in \g$}\\
\end{split}
\]
A straightforward verification shows that the linear projection $\pi \maps \widehat{\g}_{c} \to \g$
extends to a strict quasi-split fibration sequence
$\R[1-n] \emb \widehat{\g}_{c} \xto{\pi} \g$ in $\lnaft$.
\end{example}

We conclude this section with a corollary concerning the ``strictification'' of quasi-split fibrations. The proof follows directly from Lemma \ref{lem:strict_fib} and Def.\ \ref{def:split_fib}.

\begin{cor} \label{cor:strict_fib}
Let $f \maps (L,\el) \to (L',\el')$ be a quasi-split fibration between Lie $n$-algebras. 
Then there exists an 
isomorphism $\phi \maps (L,\tilde{\el}) \xto{\cong} (L,\el)$ in $\lnaft$ such that
$f\phi \maps (L,\tilde{\el}) \to (L',\el)$ is a strict quasi-split fibration with  $f\phi=(f\phi)_1 = f_1$.
\end{cor}

\subsection{Postnikov tower for Lie $n$-algebras} \label{sec:postnikov}
Next, we recall the following technical results from \cite[Sec.\ 7]{Rogers:2018} concerning Postnikov towers.
These will play a crucial role in our analysis of the integration functor in Sec.\ \ref{sec:int_exact}.

Let $(L,\el)$ be a Lie $n$-algebra. Following \cite[Def.\ 5.6]{Henriques:2008},
we consider two different truncations
of the underlying chain complex $(L,d=\el_1)$. 
For any $m \geq 0$, denote by $\tau_{\leq m}L$ and $\tau_{< m}L$ the following $(m+1)$-term complexes:
\[
%\begin{equation} \label{eq:trunc_defs}
\begin{split}
(\tau_{\leq m} L)_i=
\begin{cases}
L_i & \text{if $i < m$,}\\
\coker(d_{m+1}) & \text{if $i=m$,}\\
0 & \text{if $i >m$,}
\end{cases}
\qquad 
(\tau_{< m} L)_i=
\begin{cases}
L_i & \text{if $i < m$,}\\
\im (d_{m}) & \text{if $i=m$,}\\
0 & \text{if $i >m$.}
\end{cases}
\end{split}
%\end{equation}
\]
In degree $m$, the differentials for $\tau_{\leq m}L$ and $\tau_{<  m}L$ are $d_{m} \maps L_m/\im(d_{m+1}) \to L_{m-1}$, and the inclusion $ \im(d_m) \emb L_{m-1}$, respectively. The homology complexes of
$\tau_{\leq m}L$ and $\tau_{<  m}L$ are
\[
H_{i}(\tau_{\leq m}L) = 
\begin{cases}
H_i(L) & \text{if $i \leq m$,}\\
0 & \text{if $i>m$,}
\end{cases}
\qquad
H_{i}(\tau_{< m}L) = 
\begin{cases}
H_i(L) & \text{if $i <  m$,}\\
0 & \text{if $i \geq m$.}
\end{cases}
\]
We have the following obvious surjective chain maps
\begin{equation} \label{eq:projs}
\begin{split}
p_{\leq m} \maps L \to \tau_{\leq m}L \qquad p_{< m} \maps L \to \tau_{< m}L
\end{split}
\end{equation}
where in degree $m$, the map $p_{\leq m}$ is the surjection $L_m \to \coker(d_{m+1})$, and
$p_{< m}$ is the differential $d_{m} \maps L_m \to \im(d_{m})$. There are also the similarly defined
surjective chain maps
\begin{equation} \label{eq:projs1}
\begin{split}
q_{\leq m} \maps \tau_{\leq m}L \to \tau_{< m}L, \quad  q_{< m+1} \maps \tau_{< m +1}L \xto{\sim} \tau_{\leq m}L. 
\end{split}
\end{equation}
The map $q_{\leq m}$ in degree $m$ is the differential $d_m \maps \coker{d_{m+1}} \to \im d_m$, and the identity in all other degrees. The map $q_{<m+1}$ is the projection $L_m \to \coker d_{m+1}$ in degree $m$, the identity in all degrees $<m$, and the zero map in degree $m+1$.  
We note that $q_{<m+1}$  is a quasi-isomorphism of complexes.

\begin{proposition}[Prop.\ 7.2 \cite{Rogers:2018}] \label{prop:lna_tower}
Let $(L,\el)$ be a Lie $n$-algebra. 
\begin{enumerate}

\item The Lie $n$-algebra structure on $(L,\el)$ induces Lie $(m+1)$-structures on the complexes
$\tau_{\leq m} L$ and $\tau_{<m} L$ whose brackets are given by
\[
\tau_{\leq m} \el_{k}(\bar{x}_1,\ldots,\bar{x}_k):= p_{\leq m}\el_k(x_1,\ldots,x_k), \quad
\tau_{< m} \el_{k}(\bar{y}_1,\ldots,\bar{y}_k):= p_{<m}\el_k(y_1,\ldots,y_k),
\] 
where $\bar{x}_i= p_{\leq m}(x_i)$ and $\bar{y}_i= p_{< m}(y_i)$.

\item The assignments $(L,\el) \mapsto (\tau_{\leq m} L,\tau_{\leq m}\el)$ and
$(L,\el) \mapsto (\tau_{< m} L,\tau_{< m}\el)$ are functorial.

\item An $L_\infty$-morphism $f \maps (L,\el) \to (L',\el')$ induces a morphism of towers of Lie $n$-algebras
\begin{equation} \label{diag:tower}
\begin{tikzpicture}[descr/.style={fill=white,inner sep=2.5pt},baseline=(current  bounding  box.center)]
\matrix (m) [matrix of math nodes, row sep=2em,column sep=1.4em,
  ampersand replacement=\&]
  {  
\cdots \tau_{\leq m-1} L \& \tau_{< m-1} L \& \tau_{\leq m-2}L \& ~ \cdots ~ \& 
\tau_{\leq 1 } L \& \tau_{< 1} L \& \tau_{\leq 0} L\\
\cdots \tau_{\leq m-1} L' \& \tau_{< m-1} L' \& \tau_{\leq m-2}L' \& ~ \cdots ~ \& 
\tau_{\leq 1 } L' \& \tau_{< 1} L' \& \tau_{\leq 0} L'\\
};
\path[->,font=\scriptsize] 
(m-1-1) edge node[auto] {$q_{\leq m-1}$} (m-1-2)
(m-1-2) edge node[auto] {$q_{<m-1}$} (m-1-3)
(m-1-3) edge node[auto] {$q_{ \leq m-2}$} (m-1-4)
(m-1-4) edge node[auto] {$$} (m-1-5)
(m-1-5) edge node[auto] {$q_{\leq 1}$} (m-1-6)
(m-1-6) edge node[auto] {$q_{< 1}$} (m-1-7)
(m-2-1) edge node[auto,swap] {$q'_{\leq m-1}$} (m-2-2)
(m-2-2) edge node[auto,swap] {$q'_{<m-1}$} (m-2-3)
(m-2-3) edge node[auto,swap] {$q'_{ \leq m-2}$} (m-2-4)
(m-2-4) edge node[auto] {$$} (m-2-5)
(m-2-5) edge node[auto,swap] {$q'_{\leq 1}$} (m-2-6)
(m-2-6) edge node[auto,swap] {$q'_{< 1}$} (m-2-7)

(m-1-1) edge node[auto,swap] {$$} (m-2-1)
(m-1-2) edge node[auto,swap] {$\tau_{< m-1}f$} (m-2-2)
(m-1-3) edge node[auto,swap] {$\tau_{\leq m-2}f$} (m-2-3)
(m-1-5) edge node[auto,swap] {$\tau_{\leq 1}f$} (m-2-5)
(m-1-6) edge node[auto,swap] {$\tau_{< 1}f$} (m-2-6)
(m-1-7) edge node[auto,swap] {$\tau_{\leq 0}f$} (m-2-7)
;
\end{tikzpicture}
\end{equation}
in which the horizontal arrows are the strict $L_\infty$-morphisms induced by the surjective chain maps \eqref{eq:projs1}.

\end{enumerate}
\end{proposition}

\begin{remark} \label{rmk:tau0}
The vertical maps $\tau_{< m}f$ and $\tau_{\leq m} f$ in \eqref{diag:tower} induced by the morphism $f \maps (L,\el) \to (L',\el')$ are defined via the projections given in Eq.\ \ref{eq:projs}. For example, 
\[
\tau_{\leq m} f_k(\bar{x}_1,\ldots,\bar{x}_k):= p'_{\leq m}f_k(x_1,\ldots,x_k).
\]
%See the proof of Prop.\ 7.2 in \cite{Rogers:2018} for more details.

We also note that the Lie $n$-algebra $\tau_{\leq 0} L$ is just the Lie algebra $H_{0}(L)$ concentrated in degree zero. Given a morphism of Lie $n$-algebras 
$f \maps (L, \el) \to (L',\el')$, the induced morphism $\tau_{\leq 0} f \maps
H_{0}(L) \to H_{0}(L')$ of Lie algebras is the morphism $H_0(f_1)$ from Remark \ref{rmk:h0}.
\end{remark}

\subsection{Quasi-split fibrations and decomposition of  towers} 
The commutative diagram \eqref{diag:tower} has a convenient decomposition in the case when $f \maps (L,\el) \to (L',\el')$ is a quasi-split fibration \eqref{def:split_fib}. In these next two results recalled from Sec.\ 7.2 of \cite{Rogers:2018}, we only consider strict quasi-split fibrations. The general case involving arbitrary quasi-split fibrations  follows from applying Cor.\ \ref{cor:strict_fib}.

Let $\ker q_{< m+1}$ denote the kernel of the map 
$q_{<m+1} \maps (\tlt{m+1} L, \tlt{m+1} \el) \to (\tleq{m} L, \tleq{m} \el)$ defined in \eqref{eq:projs1}  
%strict acyclic fibration 
Then the chain complex $\ker q_{< m+1}$ is an abelian Lie $n$-algebra
concentrated in degrees $m$ and $m+1$ with
\begin{equation} \label{eq:ker-qm+1}
(\ker q_{< m+1})_m=\im d_{m+1}, \quad  (\ker q_{< m+1})_{m+1} = \im d_{m+1}[-1] 
\end{equation}
The induced differential $\el^{\ker}=\el_1$ on $\ker q_{< m+1}$ is simply the desuspension isomorphism.

\begin{proposition}[Prop.\ 7.5 \cite{Rogers:2018}]\label{prop:tower_decomp1}
Let $f=f_1 \maps (L, \el) \to (L',\el')$ be a strict quasi-split fibration.
Then there exists morphisms in $\lnaft$
\[
r \maps (\tau_{< m +1}L, \tlt{m+1}\el) \to (\ker q_{<m+1},\el^{\ker}), \quad 
r' \maps (\tau_{< m +1}L', \tlt{m+1}\el') \to (\ker q'_{<m+1},\el^{\prime \ker})
\] 
inducing $L_\infty$-isomorphisms
\[
\begin{split}
\bigl(q_{< m+1},r \bigr) &\maps \bigl (\tau_{< m +1}L, \tlt{m+1}\el \bigr) \xto{\cong} \bigl( \tau_{\leq m} L \oplus \ker q_{<m+1}, \tleq{m}\el \oplus \el^{\ker} \bigr)    \\
\bigl(q'_{< m+1},r' \bigr)&\maps \bigl( \tau_{< m +1}L', \tlt{m+1}\el' \bigr) \xto{\cong} \bigl( \tau_{\leq m} L' \oplus \ker q'_{<m+1}, \tleq{m}\el' \oplus \el^{\prime \ker} \bigr)
\end{split}
\]
such that the following diagram commutes in $\LnA{n}$:
\begin{equation} \label{diag:tower_decomp1.1}
\begin{tikzpicture}[descr/.style={fill=white,inner sep=2.5pt},baseline=(current  bounding  box.center)]
\matrix (m) [matrix of math nodes, row sep=2em,column sep=5em,
  ampersand replacement=\&]
  {  
\tau_{< m +1 }L \& \tau_{\leq m }L \oplus \ker q_{<m+1}\\
\tau_{< m +1 }L' \& \tau_{\leq m }L' \oplus \ker q'_{<m+1}\\
}; 
\path[->,font=\scriptsize] 
(m-1-1) edge node[auto]{$\bigl(q_{< m+1},r \bigr)$} node[auto,below] {$\cong$} (m-1-2)
(m-1-1) edge node[auto,swap] {$\tau_{< m +1 } f$} (m-2-1)
(m-1-2) edge node[auto] {$\tau_{\leq m} f \oplus \tau_{< m +1 }f \vert_{\ker}$} (m-2-2)
(m-2-1) edge node[auto]{$\bigl(q'_{< m+1},r' \bigr)$} node[below] {$\cong$} (m-2-2)
;
\end{tikzpicture}
\end{equation}
\end{proposition}

Now let $m\geq 1$. % We focus on those commuting squares in \eqref{diag:tower} whose top edges consist of the strict quasi-split fibrations  $q_{\leq m} \maps (\tleq{m} L, \tleq{m} \el) \to (\tlt{m} L, \tlt{m} \el)$ defined \eqref{eq:projs1}. 
The chain map $q_{\leq m}$ defined in \eqref{eq:projs1} induces a short exact sequence of chain complexes
\begin{equation} \label{eq:ses1}
H_m \xto{i} \tleq{m}L \xto{q_{\leq m}}  \tlt{m}L 
\end{equation}
where $H_m$ is the homology group $H_m(L)$ concentrated in degree $m$ with trivial differential.
The second decomposition result that we will need is:

\begin{proposition}[Prop.\ 7.6 \cite{Rogers:2018}] \label{prop:tower_decomp2}
Let $m \geq 1$ and let $f=f_1 \maps (L, \el) \to (L',\el')$ be a strict quasi-split fibration. % such that
% induced map in homology
% \[
% H(f_1) \maps H_{m} \to H'_m
% \] 
% is surjective in degree $m$.
Then there exists $L_\infty$-structures $\hat{\el}$ and $\hat{\el}'$ on the graded vector spaces $\tlt{m} L \oplus H_m$ and $\tlt{m} L' \oplus H'_m$, respectively,  and strict $L_\infty$-isomorphisms
\[
\begin{split}
\hat{q} \maps (\tleq{m} L, \tleq{m}\el) &\xto{\cong} \bigl( \tlt{m}L \oplus H_m, \hat{\el} ~ \bigr)\\
\hat{q}' \maps (\tleq{m} L', \tleq{m}\el') & \xto{\cong} \bigl( \tlt{m}L' \oplus H'_m, \hat{\el'} ~ \bigr)
\end{split}
\]
such that the following diagram of $L_\infty$-morphisms commutes
\begin{equation}
\label{diag:tower_decomp2.1}
\begin{tikzpicture}[descr/.style={fill=white,inner sep=2.5pt},baseline=(current  bounding  box.center)]
\matrix (m) [matrix of math nodes, row sep=2em,column sep=2em,
  ampersand replacement=\&]
  {  
( \tau_{\leq m }L, \tleq{m} \el) \& (\tau_{\lt m }L \oplus H_m, \hat{\el})\\
(\tau_{\leq m }L', \tleq{m} \el') \& (\tau_{\lt m }L' \oplus H_m', \hat{\el}^{\prime})\\
}; 
\path[->,font=\scriptsize] 
(m-1-1) edge node[auto]{$\hat{q}$} node[auto,below] {$\cong$} (m-1-2)
(m-1-1) edge node[auto,swap] {$\tleq{m} f$} (m-2-1)
(m-1-2) edge node[auto] {$\tlt{m} f \oplus H(f)$} (m-2-2)
(m-2-1) edge node[auto]{$\hat{q}'$} node[below] {$\cong$} (m-2-2)
;
\end{tikzpicture}
% \begin{tikzpicture}[descr/.style={fill=white,inner sep=2.5pt},baseline=(current  bounding  box.center)]
% \matrix (m) [matrix of math nodes, row sep=2em,column sep=5em,
%   ampersand replacement=\&]
%   {  
% \tau_{\leq m }L \& \tau_{\lt m }L \oplus H_m\\
% \tau_{\leq m }L' \& \tau_{\lt m }L' \oplus H_m'\\
% }; 
% \path[->,font=\scriptsize] 
% (m-1-1) edge node[auto]{$\hat{q}$} node[auto,below] {$\cong$} (m-1-2)
% (m-1-1) edge node[auto,swap] {$\tleq{m} f$} (m-2-1)
% (m-1-2) edge node[auto] {$\tlt{m} f \oplus H(f)$} (m-2-2)
% (m-2-1) edge node[auto]{$\hat{q}'$} node[below] {$\cong$} (m-2-2)
% ;
% \end{tikzpicture}
\end{equation}
\end{proposition}

\begin{remark} \label{rmk:tower_decomp1}
In order to prove Thm.\ \ref{thm:int_split} in the next section, we'll need to recall from the proof of Prop.\ 7.6 in \cite{Rogers:2018} 
some details concerning the isomorphisms $\hat{q}$ and $\hat{q}'$. In degree $m$, the short exact sequence \eqref{eq:ses1} and the analogous sequence for $q'_{\leq m}$ gives the following commutative diagram of vector spaces.
\[
\begin{tikzpicture}[descr/.style={fill=white,inner sep=2.5pt},baseline=(current  bounding  box.center)]
\matrix (m) [matrix of math nodes, row sep=2em,column sep=2em,
  ampersand replacement=\&]
  {  
H_m \& \coker d_{m+1} \& \im d_m[-1] \\
H'_m \& \coker d'_{m+1} \& \im d'_m[-1] \\
}; 
\path[->,font=\scriptsize] 
 (m-1-1) edge node[auto] {$i$} (m-1-2)
 (m-2-1) edge node[auto] {$i'$} (m-2-2)
;
\path[->,font=\scriptsize] 
 (m-1-2) edge node[auto] {$d_m$} (m-1-3)
 (m-2-2) edge node[auto] {$d'_m$} (m-2-3)
 ;
\path[->,font=\scriptsize] 
 (m-1-1) edge node[auto,swap] {$H(f_1)$} (m-2-1)
 (m-1-2) edge node[auto] {$\tleq{m}f_1$} (m-2-2)
 (m-1-3) edge node[auto] {$\tlt{m}f_1$} (m-2-3)
 ;
\end{tikzpicture}
\]
Since $f$ is a strict quasi-split fibration, there exists 
sections $s \maps \im d_m[-1] \to \coker d_{m+1}$, and $s' \maps \im d'_{m}[-1] \to \coker d'_{m+1}$, of $d_m$ and $d'_m$, respectively, such that $\tleq{m }f_1 \circ s =s' \circ \tlt{m}f_1$.
Let $t \maps \tau_{< m}L \to \tau_{\leq m}L$ and $r \maps \tleq{m} L \to H_m$ be the linear maps
\[
t(x):=
\begin{cases}
s(x), & \text{if $\deg{x} =m $} \\
x, & \text{if $\deg{x} < m$,} \\
\end{cases} \qquad \hat{r}:= \id -tq_{\leq m}
\] 
respectively. Then the strict $L_\infty$-morphism $\hat{q} \maps \tleq{m}L \to \tlt{m}L \oplus H_m$ 
is defined to be
%\begin{equation} \label{eq:qhat}
\[
\hat{q}(z):=\bigl (q_{\leq m}(z), \hat{r}(z) \bigr)
\]
%\end{equation}
The map $\hat{q}'$ is defined in the analogous way, using the 
section $s'$ instead of $s$.
\end{remark}

\begin{remark} \label{rmk:tower_decomp2}
As shown in  Sec.\ 7.2.1 of \cite{Rogers:2018}, the structure maps
$\hat{\el}_k \maps \Lambda^k (\tlt{m}L \oplus H_m) \to \tlt{m}L \oplus H_m$
for the $L_\infty$-structure on $\tlt{m}L \oplus H_m$ are given by the formula:
% \[
% \hat{\el}_k := \hat{q} \circ \tleq{m}\el_k \circ (\hat{q}^{-1})^{\tensor k}.
% \]
% It then follows from the definition of $\hat{q}$ given above in Remark \ref{rmk:tower_decomp1} that,
\begin{multline} \label{eq:lhat}
\hat{\el}_k\bigl( (x_1,y_1), (x_2,y_2), \ldots, (x_k,y_k) \bigr) =  \\ 
 \Bigl( \tlt{m} \el_k \bigl (x_1,x_2,\ldots,x_k), ~ \hat{r}\circ \tleq{m} \el_k \bigl (t x_1 + y_1, t x_2 +y_2,\ldots, t x_k+ y_k) \Bigr),
\end{multline}    
for all $x_1,\ldots,x_k \in \tlt{m}L$ and $y_1,\ldots,y_k \in H_m$.
%for all $x_1,\ldots,x_k \in \tlt{m}L$ and $y_1,\ldots,y_k \in H_m$. 
The above formula implies 
that there is only one non-trivial structure map 
$\hat{\el}_k$ that involves non-zero inputs from $H_m$, since 
$H_m$ is concentrated in top degree $m$. Namely:
\[
\hat{\el}_2\bigl( (x,0), (0,y) \bigr) = \bigl(0, \el_2(x,y) \bigr),
\]
where $x \in L_0=\tlt{m}L_0$ is an element of degree 0 and $y \in H_m$. Moreover, since $\el_2$ satisfies the Leibniz rule with respect to the differential $\el_1=d$, we obtain an action of the Lie algebra $H_0(L)=L_0/\im d_1$ on $H_m$:
\begin{equation} \label{eq:lie_action1}
\begin{split}
H_0(L) \times H_m \to H_m \\
(x + \im d_1, y) \mapsto \el_2(x,y)
\end{split}
\end{equation}
This observation will play a key role when we integrate quasi-split fibrations in Sec.\ \ref{sec:int_exact}.
\end{remark}

\subsection{Maurer--Cartan elements} \label{sec:MC}
In Sec.\ \ref{sec:int_exact}, we express Henriques' integration functor in terms of Maurer-Cartan sets by applying some results  from Sec.\ 6 of \cite{Rogers:2018}.
Throughout the present section, $S$ denotes a submanifold (possibly with corners) of $\R^N$,
%\ccomment{Reworded the definition of $\Om(S)$}
Fix an integer $r \geq 1$. Following \cite[Sec.\ 5.1]{Henriques:2008}, 
we denote by 
\[
\bigl(\Omega(S), d_{\dR} \bigr)
\] 
the differential graded Banach algebra
of ``$r$-times continuously differentiable forms''. By definition, a $k$-form $\alpha$ on $S$ is an element of $\Omega(S)$ if and only if both $\alpha$ and the $(k+1)$-form $d_{\dR}\alpha$ are $r$-times continuously differentiable.
% % \] 
% % ,  $\Omega(S)$ %%a clearer way?
% %\begin{equation} \label{eq:deRham}
% %\[
%  %\Omega^n(S) := \{ C^r $n$ \text{form $\alpha$ on $S$ such that
%    %$d\alpha$ is a $C^r$ $n+1$ form} \}  %\bigl(\Omega(S), d_{\dR} \bigr)
% %\]
% %\end{equation}
% the set of $r$-times continuously differentiable (i.e. $C^r$) forms on $S$ such that
%    $d_{\dR}\alpha$ is again $C^r$. It
% turns out that $\bigl(\Omega(S), d_{\dR} \bigr)$ is a differential graded Banach algebra.

Note that we treat $(\Omega(S), d_{\dR})$ as a \underline{cochain} complex with $d_{\dR}=\Omega(S)^{\ast} \to \Omega(S)^{\ast +1 }$ as usual.
Let $(L,\el_{k}) \in \lnaft$ be a finite type Lie $n$-algebra, and denote by 
\[
(L \tensor \OS, \el^{\Om}) 
\]
the $\Z$--graded $L_\infty$-algebra whose underlying \underline{chain} complex is $(L \tensor \OS , \el^{\Om}_1)$ where 
\[
\begin{split}
(L \tensor \OS)_m &:= \bigoplus_{i-j=m} L_{i} \tensor \OS^{j}\\
\el^{\Om}_1 &:= \el_1 \tensor \id_{\OS} + \id_{L} \tensor d_{\dR}
\end{split}
\]
and whose higher brackets are defined as:
%\begin{equation} \label{eq:elB}
\[
\el^{\Om}_k \bigl(x_1 \tensor \omega_1, \ldots, x_k \tensor \omega_k \bigr):= (-1)^{\varepsilon} \el_k(x_1,\ldots,x_k) \tensor \omega_1\omega_2 \cdots \omega_k,
\]
with 
\[
\varepsilon :=  \sum_{1 \leq i < j \leq k} \deg{\omega_i}\deg{x_j}.
\]
%\end{equation} 
% If $\delta$ denotes the codifferential on $\S(\bs L)$ encoding the Lie $n$-algebra structure, then
% we let $\delta^{\Om}$ denote the induced codifferential for the $L_\infty$-structure on $L \tensor \OS$.

It is easy to see that for every $(L,\el) \in \lnaft$, the $L_\infty$-algebra $(L \tensor \OS, \el^{\Om})$ is ``tame'', in the sense of \cite[Def.\ 6.1]{Rogers:2018}. This means that the \textbf{curvature} of an element $a \in (L \tensor \OS)_{-1}$:
\begin{equation} \label{eq:curv}
\begin{split}
% \curv^{\Om}(a) & := \bs^{-1} \sum_{k \geq 1} \frac{1}{k!} (\delta^\Om)^1_k \bigl(\bs a, \bs a, \ldots, \bs a)\\ 
\curv^{\Om}(a)  = \ell^\Om_1(a) + \sum_{k \geq 2} \sgn{k} \frac{1}{k!} \ell^\Om_k(a,a,\ldots,a) \in (L \tensor \OS)_{-2}
\end{split}
\end{equation}
is well-defined, i.e., the above summation is finite. 
We recall that elements of the set
\[
\MC \bigl(L \tensor \OS \bigr):= \bigl \{ a \in (L \tensor \OS)_{-1} ~\vert ~ \curv^\Om(a)=0 \bigr \}.
\]
are called the \textbf{Maurer--Cartan elements} of the $L_\infty$-algebra $(L \tensor \OS, \el^{\Om})$.

Next, if $f \maps (L, \el) \to (L',\el')$ is a morphism of Lie $n$-algebras, then it is easy to verify that the maps $f^{\Om}_k\maps \Lambda^{k} \bigl(L \tensor \OS \bigr) \to L' \tensor \OS$ defined as
%\begin{equation} \label{eq:fB}
\[
f^{\Om}_k\bigl(x_1 \tensor \omega_1, \ldots, x_k \tensor \omega_k \bigr):= (-1)^{\varepsilon} f_k(x_1,\ldots,x_k) \tensor \omega_1\omega_2 \cdots \omega_k,
\]
%\end{equation} 
assemble together to give a $L_\infty$-morphism $f^\Om \maps (L\tensor \OS, \el^\Om) \to (L' \tensor \OS, \el^{ \prime \Om})$.
Furthermore, for any morphism $f \maps (L, \el) \to (L',\el')$ in $\lnaft$,  the $L_\infty$-morphism $f^\Om \maps (L\tensor \OS, \el^\Om) \to (L' \tensor \OS, \el^{ \prime \Om})$ is tame 
in the sense of \cite[Def.\ 6.1]{Rogers:2018}.
This implies that the morphism 
$f^\Om$ induces a well defined function 
\[
f^\Om_\ast \maps (L \tensor \OS)_{-1} \to (L'\tensor \OS)_{-1} 
\]
where
\begin{equation} \label{eq:Fstar}
\begin{split}
%f^\Om_\ast(a) &:= \bs^{-1} \sum_{k \geq 1} \frac{1}{k!} (F^\Om)^1_k( \bs a, \bs a, \ldots, \bs a) \\
f^\Om_\ast(a)  = f^\Om_1(a) + \sum_{k \geq 2} \sgn{k} \frac{1}{k!} f^\Om_k(a,a,\ldots,a).
\end{split}
\end{equation}
for all $a \in \bigl(L \tensor \OS \bigr)_{-1}$. Note that if $f=f_1 \maps (L, \el) \to (L',\el')$ is a strict morphism, then Eq.\ \ref{eq:Fstar} implies that
\begin{equation} \label{eq:strict_Fstar}
f^\Om_\ast = f \tensor \id_{\OS} 
\end{equation}
\begin{proposition}[] \label{prop:MC}
\mbox{}
\begin{enumerate}

\item  If  $f \maps (L, \el) \to (L',\el')$ is a morphism in $\lnaft$, then  $f_\ast \maps (L \tensor \OS)_{-1} \to (L'\tensor \OS)_{-1}$  
restricts to a well-defined function $$f_\ast \maps \MC(L \tensor \OS) \to \MC(L' \tensor \OS)$$ between the corresponding Maurer-Cartan sets. % Moreover, the assignment
% \[
% f \maps (L, \el) \to (L', \el') \quad \longmapsto \quad f_\ast \maps \MC(L) \to \MC(L').
% \]
% is functorial.

\item The assignments $(L\tensor \OS, \el^\Om) \mapsto \MC \bigl( (L\tensor \OS \bigr)$, and 
$f^\Om \mapsto f^{\Om}_\ast$ define a functor
%\begin{equation} \label{eq:MC-cdga}
\[
\MC(-\tensor \OS) \maps \lnaft \to \Set,
\]
%\end{equation}
natural in $S \subseteq \R^{N}$.

\item Let $(L,\el) \in \lnaft$ and let $\CE(L)$ denote the 
Chevalley--Eilenberg algebra of $(L,\el)$ defined in Sec.\ \ref{sec:CE_alg}.
Then the isomorphism of vector spaces
\[
\begin{split}
\vphi \maps L \tensor \OS  \xto{\cong} \hom_{\R}(\bs L^\vee, \OS) \\
\vphi(x \tensor \omega)(f):=f(\bs x) \omega
\end{split}
\] 
extends to a bijection of sets
\[
\MC(L \tensor \OS) \xto{\cong} \hom_{\cdga}(\CE(L),\OS)
\]
natural in $(L, \el) \in \lnaft$ and in $S  \subseteq \R^{N}$.
\end{enumerate}
\end{proposition}
\begin{proof}
Statements (1) and (2) follow from Prop.\ 6.3 and Lemma 6.4 in \cite{Rogers:2018}.

For (3), as mentioned in Sec.\ \ref{sec:CE_alg}, $\CE(L)$ is freely generated as a graded commutative algebra by the vector space $\bs L^{\vee}$. Hence, $\vphi$ induces an isomorphism 
$(L \tensor \OS)_{-1} \cong \hom_{\mathsf{cga}}(\S(\bs L^\vee),\OS)$. A direct calculation shows that 
$\curv^\Om(x \tensor \omega)=0$ if and only if $\vphi(x\tensor \omega) \circ \delta_{\CE} = d_{\dR} \circ \vphi(x\tensor \omega)$.
\end{proof}

\section{Integration of Lie $n$-algebras} \label{sec:int_exact}

\subsection{From Lie $n$-algebras to Lie $\infty$-groups}
A Lie $\infty$-group is an $\infty$-group object (Def.\ \ref{def:ngpd}) 
in $(\Mfd,\covers_{\subm})$. We denote by $\LnG{\infty} \sse \LnGpd{\infty}$ the full subcategory of Lie $\infty$-groups.
Let us recall Henriques' construction of Lie $\infty$-groups from Lie
$n$-algebras of finite type. Recall that we denote by $\Omega(\Delta^n)$ the differential graded Banach algebra of ``$r$-times continuously differentiable forms'' (as defined in Sec.\ \ref{sec:MC})
on the geometric $n$-simplex. 

\begin{pdef}[Def.\ 5.2, Thm.\ 5.10 \cite{Henriques:2008}] \label{pdef:int}
Let $L \in \LnA{n}^{\ft}$ be a finite type Lie $n$-algebra. 
The assignment
\[
L \mapsto \left(\int L \right)_m:= \MC\bigl ( L \tensor \Omega(\Delta^n) \bigr)
\cong \hom_{\mathrm{cdga}} \bigl( \CE(L),\Omega(\Delta^m) \bigr)
\]
induces a functor
\begin{equation} \label{eq:int}
\sint \maps \LnA{n}^{\ft} \to \LnG{\infty} 
\end{equation}
from the category of finite type Lie $n$-algebras to the category of
Lie $\infty$-groups.
\end{pdef}

\begin{remark} \label{rmk:smoothness}
In \cite{Severa-Siran}, \v{S}evera and \v{S}ira\v{n} prove that $\sint L$ is a simplicial Banach manifold using an approach that differs from the one taken by Henriques' in his proof of 
\cite[Thm.\ 5.10]{Henriques:2008}. If $(L,\el) \in \lnaft$, then since $L$ is finite type, the Banach algebra structure on $\Omega(\Delta^m)$ naturally makes $L \tensor \Omega(\Delta^m)$ into a graded Banach space. Then Prop.\ 4.3 of \cite{Severa-Siran} implies that
$\MC\bigl ( L \tensor \Omega(\Delta^m) \bigr) \sse (L \tensor \Omega(\Delta^m)_{-1}$ is a Banach submanifold in the sense of \cite[Ch.\ I,3]{Lang:95}. From Eq.\ \ref{eq:curv},
we see that the curvature $\curv^{\Om} \maps (L \tensor \Omega(\Delta^m))_{-1} \to (L \tensor \Omega(\Delta^m))_{-2}$ is a polynomial function and hence smooth. In other words, 
%\begin{equation} \label{eq:MC_equalizer}
\[
\begin{tikzpicture}[descr/.style={fill=white,inner sep=2.5pt},baseline=(current  bounding  box.center)]
\matrix (m) [matrix of math nodes, row sep=2em,column sep=2em,
  ampersand replacement=\&]
  {  
\MC({L}\tensor \Omega(\Delta^m)) \&  ({L}\tensor \Omega(\Delta^m))_{-1} \& ({L}\tensor \Omega(\Delta^m))_{-2}\\
}; 
%Tikz Equalizer stuff: ($(m-1-1.east)+(x,y)$)
   \path[->,font=\scriptsize] 
($(m-1-2.east)+(0,0.25)$) edge node[above] {${\curv^\Om}$} ($(m-1-3.west)+(0,0.25)$) 
($(m-1-2.east)+(0,-0.25)$) edge node[below] {$0$} ($(m-1-3.west)+(0,-0.25)$) 
;   
   \path[right hook->,font=\scriptsize] 
   (m-1-1) edge node[auto] {$$} (m-1-2);
\end{tikzpicture}
%\end{equation}
\]
is an equalizer diagram in the category $\Mfd$ of Banach manifolds.  
\end{remark}

Before we proceed further, let us recall a very useful example of a Lie $\infty$-group stemming from the integration of a Lie $n$-algebra.
\begin{example} \label{ex:flatconn}
Following \cite[Example 5.5]{Henriques:2008}, let $(L,\el) \in \lnaft$ and
recall from Remark \ref{rmk:tau0} that $\tau_{\leq 0} L=H_0(L)$ is a Lie algebra.
Consider the Lie $\infty$-group $\int \tau_{\leq 0} L$. For each $m \geq 0$, we have a natural identification
%there is a natural 1-1 correspondence, as in Lemma \ref{lem:MC_CE}:
\[
\Bigl(\int \tau_{\leq 0}L  \Bigr )_m= \MC \bigl( \tau_{\leq 0}L \tensor \Omega^{\ast}(\Delta^m) \bigr)
 \cong \Bigl( \tau_{\leq 0}L  \tensor \Omega^{1}(\Delta^m) \Bigr)^\flat
% \left(\int \g \right)_m= \hom_{\mathrm{cdga}} \bigl( C^\ast(\g),\Omega^{\ast}(\Delta^m) \bigr)
% \cong \Bigl( \Omega^{1}(\Delta^m)\tensor \g \Bigr)^\flat
\]
between Maurer-Cartan elements
and flat connections on the trivial bundle $G \times \Delta^m \to \Delta^m$, where $G$ is the 1-connected Lie group integrating $\tau_{\leq 0}L$.  
There is also an identification
\begin{equation} \label{eq:flatconn}
\begin{split}
T \maps \Map(\Delta^m,G)/G &\xto{\cong} \Bigl( \tau_{\leq 0}L  \tensor \Omega^{1} \bigl(\Delta^m \bigr) \Bigr)^\flat \\
\Delta^m \xto{\Gamma} G \quad &\mapsto \quad  T(\Gamma):=\Gamma^{-1}d\Gamma
\end{split}
\end{equation}
between the set  $\Map(\Delta^m,G)/G$ of $G$-valued $C^{r+1}$ maps, modulo constants, and the set of flat connections. This identification is natural in $(L,\el)$. See Lemma \ref{lem:nat_id}.
\end{example}

\begin{remark} \label{rmk:Lie_n-groups}
Note that the integration functor \eqref{eq:int} a priori
assigns to a Lie $n$-algebra not a Lie $n$-group but a Lie
$\infty$-group. To resolve this, Henriques introduced a truncation functor 
\cite[Def.\ 3.5]{Henriques:2008} from Lie $\infty$-groups to simplical sheaves over $\Mfd$.
Furthermore, Henriques observed that, in general, there are obstructions to 
representing the output of the truncation functor as an actual Lie $n$-group. Further discussion of the truncation functor within the context of the homotopy theory of Lie $n$-groups will appear in future work. 
\end{remark}

We conclude this section with a warm-up result concerning the preservation of pullback squares by the 
integration functor \eqref{eq:int}. We address this in further detail in Thm.\ \ref{thm:int_exact}.

\begin{proposition} \label{prop:MC-pullback}
Let $f \maps (L,\el) \to (L'',\el'')$ be a fibration and $g \maps (L',\el') \to (L'',\el'')$ be a morphism in $\lnaft$, and let $(\ti{L},\ti{\el})$ be the pullback of the diagram
$(L',\el') \xto{g} (L'',\el'') \xleftarrow{f} (L'',\el'')$.
If the pullback of
\[
\MC(L' \tensor \Omega^{\ast}(\Delta^m)) \xto{g^{\Om}_{\ast}} \MC(L'' \tensor \Omega^{\ast}(\Delta^m))
\xleftarrow{f^{\Om}_{\ast}} \MC(L \tensor \Omega^{\ast}(\Delta^m))
\]
exists in the category $\Mfd$, then the induced commutative diagram
\[
\begin{tikzpicture}[descr/.style={fill=white,inner sep=2.5pt},baseline=(current  bounding  box.center)]
\matrix (m) [matrix of math nodes, row sep=2em,column sep=3em,
  ampersand replacement=\&]
  {  
\MC(\ti{L} \tensor \Omega^{\ast}(\Delta^m)) \& \MC({L} \tensor \Omega^{\ast}(\Delta^m)) \\
\MC(L' \tensor \Omega^{\ast}(\Delta^m)) \& \MC(L'' \tensor \Omega^{\ast}(\Delta^m))\\
}; 
  \path[->,font=\scriptsize] 
   (m-1-1) edge node[auto] {$$} (m-1-2)
   (m-1-1) edge node[sloped,below] {$$} (m-2-1)
   (m-1-2) edge node[auto] {$f^\Om_\ast$}  (m-2-2)
   (m-2-1) edge node[auto] {$g^\Om_\ast$} (m-2-2)
   % (m-2-1) edge node[auto,swap] {$\ppr' \vert_P$} (m-3-1)
   % (m-2-2) edge node[auto] {$F$} (m-3-2)
   % (m-3-1) edge node[auto] {$G$} (m-3-2)
  ;

% %begin pullback symbol%
  % \begin{scope}[shift=($(m-1-1)!.4!(m-2-2)$)]
  % \draw +(-0.25,0) -- +(0,0)  -- +(0,0.25);
  % \end{scope}
  %end pullback symbol%

\end{tikzpicture}
\]
is a pullback diagram in $\Mfd$.
\end{proposition} 
\begin{proof}
Remark \ref{rmk:smoothness} implies that the Maurer-Cartan set $\MC({L} \tensor \Omega^{\ast}(\Delta^m))$ is an equalizer in $\Mfd$ for any $(L,\el) \in \lnaft$. Therefore 
Assumption 6.6 in Sec.\ 6.2 of \cite{Rogers:2018} is satisfied, and
the proposition then follows from \cite[Cor.\ 6.7]{Rogers:2018}.
\end{proof}

We will use the following corollary in our proof of Thm.\ \ref{thm:int_split} in the next section.

\begin{cor} \label{cor:MC-pullback}
The integration functor $\sint \maps \LnA{n}^{\ft} \to \LnG{\infty}$ preserves products.
\end{cor}
\begin{proof}
The category $\LnG{\infty}$ has finite products and every trivial morphism $(L,\el) \to 0$ 
in $\lnaft$ is a fibration.
% Prop.\ \ref{prop:MC-pullback} immediately implies the desired result.
\end{proof}

\subsection{Integrating fibrations} \label{sec:int_fib}
We begin by analyzing the naturality of a construction used by Henriques 
in his proof of the following result:
\begin{proposition}[Thm.\ 5.10 \cite{Henriques:2008}] \label{prop:int_tower_andre}
Let $(L,\el) \in \lnaft$. The integrations of the $L_\infty$-morphisms $q_{\leq m}$ and $q_{\lt m+1}$ defined in \eqref{eq:projs1}:  
\[
\smallint q_{\leq m} \maps \sint \tleq{m}L \to \sint \tlt{m} L,
\quad \smallint q_{\lt m+1} \maps \sint \tlt{m+1}L \to \sint \tleq{m} L
\]
are fibrations between Lie $\infty$-groups.
\end{proposition} 

In what follows, the submanifold
\[
S \subseteq \R^{k+1}
\]
denotes either the geometric simplex $\Delta^k$ or a geometric horn $\Lambda^{k}_{j} \subseteq \Delta^k$.

Let us fix a Lie $n$-algebra $(L,\el) \in \lnaft$ and an integer $m\geq 1$. 
Our goal is to express elements of $\MC \bigl(\tleq{m}L \tensor \Omega(S) \bigr)$
as pairs consisting of an element of $\MC \bigl(\tlt{m}L \tensor \Omega(S) \bigr)$ and
a $H_m(L)$-valued differential form. Via the strict $L_\infty$-isomorphism
\[
\hat{q} \maps (\tleq{m} L, \tleq{m}\el) \xto{\cong} \bigl( \tlt{m}L \oplus H_m, \hat{\el} ~ \bigr)
\]
given in Prop.\ \ref{prop:tower_decomp2}, we have the identification
\[
\begin{split}
\MC \bigl(\tleq{m}L \tensor \Omega(S) \bigr) 
&\cong \MC \bigl( (\tlt{m}L \oplus H_m)\tensor \Omega(S) \bigr) \\
& \quad \subseteq \bigl(\tlt{m} L \tensor \Omega(S) \bigr)_{-1}  \oplus ~ H_m \tensor \Omega^{m+1}(S),
\end{split}
\]
where $H_m=H_m(L)$.
Let $\wht{\curv}^\Omega$, $\curv^{\Omega}_{\leq m}$ and $\curv^{\Omega}_{\lt m}$
denote the curvature functions \eqref{eq:curv} for the $L_\infty$-algebras
\[
\Bigl(\bigl(\tlt{m}L \oplus H_m \bigr) \tensor \Omega(S), \hat{\el}^{\sOm} \Bigr), \quad
\bigl(\tleq{m}L \tensor \Omega(S), \tleq{m}\el^{\sOm} \bigr), \quad
\bigl(\tlt{m}L \tensor \Omega(S), \tlt{m} \el^{\sOm} \bigr),
\]
respectively. 

Since the $L_\infty$-isomorphism $\hat{q}^{\sOm}=\hat{q}\tensor \id_{\OS}$ is strict, 
we can easily write $\wht{\curv}^\Omega$ in terms of the other two curvature functions above.
Indeed, consider a degree $-1$ element $(\sigma, \nu) \in 
\bigl(\tlt{m} L \tensor \Omega(S) \bigr)_{-1}  \oplus \: H_m \tensor \Omega^{m+1}(S) $.
We first express its curvature as the sum of degree $-2$ elements: 
${\wht{\curv}}^{\Omega}(\sigma,\nu) = \sum^m_{i=1} \wht{\curv}^{\Omega}(\sigma,\nu)_i$
where
\[
\wht{\curv}^{\Omega}(\sigma,\nu)_ i \in  \tlt{m} L_i \tensor \Omega^{i+2}(S)
\quad \text{for $i < m$}
\]
and
\[
\wht{\curv}^{\Omega}(\sigma,\nu)_m \in  \tlt{m} L_m \tensor \Omega^{m+2}(S)
~ \oplus ~ H_m \tensor \Omega^{m+2}(S).
\]
We then use formula \eqref{eq:lhat} for the $L_\infty$-structure on $\bigl(\tlt{m}L \oplus H_m \bigr)$ to obtain the equality
\[
\wht{\curv}^{\Omega}(\sigma,\nu)_i = 
\begin{cases}
\bigl( \curv^{\Omega}_{\lt m}(\sigma)_i, 0 \bigr) & \text{if $i < m$} \\
\bigl( \curv^{\Omega}_{\lt m}(\sigma)_m,  d_{\dR}\nu - \el^\Om_2(\sigma_0, \nu) - \kappa(\sigma) \bigr),
&  \text{if $i =m$.}
\end{cases}
\]
Above $\sigma_0 \in L_0 \tensor \Omega^{1}(S)$ is the component of $\sigma$ in bidegree $(0,-1)$ and $\kappa(\sigma)$ denotes the $H_m$-valued $(m+2)$-form
\begin{equation} \label{eq:tower_MCelts0}
\kappa(\sigma):=  -(\pr_{H_m} \tensor \id_{\OS}) \circ \: \hat{q}^{\sOm} \circ \curv^{\Omega}_{\leq m} \circ (\hat{q}^{\sOm})^{-1}(\sigma,0)  \in H_m \tensor \Omega^{m+2}(S),
\end{equation}
where $\pr_{H_m} \maps \tlt{m}L \oplus H_m \to H_m$ is the linear projection.

Hence,  we have the following characterization of the Maurer-Cartan elements:
\begin{multline*} %\label{eq:tower_MCelts1}
\MC \bigl(\tleq{m}L \tensor \Omega(S) \bigr) = \\
\Bigl\{ (\sigma, \nu) \in \MC \bigl(\tlt{m}L \tensor \Omega(S) \bigr) \oplus \:
H_m \tensor \Omega^{m+1}(S) ~ \vert ~ d_{\dR}\nu - \el^{\sOm}_2(\sigma_0, \nu) = \kappa(\sigma)
\Bigr \}.
\end{multline*}

Now suppose $\sigma$ is a Maurer-Cartan element  of $\tlt{m}L \tensor \Omega(S)$
and $\nu \in H_m \tensor \Omega^{m+1}(S)$ is a $H_m$-valued $(m+1)$-form. 
As observed by Henriques in his proof of \cite[Thm.\ 5.10 ]{Henriques:2008}, 
the condition 
\begin{equation} \label{eq:tower_MCelts2}
d_{\dR}\nu - \el^{\sOm}_2(\sigma_0, \nu) = \kappa(\sigma),
\end{equation}
can be rewritten by exploiting the action \eqref{eq:lie_action1} of the Lie algebra $H_0(L)$ on $H_m \tensor \Omega^{\ast}(S)$. We do this in the following way: Let $\alpha \in H_0(L) \tensor 
\Omega^{1}(S)$ denote the class represented by $\sigma_0$. Since $\curv^{\Omega}_{\lt m}(\sigma)=0$, we have $d_{\dR} \alpha - \frac{1}{2}[\alpha,\alpha]=0$, where $[ \cdot,\cdot]$ denotes the Lie bracket on $H_0(L)\tensor \OS$ induced by $\el^{\sOm}_2$. Hence, $\alpha$ is a flat $H_0(L)$-valued connection on
$S$. Furthermore, since $\el_2$ satisfies the Leibniz rule, one observes that $(\sigma,\nu)$ satisfies  Eq.\ \ref{eq:tower_MCelts2} if and only if 
\begin{equation} \label{eq:tower_MCelts3}
d_{\dR}\nu - [\alpha, \nu] = \kappa(\sigma).
\end{equation}
Let $G$ be the 1-connected Lie group integrating $H_0(L)$. Then $H_m \tensor \Omega^{\ast}(S)$ integrates to a $G$-module in the standard way. Via the identification \eqref{eq:flatconn} introduced in Example \ref{ex:flatconn}, there exists a unique function $\Gamma \maps S \to G$
such that 
\begin{equation}\label{eq:tower_MCelts4}
-\alpha = -\Gamma^{-1}d\Gamma, \quad  \Gamma(v_0)=e,
\end{equation}
where $v_0$ is an arbitrary fixed vertex of $S$. Then, as shown in \cite[Eqs.\ 30--32]{Henriques:2008}, $(\sigma, \nu)$ satisfy Eq.\ \ref{eq:tower_MCelts3} if and only if
\begin{equation} \label{eq:tower_MCelts5}
d_{\dR} (\Gamma \cdot \nu) = \Gamma \cdot \kappa(\sigma).
\end{equation}
This characterization of Maurer-Cartan elements using the $H_0(L)$ action is natural in the following sense: 
\begin{proposition} \label{prop:tower_MCelts}
Let $S=\Lambda^k_j$ or $\Delta^k$ and $m \geq 1$. Suppose $f=f_1 \maps (L, \el) \to (L',\el')$ is a strict quasi-split fibration in $\lnaft$. Let 
\begin{equation}
\label{diag:tower_MCelts}
\begin{tikzpicture}[descr/.style={fill=white,inner sep=2.5pt},baseline=(current  bounding  box.center)]
\matrix (m) [matrix of math nodes, row sep=2em,column sep=5em,
  ampersand replacement=\&]
  {  
\MC \Bigl ( \tau_{\leq m }L \tensor \Omega(S) \Bigr) \& \MC \Bigl ( (\tau_{\lt m }L \oplus H_m) \tensor \Omega(S) \Bigr)\\
\MC \Bigl (\tau_{\leq m }L' \tensor \Omega(S) \Bigr) \& \MC \Bigl ( (\tau_{\lt m }L' \oplus H_m')\tensor \Omega(S)  \Bigr)\\
}; 
\path[->,font=\scriptsize] 
(m-1-1) edge node[auto]{$\hat{q}^\Omega_\ast$} node[auto,below] {$\cong$} (m-1-2)
(m-1-1) edge node[auto,swap] {$\tleq{m} f^\Omega_*$} (m-2-1)
(m-1-2) edge node[auto] {$\tlt{m} f^\Omega_\ast \oplus H(f)\tensor \id_\Om$} (m-2-2)
(m-2-1) edge node[auto]{$\hat{q}^{\prime \Omega }_\ast$} node[below] {$\cong$} (m-2-2)
;
\end{tikzpicture}
\end{equation}
be the commutative diagram of smooth manifolds induced by diagram
\eqref{diag:tower_decomp2.1} in Prop.\ \ref{prop:tower_decomp2}.
If   $ a \in \MC \bigl(\tleq{m}L \tensor \Omega(S) \bigr)$ is a Maurer-Cartan element and we define
\[
(\sigma,\nu):=\hat{q}^{\sOm}_\ast(a), \quad (\sigma',\nu'):= \hat{q}^{\prime \sOm}_\ast \circ \tleq{m}f^\Omega_\ast(a),
\]
then we have the following equalities of $H'_m$-valued differential forms:
\[
\begin{split}
(H(f) \tensor \id_{\OS}) \bigl( \Gamma \cdot \nu \bigr) &= \Gamma' \cdot \nu'\\ 
(H(f) \tensor \id_{\OS}) \bigl( \Gamma \cdot \kappa(\sigma) \bigr) &= \Gamma' \cdot \kappa'(\sigma'),
\end{split}
\] 
where  $\Gamma' \maps S \to G'$ is the unique integration defined in Eq.\ \ref{eq:tower_MCelts4}
of the flat $H_0(L')$-valued connection induced by $\sigma'_0 \in L'_0 \tensor \Omega^{1}(S)$, and $\kappa'(\sigma')$ is the differential form defined via Eq.\ \ref{eq:tower_MCelts0}.

\end{proposition}

\begin{proof}
Since $f=f_1$ is a strict $L_\infty$-morphism, 
it follows from the commutativity of diagram
\eqref{diag:tower_MCelts} and the formula for $\tlt{m}f^\Omega_\ast$ (Eq.\ \ref{eq:strict_Fstar}) that: 
\[
(\sigma',\nu') = \Bigl( (\tlt{m}f \tensor \id_{\OS})(\sigma), (H(f) \tensor \id_{\OS}) (\nu) \Bigr).
\]
In particular, $\sigma'_0 = (f\tensor \id_{\OS})(\sigma_0)$.
If $\Gamma \maps S \to G$ is the unique integration \eqref{eq:tower_MCelts4}
of the flat $H_0(L)$-valued connection induced by $\sigma_0$, then, by uniqueness, $\Gamma' = \Phi \circ \Gamma$, where $\Phi \maps G \to G'$ is the Lie group homomorphism integrating $H(f) \maps H_0(L) \to H_0(L')$. The linear map $H(f) \tensor \id_{\OS} \maps H_m \tensor \Omega(S) \to H'_m \tensor \Omega(S)$ intertwines the Lie algebra actions, since $f$ is a strict $L_\infty$-morphism. Hence, it also intertwines the Lie group actions:
\[
(H(f) \tensor \id_{\OS}) \bigl( \Gamma \cdot \nu \bigr)
%= (H(f_1) \tensor \id) \bigl( \Gamma \cdot \nu \bigr)
=(\Phi \circ \Gamma) \cdot \nu' = \Gamma' \cdot \nu'.
\]
Similarly, we have
\begin{equation} \label{eq:tower_MCelts6}
(H(f) \tensor \id_{\OS}) \bigl( \Gamma \cdot \kappa(\sigma) \bigr) = 
%(H(f) \tensor \id_{\OS}) \bigl( \Gamma \cdot \kappa(\sigma) \bigr) =
\Gamma' \cdot 
(H(f) \tensor \id_{\OS}) \bigl(\kappa(\sigma) \bigr).
\end{equation}
It follows from the definition of $\kappa(\sigma)$ in Eq.\ \ref{eq:tower_MCelts0} that:
\[
\bigl (H(f) \tensor \id_{\OS} \bigr) \bigl(\kappa(\sigma) \bigr)  =
\bigl ((H(f)\pr_{H_m}\hat{q} \bigr) \tensor \id_{\OS}  \circ \curv^{\Omega}_{\leq m} \circ (\hat{q}^{-1}\tensor \id_{\OS})(\sigma,0).  
\]
The commutative diagram \eqref{diag:tower_decomp2.1} implies that 
$H(f) \circ \pr_{H_m}\hat{q}= \pr_{H'_m}\hat{q}^{\prime} \circ \tleq{m}f$. Since $\tleq{m}f$ is a strict $L_\infty$-morphism, we have 
\[
(\tleq{m}f \tensor \id_{\OS})\circ \curv^\Omega_{\leq m} = 
\curv^{\prime \sOm}_{\leq m} \circ (\tleq{m}f \tensor \id_{\OS})
\]
where $\curv^{\prime \sOm}_{\leq m}$ is the curvature function for the $L_\infty$-algebra $\tleq{m}L' \tensor \Omega(S)$. Hence, we obtain the following equalities:
\[
\begin{split}
(H(f) \tensor \id_{\OS}) \bigl(\kappa(\sigma) \bigr)  & = 
(\pr_{H'_m}\hat{q}^{\prime}  \tensor \id_{\OS}) \curv^{\prime \Omega}_{\leq m} \circ (\tleq{m}f \tensor \id_{\OS})
(\hat{q}^{-1}\tensor \id_{\OS})(\sigma,0)  \\
&= (\pr_{H'_m}\hat{q}^{\prime}  \tensor \id_{\OS}) \curv^{\prime \Omega}_{\leq m} 
\circ (\hat{q}^{\prime -1}\tensor \id_{\OS})\circ \\
& \qquad \bigl( (\tlt{m}f \oplus H(f)) \tensor \id_{\OS} \bigr) (\sigma,0)  \\
&= (\pr_{H'_m}\hat{q}^{\prime}  \tensor \id_{\OS}) \curv^{\prime \Omega}_{\leq m} 
\circ (\hat{q}^{\prime -1}\tensor \id_{\OS})(\sigma',0)  \\
&=\kappa'(\sigma').
\end{split}
\]
By combining this last equality with Eq.\ \ref{eq:tower_MCelts6}, we conclude that
\[
(H(f) \tensor \id_{\OS}) \bigl( \Gamma \cdot \kappa(\sigma) \bigr) = \Gamma' \cdot \kappa'(\sigma').
\]
\end{proof}

%\ccomment{changes of $\oplus$ to $\times$ in statement below}
\begin{cor} \label{cor:tower_MCelts}
Let $m \geq 1$ and suppose $f=f_1 \maps (L, \el) \to (L',\el')$ is a strict quasi-split fibration as in Prop.\ \ref{prop:tower_MCelts}. Let
\[
\begin{split}
X(S)&:= \Bigl\{ (\sigma, \rho) \in \MC \bigl(\tlt{m}L \tensor \Omega(S) \bigr) \times 
\bigl(H_m \tensor \Omega^{m+1}(S) \bigr) ~ \vert ~ d_{\dR}\rho =\gamma(\sigma)
\Bigr \} \\
X'(S)&:= \Bigl\{ (\sigma', \rho') \in \MC \bigl(\tlt{m}L' \tensor \Omega(S) \bigr) \times 
\bigl(H'_m \tensor \Omega^{m+1}(S) \bigr) ~ \vert ~ d_{\dR}\rho' =\gamma'(\sigma')
\Bigr \},
\end{split}
\]
where $\gamma(\sigma)$ and $\gamma'(\sigma')$ denote the $(m+2)$-forms $\Gamma \cdot \kappa(\sigma)$ and $\Gamma' \cdot \kappa'(\sigma')$, respectively, as in Prop.\ \ref{prop:tower_MCelts}.
Then the following diagram of smooth manifolds commutes:
\begin{equation}
\label{diag:tower_MCelts2}
\begin{tikzpicture}[descr/.style={fill=white,inner sep=2.5pt},baseline=(current  bounding  box.center)]
\matrix (m) [matrix of math nodes, row sep=2em,column sep=8em,
  ampersand replacement=\&]
  {  
\MC \Bigl ( (\tau_{\lt m }L \oplus H_m) \tensor \Omega(S) \Bigr) \& X(S) \\
\MC \Bigl ( (\tau_{\lt m }L' \oplus H_m')\tensor \Omega(S)  \Bigr) \& X'(S)\\
}; 
\path[->,font=\scriptsize] 
(m-1-1) edge node[auto]{$(\id_{\tlt{m}L \tensor \Om(S)}, \Gamma \cdot -)$} node[auto,below] {$\cong$} (m-1-2)
(m-1-1) edge node[auto,swap] {$\tlt{m} f^\Omega_\ast \oplus (H(f)\tensor \id_\Om)$} (m-2-1)
(m-1-2) edge node[auto] {$\tlt{m}f \tensor \id_\Om \, \times \, H(f)\tensor \id_\Om$} (m-2-2)
(m-2-1) edge node[auto]{$(\id_{\tlt{m}L' \tensor \Om(S)}, \Gamma' \cdot -)$} node[below] {$\cong$} (m-2-2)
;
\end{tikzpicture}
\end{equation}
\end{cor}

\begin{proof}
Given a finite-dimensional $G$-module $H$ and a smooth map $g \maps S \to G$, the linear map
\[
H \tensor \Omega(S) \xto{g\cdot -} H \tensor \Omega(S), \quad
\nu \mapsto g \cdot \nu
\]
is a smooth isomorphism between Banach spaces.
The rest of the corollary then follows from Prop.\ \ref{prop:tower_MCelts} and Eq.\ \ref{eq:tower_MCelts5}.
\end{proof}

We now prove the main result of this section.
\begin{theorem} \label{thm:int_split}
Let $f \maps (L,\el) \to (L',\el')$ be a quasi-split fibration in $\lnaft$.
Then the simplicial map
\[
\smallint f  \maps \sint L \to \sint L'
\]
is a fibration between Lie $\infty$-groups.
\end{theorem}

\begin{proof}
Corollary \ref{cor:strict_fib} implies that we can factor any quasi-split fibration into an $L_\infty$-isomorphism, followed by a strict quasi-split fibration. Therefore, in order to prove the theorem, we only need to consider the case when $f$ is strict.

Let $f=f_1 \maps (L,\el) \to (L',\el')$ be a strict quasi-split fibration. Let $k >0$ and $0 \leq j \leq k$. It is straightforward to show that $\sint f$ satisfies the Kan condition $\Kan(k,j)$ if and only if, for $m \geq k+1$, the integration of the truncated $L_\infty$-morphism
\[
\sint \tleq{m} f \maps \sint (\tleq{m}L, \tleq{m}\el) \to  \sint(\tleq{m}L', \tleq{m}\el'), 
\]
satisfies $\Kan(k,j)$. Hence, it suffices to show that every vertical map appearing in the morphism of Postnikov towers \eqref{diag:tower} integrates to a fibration between Lie $\infty$-groups.

We proceed by induction on the truncation level $m$. For the base case, we need to verify that
$\sint \tleq{0}L \to \sint \tleq{0}L'$ is a fibration. By definition of the truncation functor $\tleq{0}$, this is equivalent to verifying that Lie algebra morphism $H(f) \maps H_0(L) \to H_0(L')$ integrates to a fibration between Lie $\infty$-groups of the type discussed in Example \ref{eq:flatconn}. % \ccomment{changed next sentence to emphasize new def of quasi-split}
The hypothesis on $H_0(f)$ implies that  $H_0(L)$ is the trivial extension of $H_0(L')$ by $\ker H_0(f)$. Therefore, we have the following commutative diagram of Lie algebras
\[
\begin{tikzpicture}[descr/.style={fill=white,inner sep=2.5pt},baseline=(current  bounding  box.center)]
\matrix (m) [matrix of math nodes, row sep=1em,column sep=1em,
  ampersand replacement=\&]
  {  
H_0(L)  \& \& \ker f \oplus H_0(L')   \\
 \&  H_0(L') \&   \\
};
\path[->,font=\scriptsize] 
(m-1-1) edge node[auto] {$\cong$} (m-1-3)
(m-1-1) edge node[auto,swap] {$H(f)$} (m-2-2)
(m-1-3) edge node[auto] {$\pr_{H_0(L')}$} (m-2-2)
;
\end{tikzpicture}
\]
Corollary \ref{cor:MC-pullback} implies that the integration functor preserves products.
Since $\ker f \oplus H_0(L')$ is a product of Lie algebras, we conclude that    
$\sint \pr_{H_0(L')} = \pr_{\sint H_0(L')}$ is a fibration, and hence, $\sint H(f)$ is a fibration as well. 
This completes the base case.

The induction step involves two cases.\\ 

\underline{Case 1:} First, let $m \geq 0$ and 
consider the following commutative diagram in $\lnaft$:
\[
\begin{tikzpicture}[descr/.style={fill=white,inner sep=2.5pt},baseline=(current  bounding  box.center)]
\matrix (m) [matrix of math nodes, row sep=2em,column sep=2em,
  ampersand replacement=\&]
  {  
\tau_{< m +1 }L \& \tau_{\leq m }L\\
\tau_{< m +1 }L' \& \tau_{\leq m }L'\\
}; 
\path[->,font=\scriptsize] 
(m-1-1) edge node[auto] {$q_{< m+1}$} (m-1-2)
(m-1-1) edge node[auto,swap] {$\tlt{m+1}f$} (m-2-1)
(m-1-2) edge node[auto] {$\tleq{m}f$} (m-2-2)
(m-2-1) edge node[auto] {$q'_{< m+1}$} (m-2-2)
;
\end{tikzpicture}
\]
Suppose that the morphism $\sint \tleq{m}f$ is a fibration in $\LnG{\infty}$. We will show that $\sint \tlt{m+1}f$ is also a fibration. Since $f$ is a strict quasi-split fibration, Prop.\ \ref{prop:tower_decomp1} implies that the above diagram in $\lnaft$ decomposes into  
\begin{equation} \label{diag:int_split1}
\begin{tikzpicture}[descr/.style={fill=white,inner sep=2.5pt},baseline=(current  bounding  box.center)]
\matrix (m) [matrix of math nodes, row sep=2em,column sep=5em,
  ampersand replacement=\&]
  {  
\tau_{< m +1 }L \& \tau_{\leq m }L \oplus \ker q_{<m+1}\\
\tau_{< m +1 }L' \& \tau_{\leq m }L' \oplus \ker q'_{<m+1}\\
}; 
\path[->,font=\scriptsize] 
(m-1-1) edge node[auto]{$\bigl(q_{< m+1},r \bigr)$} node[auto,below] {$\cong$} (m-1-2)
(m-1-1) edge node[auto,swap] {$\tau_{< m +1 } f$} (m-2-1)
(m-1-2) edge node[auto] {$\tau_{\leq m} f \oplus \tau_{< m +1 }f \vert_{\ker}$} (m-2-2)
(m-2-1) edge node[auto]{$\bigl(q'_{< m+1},r' \bigr)$} node[below] {$\cong$} (m-2-2)
;
\end{tikzpicture}
\end{equation}
Recall from \eqref{eq:ker-qm+1} that 
$\ker q_{<m+1} = \im d_{m+1}[-1] \oplus \im d_{m+1}$ and $\ker q'_{<m+1}=\im d'_{m+1}[-1] \oplus \im d'_{m+1}$ are abelian Lie $n$-algebras with isomorphisms as differentials.
The $L_\infty$-morphism $f$ is surjective in all degrees, and hence $f \vert_{\im d_{m+1}}$ is surjective as well.
We obtain the following isomorphisms directly from the definition of the integration functor:
\[
\sint \ker q_{<m+1} \cong \im d_{m+1} \tensor \Omega^{m+1}(\Delta^\bullet), \quad \sint \ker q'_{<m+1} \cong \im d'_{m+1} \tensor \Omega^{m+1}(\Delta^\bullet).
\]
Hence, $\sint \tau_{< m +1 }f \vert_{\ker} = f \vert_{\im d_{m+1}} \tensor \id_{\Omega}$ is a surjective linear map between simplicial Banach spaces. Therefore from Lemma 5.9 in \cite{Henriques:2008} we deduce that
$\sint \tau_{< m +1 }f \vert_{\ker}$ is a fibration between Lie $\infty$-groups. This observation, when combined with the induction hypothesis and Cor.\ \ref{cor:MC-pullback}, implies that 
\[
\sint \tau_{\leq m} f \times \sint \tau_{< m +1 }f \vert_{\ker} = \sint (\tau_{\leq m} f \oplus \tau_{< m +1 }f \vert_{\ker})
\]   
is also a fibration. It then follows from the commutativity of diagram \eqref{diag:int_split1} that 
$\sint \tlt{m+1}f$ is also a fibration.\\

\underline{Case 2:} 
Now let $m \geq 1$ and consider the commutative diagram
\begin{equation} \label{diag:int_split2}
\begin{tikzpicture}[descr/.style={fill=white,inner sep=2.5pt},baseline=(current  bounding  box.center)]
\matrix (m) [matrix of math nodes, row sep=2em,column sep=2em,
  ampersand replacement=\&]
  {  
\tau_{\leq m }L \& \tau_{\lt m }L\\
\tau_{\leq m }L' \& \tau_{\lt m }L'\\
}; 
\path[->,font=\scriptsize] 
(m-1-1) edge node[auto] {$q_{\leq m}$} (m-1-2)
(m-1-1) edge node[auto,swap] {$\tleq{m}f$} (m-2-1)
(m-1-2) edge node[auto] {$\tlt{m}f$} (m-2-2)
(m-2-1) edge node[auto] {$q'_{\leq m}$} (m-2-2)
;
\end{tikzpicture}
\end{equation}
in $\lnaft$. Suppose that the morphism $\sint \tlt{m}f$ is a fibration in $\LnG{\infty}$. We will show that $\sint \tleq{m}f$ is also a fibration. Proposition \ref{prop:tower_decomp2} 
gives us the following diagram of strict $L_\infty$-morphisms
\begin{equation} \label{diag:int_split2a}
\begin{tikzpicture}[descr/.style={fill=white,inner sep=2.5pt},baseline=(current  bounding  box.center)]
\matrix (m) [matrix of math nodes, row sep=2em,column sep=2em,
  ampersand replacement=\&]
  {  
( \tau_{\leq m }L, \tleq{m} \el) \& (\tau_{\lt m }L \oplus H_m, \hat{\el})\\
(\tau_{\leq m }L', \tleq{m} \el') \& (\tau_{\lt m }L' \oplus H_m', \hat{\el}^{\prime})\\
}; 
\path[->,font=\scriptsize] 
(m-1-1) edge node[auto]{$\hat{q}$} node[auto,below] {$\cong$} (m-1-2)
(m-1-1) edge node[auto,swap] {$\tleq{m} f$} (m-2-1)
(m-1-2) edge node[auto] {$\tlt{m} f \oplus H(f)$} (m-2-2)
(m-2-1) edge node[auto]{$\hat{q}'$} node[below] {$\cong$} (m-2-2)
;
\end{tikzpicture}
\end{equation}

% Recall from Remark \ref{rmk:tower_decomp2} that the $L_\infty$-structure on the direct sum $\tau_{\lt m }L \oplus H_m$ of graded vector spaces is not the product structure. However, 
It follows from the definitions of $\hat{q}$ and $\hat{\el}$ provided in Remarks \ref{rmk:tower_decomp1} and \ref{rmk:tower_decomp2}, respectively, that the linear projections
\begin{equation} \label{eq:int_split1}
\begin{split}
\pr_{\tlt{m}L} &= q_{\leq m} \circ \hat{q}^{-1} \maps (\tau_{\lt m }L \oplus H_m, \hat{\el})  \to 
(\tau_{\lt m }L,\tau_{\lt m } \el) \\
\pr_{\tlt{m}L'} &= q'_{\leq m} \circ \hat{q}^{\prime -1} \maps 
(\tau_{\lt m }L' \oplus H'_m, \hat{\el'})  \to (\tau_{\lt m }L',\tau_{\lt m } \el')
\end{split}
\end{equation}
are strict $L_\infty$-morphisms.

We will now show that $\sint \tlt{m} f \oplus H(f)$ is a fibration by 
applying results based on the discussion following Prop.\ \ref{prop:int_tower_andre} above.
Let $k >0$ and $0 \leq j \leq k$. For the sake of brevity, let
\begin{align*} 
X&:= \sint \tau_{\lt m }L \oplus H_m,  &X'&:= \sint \tau_{\lt m }L'
                                            \oplus H'_m \\ 
Y&:= \sint \tau_{\lt m }L,   &Y'&:= \sint \tau_{\lt m }L'.
\end{align*}
Via Cor. \ref{cor:tower_MCelts}, we tacitly make the following identifications:
\begin{equation*}
\begin{split}
X_k &= \Bigl\{ (\sigma, \rho) \in Y_k \times 
\bigl(H_m \tensor \Omega^{m+1}(\Delta^k) \bigr) ~ \vert ~ d_{\dR}\rho =\gamma(\sigma)
\Bigr \} \\
X(\Lambda^k_j)&= \Bigl\{ (\eta, \mu) \in Y(\Lambda^k_j)  \times \bigl(H_m \tensor \Omega^{m+1}(\Lambda^k_j) \bigr) ~ \vert ~ d_{\dR}\mu =\gamma(\eta)
\Bigr \},
\end{split}
\end{equation*}
and the analogous identifications for $X'_k$ and $X'(\Lambda^k_j)$. 
Furthermore, in what follows, we adopt the notation used in the commutative diagram \eqref{diag:tower_MCelts2} in Cor. \ref{cor:tower_MCelts}. For example,  we identify $\tlt{m} f^{\Om}_\ast \oplus H(f)\tensor \id_\Omega$
with the map
\begin{equation} \label{eq:map_rewrite}
\tlt{m}f \tensor \id_\Om \, \times \, H(f)\tensor \id_\Om \maps X_k \to X'_k.
\end{equation}
Let $\jmath \maps  \Horn{k}{j} \hookrightarrow \Delta^k$ denote the inclusion. 
We wish to show that the map
\[
\bigl(\tlt{m}f \tensor \id_\Om \, \times \, H(f)\tensor \id_\Om  ~ ,~ \jmath^\ast \bigr) \maps
X_k \to X'_k \times_{X'(\Horn{k}{j})} X(\Horn{k}{j})
\]
is a cover, i.e.\ a surjective submersion. Note that the pullback on the right hand side above is indeed a manifold since the projection $X'_k \to X'(\Horn{k}{j})$ is a cover. By hypothesis, $\sint \tlt{m}f$ is a fibration hence the usual map
\[
Y_k \to Y'_k \times_{Y'(\Horn{k}{j})} Y(\Horn{k}{j})
\]  
is a cover. The integrations of the  $L_\infty$-morphisms $\pr_{\tlt{m}L}$ and $\pr_{\tlt{m}L'}$ from Eq.\ \ref{eq:int_split1} induce a map 
$X'_k \times_{X'(\Horn{k}{j})} X(\Horn{k}{j}) \to Y'_k \times_{Y'(\Horn{k}{j})} Y(\Horn{k}{j})$.
Let $Z_1$ denote the following pullback
\begin{equation} \label{diag:int_split2b}
\begin{tikzpicture}[descr/.style={fill=white,inner sep=2.5pt},baseline=(current  bounding  box.center)]
\matrix (m) [matrix of math nodes, row sep=2em,column sep=2em,
  ampersand replacement=\&]
  {  
Z_1  \& Y_k \\
X'_k \times_{X'(\Horn{k}{j})} X(\Horn{k}{j}) \& Y'_k \times_{Y'(\Horn{k}{j})} Y(\Horn{k}{j}) \\
}; 
  \path[->,font=\scriptsize] 
   (m-1-1) edge node[auto] {$$} (m-1-2)
   (m-1-1) edge node[auto,swap] {$\pi^{Z_1}$} (m-2-1)
   (m-1-2) edge node[auto] {$$} (m-2-2)
   (m-2-1) edge node[auto] {$$} (m-2-2)
  ;

%begin pullback symbol%
  \begin{scope}[shift=($(m-1-1)!.25!(m-2-2)$)]
  \draw +(-0.25,0) -- +(0,0)  -- +(0,0.25);
  \end{scope}
  %end pullback symbol%
\end{tikzpicture}
\end{equation}
Since the vertical map on the right hand side above is a cover, so is
$\pi^{Z_1}$. Hence, $Z_1$ is a manifold. 
% Applying \eqref{eq:xk-xkj}
% and \eqref{eq:xx'} \eqref{eq:yy'},
%we notice that an element of $Z_1$
Note that an element of $Z_1$
\[
\bigl( (\sigma', \rho'), (\eta, \mu), \theta \bigr) \in Z_1
\]
consists of: 
\begin{itemize}
\item a pair $(\sigma', \rho') \in \MC \bigl(\tlt{m}L' \tensor \Omega(\Delta^k) \bigr) \times 
\bigl(H'_m \tensor \Omega^{m+1}(\Delta^k) \bigr)$, 
\item a pair $(\eta, \mu) \in \MC \bigl(\tlt{m}L \tensor \Omega(\Horn{k}{j}) \bigr) \times 
\bigl(H_m \tensor \Omega^{m+1}(\Horn{k}{j}) \bigr)$, and 
\item a Maurer-Cartan element $\theta \in \MC \bigl(\tlt{m}L \tensor \Omega(\Delta^k) \bigr)$ 
\end{itemize}
such that the following equations hold:
% \begin{equation} \label{eq:int_split2}
% \begin{split}
% d_{\dR}\mu  = \gamma(\eta), &\quad  d_{\dR}\rho' = \gamma'(\sigma')\\
% \jmath^\ast \sigma' = (\tlt{m} f \tensor \id)(\eta), & \quad \jmath^\ast \rho' = (H(f) \tensor \id)(\mu)\\ 
% (\tlt{m} f \tensor \id)(\theta) = \sigma', & \quad \jmath^\ast \theta = \eta.
% \end{split}
% \end{equation}
\begin{align} \label{eq:int_split2a}
&d_{\dR}\mu  = \gamma(\eta) &  &d_{\dR}\rho' = \gamma'(\sigma')\\ \label{eq:int_split2b}
&\jmath^\ast \sigma' = (\tlt{m} f \tensor \id_\Omega)(\eta) &  &\jmath^\ast \rho' = (H(f) \tensor \id_\Omega)(\mu)\\ \label{eq:int_split2c}
&(\tlt{m} f \tensor \id_\Omega)(\theta) = \sigma' &  &\jmath^\ast \theta = \eta.
\end{align}

Next, we recall that Prop.\ \ref{prop:int_tower_andre} implies that the integration of the $L_\infty$-morphism $q_{\leq m}$ in diagram \eqref{diag:int_split2} is a fibration. It then follows from Eq.\ \ref{eq:int_split1} that 
\[
\sint \pr_{\tlt{m} L} \maps X \to Y
\] 
is also a fibration, and hence the induced map
\[
X_k \to Y_k \times_{Y(\Horn{k}{j})} X(\Horn{k}{j})
\] 
is a cover. Equation \ref{eq:int_split2c} implies that the projection $Z_1 \to X(\Horn{k}{j})$ and the vertical
map $Z_1  \to Y_k$ in \eqref{diag:int_split2b} 
%composition of the projections $Z_1 \to X_k \xto{\pr_Y} Y_k$ 
induce a map $Z_1 \to Y_k \times_{Y(\Horn{k}{j})} X(\Horn{k}{j})$.
Let $Z_2$ denote the following pullback
\begin{equation} \label{diag:int_split2c}
\begin{tikzpicture}[descr/.style={fill=white,inner sep=2.5pt},baseline=(current  bounding  box.center)]
\matrix (m) [matrix of math nodes, row sep=2em,column sep=2em,
  ampersand replacement=\&]
  {  
Z_2  \& X_k \\
Z_1 \& Y_k \times_{Y(\Horn{k}{j})} X(\Horn{k}{j}). \\
}; 
  \path[->,font=\scriptsize] 
   (m-1-1) edge node[auto] {$$} (m-1-2)
   (m-1-1) edge node[auto,swap] {$\pi^{Z_2}$} (m-2-1)
   (m-1-2) edge node[auto] {$$} (m-2-2)
   (m-2-1) edge node[auto] {$$} (m-2-2)
  ;

%begin pullback symbol%
  \begin{scope}[shift=($(m-1-1)!.25!(m-2-2)$)]
  \draw +(-0.25,0) -- +(0,0)  -- +(0,0.25);
  \end{scope}
  %end pullback symbol%
\end{tikzpicture}
\end{equation} 
As in the previous case, it follows that $\pi^{Z_2}$ is a cover. 
% Applying \eqref{eq:xk-xkj}
% and \eqref{eq:xx'} \eqref{eq:yy'},
An element of $Z_2$
\[
\bigl( (\sigma', \rho'), (\eta, \mu), (\theta, \theta_H) \bigr) \in Z_2
\]
consists of an element $\bigl( (\sigma', \rho'), (\eta, \mu), \theta \bigr) \in Z_1$
and  a $(m+1)$-form $\theta_H \in H_m \tensor \Omega^{m+1}(\Delta^k)$ such that
the following equations hold:
\begin{equation} \label{eq:int_split3}
d_{\dR} \theta_H = \gamma(\theta), \quad \jmath^\ast \theta_H = \mu.
\end{equation} 
In general, we might not have $(H(f) \tensor \id_\Om)(\theta_H) = \rho'$. 
However, it follows from the definition of the map \eqref{eq:map_rewrite} 
that
% the commutativity of the diagram in Cor.\ \ref{cor:tower_MCelts} implies that
\[
d_{\dR} (H(f) \tensor \id_\Om)(\theta_H) = \gamma' \bigl( (\tlt{m} f \tensor \id_\Om) \theta \bigr).
\]
Combining this with Eqs.\ \ref{eq:int_split2a}--\ref{eq:int_split2c} and Eq.\ \ref{eq:int_split3}, we deduce that
\[
\begin{split}
% d_{\dR} (\tlt{m} f \tensor \id_\Om)(\theta_H) &= \gamma' \bigl( (\tlt{m} f \tensor \id_\Om) \theta \bigr)\\
d_{\dR} (H(f) \tensor \id_\Om)(\theta_H) %&= \gamma' \bigl( (\tlt{m} f \tensor \id_\Om) \theta \bigr)\\
= \gamma'(\sigma')=d_{\dR}\rho'.
\end{split}
\]
Hence, we have a smooth map
\[
\phi \maps Z_2 \to H'_m \tensor \Omega^{m+1}_{\cl}(\Delta^k), 
\quad \phi\bigl( (\sigma', \rho'), (\eta, \mu), (\theta, \theta_H) \bigr) := \rho' -
(H(f) \tensor \id_\Omega)(\theta_H),
\]
where $\Omega^{m+1}_{\cl}(\Delta^k)$ denotes the Banach space of closed $(m+1)$-forms on $\Delta^k$. 

By hypothesis, $H(f) \maps H_m \to H'_m$ is surjective. Since $H_m$ and $H'_m$ are finite dimensional,
it follows from Lemma 5.9 in \cite{Henriques:2008} that 
\[
H(f) \tensor \id_\Om \maps H_m \tensor \Omega_{\cl}(\Delta^\bullet) \to  H'_m \tensor \Omega_{\cl}(\Delta^\bullet)
\]
is a fibration between simplicial Banach spaces. Hence, the obvious map
\begin{equation} \label{eq:int_split3.1}
H_m \tensor \Omega_{\cl}(\Delta^k) \to H'_m \tensor \Omega_{\cl}(\Delta^k) \times_{H'_m \tensor \Omega_{\cl}(\Horn{k}{j}) }
H_m \tensor \Omega_{\cl}(\Horn{k}{j})
\end{equation}
is a cover. Furthermore, since $\jmath^\ast (H(f) \tensor \id_\Om)(\theta_H) - \jmath^\ast \rho' =0$, 
the map $\phi \maps Z_2 \to H'_m \tensor \Omega^{m+1}_{\cl}(\Delta^k)$ together with the  constant
function $Z_2 \xto{0} H'_m \tensor \Omega^{m+1}_{\cl}(\Horn{k}{j})$ induce a map from $Z_2$ into the pullback appearing in \eqref{eq:int_split3.1}.

Finally, let $Z_3$ denote the following pullback
\begin{equation} \label{diag:int_split3}
\begin{tikzpicture}[descr/.style={fill=white,inner sep=2.5pt},baseline=(current  bounding  box.center)]
\matrix (m) [matrix of math nodes, row sep=2em,column sep=2em,
  ampersand replacement=\&]
  {  
Z_3  \& H_m \tensor \Omega_{\cl}(\Delta^k)  \\
Z_2 \& H'_m \tensor \Omega_{\cl}(\Delta^k) \times_{H'_m \tensor \Omega_{\cl}(\Horn{k}{j}) }
H_m \tensor \Omega_{\cl}(\Horn{k}{j}) \\
}; 
  \path[->,font=\scriptsize] 
   (m-1-1) edge node[auto] {$$} (m-1-2)
   (m-1-1) edge node[auto,swap] {$\pi^{Z_3}$} (m-2-1)
   (m-1-2) edge node[auto] {$$} (m-2-2)
   (m-2-1) edge node[auto] {$$} (m-2-2)
  ;

%begin pullback symbol%
  \begin{scope}[shift=($(m-1-1)!.25!(m-2-2)$)]
  \draw +(-0.25,0) -- +(0,0)  -- +(0,0.25);
  \end{scope}
  %end pullback symbol%
\end{tikzpicture}
\end{equation} 
It follows that $Z_3$ is a manifold and that $\pi^{Z_3}$ is a cover. An element of $Z_3$
\[
\bigl( (\sigma', \rho'), (\eta, \mu), (\theta, \theta_H), \rho \bigr) \in Z_3
\]
consists of an element $\bigl( (\sigma', \rho'), (\eta, \mu), (\theta,\theta_H) \bigr) \in Z_2$
and a closed $(m+1)$-form $\rho \in H_m \tensor \Omega_{\cl}^{m+1}(\Delta^k)$ such that
%\begin{equation} \label{eq:int_split4}
\[
\bigl(H(f) \tensor \id_\Om \bigr)(\rho) =  \rho' - (H(f) \tensor \id_\Om)(\theta_H), \quad \jmath^\ast \rho = 0.
\]
%\end{equation}
Thus we have a smooth map $\chi \maps Z_3 \to X_k$:
\[
\chi\bigl( (\sigma', \rho'), (\eta, \mu), (\theta, \theta_H), \rho \bigr):= 
(\theta, \theta_H + \rho)
\]
that fits into the following commutative diagram:
\[
\begin{tikzpicture}[descr/.style={fill=white,inner sep=2.5pt},baseline=(current  bounding  box.center)]
\matrix (m) [matrix of math nodes, row sep=2em,column sep=2em,
  ampersand replacement=\&]
  {  
Z_3 \& Z_2 \& Z_1 \& X'_k \times_{X'(\Horn{k}{j})} X(\Horn{k}{j}) \\
X_k \&     \&     \& \\
}; 
  \path[->,font=\scriptsize] 
   (m-1-1) edge node[auto] {$\pi^{Z_{3}}$} (m-1-2)
   (m-1-2) edge node[auto] {$\pi^{Z_2}$} (m-1-3)
   (m-1-3) edge node[auto] {$\pi^{Z_1}$} (m-1-4)
   (m-1-1) edge node[auto,swap] {$\chi$} (m-2-1)
   (m-2-1) edge node[auto,swap] {$\bigl( \tlt{m}f \tensor \id_\Om \, \times \, H(f)\tensor \id_\Om ~ ,~ \jmath^\ast \bigr)$} (m-1-4)
  ;
\end{tikzpicture}
\]

Note that $\chi$ is surjective. Indeed, if $(\theta, \theta_H) \in X_k$ then set:
\[
\begin{split}
(\sigma',\rho') &= \bigl( \tlt{m}f \tensor \id_\Om \, \times \, H(f)\tensor \id_\Om \bigr)(\theta, \theta_H),\\
(\eta, \mu) &= \jmath^\ast(\theta, \theta_H),\\
\rho &=0,
\end{split}
\]
in order to obtain a pre-image of
$(\theta, \theta_H)$. Furthermore, the composition 
\[
\bigl( \tlt{m}f \tensor \id_\Om \, \times \, H(f)\tensor \id_\Om ~ ,~ \jmath^\ast \bigr) \circ \chi
\]
is a surjective submersion, by construction. It then follows from
Lemma \ref{lem:surj_sub_2of3} that 
$\bigl( \tlt{m}f \tensor \id_\Om \, \times \, H(f)\tensor \id_\Om ~ ,~ \jmath^\ast \bigr)$
is a surjective submersion. This concludes the proof of the theorem.
\end{proof}

%\subsection{Remarks on integrating fibrations} 
% Even though the iCFO structure on $\lnaft$ originates from the projective model structure on $\Ch_{\geq 0}(\R)$, it seems unreasonable to expect that the integration functor would preserve all fibrations, even in the abelian case. If $(L,\el_1)$ is a chain complex, then $\int L$ is obvious much ``larger'' than the simplicial vector space $\mathrm{DK}(L)$ produced by the Dold-Kan construction.      
% Getzler \cite{Ezra-infty} showed, however, that $\mathrm{DK}(L)$ embeds into $\int L$, and that this embedding is a homotopy equivalence. 

\begin{remark} \label{rmk:fib_not_int}
Not every fibration in $\lnaft$ integrates to a fibration of Lie $\infty$-groups.
If $f \maps \h \to \g$ is a Lie algebra morphism, then $f$ is a fibration between Lie 1-algebras, since it is trivially surjective in all positive degrees. 
Since $(\int \h)_{0} = (\int \g)_{0} = \pt$, the integrated map
$\sint f \maps \int \h \to \int \g$ satisfies the Kan condition $\Kan(1,1)$ if and only if 
$(\sint f)_1 \maps (\int \h)_{1} \to (\int \g)_{1}$ is a surjective submersion.
Let $H$ and $G$ be the 1-connected Lie groups integrating $\h$ and $\g$, respectively.
From Example \ref{ex:flatconn}, it follows that $(\sint f)_1$ is a surjective submersion 
if and only if the induced map  $f^I \maps P_eH \to P_eG$ between based path spaces is a surjective submersion. So if, for example, $f \maps \h \to \g$ is the inclusion of a proper Lie subalgebra $\h$ into $\g$, then $\sint f$ is not a fibration.

On the other hand, we presently do not have an example which demonstrates that the identification $H_0(L) \cong \ker H(f) \oplus H_0(L')$ is necessary in order for $f$ to be an integrable fibration. Although we do use this property of quasi-split fibrations 
when we establish the base case in the proof of Thm.\ \ref{thm:int_split}, it might be possible that the integration functor is exact with respect to a larger class of surjective fibrations. 
\end{remark}

\subsection{Integrating weak equivalences}\label{sec:int_weq}
Next, we show that weak equivalences in $\lnaft$, i.e. $L_\infty$-quasi-isomorphisms, between finite type $L_\infty$-algebras integrate to weak equivalences in $\LnG{\infty}$.  
Our proof is based on the following result of Henriques, which involves
the simplicial homotopy groups $\pi^{\spl}_{\ast}(X)$ of a $k$-group $X$ defined in Def.\ \ref{def:andre_pi_n}.

\begin{proposition}[Thm.\ 6.4 \cite{Henriques:2008}]\label{prop:les}
Let $(L,\el) \in \lnaft$, and let $G$ be the 1-connected Lie group integrating the Lie
  algebra $H_0(L)$. Then $\pi^{\spl}_{1}
  \bigl( \sint L \bigr) \cong G$, and there is a long exact
  sequence of (sheaves of) groups
\begin{equation} \label{eq:les}
\begin{split}
\cdots \to \pi^{\spl}_{n+1} \bigl( \sint L \bigr) \to \pi_{n+1}(G,e) \to H_{n-1}(L)
\to \pi^{\spl}_{n} \bigl( \sint L \bigr) \to \pi_{n}(G,e) \to \cdots\\
\cdots \to  \pi^{\spl}_{3} \bigl( \sint L \bigr) \to \pi_{3}(G,e) \to
H_{1}(L) \to \pi^{\spl}_{2} \bigl( \sint L \bigr) \to \pi_{2}(G,e).
\end{split}
\end{equation}
\end{proposition}
The long exact sequence \eqref{eq:les} is functorial with respect to morphisms of Lie $n$-algebras. Although this is not shown in \cite{Henriques:2008}, it 
does follow from the arguments made there, as we now explain.

The sequence \eqref{eq:les} can be constructed by applying the integration functor of Prop/Def.\ \ref{pdef:int} to the Postnikov tower \eqref{diag:tower} of the Lie $n$-algebra $L$.
This gives a tower of Kan fibrations between  Lie $\infty$-groups \cite[Thm.\ 5.10]{Henriques:2008}.
As discussed in the remarks preceding Cor.\ 6.5 in \cite{Henriques:2008},
the spectral sequence associated to this tower is
\begin{equation} \label{eq:specseq}
E^{1}_{m,k}=\pi^{\spl}_m\Bigl(\int H_{k-1}(L)[k-1] \Bigr) \Rightarrow \pi^{\spl}_{m+k} \Bigl( \int L \Bigr)
\end{equation}
where $H_{k-1}(L)[k-1]$ is the Lie $n$-algebra with only $H_{k-1}(L)$ in degree $k-1$. 
Prop.\ \ref{prop:lna_tower} implies that the 
spectral sequence is functorial. The spectral sequence is also sparse. As a result, it reduces to a long exact sequence, which becomes \eqref{eq:les} after identifications are
made between certain simplicial homotopy groups, and the Lie group $G$ and its homotopy groups.
Therefore, to verify the functoriality of \eqref{eq:les}, it remains to check the naturality of these identifications.

% Following \cite[Example 5.5]{Henriques:2008}, we consider the Lie $\infty$-group $\int \g$, where $\g$ is the Lie algebra $\tau_{\leq 0} L=H_0(L)$. For each $m \geq 0$, we have a natural identification
% %there is a natural 1-1 correspondence, as in Lemma \ref{lem:MC_CE}:
% \[
% \left(\int \g \right)_m= \MC \bigl( \g \tensor \Omega^{\ast}(\Delta^m) \bigr)
%  \cong \Bigl( \g \tensor \Omega^{1}(\Delta^m) \Bigr)^\flat
% % \left(\int \g \right)_m= \hom_{\mathrm{cdga}} \bigl( C^\ast(\g),\Omega^{\ast}(\Delta^m) \bigr)
% % \cong \Bigl( \Omega^{1}(\Delta^m)\tensor \g \Bigr)^\flat
% \]
% between Maurer-Cartan elements
% and flat connections on the trivial bundle $G \times \Delta^n \to \Delta^n$. 
% There is also an identification
% \begin{equation} \label{eq:flatconn}
% \begin{split}
% \rho^n_\g \maps \Map(\Delta^n,G)/G &\xto{\cong} \Bigl( \Omega^{1}(\g \tensor \Delta^n) \Bigr)^\flat \\
% f \maps \Delta^n \to G \quad &\mapsto \quad \alpha_f = f^{-1}df
% \end{split}
% \end{equation}
% between the set  $\Map(\Delta^n,G)/G$ of $G$-valued $C^{r+1}$ maps, modulo constants, and the set of flat connections.

\begin{lemma} \label{lem:nat_id}
Let $f \maps (L,\el) \to (L',\el')$ be a morphism in $\lnaft$.  Let $\Phi \maps G \to G'$ be the unique homomorphism of 1-connected Lie groups integrating the Lie algebra morphism 
$\tau_{\leq 0}(f)= \tau_{\leq 0}L  \to \tau_{\leq 0}L'$. Then the following diagram commutes
\[
\begin{tikzpicture}[descr/.style={fill=white,inner sep=2.5pt},baseline=(current  bounding  box.center)]
\matrix (m) [matrix of math nodes, row sep=2em,column sep=4.5em,
  ampersand replacement=\&]
  {  
\Map(\Delta^n,G)/G \& \Map(\Delta^n,G')/G'\\
 \Bigl(\tau_{\leq 0}L \tensor \Omega^{1}(\Delta^n) \Bigr)^\flat \&
\Bigl(\tau_{\leq 0}L' \tensor \Omega^{1}(\Delta^n) \Bigr)^\flat\\
};
\path[->,font=\scriptsize] 
(m-1-1) edge node[auto] {$\Gamma \mapsto \Phi \circ \Gamma$} (m-1-2)
(m-1-1) edge node[auto,swap] {$T$} (m-2-1)
(m-2-1) edge node[auto] {$\tau_{\leq 0}(f) \tensor \id_\Om$} (m-2-2)
(m-1-2) edge node[auto] {$T'$} (m-2-2);
\end{tikzpicture}
\]
where $T$ and $T'$ are the bijections defined in Example \ref{eq:flatconn}.
\end{lemma}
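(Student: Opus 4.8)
The plan is to reduce the commutativity of the diagram to the naturality of the (left) Maurer--Cartan form under Lie group homomorphisms. Recall that under the identification \eqref{eq:flatconn}, the left vertical map $\rho^n_\g$ sends a map $f \maps \Delta^n \to G$ (modulo constants) to the $\g$-valued flat connection $\alpha_f = f^{-1}df$, which is precisely the pullback $f^\ast \theta_G$ of the left-invariant Maurer--Cartan form $\theta_G$ on $G$; likewise on the $G'$ side. Since $\theta_G$ is left-invariant, $f^\ast \theta_G$ depends only on the class of $f$ in $\Map(\Delta^n,G)/G$, and since $\Phi$ is a homomorphism we have $\Phi \circ (g \cdot f) = \Phi(g) \cdot (\Phi \circ f)$, so the top map descends to the quotient. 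Finally, because $\tau_{\leq 0}(\phi) \maps \g \to \g'$ is a morphism of Lie algebras, post-composition with it carries flat connections to flat connections, so the bottom map $\id \tensor \tau_{\leq 0}(\phi)$ is well-defined.

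The key input is the following standard fact: if $\Phi \maps G \to G'$ is a homomorphism of (finite-dimensional) Lie groups with differential $d\Phi_e \maps \g \to \g'$ at the identity, then $\Phi^\ast \theta_{G'} = d\Phi_e \circ \theta_G$. This follows by differentiating the identity $L_{\Phi(g)^{-1}} \circ \Phi = \Phi \circ L_{g^{-1}}$ at $g$, which yields $(dL_{\Phi(g)^{-1}}) \circ d\Phi_g = d\Phi_e \circ (dL_{g^{-1}})$, i.e.\ exactly the stated relation between the two Maurer--Cartan forms. Note that $G$ and $G'$ are finite-dimensional, since $\g = H_0(L)$ and $\g' = H_0(L')$ are finite-dimensional whenever $L, L'$ are of finite type; hence this classical identity applies in our setting.

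With these facts in place the diagram chase is immediate. Since $\Phi$ integrates $\tau_{\leq 0}(\phi) = H_0(\phi_1)$, we have $d\Phi_e = \tau_{\leq 0}(\phi)$. For any representative $f \maps \Delta^n \to G$, functoriality of pullback together with the naturality identity gives
\begin{align*}
\rho^n_{\g'}(\Phi \circ f) &= (\Phi \circ f)^\ast \theta_{G'} = f^\ast(\Phi^\ast \theta_{G'}) \\
&= f^\ast(d\Phi_e \circ \theta_G) = d\Phi_e \circ (f^\ast \theta_G) = \tau_{\leq 0}(\phi) \circ \alpha_f.
\end{align*}
The right-hand side is exactly $(\id \tensor \tau_{\leq 0}(\phi))(\rho^n_\g(f))$, which is the result of traversing the diagram down and then across. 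Hence the diagram commutes.

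There is no genuine obstacle here; the mathematical content is entirely the naturality of the Maurer--Cartan form. The only points requiring care are bookkeeping: confirming that all four maps descend to, or are defined on, the appropriate quotients and spaces of flat connections (handled in the first paragraph), and keeping track of the convention that $\id \tensor \tau_{\leq 0}(\phi)$ acts on a $\g$-valued form by post-composition with the linear map $\tau_{\leq 0}(\phi)$.
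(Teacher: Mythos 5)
Your proof is correct and follows essentially the same route as the paper: the paper's (very terse) argument invokes exactly the two facts you use, namely that $\Phi$ intertwines left multiplication on $G$ and $G'$ (which is what yields $\Phi^\ast\theta_{G'}=d\Phi_e\circ\theta_G$) and that $d\Phi_e = H_0(\phi_1)=\tau_{\leq 0}(\phi)$. Your write-up simply spells out the Maurer--Cartan computation that the paper leaves implicit.
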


\begin{proof}
The commutativity of the diagram is verified by using the following elementary facts:
(1) $\Phi$, being a homomorphism, intertwines the left multiplication on $G$ with that of $G'$, and (2) the differential of $\Phi$ at the identity is $\tau_{\leq 0}(f)$.
\end{proof}

\begin{remark} \label{rmk:nat_id}
We note that Lemma \ref{lem:nat_id} also implies that the identifications made in Example 6.2 of 
\cite{Henriques:2008}:
\[
 \pi^{\spl}_{1} \bigl (\sint H_0(L)  \bigr) \cong G, \quad  \pi^{\spl}_{k \geq 2} \bigl (\sint H_0(L) \bigr)
\cong \pi_{k \geq 2}(G,e).
\]
are also natural in $(L,\el)$.
\end{remark}
Now that the functoriality of the long exact sequence \eqref{eq:les}
has been clarified, we can prove the following:
\begin{theorem} \label{thm:int_weq} 
If $f \maps (L,\el) \to (L',\el')$
  is a weak equivalence in $\lnaft$, then the morphism
\begin{equation} \label{eq:int_weq0}
\sint f \maps \sint L \to \sint L' 
\end{equation}
is a weak equivalence of Lie $\infty$-groups.
\end{theorem}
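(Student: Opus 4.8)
The plan is to show that $\int_{\leq n}\phi$ is a stalkwise weak equivalence (Def.\ \ref{def:stalk_weq}) by reducing, through the functorial long exact sequence \eqref{eq:les}, to the quasi-isomorphism hypothesis on $\phi$. First I would dispose of the truncation. The functor $\tau_{\leq n}$ (Def.\ \ref{def:trunc}) is built from a coequalizer and the matching objects $\Hom(P,X)$, $\Hom(\Delta[m],X)$ of finitely generated simplicial sets; since every point $\ppt \in \pts$ preserves finite limits and small colimits, Cor.\ \ref{cor:matching} yields a natural identification $\ppt(\int_{\leq n} L) \cong \tau_{\leq n}(\ppt\,\int L)$, where on the right $\tau_{\leq n}$ denotes the ordinary $n$-th Postnikov section of a Kan complex. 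Because $\ppt\,\int L$ is a reduced Kan complex (Cor.\ \ref{cor:kan}) and Postnikov truncation carries weak homotopy equivalences of such complexes to weak homotopy equivalences, it suffices to prove that $\int\phi \maps \int L \to \int L'$ is itself a stalkwise weak equivalence of Lie $\infty$-groups.

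For this, fix $\ppt$ and observe that $\ppt\,\int L$ and $\ppt\,\int L'$ are reduced, hence connected, Kan complexes, so by Whitehead's theorem it is enough that $\ppt(\int\phi)$ induce isomorphisms on all homotopy groups. Proposition \ref{thm:homotopy_grps}, whose proof applies verbatim to $\infty$-groups since it uses only that $S^k$ and $\cyl(S^k)$ are finitely generated, supplies natural isomorphisms $\pi_k(\ppt\,\int L) \cong \ppt\,\pi^{\spl}_k(\int L)$. This reduces the theorem to the purely sheaf-theoretic claim that $\int\phi$ induces isomorphisms $\pi^{\spl}_k(\int L) \to \pi^{\spl}_k(\int L')$ for every $k \geq 1$, because applying the colimit-preserving functor $\ppt$ to such an isomorphism gives an isomorphism on $\pi_k(\ppt\,-)$.

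To establish this last claim I would invoke the functoriality of \eqref{eq:les}, now available through Prop.\ \ref{prop:lna_tower}, Lemma \ref{lem:nat_id}, and Remark \ref{rmk:nat_id}. Since $\phi$ is an $L_\infty$-quasi-isomorphism (Def.\ \ref{def:quasi-iso}), the maps $H_\ast(\phi_1) \maps H_\ast(L) \to H_\ast(L')$ are isomorphisms; in particular $\tau_{\leq 0}\phi = H_0(\phi_1)$ is an isomorphism of Lie algebras, so the integrating homomorphism $\Phi \maps G \to G'$ of simply connected Lie groups is an isomorphism and induces isomorphisms $\pi_k(G,e) \to \pi_k(G',e)$. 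Comparing the sequences \eqref{eq:les} for $L$ and $L'$ via the vertical maps induced by $\phi$ — a ladder of abelian group sheaves in the range $k\geq 2$ — the five lemma gives that $\pi^{\spl}_k(\int\phi)$ is an isomorphism for all $k \geq 2$, the four flanking vertical maps $\pi_{k+1}(\Phi)$, $H_{k-1}(\phi)$, $\pi_k(\Phi)$, $H_{k-2}(\phi)$ all being isomorphisms. The case $k=1$ follows from $\pi^{\spl}_1(\int L)\cong G$ (Prop./Def.\ \ref{pdef:int}) together with the naturality recorded in Remark \ref{rmk:nat_id}, under which $\pi^{\spl}_1(\int\phi)$ is identified with $\Phi$.

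The conceptually hard part is entirely concentrated in the functoriality of \eqref{eq:les}: one must know that the identifications $\pi^{\spl}_1(\int\g)\cong G$ and $\pi^{\spl}_{\geq 2}(\int\g)\cong\pi_{\geq 2}(G,e)$, together with the collapse of the sparse spectral sequence \eqref{eq:specseq} to \eqref{eq:les}, are all natural in $\phi$ — which is exactly what the discussion preceding the theorem, Lemma \ref{lem:nat_id}, and Remark \ref{rmk:nat_id} secure. Granting this, the remainder is formal. Beyond the bookkeeping, the only steps demanding genuine care are verifying that $\ppt$ commutes with $\tau_{\leq n}$ and that Postnikov truncation preserves the relevant weak equivalences, so that the conclusion for the Lie $\infty$-groups $\int L,\int L'$ descends to the Lie $n$-groups $\int_{\leq n}L,\int_{\leq n}L'$.
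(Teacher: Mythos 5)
Your proposal is correct and follows essentially the same route as the paper: reduce via the points and Prop.\ \ref{thm:homotopy_grps} to an isomorphism of simplicial homotopy group sheaves, then compare the functorial long exact sequences \eqref{eq:les} through $\Phi$ and the five lemma, the paper handling the truncation more directly by simply observing that $\pi^{\spl}_i$ of $\int_{\leq n}L$ agrees with that of $\int L$ for $i\leq n$ and vanishes above. The one point needing care is $k=2$: the sequence \eqref{eq:les} terminates at $\pi_2(G,e)$, so the five lemma does not literally apply with the flanking map $H_{0}(\phi_1)$; the paper instead uses $\pi_2(G)=\pi_2(G')=0$ and a short diagram chase, which is how your argument should be repaired at that step.
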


\begin{proof}
It is sufficient to show that $\sint f$ induces an isomorphism of simplicial homotopy groups, i.e.\
the induced morphisms of group sheaves
\[
\psi_m:=\pi^{\spl}_m\bigl ( \sint f \bigr ) \maps \pi^{\spl}_m\bigl( \sint L \bigr) \to
\pi^{\spl}_m\bigl( \sint L' \bigr) \quad m > 0
\]
are isomorphisms. Indeed, if that is the case, then Prop.\ \ref{thm:homotopy_grps} 
implies that $\sint f$ is a stalkwise weak equivalence.
% For $i \leq n$, the $i$th simplicial homotopy groups of $\int_{\leq n} L$ and
% $\int_{\leq n} L'$ are equal to those of $\int L$ and $\int L'$, respectively. 
% (For $i > n$, the groups are trivial.) 
% So let $\psi \maps X \to X'$ denote the pre-truncated morphism of Lie $\infty$-groups
% \[
% %\begin{equation} 
% %\label{eq:int_weq1}
% \int \phi \maps \int L \to \int L'. 
% \]
% %\end{equation}

Let $G$ and
$G'$ denote the simply connected Lie groups integrating the Lie
algebras $H_0(L)$ and $H_0(L')$, respectively. From Remark
\ref{rmk:h0}, it follows that
\[
H_0(f_1) \maps H_0(L) \xto{\cong} H_0(L')
\]
is an isomorphism of Lie algebras. Let $\Phi \maps G \xto{\cong} G'$
denote the corresponding isomorphism of Lie groups induced by $H_0(f_1)$.
It follows from Lemma \ref{lem:nat_id} and Remark \ref{rmk:nat_id} that we have a commuting diagram
\begin{equation} \label{eq:int_weq0.2}
\begin{tikzpicture}[descr/.style={fill=white,inner sep=2.5pt},baseline=(current  bounding  box.center)]
\matrix (m) [matrix of math nodes, row sep=2em,column sep=3em,
  ampersand replacement=\&]
  {  
\pi^{\spl}_1(\sint L)  \& \pi^{\spl}_1(\sint L') \\
G \& G' \\
}; 
\path[->,font=\scriptsize] 
 (m-1-1) edge node[auto] {$\psi_1$} (m-1-2)
 (m-1-1) edge node[below,sloped] {$\cong$} (m-2-1)
 (m-1-2) edge node[above,sloped] {$\cong$} (m-2-2)
 (m-2-1) edge node[auto] {$\Phi$} node[below]{$\cong$} (m-2-2)
%  (m-1-2) edge [bend left=35] node[auto] {$x$} (m-3-3)
%  (m-2-1) edge [bend right=35] node[auto,swap] {$x_0$} (m-3-3)
;
\end{tikzpicture}
\end{equation}
Hence, $\psi_m$ is an isomorphism for $m=1$.

The functoriality of the  long exact sequence \eqref{eq:les} gives the commutative diagram (with exact rows)
\begin{equation} \label{eq:int_weq0.5}
\begin{tikzpicture}[descr/.style={fill=white,inner sep=2.5pt},baseline=(current  bounding  box.center)]
\matrix (m) [matrix of math nodes, row sep=2em,column sep=2em,
  ampersand replacement=\&]
  {  
\pi_{3}(G) \& H_{1}(L) \& \pi^{\spl}_{2} (\sint L) \& \pi_{2}(G) \\
\pi_{3}(G') \& H_{1}(L') \& \pi^{\spl}_{2} (\sint L') \& \pi_{2}(G') \\
}; 
\path[->,font=\scriptsize] 
 (m-1-1) edge node[auto] {$$} (m-1-2)
 (m-1-2) edge node[auto] {$$} (m-1-3)
 (m-1-3) edge node[auto] {$$} (m-1-4)
 (m-2-1) edge node[auto] {$$} (m-2-2)
 (m-2-2) edge node[auto] {$$} (m-2-3)
 (m-2-3) edge node[auto] {$$} (m-2-4)

 (m-1-1) edge node[auto] {$\pi_3 (\Phi)$} node[below,sloped]{$\cong$} (m-2-1)
 (m-1-2) edge node[auto] {$H_1(f_1)$} node[below,sloped]{$\cong$} (m-2-2)
 (m-1-3) edge node[auto] {$\psi_2$} (m-2-3)
 (m-1-4) edge node[auto] {$\pi_2(\Phi)$} node[below,sloped]{$\cong$} (m-2-4)
 % (m-1-1) edge node[auto,swap] {$\cong$} (m-2-1)
 % (m-1-2) edge node[auto] {$\cong$} (m-2-2)
 % (m-2-1) edge node[auto] {$\Phi$} (m-2-2)
%  (m-1-2) edge [bend left=35] node[auto] {$x$} (m-3-3)
%  (m-2-1) edge [bend right=35] node[auto,swap] {$x_0$} (m-3-3)
;
\end{tikzpicture}
\end{equation}
Since $\pi_2(G)=\pi_2(G')=0$, a simple diagram chase shows that
$\psi_2 \maps \pi^{\spl}_2(\sint L) \to \pi^{\spl}_{2}(\sint L')$ is an isomorphism.
For $m >2$, we can apply the 5-lemma to 
\begin{equation} \label{eq:int_weq2}
\begin{tikzpicture}[descr/.style={fill=white,inner sep=2.5pt},baseline=(current  bounding  box.center)]
\matrix (m) [matrix of math nodes, row sep=2em,column sep=2em,
  ampersand replacement=\&]
  {  
\cdots \& \pi^{\spl}_{m+1} (\sint L) \& \pi_{m+1}(G) \& H_{m-1}(L)
\& \pi^{\spl}_{m} (\sint L) \& \pi_{m}(G) \& \cdots\\
\cdots \& \pi^{\spl}_{m+1} (\sint L') \& \pi_{m+1}(G') \& H_{m-1}(L')
\& \pi^{\spl}_{m} (\sint L') \& \pi_{m}(G') \& \cdots\\
}; 
\path[->,font=\scriptsize] 
 (m-1-1) edge node[auto] {$$} (m-1-2)
 (m-1-2) edge node[auto] {$$} (m-1-3)
 (m-1-3) edge node[auto] {$$} (m-1-4)
 (m-1-4) edge node[auto] {$$} (m-1-5)
 (m-1-5) edge node[auto] {$$} (m-1-6)
 (m-1-6) edge node[auto] {$$} (m-1-7)
 (m-2-1) edge node[auto] {$$} (m-2-2)
 (m-2-2) edge node[auto] {$$} (m-2-3)
 (m-2-3) edge node[auto] {$$} (m-2-4)
 (m-2-4) edge node[auto] {$$} (m-2-5)
 (m-2-5) edge node[auto] {$$} (m-2-6)
 (m-2-6) edge node[auto] {$$} (m-2-7)

 (m-1-2) edge node[auto] {$\psi_{m+1}$} (m-2-2)
 (m-1-3) edge node[auto] {$\pi_{m+1}(\Phi)$} node[below,sloped]{$\cong$} (m-2-3)
 (m-1-4) edge node[auto] {$H_{m-1}(f_1)$} node[below,sloped]{$\cong$} (m-2-4)
 (m-1-5) edge node[auto] {$\psi_m$} (m-2-5)
 (m-1-6) edge node[auto] {$\pi_m(\Phi)$} node[below,sloped]{$\cong$} (m-2-6)
 % (m-1-1) edge node[auto,swap] {$\cong$} (m-2-1)
 % (m-1-2) edge node[auto] {$\cong$} (m-2-2)
 % (m-2-1) edge node[auto] {$\Phi$} (m-2-2)
%  (m-1-2) edge [bend left=35] node[auto] {$x$} (m-3-3)
%  (m-2-1) edge [bend right=35] node[auto,swap] {$x_0$} (m-3-3)
;
\end{tikzpicture}
\end{equation}
and deduce that $\psi_m$ is an isomorphism. This completes the proof.
\end{proof}

\subsection{The exactness of integration} \label{sec:int_acyc}
Let $F \maps \C \to \C'$ be a functor between iCFOs. We recall from 
Def.\ \ref{def:distingfib} and Def.\ \ref{def:exactfunctor} the notion of $F$ being exact with respect to a class of distinguished fibrations $S$ in $\C$. 
By definition, the class $S$ is required to contain all acyclic fibrations and all morphisms into the terminal object. The functor $F$ is required to preserve the terminal object and acyclic fibrations, and for all $f \in S$, $F(f)$ must be a fibration in $\C'$. Finally $F$ is required to preserve pullbacks of fibrations in $S$ along arbitrary morphisms in $\C$.

Note that it follows immediately from Def.\ \ref{def:split_fib} that the class of quasi-split fibrations in $\lnaft$ is distinguished.  The second main result of this paper is the following:
\pagebreak
\begin{theorem} \label{thm:int_exact}
\mbox{}
\begin{enumerate}
\item The integration functor $\sint \maps \LnA{n}^{\ft} \to \LnG{\infty}$
preserves finite products, weak equivalences, acyclic fibrations, and
sends quasi-split fibrations to Kan fibrations.

\item Any pullback square in $\lnaft$ of the form
\begin{equation} \label{diag:int_exact}
\begin{tikzpicture}[descr/.style={fill=white,inner sep=2.5pt},baseline=(current  bounding  box.center)]
\matrix (m) [matrix of math nodes, row sep=2em,column sep=3em,
  ampersand replacement=\&]
  {  
(\ti{L},\ti{\el}) \& (L,\el) \\
(L',\el') \& (L'',\el'') \\
}
; 
  \path[->,font=\scriptsize] 
   (m-1-1) edge node[auto] {$$} (m-1-2)
   (m-1-1) edge node[auto,swap] {$$} (m-2-1)
   (m-1-2) edge node[auto] {$f$} (m-2-2)
   (m-2-1) edge node[auto] {$$} (m-2-2)
  ;

%begin pullback symbol%
  \begin{scope}[shift=($(m-1-1)!.4!(m-2-2)$)]
  \draw +(-0.25,0) -- +(0,0)  -- +(0,0.25);
  \end{scope}
  %end pullback symbol%
\end{tikzpicture}
\end{equation}
in which $f$ is a quasi-split fibration, is mapped by the integration functor  to a pullback square in $\LnG{\infty}$.
\end{enumerate}
In particular, the functor $\int \maps \LnA{n}^{\ft} \to \LnG{\infty}$ is exact with respect to the class of quasi-split fibrations.
\end{theorem}

\begin{proof}
It follows from Cor.\ \ref{cor:MC-pullback} and Thm.\ \ref{thm:int_split} that the integration functor preserves finite products and sends quasi-split fibrations to Kan fibrations. Theorem \ref{thm:int_weq} implies that integration preserves weak equivalences. Therefore, since every acyclic fibration is a quasi-split fibration, it follows that integration preserves acyclic fibrations.

Now suppose we have a pullback diagram of finite type Lie $n$-algebras, as in \eqref{diag:int_exact},
in which $f \maps (L,\el) \to (L',\el')$ is a quasi-split fibration. Hence, $\sint f$ is a Kan fibration. Since Lie $\infty$-groups are reduced Lie $\infty$-groupoids, the hypotheses of Prop.\ \ref{prop:axiom4} are satisfied and therefore the pullback of the diagram
\[
\sint L' \xto{\sint g} \sint L'' \xleftarrow{\sint f} \sint L
\] 
exists in $\LnG{\infty}$. It then follows from Prop.\ \ref{prop:MC-pullback} that 
\[
\begin{tikzpicture}[descr/.style={fill=white,inner sep=2.5pt},baseline=(current  bounding  box.center)]
\matrix (m) [matrix of math nodes, row sep=2em,column sep=2em,
  ampersand replacement=\&]
  {  
\sint \ti{L} \& \sint L \\
\sint L' \& \sint L'' \\
}
; 
  \path[->,font=\scriptsize] 
   (m-1-1) edge node[auto] {$$} (m-1-2)
   (m-1-1) edge node[auto,swap] {$$} (m-2-1)
   (m-1-2) edge node[auto] {$\sint f$} (m-2-2)
   (m-2-1) edge node[auto] {$$} (m-2-2)
  ;

 %begin pullback symbol%
   \begin{scope}[shift=($(m-1-1)!.4!(m-2-2)$)]
   \draw +(-0.25,0) -- +(0,0)  -- +(0,0.25);
   \end{scope}
   %end pullback symbol%
\end{tikzpicture}
\]
is a pullback diagram of Lie $\infty$-groups. 

\end{proof}

\appendix
\section{Proof of Lemma \ref{lem:pobj_fib}} \label{sec:lem_proof}
We need to prove the following: For all $n\geq1$ and $0 \leq j \leq n$,
the natural inclusion
\begin{equation} \label{eq:path-horn-filling}
\Horn{n}{j}\times\Simp{1} \sqcup_{\Horn{n}{j} \times \partial \Simp{1}} 
  \Simp{n}\times \partial \Simp{1} \hookrightarrow \Simp{n} \times \Simp{1}
\end{equation} 
is a collapsible extension.

Let us first establish some notation. We fix $n$.
Via the usual triangulation, we write 
$\Delta^n \times \Delta^1$ 
as the union $\bigcup_{0 \leq  l \leq n} x_l$
of  $n+1$ $(n+1)$-simplices where:
\[
%\begin{equation}\label{eq:triang}
 x_l:= \Delta^{n+1}\bigl \{ (0,0), (1,0), \dots, (l,0), (l,1), (l+1,1), \dots,
 (n,1) \bigr \}.
%\end{equation}
\]
Also, for $0 \leq l \leq n$ and $0 \leq j \leq n$ denote by
$y^{j}_{l}$ the following face of the simplex $x_l$:
\begin{equation} \label{eq:yjl_def}
y^{j}_l := \begin{cases}
d_{j+1} x_l, & \text{if $0 \leq l \leq j$}\\
d_j x_l , & \text{if $j<l$}.
\end{cases}
\end{equation}

And for $l=-1,\ldots,n$, and $0 \leq j \leq n$, denote by $T_{j,l}
\subseteq \Delta^n \times \Delta^1$ the following simplicial subsets:
If $l=-1$, then
\[
T_{j,-1} := \Horn{n}{j}\times\Simp{1} \cup \Simp{n}\times \partial \Simp{1}
\]
and for $l \geq 0$
\[
T_{j,l} := T_{j,l-1} \cup x_{l}.
\]
Note that $T_{j,-1}$ is the left side of the inclusion
\eqref{eq:path-horn-filling}, while $T_{j,n}=\Delta^n \times \Delta^1$.
Below, we will prove the lemma by showing the inclusions $T_{j,l-1}
\sse T_{j,l}$ are collapsible extensions.
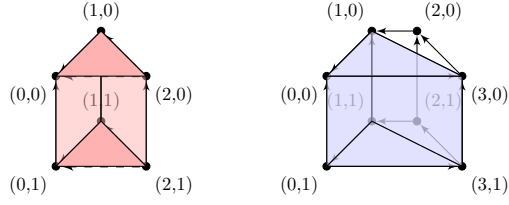
\begin{figure}[h]

\centering

\begin{tikzpicture}
[>=latex',mydot/.style={draw,circle,inner
    sep=1pt},every label/.style={scale=0.7},scale=0.6]

 \foreach \i in {0, 1}{
  \node[mydot, fill=black, label=240:\( (0 {,} \i ) \)]                at (-1, 0-2*\i)    (p\i0) {};
  \node[mydot,fill=black,label=90:$ (1 {,} \i )$]      at (0,1-2*\i) (p\i1) {};
  \node[mydot,fill=black,label=300:$(2 {,} \i )$]     at (1,0-2*\i)    (p\i2) {};
}
\begin{scope}[<-]
\foreach \i in {0, 1}{
    \draw (p\i0) --node[left,scale=0.6]{}  (p\i1);
\draw  [>=latex', thick, dashed] (p\i0)
   to node[above,scale=0.6]{} (p\i2);
    \draw (p\i1) --node[right,scale=0.6]{} (p\i2);
}
\foreach \j in {0, 1, 2} {
\draw (p0\j)  --node[left,scale=0.6]{}  (p1\j);
}
\end{scope}
% \node at (0,-3.25){$T_{1,-1}, \quad (n=2)$ };
\tikzstyle{hornfill} = [fill=red!30,fill opacity=0.5]
\tikzstyle{tbfill} = [fill=red!30,fill opacity=1]
\filldraw[hornfill](-1,0)--(0,1)--(0,-1)
                        --(-1,-2)--cycle;
\filldraw[hornfill](1,0)--(0,1)--(0,-1)
                        --(1,-2)--cycle;
\filldraw[tbfill](-1, 0)--(0, 1)--(1, 0)--cycle;
\filldraw[tbfill](-1, -2)--(0, -1)--(1, -2)--cycle;

 \foreach \i in {0, 1}{
  \node[mydot, fill=black, label=240:\( (0 {,} \i  ) \)]                at (-1+6, 0-2*\i)    (p\i0) {};
  \node[mydot,fill=black,label=100:$(1 {,} \i )$]      at (0+6,1-2*\i)
  (p\i1) {};
\node[mydot,fill=black,label=80:$(2 {,} \i )$]      at (1+6,1-2*\i) (p\i2) {};
  \node[mydot,fill=black,label=300:$(3 {,} \i )$]     at (2+6,0-2*\i)    (p\i3) {};
}
\begin{scope}[<-]
\foreach \i in {0, 1}{
    \draw (p\i0) --node[left,scale=0.6]{}  (p\i1);
    \draw (p\i1) --node[left,scale=0.6]{} (p\i2);
\draw (p\i2) --node[left,scale=0.6]{} (p\i3);
\draw (p\i3) --node[left,scale=0.6]{} (p\i0);
}
\foreach \j in {0, 1, 2, 3} {
\draw (p0\j)  --node[left,scale=0.6]{}  (p1\j);
}
\end{scope}
% \node at (6.5,-3.25){$d_2\Delta^3 \times \Delta^1 \sse \Delta^3 \times \Delta^1$,};
\tikzstyle{cfill}=[fill=blue!15, fill opacity=0.5]
\filldraw[cfill](-1+6,0)--(6,1)--(6,-1)
                        --(-1+6,-2)--cycle;
\filldraw[cfill] (6,1)--(6,-1)
                        --(2+6,-2)--(8, 0)--cycle;
\filldraw[cfill] (5,0)--(8, 0)--(2+6,-2)--(5, -2)--cycle;
\filldraw[cfill] (5, 0)--(6, 1)--(8, 0)--cycle;
\filldraw[cfill](5, -2)--(6, -1)--(8, -2)--cycle;

\end{tikzpicture}

\caption{For $n=2$,  the left-hand side depicts the geometric realization of the
  simplicial subset $T_{1,-1} \sse \Delta^2 \times \Delta^1$. For
  $n=3$, the right-hand side depicts the realization of the ``face'' $d_2 \Delta^3 \times \Delta^1$
which is missing from $T_{2,-1}$.}

\end{figure}

We will also need the following simple facts:
\begin{claim} \label{claim1}
Let $0 \leq j \leq n$ and $0 \leq l \leq n$. Every face of the
$(n+1)$-simplex $x_l$, with the possible exception of $d_{l+1}x_{l}$ and
$y^{j}_{l}$,  is contained in the simplicial subset $T_{j,l-1}$.
\end{claim}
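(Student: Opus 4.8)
The plan is to prove the Claim by a direct vertex-level analysis of the facets $d_i x_l$ of the $(n+1)$-simplex $x_l$, organized around the projection $\pr\maps\Simp{n}\times\Simp{1}\to\Simp{n}$ to the first coordinate. First I would record the vertices of $x_l$ in order: vertex $i$ is $(i,0)$ for $0\le i\le l$ and $(i-1,1)$ for $l+1\le i\le n+1$. Applying $\pr$, vertex $i$ maps to $i$ when $i\le l$ and to $i-1$ when $i\ge l+1$; hence the value $l$ is attained exactly twice, by vertices $l$ and $l+1$, while every other value $k\in\{0,\dots,n\}$ is attained exactly once. This bookkeeping is the engine of the argument, since it determines precisely which vertex of $\Simp{n}$ the projected facet $\pr(d_i x_l)$ omits.

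Next I would sweep through the facets $d_i x_l$ in the ranges $i<l$, $i=l$, $i=l+1$, $i>l+1$, setting aside the two declared exceptions $d_{l+1}x_l$ and $y^j_l$. For $i<l$ the unique preimage of $i$ is deleted, so $d_i x_l$ projects onto $d_i\Simp{n}$ and therefore lies in $d_i\Simp{n}\times\Simp{1}$; as long as $i\neq j$ this sits inside $\Horn{n}{j}\times\Simp{1}\subseteq T_{j,-1}\subseteq T_{j,l-1}$, and the single excluded index $i=j$ (which forces $l>j$) gives exactly $y^j_l=d_j x_l$. Symmetrically, for $i>l+1$ the facet projects onto $d_{i-1}\Simp{n}$ with $i-1\neq l$, landing in $\Horn{n}{j}\times\Simp{1}$ unless $i-1=j$, and that excluded case $i=j+1$ (which forces $l<j$) is exactly $y^j_l=d_{j+1}x_l$. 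Thus every non-exceptional facet with $i\neq l,l+1$ already lies in $T_{j,-1}$.

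It then remains to handle $i=l$. Here I would observe that $d_l x_l$ is the common $n$-dimensional face shared by $x_l$ and $x_{l-1}$: deleting $(l,0)$ yields the simplex on $(0,0),\dots,(l-1,0),(l,1),\dots,(n,1)$, which is also the facet $d_l x_{l-1}$ of $x_{l-1}$, so $d_l x_l\subseteq x_{l-1}\subseteq T_{j,l-1}$ for $l\ge1$. The extreme indices are absorbed by the boundary: for $l=0$ one has instead $d_0 x_0=\Simp{n}\times\{1\}$, and for $l=n$ the leftover bottom facet coincides with the exception $d_{l+1}x_l=\Simp{n}\times\{0\}$; in each such case the relevant face lies in $\Simp{n}\times\partial\Simp{1}\subseteq T_{j,-1}$. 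Assembling the ranges completes the proof.

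I expect the only real obstacle to be organizational rather than conceptual: keeping the off-by-one shift under $\pr$ straight across the seam at vertices $l,l+1$, and confirming that the facet whose projection omits vertex $j$ is in every regime exactly $y^j_l$ as given by its two-branch definition. I would therefore verify the subcases $l<j$, $l=j$, $l>j$ individually, noting in particular that when $l=j$ the two exceptions $d_{l+1}x_l$ and $y^j_l=d_{j+1}x_l$ coincide, so no genuine facet is omitted from the conclusion.
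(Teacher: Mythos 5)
Your proposal is correct and follows essentially the same route as the paper's proof: a case analysis on the face index $i$ of $d_i x_l$ (split into $i<l$, $i=l$, $i>l+1$), showing the face projects into a face of $\Simp{n}$ and hence lies in $\Horn{n}{j}\times\Simp{1}$ unless the omitted index is $j$ (giving $y^j_l$), and identifying $d_l x_l$ with $d_l x_{l-1}\subseteq x_{l-1}$ (or with $\Simp{n}\times\{1\}$ when $l=0$). The explicit vertex-level bookkeeping under the projection is just a more detailed justification of the containments the paper states directly.
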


\begin{proof}
We consider the faces $d_k x_l$. There are three cases. 
First, if $k=l$, then it is easy to verify that:
\[
%\begin{equation} \label{eq:claim1.1}
\begin{split}
d_l x_l & \subseteq \Delta^n \times \partial \Delta^1 \sse T_{j,l-1}, \quad \text{if
  $l=0$ or $l=n$}\\
d_l x_l &=d_l x_{l-1} \in T_{j,l-1} \quad \text{if $0 < l < n$.}
\end{split}
\]
%\end{equation}

Now, if $k < l$, then $d_k x_l \sse d_k \Delta^n \times \Delta^1$.
Hence, if $k \neq j$, then we have $d_k x_l \sse \Horn{n}{j} \times \Delta^1 \sse T_{j,l-1}.$
Otherwise,  we have $d_k x_l = d_j x_l = y^{j}_{l}$.

Finally, if $k\geq l+2$, then $d_k x_{l} \sse d_{k-1} \Delta^n \times
\Delta^1$. So if $k \neq j+1$, then
$d_k x_l \sse \Horn{n}{j} \times \Delta^1 \sse T_{j,l-1},$
Otherwise, we have $d_k x_l = d_{j+1}x_l = y^{j}_l$.
\end{proof}

\begin{claim}\label{claim2}
Let $\Sigma_j:=d_j\Delta^n$ for $0 \leq j \leq n$. Then for $0 \leq i
\leq n-1$, we have the inclusion of simplicial sets
\[
d_i \Sigma_j \times \Delta^1 \sse T_{j,-1}.
\]
\end{claim}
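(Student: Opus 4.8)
The plan is to reduce the claim to the single inclusion of simplicial subsets of $\Simp{n}$
\[
d_i \Sigma_j \sse \Horn{n}{j}, \qquad 0 \le i \le n-1.
\]
Once this is established, crossing with $\Simp{1}$ preserves inclusions, so $d_i\Sigma_j \times \Simp{1} \sse \Horn{n}{j}\times\Simp{1}$; and since by definition $T_{j,-1} = \Horn{n}{j}\times\Simp{1} \cup \Simp{n}\times\partial\Simp{1}$, the right-hand side is contained in $T_{j,-1}$. This gives exactly $d_i \Sigma_j \times \Simp{1} \sse T_{j,-1}$, as desired. Thus the entire content of the claim is the purely combinatorial inclusion $d_i\Sigma_j \sse \Horn{n}{j}$.

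To prove that inclusion, I would work directly with vertex sets. Recall $\Sigma_j = d_j \Simp{n}$ is the face of $\Simp{n}$ spanned by the vertices $\{0,\dots,n\}\setminus\{j\}$. For $0 \le i \le n-1$, the face $d_i\Sigma_j$ is obtained by deleting the $i$th vertex $v_i$ of the $(n-1)$-simplex $\Sigma_j$, where $v_i = i$ if $i<j$ and $v_i = i+1$ if $i \ge j$; in either case $v_i \neq j$, since all vertices of $\Sigma_j$ are distinct from $j$. Hence $d_i\Sigma_j$ is the subsimplex of $\Simp{n}$ spanned by the vertex set $\{0,\dots,n\}\setminus\{j, v_i\}$. (One may instead invoke the simplicial identity $d_i d_j = d_{j-1} d_i$ for $i<j$ to identify $d_i\Sigma_j$ with a face $d_a d_b \Simp{n}$, but the vertex-set bookkeeping is cleaner and makes $v_i \neq j$ manifest.)

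Finally I would apply the definition of the horn from the preliminaries: a simplex $f \in (\Simp{n})_p$ lies in $\Horn{n}{j}$ iff its image omits some element of $\{0,\dots,j-1,j+1,\dots,n\}$. The top nondegenerate simplex generating $d_i\Sigma_j$ has image $\{0,\dots,n\}\setminus\{j,v_i\}$, which omits $v_i \in \{0,\dots,n\}\setminus\{j\}$, and so lies in $\Horn{n}{j}$. Since $\Horn{n}{j}$ is a simplicial subset of $\Simp{n}$ and $d_i\Sigma_j$ is generated by this single simplex (its faces and degeneracies being automatically in any simplicial subset containing it), the whole of $d_i\Sigma_j$ lies in $\Horn{n}{j}$. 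There is no substantial obstacle here; the only point requiring care is confirming that the vertex $v_i$ deleted by $d_i$ is genuinely distinct from $j$, which is precisely why the restriction to faces of $\Sigma_j = d_j\Simp{n}$ (rather than of $\Simp{n}$ itself) is essential.
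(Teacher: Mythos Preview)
Your proposal is correct and is essentially the same argument as the paper's. The paper's proof is the single sentence ``This follows directly from the simplicial identities for face maps,'' which is exactly the alternative formulation you mention; your vertex-set bookkeeping simply spells out what those identities say, namely that $d_i\Sigma_j$ is contained in $d_{v_i}\Simp{n}$ with $v_i \neq j$, hence in $\Horn{n}{j}$.
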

\begin{proof}
This follows directly from the simplicial
identities for face maps.
\end{proof}

\begin{claim} \label{claim3}
Let $0 \leq j \leq n$, $0 \leq l \leq n$, and let $z^{j}_{l}$ denote
the $(n-1)$-simplex
%\begin{equation}\label{eq:zjl_def}
\[
z^{j}_{l}:= \begin{cases}
d_{l+1} y^{j}_l, & \text{if $l \leq j$}\\
d_l y^{j}_l , & \text{if $j<l$},
\end{cases}
\]
%\end{equation}
where $y^j_l$ is the $n$-simplex \eqref{eq:yjl_def}. Every face of
$y^j_l$, with the possible exception of $z^{j}_{l}$, is contained in
the simplicial subset $T_{j,l-1}$. 
\end{claim}

\begin{proof}
There are a few cases to consider.\\

\underline{Case $k<l$:}  If $0 \leq j \leq l$, then we have 
$d_ky^{j}_l = d_k d_{j+1} x_l = d_j d_k x_{l}$. Claim \ref{claim1}
implies that $d_k x_l \sse T_{j,l-1}$, so therefore $d_ky^{j}_l \sse
T_{j,l-1}$.  If $j < l$, then $d_k y^{j}_{l} = d_k d_j x_l$. Hence, we
have $d_k y^{j}_l \sse d_k \Sigma_j \times \Delta^1$.
So Claim \ref{claim2} implies that $d_ky^{j}_l \sse T_{j,l-1}$.\\

\underline{Case $k=l$:}  If $j < l$, then $d_k
y^{j}_{l}=z^j_{l}$. If $l \leq j$, then we have $d_k y^{j}_{l} = d_l d_{j+1} x_l = d_j d_l x_l.$
Claim \ref{claim1} implies that $d_l x_l \sse T_{j,l-1}$, so therefore
$d_k y^{j}_{l}  \sse T_{j,l-1}$.\\

\underline{Case $k=l+1$:} If $ l \leq j$, then we have $d_k
y^{j}_{l} =z^{j}_l$. If $j < l$, then we have
$d_ky^{j}_{l} = d_{l+1} d_{j} x_l = d_j d_{l+2} x_l.$
Claim \ref{claim1} implies that $d_{l+2} x_l \sse T_{j,l-1}$. Hence, 
$d_ky^{j}_{l} \sse T_{j,l-1}$.\\

\underline{Case $k \geq l+2$:} If $l \leq j$, then either $d_k
y^{j}_l = d_j d_{k} x_l$ or $d_k y^{j}_{l}= d_{j+1} d_{k+1} x_l$,
depending on whether $k < j+1$ or $k \geq j+1$. In both cases, Claim
\ref{claim1} implies that $d_k y^{j}_l \sse T_{j,l-1}$. Finally, if $j
< l$, then $d_k y^{j}_l = d_k d_j x_l$, which implies that
$d_ky^{j}_l \sse d_k \Sigma_j \times \Delta^1.$
Hence, it follows from Claim \ref{claim2} that $d_ky^{j}_l \sse T_{j,l-1}$.
\end{proof}

We now arrive at:
\begin{proof}[Proof of Lemma \ref{lem:pobj_fib}]
Let $0 \leq j \leq n$ and $0 \leq l \leq n$. We will show that the
inclusion $T_{j,l-1} \sse T_{j,l}$
is a collapsible extension. First, we observe that
the boundary of $x_l$ is not contained in $T_{j,l-1}$:
either the face $y^{j}_{l}$ or the face $d_{l+1}x_{l}$ is missing.
If $y^{j}_{l}$ is  contained in
$T_{j,l-1}$, then Claim \ref{claim1} implies that there exists a map
\begin{equation} \label{eq:phi_push}
\psi \maps \Horn{n+1}{l+1} \to T_{j,l-1}
\end{equation}
which sends the generators of $\Horn{n+1}{l+1}$ to all the faces of $x_l$
except $d_{l+1}x_{l}$. Then the pushout of $\Horn{n+1}{l+1}
\hookrightarrow \Delta^n$ along $\psi$ is $T_{j,l}$.

On the other hand, if $y^{j}_{l}$ is  not contained in $T_{j,l-1}$
then let
\[
k= 
\begin{cases}
l+1 & \text{if $l \leq j$,} \\ 
l & \text{if $j < l$.} \\ 
\end{cases}
\]
We observe that the boundary of $y^{j}_{l}$ is not contained in $T_{j,l-1}$.
Claim \ref{claim3} implies that there exists a map 
$\phi \maps \Horn{n}{k} \to T_{j,l-1}$
which sends generators of the horn to all faces of $y^{j}_l$ except $z^{j}_l$.
The pushout of $\Horn{n}{k} \hookrightarrow \Delta^n$ along $\phi$
gives a simplicial subset $S_{j,l}:=T_{j,l-1} \cup y^{j}_{l}$. If
$x_l$ is not contained in $S_{j,l}$, then we compose $\psi$ \eqref{eq:phi_push} with the
inclusion $T_{j,l-1} \sse S_{j,l}$. The pushout of 
$\Horn{n+1}{l+1} \hookrightarrow \Delta^n$ along this composition is $T_{j,l}$. 

\end{proof}

\section{Sheaves on large categories} \label{sec:universe} 
As mentioned in Section \ref{subsec:set_theory}, there can be
set-theoretic technicalities when working with sheaves over large
categories.  For example, in this paper, we take colimits of representable
sheaves (e.g., the simplicial homotopy groups in
Def.\ \ref{def:andre_pi_n}) and this implicitly requires a
sheafification functor. Unfortunately, the usual plus-construction 
for producing a sheaf from a presheaf is not  well-defined 
for large categories, since it a priori requires taking 
colimits over proper classes.
All of this can be avoided by using Grothendieck universes, and in
particular the Universe Axiom, which allows us take colimits
in an ambient larger universe in which our classes are sets. However,
we would like our formalism to not depend on this ambient larger
universe in any way. One could have, for example, 
the colimit of a diagram of representables be a sheaf that takes values
in sets which properly reside in the ambient larger universe.

In this appendix, we verify that colimits of representable sheaves are independent 
of choice of the ambient universe, for those  pretopologies on large
categories which admit a ``small refinement'' (Def.\ \ref{def:small}).
We conclude by showing that our main example of interest: the surjective submersion
pretopology $\covers_{\subm}$ on the category $\Mfd$ of Banach
manifolds is a pretopology which admits a small refinement. 

We claim no particular originality for these results: they just provide
us with an elementary way  to comfortably ignore size issues.
Our main reference throughout for the set theory involved is
Sec.\ 1.1 of \cite{Borceux:1994}, as well as 
the preprint \cite{Low:2013}.

\subsection{Grothendieck universes}

\begin{definition} \label{def:universe}
A \textbf{universe} is a set $\cU$ that satisfies the following
axioms:
\begin{enumerate}

\item If $x \in y$ and $y \in \cU$, then $x \in \cU$.

\item If $x$ and $y$ are elements of $\cU$,  then $\{x,y\} \in \cU$.

\item If $x \in \cU$ then the power set $\pset{x}$ is an element of $\cU$.

\item If $I \in \cU$ and $\{x_\alpha\}_{\alpha \in I}$ is a family of
  elements of $\cU$, then the union $\bigcup_{\alpha \in I} x_\alpha$
  is an element of $\cU$.

\item The set of all finite von Neumann ordinals is an element of $\cU$.

\end{enumerate}

\end{definition}

We adopt the following universe axiom:

\begin{assumption}[Universe Axiom] \label{ass:universe}
For each set $x$, there exists a universe $\cU$ with $x \in \cU$.
\end{assumption}

Any universe $\cU$ is a model of ZFC, so all usual
set-theoretic constructions apply.  A \textbf{$\cU$-set} is a member
of $\cU$, and a \textbf{$\cU$-class} is a subset of $\cU$. A
\textbf{proper $\cU$-class} is a $\cU$-class which is not a $\cU$-set.

\begin{convention}
We \textbf{fix a universe $\cU$} in which we consider as the ``usual universe'' in
which we do our mathematics. We denote by $\Set$ the category of $\cU$-sets.
\end{convention}

\subsection{Sheaves on locally $\cU$-small categories}
A category $\Cat$ is \textbf{$\cU$-small} iff $\Ob(\Cat)$ and
$\Mor(\Cat)$ are $\cU$-sets.  We say $\Cat$ is \textbf{locally $\cU$-small} iff
$\hom_\Cat(x,y)$ is a $\cU$-set for all $x,y \in \Ob(\Cat)$.

\subsubsection{Universe extension}
Presheaves are only well-defined over
categories that are $\cU$-small. However, by the universe axiom, there
exists a universe $\tilde{\cU}$ such that
\[
\cU \in \tilde{\cU}. 
\]
It follows from Def.\ \ref{def:universe} that
$\pset{\cU}$ is also a $\tilde{\cU}$-set, and hence any $\cU$-class is as
well. We denote by $\widetilde{\Set}$, the category of $\tilde{\cU}$-sets.
Therefore, if $\Cat$ is a locally $\cU$-small category, then it is
$\tilde{\cU}$-small category. So, for a well-behaved theory of presheaves on $\Cat$, we
consider the category $\PSh(\Cat)$ of
 $\widetilde{\Set}$-valued functors
\[
F \maps \Cat^{\op} \to \widetilde{\Set}.
\]
The entire theory of presheaves and sheaves can be applied to
those on a locally $\cU$-small category
without worry of set-theoretic complications, provided we work in $\tilde{\cU}$.
Let $(\Cat,\covers)$ be a locally $\cU$-small category equipped with a pretopology (Def.\
\ref{def:pretopology}).  Let $\covers(-) \maps \Ob(\Cat) \to \pset{\Mor(\Cat)}$
denote the function which assigns to an object $X$ the $\tilde{\cU}$-set of
covers $\covers(X)$ of $X$. (Note that $\covers(X)$ is a priori not a $\cU$-set.)

Let $\Sh(\Cat)$ denote the category of sheaves
on $\Cat$. We have the adjunction
 \begin{equation*} %\label{eq:adjunct}
 \ell \maps \PSh(\Cat) \leftrightarrows  \Sh(\Cat) \maps i
 \end{equation*}
 where $i$ is the inclusion, and $\ell$ is the sheafification
 functor. As usual, the functor $\ell$ preserves finite limits, and
 the composite $\ell \circ i$ is naturally isomorphic to the identity
 functor.

\subsubsection{Sheafification}
Let us quickly describe the sheafification functor $\ell$ for
pretopologies in the sense of Def.\ \ref{def:pretopology}.
Let $\alpha \maps U \to X$ and $\beta
\maps V \to X$ be a covers of an object $X \in \Cat$. We say $\alpha$
\textbf{refines} $\beta$, and write $\alpha \prec \beta$ 
if there exists a morphism $f \maps U \to V$
such that $\beta \circ f =\alpha$. The axioms of a pretopology imply
that any two covers of an object $X$ have a common refinement, hence
the set $\covers(X)$ is equipped with a directed preorder.

Let $F$ be a presheaf and  $\alpha \maps U \to X$  a cover. A \textbf{matching family} for
$\alpha$ is an element $x \in F(U)$ such that the following diagram
commutes:
\[
\begin{tikzpicture}[descr/.style={fill=white,inner sep=2.5pt},baseline=(current  bounding  box.center)]
\matrix (m) [matrix of math nodes, row sep=2em,column sep=2em,
  ampersand replacement=\&]
  {  
U \times_\alpha U  \&  U  \& \\
U \& X  \& \\ [-1 cm]
\& \&  F  \\
}; 
\path[->,font=\scriptsize] 
 (m-1-1) edge node[auto] {$$} (m-1-2)
 (m-1-1) edge node[auto,swap] {$$} (m-2-1)
 (m-1-2) edge node[auto,swap] {$\alpha$} (m-2-2)
 (m-2-1) edge node[auto] {$\alpha$} (m-2-2)
 (m-1-2) edge [bend left=35] node[auto] {$x$} (m-3-3)
 (m-2-1) edge [bend right=35] node[auto,swap] {$x$} (m-3-3)
;

%begin pushout symbol%
% \begin{scope}[shift=($(m-1-1)!.65!(m-2-2)$)]
% \draw +(.25,0) -- +(0,0)  -- +(0,.-.25);
% \end{scope}
% %end pushout symbol%
\end{tikzpicture}
\]
Denote by $\Match(\alpha,F)$ the $\cUp$ set of all matching families
for the cover $\alpha$.

The \textbf{plus construction} applied to a presheaf $F$ produces a
new presheaf $F^+$, which assigns to an object $X \in \Cat$, the
$\cUp$-set
\begin{equation} \label{eq:plus}
F^+(X):= \colim_{\alpha \in \covers(X)} \Match(\alpha,F).
\end{equation}
An element of $F(X)^+$ is an equivalence class of matching
families $\overline{x_\alpha}$. Matching families $x_\alpha$
and $x_\beta$ for covers $\alpha \maps U \to X$, $\beta \maps V \to X$,
respectively, are equivalent iff there exists a cover $\gamma \maps W
\to X$ refining $\alpha$ and $\beta$ such that $x_\alpha \circ f =
x_\beta \circ g$, where $f \maps W \to U$, $g \maps W \to V$ are
morphisms such that $\gamma=\alpha \circ f=\beta \circ g$.
The sheafification functor $\ell \maps \PSh(\Cat) \to \Sh(\Cat)$
is then defined as
\[
\ell(F):=\bigl(F^+\bigr)^+.
\]

\subsection{Independence of choice of ambient universe}
Let $F \maps \Cat^\op \to \widetilde{\Set}$ be
a presheaf.  We say $F$ is a \textbf{$\cU$-presheaf} iff
$F(X)$ is a $\cU$-set for all objects $X \in \Cat$.  The analogous definition
for sheaves is clear: A sheaf $F$ is a \textbf{$\cU$-sheaf} iff the
presheaf $i(F)$ is a $\cU$-presheaf.  Since $\Cat$ is locally $\cU$-small, the
representable sheaves are $\cU$-sheaves. 

We want  $\cU$-small colimits and limits involving
$\cU$-(pre)sheaves to output an object which ``stays'' in our universe
$\cU$, and does not depend on the non-canonical choice of the ambient
universe $\cUp$. This is true for $\cU$-presheaves, since the inclusion
functor
\[
\Set \hookrightarrow \widetilde{\Set}
\]
reflects colimits and limits for all $\cU$-small diagrams \cite[Cor.\ 1.19]{Low:2013}.
And, indeed, this will also be true for $\cU$-sheaves, provided we require
our pretopology to satisfy a certain smallness condition.
 
First, we deal with limits of $\cU$-sheaves.
\begin{proposition} \label{prop:small_lim}
Let $J$ be a $\cU$-small category and $D \maps J \to \Sh(\Cat)$ a
diagram such that $D(j)$ is a $\cU$-sheaf for all objects $j \in
J$. Then $\lim D$ is a $\cU$-sheaf
which can be constructed independently from the choice of ambient universe $\cUp$. 
\end{proposition}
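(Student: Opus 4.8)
The plan is to reduce the statement about limits of $\cU$-sheaves to the already-understood case of limits of $\cU$-presheaves, using the adjunction \eqref{eq:adjunct} together with the fact (cited from \cite{Low:2013}) that the inclusion $\Set \hookrightarrow \widetilde{\Set}$ reflects $\cU$-small limits. The key observation is that limits in a reflective subcategory are computed in the ambient category: since $\Sh(\Cat)$ is a full reflective subcategory of $\PSh(\Cat)$ via $i \dashv \ell$, the right adjoint $i$ preserves limits, so the limit of the diagram $D$ in $\Sh(\Cat)$ is computed by taking the limit of $i \circ D$ in $\PSh(\Cat)$ and observing that this presheaf limit is already a sheaf.

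First I would note that $i \maps \Sh(\Cat) \to \PSh(\Cat)$, being a right adjoint, preserves all limits that exist; in particular $i(\lim D) \cong \lim(i \circ D)$ as presheaves. Next I would compute $\lim(i \circ D)$ objectwise: for each object $X \in \Cat$, we have $\bigl(\lim(i\circ D)\bigr)(X) \cong \lim_{j \in J} (i\,D(j))(X)$ in $\widetilde{\Set}$, since limits of presheaves are computed objectwise. Now by hypothesis each $D(j)$ is a $\cU$-sheaf, so $(i\,D(j))(X)$ is a $\cU$-set, and $J$ is $\cU$-small; therefore this is a $\cU$-small diagram of $\cU$-sets. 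By \cite[Cor.\ 1.19]{Low:2013} the inclusion $\Set \hookrightarrow \widetilde{\Set}$ reflects $\cU$-small limits, so the limit lands in $\Set$ and is computed identically whether we work in $\cU$, in $\cUp$, or in any larger universe. Hence $i(\lim D)$ is a $\cU$-presheaf whose value at each $X$ is independent of the choice of ambient universe.

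It then remains to observe that this presheaf limit is already a sheaf, so that no sheafification — which is the only construction genuinely sensitive to the ambient universe — is needed. This follows because the sheaf condition (Def.\ \ref{def:pretopology}, the equalizer requirement $F(X) = \mathrm{eq}\bigl(F(U) \rightrightarrows F(U\times_X U)\bigr)$) is itself a limit condition, and limits commute with limits: for each cover $U \to X$, the equalizer diagram for $\lim(i\circ D)$ is the limit over $J$ of the equalizer diagrams for the individual sheaves $i\,D(j)$, each of which is an isomorphism. Thus $\lim(i\circ D)$ satisfies the sheaf condition, so it equals $i(\lim D)$ and the limit is computed in $\Set$.

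The main obstacle — and really the only substantive point — is verifying that the presheaf-level limit is a sheaf, i.e.\ the interchange of the limit over $J$ with the equalizer defining the sheaf condition. This is a routine $\lim\circ\lim \cong \lim\circ\lim$ argument, but it is the step where one must be careful that all the limits involved are $\cU$-small (which they are, since $J$ is $\cU$-small and the equalizer is a finite limit) so that reflection by $\Set \hookrightarrow \widetilde{\Set}$ applies uniformly. Once this is in place, the universe-independence is immediate from the cited corollary, and the conclusion that $\lim D$ is a $\cU$-sheaf follows.
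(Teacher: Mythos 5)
Your proof is correct and follows essentially the same route as the paper: use the right adjoint $i \maps \Sh(\Cat) \to \PSh(\Cat)$ to identify $i(\lim D)$ with the pointwise presheaf limit, then invoke completeness of $\Set$ under $\cU$-small limits (equivalently, reflection of such limits by $\Set \hookrightarrow \widetilde{\Set}$) to conclude the values are $\cU$-sets independent of the ambient universe. Your additional verification that the presheaf limit already satisfies the sheaf condition (limits commute with the equalizer defining the sheaf axiom) is a correct elaboration of a point the paper leaves implicit, but it does not change the argument.
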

\begin{proof}
Since the inclusion $i \maps \Sh(C) \to \PSh(C)$ is a right adjoint,
it preserves all $\cUp$-small limits. Hence, $i \lim D \cong \lim i
\circ D$. Now $i \circ D$ is a $\cU$-small limit of 
$\cU$-presheaves. Such a limit is computed point-wise, and since $\Set$ is
complete with respect to $\cU$-small limits, it follows that 
$\lim i \circ D$ is a small presheaf.
\end{proof}

\subsubsection{A smallness condition for pretopologies}

\begin{definition}\label{def:small}
We say that a pretopology $\covers$ on a locally
$\cU$-small category $\Cat$ admits a \textbf{($\cU$-)small refinement} $\cO$
iff for every object $X \in \Cat$ there is a $\cU$-small subset
$\cO(X) \subseteq \covers(X)$ such that every cover in $\covers(X)$
is refined by a cover in $\cO(X)$.
\end{definition}

An example of pretopology which admits a $\cU$-small refinement is the
surjective submersion topology on the category of Banach
manifolds. 
\begin{example} \label{ex:banach_small}
Let $(\Mfd,\covers_{\subm})$ denote the category of ($\cU$-small) Banach manifolds
equipped with the surjective submersion pretopology. Let $M \in
\Mfd$. If $\{U_{i}\}_{i \in I}$ is an open cover of $M$, then the
morphism $\coprod_{i \in I} U_{i} \xto{\iota} M$ is a surjective
submersion, where $\iota$ is the unique map induced by the inclusions
$U_i \subseteq M$. Consider the subset of $\covers(M)$ 
\[
\cO(M):= \Bigl \{ \coprod_{i \in I} U_{i} \xto{\iota} M ~ \vert~
\text{$\{U_{i}\}_{i \in I}$ is an open cover of $M$} \Bigr \}.
\] 

Since power sets of $\cU$-sets are $\cU$-sets, 
$\cO(M)$ is a $\cU$-set for each $M \in
\Mfd$. If $f \maps N \to M$ is a surjective
submersion, then for each $x\in N$ there exists an open neighborhood
$U_x \subseteq M$ of $f(x)$ and a map $\sigma_x \maps U_x \to N$
such that $\sigma_x(f(x))=x$ and  $f \circ \sigma_x =
\id_{U_x}$. Therefore via the universal property of the coproduct, we
have a commuting diagram in $\Mfd$
\[
\begin{tikzpicture}[descr/.style={fill=white,inner sep=2.5pt},baseline=(current  bounding  box.center)]
\matrix (m) [matrix of math nodes, row sep=1em,column sep=1em,
  ampersand replacement=\&]
  {  
\coprod_{x \in N} U_{x} \& \& N \\
\& M \&\\
}; 
\path[->,font=\scriptsize] 
 (m-1-1) edge node[auto] {$\sigma$} (m-1-3)
 (m-1-1) edge node[auto,swap] {$\iota$} (m-2-2)
 (m-1-3) edge node[auto] {$f$} (m-2-2)
;
\end{tikzpicture}
\]
Hence, every cover in $\covers_{\subm}(M)$ is refined by a cover in $\cO(M)$.
\end{example}

Note we do not require $\cO$ in Def.\ \ref{def:small} to be a
pretopology. Nevertheless, taking refinements induces a directed
preorder structure on $\cO$.  If $F$ is a $\cU$-presheaf on
$(\Cat,\covers)$, and $\covers$ admits a $\cU$-small refinement, then
for each $\alpha \in \cO(X)$, we have $\Match(\alpha,F) \in \Set$ and
moreover, we have
\[
\colim_{\alpha \in \cO(X)} \Match (\alpha, F) \in \Set
\]
since this is $\cU$-small colimit of $\cU$-small sets.
Then we have the following:

\begin{theorem} \label{thm:small}
Let $(\Cat,\covers)$ be a locally $\cU$-small category equipped with
a pretopology which admits a $\cU$-small refinement. If $F$ is a
$\cU$-presheaf on $\Cat$, then its sheafification $\ell(F)$ is a $\cU$-sheaf
which can be constructed independently from the choice of ambient universe $\cUp$. 
\end{theorem}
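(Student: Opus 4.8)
The plan is to reduce the a priori large colimit \eqref{eq:plus} defining the plus construction to a $\cU$-small cofinal colimit, using the small refinement $\cO$. First I would observe that for any cover $\alpha \maps U \to X$ a matching family is by definition an element of $F(U)$, so $\Match(\alpha,F) \sse F(U)$ is a $\cU$-set whenever $F$ is a $\cU$-presheaf. The only difficulty is that the index set $\covers(X)$ in \eqref{eq:plus} is a priori merely a $\cUp$-set, so the directed colimit is taken over a proper $\cU$-class and could leave $\cU$ and depend on the choice of $\cUp$.

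The key step is a cofinality argument. Recall (Def.\ \ref{def:small}) that the small refinement $\cO(X) \sse \covers(X)$ is a $\cU$-small subset with the property that every cover of $X$ is refined by a cover in $\cO(X)$; writing $\alpha \prec \beta$ for ``$\alpha$ refines $\beta$'', this says exactly that $\cO(X)$ is cofinal in the directed preorder $\bigl(\covers(X),\prec\bigr)$. Since a cofinal sub-preorder of a directed preorder induces an isomorphism of colimits (the colimits being computed in $\widetilde{\Set}$, where they exist), I obtain a natural isomorphism
\[
F^+(X) = \colim_{\alpha \in \covers(X)} \Match(\alpha,F) \;\cong\; \colim_{\alpha \in \cO(X)} \Match(\alpha,F).
\]
The right-hand colimit is a $\cU$-small colimit of $\cU$-sets, hence again a $\cU$-set, and---by the same reasoning as in Prop.\ \ref{prop:small_lim}, using that $\Set \hookrightarrow \widetilde{\Set}$ reflects $\cU$-small colimits \cite[Cor.\ 1.19]{Low:2013}---it is computed in $\Set$ independently of the ambient universe $\cUp$. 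Thus $F^+$ is a $\cU$-presheaf whose values do not depend on $\cUp$. I would then iterate: since $\ell(F) = (F^+)^+$ and $F^+$ is itself a $\cU$-presheaf, applying the previous argument to $F^+$ in place of $F$ shows that $(F^+)^+$ is again a $\cU$-presheaf with ambient-universe-independent values; being the sheafification, it is a $\cU$-sheaf, which completes the proof.

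I expect the main obstacle to be the cofinality step: one must check carefully that restricting \eqref{eq:plus} from the large directed preorder $\covers(X)$ to the $\cU$-small cofinal subset $\cO(X)$ really yields an isomorphism, and that this isomorphism is natural in $X$, so that $F^+$ is genuinely a $\cU$-valued presheaf rather than merely pointwise small. One should also verify that the equivalence relation identifying matching families over a common refinement is unaffected by passing to $\cO(X)$; this again follows from cofinality, since any common refinement can be further refined inside $\cO(X)$.
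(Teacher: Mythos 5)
Your proposal is correct and follows essentially the same route as the paper: the paper likewise defines the auxiliary colimit $\colim_{\alpha \in \cO(X)} \Match(\alpha,F)$ and shows the canonical map to $F^+(X)$ is a bijection using precisely the refinement (cofinality) property of $\cO(X)$. Your explicit mention of the iteration $\ell(F)=(F^+)^+$ and of naturality in $X$ only makes visible steps the paper leaves implicit.
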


\begin{proof}
Let $X \in \Cat$ and define a new $\cU$-presheaf $F^+_{\cO}$ via the colimit
\[
F^{+}_{\cO}(X):=  \colim_{\alpha \in \cO(X)} \Match (\alpha, F).
\]
There is the obvious function from $F^{+}_{\cO}$ to the plus
construction \eqref{eq:plus}
applied to $F$:
\[
\begin{array}{c}
F^{+}_{\cO}(X) \to F^{+}(X)\\
 \overline{x_\alpha} \mapsto \overline{x_\alpha},\\
 \end{array}
 \]
where $\overline{x_\alpha}$ on the right hand side above is the
equivalence class in
$\colim_{\alpha \in \covers(X)} \Match (\alpha, F)$ represented by the
matching family $x_\alpha$.
The theorem is proved if this assignment is a bijection. But this follows
directly from the fact that for  every cover $\alpha \in \covers(X)$, there
exists a cover in $\cO(X)$ refining $\alpha$. 
\end{proof}

Since the colimit of a diagram of sheaves is constructed by
sheafifying the point-wise colimit of the underlying diagram of
presheaves, the above theorem gives, as a corollary, the dual of Prop.\ \ref{prop:small_lim}
\begin{cor} \label{cor:small_colim}
Let $J$ be a $\cU$-small category and $D \maps J \to \Sh(\Cat)$ a
diagram such that $D(j)$ is a $\cU$-sheaf for all objects $j \in
J$. Then $\colim D$ is a $\cU$-sheaf 
which can be constructed independently of the choice of ambient universe $\cUp$. 
\end{cor}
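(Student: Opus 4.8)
The plan is to deduce this directly from Theorem \ref{thm:small}, together with the fact that colimits of sheaves are computed by sheafifying the pointwise colimit of the underlying presheaves. First I would recall that, by the adjunction \eqref{eq:adjunct}, the sheafification functor $\ell \maps \PSh(\Cat) \to \Sh(\Cat)$ is a left adjoint and hence preserves all colimits. Writing $i \maps \Sh(\Cat) \to \PSh(\Cat)$ for the inclusion and setting $P := \colim (i \circ D)$ for the colimit computed in the presheaf category, this yields a natural isomorphism
\[
\colim D \cong \ell(P).
\]
So it suffices to show that $P$ is a $\cU$-presheaf whose construction is independent of $\cUp$, and then to invoke Theorem \ref{thm:small}.

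Next I would verify that $P$ is indeed a $\cU$-presheaf. Colimits of presheaves are computed objectwise, so for each $X \in \Cat$ we have $P(X) = \colim_{j \in J} (iD(j))(X)$. Since each $D(j)$ is a $\cU$-sheaf, each $(iD(j))(X)$ is a $\cU$-set; and because $J$ is $\cU$-small and $\Set$ is cocomplete for $\cU$-small colimits, $P(X)$ is again a $\cU$-set. Moreover, as the inclusion $\Set \hookrightarrow \widetilde{\Set}$ preserves and reflects $\cU$-small colimits \cite[Cor.\ 1.19]{Low:2013}, this objectwise colimit does not depend on the choice of ambient universe $\cUp$. Thus $P$ is a $\cU$-presheaf, constructed independently of $\cUp$.

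Finally, applying Theorem \ref{thm:small} to $P$ gives that $\ell(P)$ is a $\cU$-sheaf that can be constructed independently of $\cUp$; combining this with the isomorphism $\colim D \cong \ell(P)$ completes the argument. I do not anticipate a genuine obstacle, since the corollary is essentially formal once Theorem \ref{thm:small} is available. The only point requiring care is the bookkeeping of the two separate universe-independence claims—one for the presheaf-level objectwise colimit and one for the subsequent sheafification—and checking that the isomorphism $\colim D \cong \ell(P)$ agrees whether computed in $\cUp$ or in any larger universe, which follows from the naturality of the adjunction \eqref{eq:adjunct}.
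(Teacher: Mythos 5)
Your argument is correct and is essentially the paper's own proof: the paper likewise observes that the colimit of a diagram of sheaves is obtained by sheafifying the objectwise colimit of the underlying presheaves (which is a $\cU$-presheaf since $J$ is $\cU$-small and each $D(j)$ is a $\cU$-sheaf) and then invokes Theorem \ref{thm:small}. You have merely spelled out the adjunction bookkeeping that the paper leaves implicit.
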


 %%%%%%%%%

\end{document}